\documentclass[11pt]{article}
\usepackage[margin=1in]{geometry}

\usepackage{rotating}
\usepackage{fancyvrb}
\usepackage{rotating}
\usepackage{fancyvrb}
\usepackage[%
backref=true,%
natbib=true,%
maxbibnames=99,%
doi=false,%
url=false,%
giveninits=true,%
style=alphabetic,%
uniquelist=false]{biblatex}
\DefineBibliographyStrings{english}{backrefpage={page}, backrefpages={pages}}
\addbibresource{./refs.bib}
\addbibresource{./rebuttal.bib}
\usepackage{mymacros}

\usepackage{amsfonts,amsthm,amssymb,bigints,mathtools,empheq,braket,bm,bbm}
\usepackage{caption}
\usepackage{subcaption}
\usepackage[inline]{enumitem}
\usepackage[vlined,ruled,linesnumbered]{algorithm2e}
\usepackage{tabularx,booktabs}
\usepackage{csvsimple}
\usepackage{xcolor,tikz,pgfplots,pgf-pie}
\usepgfplotslibrary{fillbetween}
\usepackage{parskip}
\usepackage{hyperref}
\usepackage[title]{appendix}

\theoremstyle{definition} %
\newtheorem{theorem}{Theorem}
\newtheorem{proposition}{Proposition}
\newtheorem{lemma}{Lemma}
\newtheorem{corollary}{Corollary}

\theoremstyle{definition} %
\newtheorem{definition}{Definition}
\newtheorem{example}{Example}
\newtheorem{assumption}{Assumption}
\theoremstyle{remark} %
\newtheorem{remark}{Remark}

\newcommand{\PDRCCL}{\text{\textbf{P-CC}}}
\newcommand{\ODRCCL}{\text{\textbf{O-CC}}}

\newcommand{\SDC}{\text{\textbf{PP}}}

\newcommand{\WP}{\text{\textbf{HP}}}

\newcommand{\CVaR}{\text{CVaR}}
\newcommand{\VaR}{\text{VaR}}
\newcommand{\CCP}{\textbf{CC}}

\newcommand{\SOC}{\text{SOC}}

\newcommand{\Probt}{\mathbb{P}_{\text{true}}}
\newcommand{\Safe}{\mathcal{S}}
\newcommand{\Bad}{\mathcal{U}}
\newcommand{\Leb}{\mathbf{Leb}}
\newcommand{\Dist}[1]{\mathbf{d}\left( #1 \right)}

\newcommand{\disteq}{\stackrel{d}{=}}

\newcommand{\modify}[1]{{\color{black}#1}}
\newcommand{\redmodify}[1]{{\color{black}#1}}

\newenvironment{FirstUpdate}{\begingroup\color{black}}{\endgroup}
\newenvironment{SecondUpdate}{\begingroup\color{black}}{\endgroup}

\usepackage{comment}
\specialcomment{SecondUpdateRemoved}{\begingroup\color{teal}}{\endgroup}
\excludecomment{SecondUpdateRemoved}
\includecomment{SecondUpdateRemoved}

\newcommand*{\everymodeprime}{\ensuremath{^\prime}}

\makeatletter

\makeatother

\title{\bf Convex Chance-Constrained Programs with Wasserstein Ambiguity}
\author{\normalsize {\bf Haoming Shen}\footnote{Department of Industrial Engineering, University of Arkansas, Fayetteville (haomings@uark.edu).}~~and {\bf Ruiwei Jiang}\footnote{Department of Industrial and Operations Engineering, University of Michigan, Ann Arbor (ruiwei@umich.edu).} \\
}
\date{ }

\begin{document}

\maketitle{}

\begin{abstract}
Chance constraints yield non-convex feasible regions in general. In particular,
when the uncertain parameters are modeled by a Wasserstein
ball,~\citep{xie-2019-distr-robus} and~\citep{chen-2018-data-driven} showed that
the distributionally robust (pessimistic) chance constraint admits a
mixed-integer conic representation. This paper identifies sufficient conditions
that lead to \emph{convex} feasible regions of chance constraints with
Wasserstein ambiguity. First, when uncertainty arises from the right-hand side
of a pessimistic \emph{joint} chance constraint, we show that the ensuing
feasible region is convex if the Wasserstein ball is centered around a
log-concave distribution (or, more generally, an \(\alpha\)-concave distribution
with \(\alpha \geq -1\)). In addition, we propose a block coordinate ascent
algorithm and prove its convergence to global optimum\redmodify{, as well as the rate of convergence}. {\color{black}Second, when
uncertainty arises from the left-hand side of a pessimistic \emph{two-sided}
chance constraint, we show the convexity if the Wasserstein ball is centered
around an elliptical and star-unimodal distribution. In addition, we propose a
family of second-order conic inner approximations, and we bound their
approximation error and prove their asymptotic exactness.} Furthermore, we
extend the convexity results to \emph{optimistic} chance constraints.%

\hfill\break%
\noindent\textit{Keywords}: Chance constraints; Convexity; Wasserstein ambiguity; Distributionally
robust optimization; Distributionally optimistic optimization
\end{abstract}

\section{Introduction}\label{sec:intro}
Many optimization models include safety principles taking the form
\[
A(x) \; \xi \leq b(x),
\]
where \(x \in \reals^n\) represents decision variables, \(\xi \in \Xi \subseteq \reals^{q}\)
represents model parameters, and \(A(x) \in \reals^{m \times q}\) and
\(b(x) \in \reals^m\) are affine functions of \(x\). When $\xi$ is subject to
uncertainty and follows a probability distribution $\Probt$, a convenient way of
protecting these safety principles is to use chance constraint
\begin{align*}
  \Probt \Big[ A(x) \; \xi \leq b(x) \Big] \geq 1 - \epsilon, \tag{\CCP}
\end{align*}
where
\(1 - \epsilon \in (0, 1)\) represents a pre-specified risk threshold.
(\CCP) requires to satisfy all safety principles with high probability (i.e., \(1 - \epsilon\) is usually close to one, e.g., 0.95). (\CCP) was first studied in the \(1950\)s~\cite{charnes-1959-chanc-const-progr,
  charnes-1958-cost-horiz, miller-1965-chanc-const,
  prekopa-1970-on-proba} and finds a wide range of applications in, e.g.,
power system~\cite{wang-2011-chanc-const},
vehicle routing~\cite{stewart-1983-stoch-vehic-routin}, scheduling~\cite{deng-2016-decom-algor}, portfolio
management~\cite{li-1995-insur-inves}, and facility
location~\cite{miranda-2006-simul-inven}.
We mention two examples.
\begin{example}\label{exa:pp}
{\bf (Production Planning)} Suppose that we produce certain commodity at \(n\) facilities to serve \(m\) demand locations. If \(x_j\) denotes the production capacity of facility \(j\) and \(T_{ij}\) denotes the service coverage of facility \(j\) for location \(i\) (i.e., \(T_{ij} = 1\) if facility \(j\) can serve location \(i\) and \(T_{ij} = 0\) otherwise) for all \(i \in [m]\) and \(j \in [n]\), then chance constraint
\[
\Probt \big[ Tx \geq \xi \big] \geq 1 - \epsilon \tag{\SDC}
\]
assures that the production capacities are able to satisfy the demands \(\xi\) at all locations. Here, \(A(x)\) in (\CCP) equals the \(m \times m\) identify matrix and \(b(x)\) equals \(Tx\).
\end{example}

\begin{SecondUpdate}
\begin{example}\label{exa:water-plan}
\textbf{(Hydro Planning)} Over a discrete time horizon, a hydro power plant takes a random (precipitation) inflow \(\xi_i\) to its water reservoir and makes a plan to release \(x_i\) amount of water in each time unit \(i \in [t]\), in order to generate electricity and maintain the water inventory between a pre-specified lower bound \(\ell_{\text{low}}\) (dead storage) and an upper
bound \(\ell_{\text{high}}\) (flood reserve). If we denote by \(\ell_0\) the initial water inventory, then chance constraint
\begin{align*}
\Probt \left[
\ell_{\text{low}} \leq \ell_0 + \sum_{i=1}^t (\xi_i - x_i) \leq \ell_{\text{high}}
\right] \geq 1 - \epsilon \tag{\WP}
\end{align*}
assures that the reservoir maintains a safe inventory with high probability. Here, %
(\WP) admits the form of (\CCP) with \(m=2\).
\end{example}
\end{SecondUpdate}

\begin{SecondUpdateRemoved}
\end{SecondUpdateRemoved}

In~(\SDC), the random vector \(\xi\) is decoupled from the
decision variables \(x\) because, in this example, \(A(x)\) is independent of \(x\). For such chance constraints with \(A(x) \equiv A\), we follow the convention in the literature and refer to them as chance constraints with right-hand side (\RHS{}) uncertainty. {In contrast, \(\xi\) and \(x\) are coupled in~(\WP).} To distinguish chance constraints in this form from those with \RHS{} uncertainty, we call them chance constraints with left-hand side (\LHS{}) uncertainty. In addition, we say a chance constraint is \emph{individual} if \(m = 1\), \modify{\emph{two-sided} if \(m = 2\) and the two rows of \(A(x)\) are opposite (such as in~\redmodify{(\WP)})}, and \emph{joint} if \(m \geq 2\) (such as in~(\SDC)).

Although~(\CCP{}) provides an intuitive way to model uncertainty in safety principles, it produces a non-convex feasible region in general, giving rise to concerns of challenging computation. To this end, a stream of prior work proposed effective mixed-integer programming (MIP) approaches based on the notions of, e.g., sample average approximation~\cite{luedtke-2008-sampl-approx,luedtke-2008-integ-progr} and $p$-efficient points~\cite{prekopa-1990-dual-method,beraldi-2002-probab-set}, and derived valid inequalities to strengthen the ensuing MIP formulations (see, e.g.,~\cite{kuecuekyavuz-2012-mixin-sets,luedtke-2014-branc-and} and a recent survey~\cite{kuccukyavuz-2021-chance}). Another stream of prior work identified sufficient conditions for (\CCP) to produce a convex feasible region. For individual (\CCP),~\cite{panne-1963-minim-cost}
derived a second-order conic (\SOC{}) representation when \(\xi\) follows a Gaussian
distribution, and~\cite{lagoa-2001-applic}
and~\cite{calafiore-2006-distr-robus} further extended this result when \(\xi\) follows an elliptical log-concave distribution (see Definition~\ref{def:alpha-concave-dist}). \modify{Two-sided (\CCP) was first studied by~\cite{lubin-2015-two-sided}, who assumed a Gaussian \(\Probt{}\) and proved the convexity of the ensuing feasible region. Later,~\cite{fathabad-2021-tight-conic} generalized the study to a Gaussian mixture model.} For joint (\CCP) with \RHS{} uncertainty,~\cite{prekopa-2013-stoch} (see his Theorem~\(10.2\)) proved the convexity of the ensuing feasible region when \(\xi\) follows a log-concave distribution, examples of which include Gaussian, exponential, beta (if both shape parameters are at least 1), uniform on convex support, etc. Furthermore,~\cite{shapiro-2009-lectur} generalized this result to \(\alpha\)-concave distributions (see Definition~\ref{def:alpha-concave-dist}).

In most practical problems, the (true) distribution \(\Probt{}\) of the random parameters \(\xi\) is unknown or ambiguous to the modeler, who often replaces \(\Probt{}\) in (\CCP{}) with a crude estimate, denoted by \(\Prob{}\). Candidates of \(\Prob\) includes the empirical distribution based on past observations of \(\xi\) and Gaussian distribution, whose mean and covariance matrix can be estimated based on these past observations. Since \(\Prob\) may not perfectly model the uncertainty of \(\xi\), it is reasonable to take into account its neighborhood, or more formally, an ambiguity set \(\mathcal{P}\) around \(\Prob\). In this paper, we adopt a Wasserstein ambiguity set defined as
\begin{align*}
  \mathcal{P} := \Set{\QProb \in \mathcal{P}_0 \colon d_W(\QProb, \Prob) \leq \delta},%
\end{align*}
where \(\mathcal{P}_0\) is the set of all probability distributions, \(\delta > 0\) is a pre-specified radius of \(\mathcal{P}\), and \(d_W(\cdot, \cdot)\) represents the Wasserstein distance (see,
e.g.,~\cite{mohajerin-2018-data-driven,kuhn-2019-wasser-distr}). Specifically, the Wasserstein distance between two distributions \(\Prob_1\) and \(\Prob_2\) is defined through
\begin{align}
  d_W(\Prob{}_1, \Prob{}_2)
  := \inf_{\Prob_0 \sim (\Prob{}_1, \Prob{}_2)} \Expect{}_{\Prob_0} \Big[\norm{X_1 - X_2} \Big], \label{eq:intro-wass-def}
\end{align}
where \(X_1\), \(X_2\) are two random variables
following distributions \(\Prob{}_1\), \(\Prob{}_2\) respectively, \(\Prob_0\) is the coupling of \(\Prob_1\) and \(\Prob_2\), and
\(\norm{\cdot}\) represents a norm. \(d_W(\Prob{}_1, \Prob{}_2)\) can be interpreted as the minimum cost, with respect to \(\norm{\cdot}\), of transporting the probability masses of \(\Prob_1\) to recover \(\Prob_2\). Hence, the Wasserstein ambiguity set \(\mathcal{P}\) is a ball (in the space of probability distributions) centered around \(\Prob\), which for this reason is referred to as the reference distribution. Additionally, \(\mathcal{P}\) may include the true distribution \(\Probt\), i.e., \(\Probt \in \mathcal{P}\), when the radius \(\delta\) is large enough. As a result, the pessimistic counterpart
\begin{align}
\inf_{\QProb \in \mathcal{P}} \QProb \left[ A(x)\xi \leq b(x) \right] \geq 1 - \epsilon \tag{\PDRCCL{}}
\end{align}
implies (\CCP) because it requires that (\CCP) holds with respect to all distributions in \(\mathcal{P}\). In contrast, an optimistic modeler may be satisfied as long as there exists some distribution in \(\mathcal{P}\), with respect to which (\CCP) holds. This gives rise to the following optimistic counterpart of (\CCP):
\begin{align}
\sup_{\QProb \in \mathcal{P}} \QProb \left[ A(x)\xi \leq b(x) \right] \geq 1 - \epsilon. \tag{\ODRCCL{}}
\end{align}
\modify{(\ODRCCL) finds applications in portfolio management to quantify the profit opportunities in stock market~\cite{singh-2021-distr-robus}. In addition, when \(\Probt\) is ambiguous, it becomes impossible to solve an optimization model with (\CCP) directly. Nevertheless, replacing (\CCP) with (\ODRCCL) and (\PDRCCL), respectively, produces a confidence interval for the (unknown) optimal value. Besides, since (\ODRCCL) provides a relaxation of (\CCP), any valid inequality for (\ODRCCL) remains valid for computing (\CCP).}

In the existing literature, (\PDRCCL{}) is also known as distributionally robust chance constraint and, depending on the value of \(m\) and the ambiguity set \(\mathcal{P}\), the feasible region of (\PDRCCL{}) may be convex or non-convex. For individual (\PDRCCL{}) (i.e., \(m=1\)), convex representations have been derived when \(\mathcal{P}\) is Chebyshev, i.e., when \(\mathcal{P}\) consists of all distributions sharing the same mean and covariance matrix of \(\xi\). Specifically,~\cite{ghaoui-2003-worst-case,calafiore-2006-distr-robus} derived semidefinite and SOC representations of (\PDRCCL{}) with a Chebyshev \(\mathcal{P}\). With the same ambiguity set,~\cite{zymler-2011-distr-robus} showed that (\PDRCCL{}) is equivalent to its approximation based on conditional Value-at-Risk (\CVaR) even when the safety principle becomes nonlinear in \(\xi\). Additionally,~\cite{hanasusanto-2015-distr-robus} and~\cite{li-2019-ambig-risk} incorporated structural information (e.g., unimodality) into the Chebyshev \(\mathcal{P}\) and derived semidefinite and SOC representations of (\PDRCCL{}), respectively. For joint (\PDRCCL{}) (i.e., \(m \geq 2\)), however, convexity results become scarce.~\cite{hanasusanto-2017-ambig-joint} characterized \(\mathcal{P}\) by a conic support, the mean, and a positively homogeneous dispersion measure of \(\xi\), and showed that (\PDRCCL{}) with RHS uncertainty is conic representable. In addition, they showed that this result falls apart if one relaxes these conditions even in a mildest possible manner. More recently,~\cite{xie-2016-deter-refor} extended the convexity result when the safety principles depend on \(\xi\) nonlinearly and \(\mathcal{P}\) is characterized by a single moment constraint of \(\xi\). In this paper, we study (\PDRCCL{}) and (\ODRCCL{}) with \(\mathcal{P}\) being a Wasserstein ambiguity set.

To the best of our knowledge, the convexity results for either (\PDRCCL{}) or (\ODRCCL{}) with Wasserstein ambiguity do not exist in the existing literature to date. This is not surprising because~\cite{xie-2020-bicrit-approx} showed that it is strongly NP-hard to optimize over the feasible region of (\PDRCCL{}), if \(\mathcal{P}\) is centered around an empirical distribution of \(\xi\). In addition, for the same setting~\cite{xie-2019-distr-robus,chen-2018-data-driven,ji-2020-data-driven} derived mixed-integer conic representations for (\PDRCCL{}), implying a non-convex feasible region. This paper seeks to revise the choice of the reference distribution \(\Prob\), with regard to which (\PDRCCL{}) and (\ODRCCL{}) with Wasserstein ambiguity produce convex feasible regions. Our main results include:
\begin{enumerate}
\item For joint (\PDRCCL{}) with RHS uncertainty, we prove that the ensuing feasible region is convex if the reference distribution \(\Prob\) is log-concave. %
More generally, this result holds when \(\Prob\) is \(\alpha\)-concave with \(\alpha \geq -1\). Furthermore, we derive a block coordinate ascent algorithm for optimization models involving (\PDRCCL) and prove its convergence to global optimum.
\item \modify{For two-sided (\PDRCCL{}) with LHS uncertainty, we prove its convexity when the reference distribution \(\Prob\) is elliptical and star-unimodal. These conditions are tight in the sense that dropping either of them fails the convexity. Furthermore, we derive a family of second-order conic inner approximations for two-sided (\PDRCCL{}), bound their approximation error, and prove their asymptotic exactness.}
\item We extend the aforementioned convexity results for joint (\PDRCCL{}) with RHS uncertainty and \modify{two-sided} (\PDRCCL{}) with LHS uncertainty to their optimistic counterparts (\ODRCCL{}).
\end{enumerate}
In addition, we summarize the main convexity results in the following table.
\begingroup
\begin{table}[h]
    \centering
    \setlength{\tabcolsep}{18pt}
    \renewcommand{\arraystretch}{2.0}
    \begin{tabular}{ c | c c}
    \hline
     & (\PDRCCL) & (\ODRCCL) \\
    \hline
    LHS Uncertainty & Theorem~\ref{thm:ts-pcc-convexity} & Theorem~\ref{thm:ts-occ-convexity} \\
    RHS Uncertainty & Theorem~\ref{thm:drcc-rhs-cvx} & Theorem~\ref{thm:opt-rhs-cvx} \\
    \hline
    \end{tabular}
\end{table}
\endgroup

The remainder of this paper is organized as follows.
Section~\ref{sec:preliminary-results} reviews key definitions. %
Sections~\ref{sec:pessimistic-case} and~\ref{sec:algo} study convexity and solution approaches for (\PDRCCL{}), respectively. Section~\ref{sec:optimistic-drcc} extends the convexity results to (\ODRCCL{}).  Section~\ref{sec:exps} demonstrates (\PDRCCL) and (\ODRCCL) through two numerical experiments. Almost all proofs, except that for Theorem~\ref{thm:drcc-rhs-cvx}, are relegated to Appendix~\ref{apx-proofs}.

\noindent{\bf Notation}: We use \(\mathcal{X}^p\) and \(\mathcal{X}^o\) to
denote the feasible region of~(\PDRCCL{}) and~(\ODRCCL{}), respectively. We
denote the \(n\)-dimensional extended real system by \(\overline{\reals}^n\).
For a given decision \(x\), we denote by \(\Safe(x)\) the event
\(\set{\xi \colon A(x)\xi \leq b(x)}\) and by \(\Safe^c(x)\) its complement. For
\(a,b \in \reals\), \((a)^+ := \max \{a, 0\}\), \((a)^- := \min \{a, 0\}\),
\redmodify{\(a \wedge b := \min\{a, b\}\)}, and \(a \vee b := \max\{a, b\}\).
For a norm \(\norm{\cdot}\), \(\norm{\cdot}_*\) denotes its dual norm.
\(\norm{\cdot}_2\) represents the \(2\)-norm, i.e., for \(a \in \reals^n\),
\(\norm{a}_2 = \sqrt{\sum_{i=1}^n a_i^2}\).
\(I_n\)
denotes the \(n \times n\) identity matrix, \(\Leb{}(\cdot)\) denotes the
Lebesgue measure defined on the Borel \(\sigma\)-algebra of
\(\reals^{q}\), and the indicator \(\Ind{x \in \Omega}\) equals one if
\(x \in \Omega\) and zero if \(x \notin \Omega\). \modify{ For two random
variables \(X_1\) and \(X_2\), \(X_1 \disteq{} X_2\) means that \(X_1\) and
\(X_2\) are identically distributed.}

\section{Key Definitions and Examples}
\label{sec:preliminary-results}
We review definitions frequently used in subsequent discussions.

\begin{definition}%
\label{def:alpha-ccv}
A nonnegative function \(f\) defined on a convex subset of
\(\reals^n\) is said to be \(\alpha\)-concave with
\(\alpha \in \overline{\reals}\) if for all \(x, y \in \dom f\) and
\(\theta \in [0, 1]\)
\begin{align*}
f(\theta x + (1 - \theta) y ) \geq m_{\alpha}(f(x), f(y); \theta),
\end{align*}
where \(m_{\alpha} \colon \reals_+ \times \reals_+ \to \reals\) is
defined as
\begin{align*}
  m_{\alpha}(a, b; \theta) := 0 \quad \text{ if \(ab = 0\)},
\end{align*}
and if \(a > 0, b > 0, \theta \in [0, 1]\), then
\begin{align*}
  m_{\alpha}(a, b; \theta)
  :=
  \begin{cases}
  a^{\theta} b^{(1 - \theta)} & \text{ if \(\alpha = 0\),} \\
  \max\{a, b\} & \text{ if \(\alpha = +\infty\),} \\
  \min\{a, b\} & \text{ if \(\alpha = -\infty\),} \\
  (\theta a^{\alpha} + (1 - \theta) b^{\alpha})^{1 / \alpha} & \text{otherwise.}
  \end{cases}
\end{align*}
When \(\alpha = 0\) or \(\alpha = -\infty\), we say \(f\) is
log-concave or quasi-concave, respectively.
\end{definition}
The Minkowski sum of two Borel measurable subsets
\(A, B \subset \reals^n\) is Borel measurable. Let
\(\theta \in [0,1]\), then the convex combination of \(A, B\) is
defined through
\begin{align*}
  \theta A + (1 - \theta) B := \Set{\theta x + (1 - \theta) y \colon x \in A, y \in B}.
\end{align*}
\begin{definition} \label{def:alpha-concave-dist}
A probability measure \(\Prob\) defined on the Lebesgue subsets of a
convex subset \(\Omega \subseteq \reals^n\) is said to be
\(\alpha\)-concave if for any Borel measurable sets
\(A, B \subseteq \Omega\) and for all \(\theta \in [0,1]\),
\begin{align*}
  \Prob(\theta A + (1 - \theta) B) \geq m_{\alpha}(\Prob(A), \Prob(B); \theta).
\end{align*}
For a random variable \(\xi\) supported on \(\reals^n\), we
say it is \(\alpha\)-concave if the probability measure induced by \(\xi\) is \(\alpha\)-concave. In particular, \(\xi\) is log-concave if it induces a \(0\)-concave distribution.
\end{definition}
\begin{example}%
The PDF of an \(n\)-dimensional nondegenerate Gaussian is
\begin{align*}
  f(x) = \frac{1}{\sqrt{(2 \pi)^n \det(\Sigma)}} \exp \left[ - \frac{1}{2} \norm{\Sigma^{-1/2} (x - \mu)}_2 \right],
\end{align*}
where \(\mu\) and \(\Sigma\) represent its mean and
covariance, respectively. Since \(\ln f\) is concave, \(f\) is a
log-concave function and Gaussian random variables are
log-concave.
\end{example}

\begin{example}%
The PDF of a uniform distribution defined on a bounded convex subset
\(\Omega \subset \reals^n\) is
\begin{align*}
  f(x) = \frac{1}{\Leb{}(\Omega)} \Ind{x \in \Omega},
\end{align*}
where \(\Leb{}(\Omega)\) represents the volume of \(\Omega\). \(f\) is \(+\infty\)-concave on
\(\Omega\). Therefore, \(n\)-dimensional uniform distributions over a
bounded convex subset are \((1/n)\)-concave.
\end{example}

\begin{FirstUpdate}
\begin{definition}%
An \(n\)-dimensional random vector \(X\) is said to be elliptical and denoted by \(\mathcal{E}_n(\mu, \Sigma, \phi)\) if and only if there exist a
vector \(\mu \in \reals^n\), a positive semidefinite matrix
\(\Sigma \in \reals^{n \times n}\), and a function
\(\phi \colon \reals_+ \to \reals\) such that the characteristic function
\(t \mapsto \varphi_{X - \mu}(t)\) of \(X - \mu\) corresponds to
\(t \mapsto \phi(t^{\top}\Sigma t)\) for \(t \in \reals^n\).
\end{definition}
Examples of elliptical distributions include uniform distribution in a ball,
Gaussian, \(t\)-distribution, symmetric stable distribution, symmetric Laplace
distribution, logistic distribution, and Cauchy distribution.
\begin{definition}%
A set \(S \subseteq \reals^n\) is called
star-shaped if, for all \(\xi \in S\), the line segment connecting \(0\) and \(\xi\) is completely contained in \(S\). A distribution on \(\reals^n\) is called star-unimodal if it belongs to the closed convex hull of the set of uniform distributions on sets in \(\reals^n\) which are star-shaped.
\end{definition}
The above definitions of star-shapedness and star-unimodality assume that the mode is \(0\), which can be achieved without loss of generality by shifting a star-unimodal random variable by its mode. Intuitively, if a star-unimodal distribution admits a density function \(f_\xi\), then \(f_\xi(zd)\) is nonincreasing in \(z > 0\) for all \(d \in \reals^n\) and \(d \neq 0\). That is, the density function is nonincreasing along any ray emanating from the origin. Examples of star-unimodal distributions include uniform distribution in a ball, Gaussian, \(t\)-distribution, logistic distribution, and Cauchy distribution.

We review properties of \(\alpha\)-concave functions, \(\alpha\)-concave probability measures, as well as elliptical and star-unimodal distributions in Appendix~\ref{apx-preliminary}.
\end{FirstUpdate}

\section{Pessimistic Chance Constraint}
\label{sec:pessimistic-case}

We first review the definitions of value-at-risk (\VaR{}) and
\CVaR{}~\cite{rockafellar-1999-optim-condit}, as well as the~\CVaR{}
reformulation of \(\mathcal{X}^p\) derived by~\cite{xie-2019-distr-robus}. Then, we derive a new reformulation of \(\mathcal{X}^p\) for \(\alpha\)-concave reference distribution \(\Prob\). The new reformulation leads to convexity proofs for joint (\PDRCCL{}) with RHS uncertainty and  two-sided (\PDRCCL{}) with LHS uncertainty in Sections~\ref{sec:pess-joint-drcc} and~\ref{sec:pess-two-sided}, respectively.

\begin{definition}
Let \(X\) be a
random variable, inducing probability distribution
\(\Prob{}_{X}\). The \((1 - \epsilon)\)-\VaR{} of
\(X\) is defined through
\begin{gather*}
\VaR_{1 - \epsilon} (X) := \inf
\Set{x : \Prob{}_{X} \left[ X \leq x \right] \geq 1 - \epsilon},
\end{gather*}
and its \((1 - \epsilon)\)-\CVaR{} is defined through
\begin{align*}
\CVaR_{1 - \epsilon} (X) :=
  \min_{\gamma} \Set{\gamma + \frac{1}{\epsilon} \Expect{} \Big[ \posp{X - \gamma} \Big] }.
\end{align*}
\end{definition}

\begin{proposition}[Adapted from Theorem~\(1\) in~\cite{xie-2019-distr-robus}]\label{prop:pess-cvar-reform-1}
For \(\delta > 0\), it holds that
\begin{gather}
\mathcal{X}^p = \Set{ x \in \reals^n \colon \frac{\delta}{\epsilon} +
  \CVaR_{1 - \epsilon} \Big( - \Dist{{\zeta}, \Safe^c(x)} \Big)
  \leq 0 }. \label{eq:deter-reform-xie}
\end{gather}
Here, random variable \({\zeta}\) follows the reference distribution \(\Prob\) and
\(\Dist{\zeta, \Safe^c(x)}\) represents the distance from \(\zeta\) to
the ``unsafe'' set \(\Safe^c(x)\)~\cite{chen-2018-data-driven},
\begin{align*}
  \Dist{\zeta, \Safe^c(x)}
  & := \inf_{\xi \in \Xi} \Set{\norm{ \zeta - \xi } \colon A(x)\xi \nleq b(x)},
\end{align*}
and \(\Xi\) is the support of \(\xi\).
\end{proposition}
For all \(x \in \mathcal{X}^p\), it holds that
\begin{equation*}
a_i(x) = 0 \ \Rightarrow \ b_i(x) \geq 0 \quad \forall i \in [m],
\end{equation*}
where \(a_i(x)^{\top}\) represents row \(i\) of matrix \(A(x)\) and \(b_i(x)\) represents entry \(i\) of vector \(b(x)\), because otherwise \(\Prob[A(x)\zeta \leq b(x)] = 0\) and \(x \notin \mathcal{X}^p\). Assuming the above implication without loss of generality, we define function \(f: \reals^n \times \reals^m \rightarrow \reals\),
\begin{align*}
f(x, \zeta) := \min_{i \in [m] \setminus I(x)} \left\{ \frac{b_i(x) - a_i(x)^{\top} \zeta}{\|a_i(x)\|_*} \right\},
\end{align*}
where \(I(x):=\{i \in [m]: a_i(x) = 0\}\). Then, it follows from~\cite{xie-2019-distr-robus,chen-2018-data-driven} that
\begin{align*}
\Dist{\zeta, \Safe^c(x)} = \Big( f(x, \zeta) \Big)^+.
\end{align*}
In what follows, we derive new reformulations of \(\mathcal{X}^p\) based on \(f(x, \zeta)\). To this end, we need the following lemma to relate the the \CVaR{} of \(f(x, \zeta)\) to that of
\(-\Dist{\zeta, \Safe^c(x)}\) in~\eqref{eq:deter-reform-xie}.
\begin{lemma}\label{lem:reform-neg-cvar}
Let \(X\) be a random variable, then
\begin{align*}
  \CVaR_{1 - \epsilon} ( X^- )
  = \Ind{0 \geq \VaR_{1 - \epsilon} (X) } \cdot \left[
  \CVaR_{1 - \epsilon} (X)  - \frac{1}{\epsilon} \Expect [X^+]\right].
\end{align*}
\end{lemma}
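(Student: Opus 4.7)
The plan is to start from the variational definition of $\CVaR$,
\[
\CVaR_{1-\epsilon}(X^-) \;=\; \min_{\gamma \in \reals} \Bigl\{\gamma + \tfrac{1}{\epsilon}\,\Expect\bigl[(X^- - \gamma)^+\bigr]\Bigr\},
\]
and split the range of $\gamma$ into $\gamma \geq 0$ and $\gamma < 0$. On the first piece, since $X^- \leq 0 \leq \gamma$, the positive part vanishes and the objective reduces to $\gamma$, so $\inf_{\gamma \geq 0}\{\cdots\} = 0$, attained at $\gamma = 0$.

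For $\gamma < 0$, the key algebraic step is to rewrite $(X^- - \gamma)^+$ in terms of $(X-\gamma)^+$. Splitting the sample space into $\{X \geq 0\}$, $\{\gamma \leq X < 0\}$, and $\{X < \gamma\}$, a direct case check gives
\[
(X^- - \gamma)^+ \;=\; (X - \gamma)^+ - X^+ \quad \text{for every }\gamma \leq 0,
\]
and taking expectation yields $\Expect[(X^- - \gamma)^+] = \Expect[(X-\gamma)^+] - \Expect[X^+]$. Substituting back, the CVaR objective for $\gamma \leq 0$ becomes $\phi(\gamma) - \tfrac{1}{\epsilon}\Expect[X^+]$, where $\phi(\gamma) := \gamma + \tfrac{1}{\epsilon}\Expect[(X-\gamma)^+]$ is the standard convex CVaR function for $X$, with unconstrained minimum $\CVaR_{1-\epsilon}(X)$ attained at $\gamma^{\ast} = \VaR_{1-\epsilon}(X)$.

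Now I split into two cases based on the sign of $\VaR_{1-\epsilon}(X)$. If $\VaR_{1-\epsilon}(X) \leq 0$, the unconstrained minimizer of $\phi$ lies in $\gamma \leq 0$, so the minimum on this piece equals $\CVaR_{1-\epsilon}(X) - \tfrac{1}{\epsilon}\Expect[X^+]$; convexity of $\phi$ together with $\phi(0) = \tfrac{1}{\epsilon}\Expect[X^+]$ shows that this quantity is $\leq 0$, so it also beats the value $0$ obtained on the $\gamma \geq 0$ piece, giving the claimed formula with the indicator equal to $1$. If instead $\VaR_{1-\epsilon}(X) > 0$, then $\phi$ is nonincreasing on $(-\infty, 0]$, so on $\gamma \leq 0$ the minimum of $\phi(\gamma) - \tfrac{1}{\epsilon}\Expect[X^+]$ is attained at $\gamma = 0$ and equals $\phi(0) - \tfrac{1}{\epsilon}\Expect[X^+] = 0$, matching the $\gamma \geq 0$ piece; the overall minimum is $0$, consistent with the indicator equal to $0$.

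The main obstacle is the identity $(X^- - \gamma)^+ = (X - \gamma)^+ - X^+$ for $\gamma \leq 0$: it is elementary but must be handled carefully on each of the three regions for $X$, and one has to verify that it fails for $\gamma > 0$ (which is why the split at $\gamma = 0$ is the right one). The remainder of the argument is just exploiting convexity of $\phi$ and the characterization $\phi(\VaR_{1-\epsilon}(X)) = \CVaR_{1-\epsilon}(X)$ to compare the two pieces.
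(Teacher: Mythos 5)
Your proof is correct, and it takes a genuinely (if mildly) different route from the paper's. You stay entirely inside the variational definition \(\CVaR_{1-\epsilon}(X^-) = \min_{\gamma}\{\gamma + \tfrac{1}{\epsilon}\Expect[(X^- - \gamma)^+]\}\), split the range of \(\gamma\) at zero, and use convexity of \(\phi(\gamma) := \gamma + \tfrac{1}{\epsilon}\Expect[(X-\gamma)^+]\) together with \(\phi(0) = \tfrac{1}{\epsilon}\Expect[X^+]\) to decide which piece attains the global minimum. The paper instead short-circuits the optimization: it writes \(\CVaR_{1-\epsilon}(X^-) = \VaR_{1-\epsilon}(X^-) + \tfrac{1}{\epsilon}\Expect[(X^- - \VaR_{1-\epsilon}(X^-))^+]\), identifies \(\VaR_{1-\epsilon}(X^-)\) as \(0\) or \(\VaR_{1-\epsilon}(X)\) according to the sign of \(\VaR_{1-\epsilon}(X)\) (an identity it asserts rather than proves), and then carries out essentially your algebraic cancellation \((X^- - \gamma)^+ = (X-\gamma)^+ - X^+\) only at the single point \(\gamma = \VaR_{1-\epsilon}(X) \leq 0\), whereas you establish it for all \(\gamma \leq 0\). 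Both arguments share the same case split on the sign of \(\VaR_{1-\epsilon}(X)\) and the same key cancellation of \(\Expect[X^+]\). What your version buys is self-containedness: you never need to compute the quantile of \(X^-\), and the comparison between the two pieces of the \(\gamma\)-domain is justified directly by convexity and the fact that the lower \((1-\epsilon)\)-quantile of \(X\) minimizes \(\phi\). What the paper's version buys is brevity. One small point worth making explicit in your write-up: when \(\VaR_{1-\epsilon}(X) \leq 0\) you use that \(\VaR_{1-\epsilon}(X)\) is an actual minimizer of \(\phi\) (not merely an infimizing point); this holds because the minimizer set of the Rockafellar--Uryasev objective is the closed interval of \((1-\epsilon)\)-quantiles, whose left endpoint is \(\VaR_{1-\epsilon}(X)\) as defined in the paper.
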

Combining Proposition~\ref{prop:pess-cvar-reform-1} and
Lemma~\ref{lem:reform-neg-cvar} leads to the following reformulation of \(\mathcal{X}^p\).
\begin{corollary}\label{cor:pess-cvar-reform-2}
For \(\delta > 0\), it holds that
\begin{empheq}[left={\mathcal{X}^p = \empheqlbrace{} x \in \reals^n \colon}, right=\empheqrbrace]{gather}
\frac{\delta}{\epsilon} + \CVaR_{1 - \epsilon} \left( -f(x, \zeta) \right)
\leq \frac{1}{\epsilon} \Expect \left[ \left( -f(x, \zeta) \right)^+ \right] \label{eq:cor-pess-cvar-reform-1}\\
0 \geq \VaR_{1 - \epsilon} \left( -f(x, \zeta) \right) \label{eq:cor-pess-cvar-reform-2}
\end{empheq}
\end{corollary}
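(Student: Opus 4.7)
The plan is to chain Theorem~\ref{thm:pess-cvar-reform-1} with Lemma~\ref{lem:reform-neg-cvar}, treating $X := -f(x,\zeta)$ as the random variable, and then peel off the indicator in the expression for $\CVaR_{1-\epsilon}(X^-)$ by a short case analysis.

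First I would rewrite the reformulation in Theorem~\ref{thm:pess-cvar-reform-1} using $f(x,\zeta)$. Since $\Dist{\zeta, \Safe^c(x)} = \posp{f(x,\zeta)}$, we have
\begin{equation*}
-\Dist{\zeta, \Safe^c(x)} \ = \ -\posp{f(x,\zeta)} \ = \ \min\{-f(x,\zeta),\,0\} \ = \ \bigl(-f(x,\zeta)\bigr)^-.
\end{equation*}
Therefore $x \in \mathcal{X}^p$ iff $\delta/\epsilon + \CVaR_{1-\epsilon}\bigl((-f(x,\zeta))^-\bigr) \leq 0$. Setting $X := -f(x,\zeta)$, Lemma~\ref{lem:reform-neg-cvar} rewrites the left-hand side as
\begin{equation*}
\frac{\delta}{\epsilon} + \Ind{0 \geq \VaR_{1-\epsilon}(X)} \cdot \left[ \CVaR_{1-\epsilon}(X) - \frac{1}{\epsilon}\Expect[X^+] \right] \ \leq \ 0.
\end{equation*}

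Next I would do a two-case split on the value of the indicator. If $0 < \VaR_{1-\epsilon}(X)$, the indicator vanishes and the inequality collapses to $\delta/\epsilon \leq 0$, which fails because $\delta > 0$; hence no such $x$ lies in $\mathcal{X}^p$. Thus $x \in \mathcal{X}^p$ forces \eqref{eq:cor-pess-cvar-reform-2}, i.e., $0 \geq \VaR_{1-\epsilon}(-f(x,\zeta))$. Conversely, when \eqref{eq:cor-pess-cvar-reform-2} holds, the indicator equals one and the inequality reduces, after rearranging, to \eqref{eq:cor-pess-cvar-reform-1}. Combining the two implications yields equivalence of $x \in \mathcal{X}^p$ with the simultaneous validity of \eqref{eq:cor-pess-cvar-reform-1} and \eqref{eq:cor-pess-cvar-reform-2}.

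The only mild subtlety I anticipate is handling the boundary case $\VaR_{1-\epsilon}(X) = 0$ consistently with whatever convention Lemma~\ref{lem:reform-neg-cvar} adopts for its indicator (the lemma uses a weak inequality $0 \geq \VaR_{1-\epsilon}(X)$, so this boundary is absorbed into the ``indicator $= 1$'' branch and matches the weak inequality in \eqref{eq:cor-pess-cvar-reform-2}). Otherwise the argument is essentially an algebraic substitution, and no new convexity or measure-theoretic machinery is needed beyond what is already in Theorem~\ref{thm:pess-cvar-reform-1} and Lemma~\ref{lem:reform-neg-cvar}.
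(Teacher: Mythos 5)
Your proposal is correct and matches the paper's intended argument: the paper states the corollary as an immediate consequence of combining Theorem~\ref{thm:pess-cvar-reform-1} with Lemma~\ref{lem:reform-neg-cvar} (via \(-\Dist{\zeta,\Safe^c(x)} = (-f(x,\zeta))^-\)), and your explicit case split on the indicator, with \(\delta > 0\) ruling out the indicator-zero branch, is exactly the reasoning being elided.
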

In this paper, we focus on cases in which \(\Prob\) is \(\alpha\)-concave. The next lemma shows that an \(\alpha\)-concave \(\Prob\) yields atomless \(\Dist{\zeta, \Safe^c(x)}\) and \(f(x, \zeta)\), which lead to a further reformulation of \(\mathcal{X}^p\).
\begin{lemma}\label{lem:cont-of-dist-to-set}
If the reference distribution \(\Prob\) is \(\alpha\)-concave, then for all \(x\), \(\Prob \big[ \Dist{\zeta, \Safe^c(x)} = y \big] = 0\) for all \(y > 0\) and \(\Prob \big[ f(x, \zeta) = y \big] = 0\) for all \(y \in \reals\).
\end{lemma}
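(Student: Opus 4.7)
The plan is to reduce both claims to showing that the $\alpha$-concave reference distribution assigns zero mass to any hyperplane in $\reals^m$, and then observe that the level sets $\{f(x,\zeta)=y\}$ are contained in a finite union of such hyperplanes.

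First I would handle the reduction. Since $\Dist{\zeta,\Safe^c(x)}=(f(x,\zeta))^+$ by the paragraph preceding the lemma, for any $y>0$ the event $\{\Dist{\zeta,\Safe^c(x)}=y\}$ coincides with $\{f(x,\zeta)=y\}$. Hence it suffices to prove the second claim, namely $\Prob[f(x,\zeta)=y]=0$ for every $y\in\reals$.

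Next, unpacking the definition of $f(x,\zeta)$ as a minimum of the affine functionals $g_i(\zeta):=(b_i(x)-a_i(x)^{\top}\zeta)/\|a_i(x)\|_*$, $i\in[m]\setminus I(x)$, I would write
\begin{align*}
\{\zeta:f(x,\zeta)=y\}\ \subseteq\ \bigcup_{i\in[m]\setminus I(x)}\bigl\{\zeta:a_i(x)^{\top}\zeta=b_i(x)-y\,\|a_i(x)\|_*\bigr\}.
\end{align*}
Because $a_i(x)\neq 0$ for every $i\notin I(x)$ by construction, each set on the right is a genuine affine hyperplane in $\reals^m$, and hence has zero $m$-dimensional Lebesgue measure. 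The key ingredient is then Theorem~\ref{thm:alpha-prob-meas-and-pdf}: since $\Prob$ is $\alpha$-concave (with $\alpha\leq 1/m$, which holds after replacing $\alpha$ by $\min\{\alpha,1/m\}$ using Lemma~\ref{lem:monotone-m-alpha}), $\Prob$ admits an $\alpha'$-concave density with respect to the Lebesgue measure on the affine hull of its support. In the nondegenerate setting where this hull is all of $\reals^m$, absolute continuity immediately yields that each hyperplane above has zero $\Prob$-mass, and finite subadditivity gives $\Prob[f(x,\zeta)=y]=0$.

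The main obstacle is the possibility that the support of $\Prob$ lies in a proper affine subspace $\mathcal{H}(\Omega)$ of dimension $s<m$; then one of the hyperplanes above might contain $\mathcal{H}(\Omega)$ entirely, which would happen precisely when $a_i(x)$ is orthogonal to the linear part of $\mathcal{H}(\Omega)$ and $y$ equals the unique constant value that $g_i$ takes on $\mathcal{H}(\Omega)$. I would dispose of this by distinguishing, for each $i$, between hyperplanes that do not contain $\mathcal{H}(\Omega)$ (whose intersection with $\mathcal{H}(\Omega)$ is an affine subspace of dimension at most $s-1$, hence has zero $s$-dimensional Lebesgue measure and thus zero $\Prob$-mass by Theorem~\ref{thm:alpha-prob-meas-and-pdf}) and those that do contain it (which contribute only at isolated values of $y$ at which $a_i(x)^{\top}\zeta$ is almost surely equal to a particular constant, a situation that is either ruled out by the standing assumption that $\Prob$ is nondegenerate on $\mathcal{H}(\Omega)$ or handled by simply excluding those finitely many $y$ via the same measure-zero argument applied to the remaining non-degenerate $g_j$'s). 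Combining the two cases yields $\Prob[f(x,\zeta)=y]=0$ for all $y\in\reals$, which together with the reduction in the first paragraph completes the proof.
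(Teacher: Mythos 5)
Your main line of argument is sound and, for the second claim, coincides with the paper's: the level set \(\{\zeta \colon f(x,\zeta)=y\}\) is contained in a finite union of affine hyperplanes because \(a_i(x)\neq 0\) for every \(i\notin I(x)\), hence is Lebesgue-null, and the \(\alpha\)-concave reference distribution is absolutely continuous with respect to Lebesgue measure, so the event has zero probability. Your treatment of the first claim is a genuine and welcome simplification: the paper instead works directly with the distance function, observing that \(\Dist{\zeta,\Safe^c(x)}\) equals the distance to the closure of \(\Safe^c(x)\) and invoking a classical result of Erd\H{o}s that the set of points at a fixed positive distance from a closed set is Lebesgue-null. Your reduction via \(\Dist{\zeta,\Safe^c(x)}=(f(x,\zeta))^+\), so that \(\{\Dist{\zeta,\Safe^c(x)}=y\}=\{f(x,\zeta)=y\}\) for \(y>0\), makes that external result unnecessary and derives the first claim from the second.

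The weak point is your last paragraph on the degenerate case, which claims more than it delivers. If the support of \(\Prob\) spans only a proper affine subspace \(\mathcal{H}\) and \(\mathcal{H}\subseteq\{\zeta \colon a_i(x)^{\top}\zeta=c\}\) for some \(i\notin I(x)\), then \(g_i\) is \(\Prob\)-almost surely equal to the constant \(\bar y:=(b_i(x)-c)/\|a_i(x)\|_*\), and the event \(\{f(x,\zeta)=\bar y\}\) contains \(\{\zeta\colon g_j(\zeta)>\bar y \ \forall j\neq i\}\), which can carry positive probability; ``excluding those finitely many \(y\)'' is not available to you, since the lemma asserts the conclusion for \emph{every} \(y\in\reals\). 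Indeed, for a Dirac mass (which is \(+\infty\)-concave, hence \(\alpha\)-concave for all \(\alpha\)) the random variable \(f(x,\zeta)\) is almost surely constant and the statement is simply false, so no argument can cover that case. The correct resolution is the one the paper adopts implicitly: it asserts (citing Norkin's Theorem 2.2) that the \(\alpha\)-concave \(\Prob\) is absolutely continuous with respect to \(m\)-dimensional Lebesgue measure, i.e., it tacitly assumes the reference distribution is non-degenerate. If you add full-dimensionality of the support as a hypothesis, your proof closes cleanly and you can delete the degenerate-case discussion entirely.
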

We are now ready to present the new reformulations of \(\mathcal{X}^p\).
\begin{proposition}\label{prop:reform-w-cc}
Suppose that \(\Prob\) is \(\alpha\)-concave. Then, for \(\delta > 0\), it holds that
\begin{empheq}[left={\mathcal{X}^p = \empheqlbrace{} x \in \reals^n \colon}, right=\empheqrbrace]{gather}
\delta \leq \int_0^{\VaR_{\epsilon}\left( f(x, \zeta) \right)}
\Big(\Prob \left[  f(x, \zeta) \geq t \right] - ( 1 - \epsilon)\Big) \dLeb{t} \label{eq:deter-reform-int-1}\\
\Prob\left[A(x)\zeta \leq b(x)\right] \geq 1 - \epsilon
\label{eq:deter-reform-int-2}
\end{empheq}
\end{proposition}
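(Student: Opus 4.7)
The plan is to start from Corollary \ref{cor:pess-cvar-reform-2}, which already characterizes $\mathcal{X}^p$ through an inequality on $\CVaR_{1-\epsilon}(-f(x,\zeta))$ together with a VaR constraint, and to convert each of those two conditions into the form stated in the proposition. The key enabling ingredient is Lemma \ref{lem:cont-of-dist-to-set}: under the standing $\alpha$-concavity of $\Prob$, the random variable $f(x,\zeta)$ is atomless, so I may freely invoke the standard atomless-case identities $\VaR_{1-\epsilon}(-f(x,\zeta)) = -\VaR_\epsilon(f(x,\zeta))$, $\Prob[f(x,\zeta) \leq \VaR_\epsilon(f(x,\zeta))] = \epsilon$, and $\CVaR_{1-\epsilon}(Y) = \frac{1}{\epsilon}\,\Expect[Y\,\Ind{Y \geq \VaR_{1-\epsilon}(Y)}]$.

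First I would translate \eqref{eq:cor-pess-cvar-reform-2}. Writing $v^\ast := \VaR_\epsilon(f(x,\zeta))$, the first identity makes \eqref{eq:cor-pess-cvar-reform-2} equivalent to $v^\ast \geq 0$, and continuity of the CDF of $f(x,\zeta)$ turns this into $\Prob[f(x,\zeta) \geq 0] \geq 1-\epsilon$. The implication $a_i(x) = 0 \Rightarrow b_i(x) \geq 0$ stated just before Lemma \ref{lem:reform-neg-cvar} guarantees $\{f(x,\zeta) \geq 0\} = \{A(x)\zeta \leq b(x)\}$, which yields \eqref{eq:deter-reform-int-2}.

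Next I would tackle the CVaR condition \eqref{eq:cor-pess-cvar-reform-1}. With $Y := -f(x,\zeta)$ and therefore $\VaR_{1-\epsilon}(Y) = -v^\ast$, the atomless CVaR identity gives $\epsilon\,\CVaR_{1-\epsilon}(Y) = -\Expect[f(x,\zeta)\,\Ind{f(x,\zeta) \leq v^\ast}]$, and directly $\Expect[Y^+] = -\Expect[f(x,\zeta)\,\Ind{f(x,\zeta) < 0}]$. Substituting into \eqref{eq:cor-pess-cvar-reform-1}, multiplying through by $\epsilon$, and using the just-established $v^\ast \geq 0$ (so that $\Ind{f(x,\zeta) \leq v^\ast} - \Ind{f(x,\zeta) < 0} = \Ind{0 \leq f(x,\zeta) \leq v^\ast}$), the constraint collapses to
\begin{align*}
  \delta \leq \Expect\bigl[f(x,\zeta)\,\Ind{0 \leq f(x,\zeta) \leq v^\ast}\bigr].
\end{align*}
Finally I would apply the layer-cake (Fubini) formula to the nonnegative random variable on the right: its expectation equals $\int_0^{v^\ast} \Prob[t < f(x,\zeta) \leq v^\ast]\,\dLeb{t}$, and atomlessness lets me rewrite the inner probability as $\Prob[f(x,\zeta) \geq t] - \Prob[f(x,\zeta) > v^\ast] = \Prob[f(x,\zeta) \geq t] - (1-\epsilon)$. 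This delivers \eqref{eq:deter-reform-int-1}.

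The main obstacle is purely bookkeeping around the atomless identities: every ``$\leq$ versus $<$'' swap inside the probabilities, the equality $\Prob[f(x,\zeta) \leq v^\ast] = \epsilon$, and the flip $\VaR_{1-\epsilon}(-f(x,\zeta)) = -\VaR_\epsilon(f(x,\zeta))$ all require $v^\ast$ to be a continuity point of the CDF of $f(x,\zeta)$. Lemma \ref{lem:cont-of-dist-to-set} provides exactly this continuity everywhere, so the chain of equivalences above proceeds without any further structural assumption on $x$ or on $\Prob$ beyond $\alpha$-concavity.
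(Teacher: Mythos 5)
Your proposal is correct and follows essentially the same route as the paper: both start from Corollary~\ref{cor:pess-cvar-reform-2}, use the atomlessness supplied by Lemma~\ref{lem:cont-of-dist-to-set} to reduce constraint~\eqref{eq:cor-pess-cvar-reform-1} to $\delta \leq \Expect\bigl[f(x,\zeta)\,\Ind{0 \leq f(x,\zeta) \leq \VaR_{\epsilon}(f(x,\zeta))}\bigr]$, and then convert that expectation to the integral in~\eqref{eq:deter-reform-int-1} via the layer-cake representation (the paper routes this through Tonelli's theorem and a pointwise indicator comparison, but that is only a bookkeeping difference). The treatment of~\eqref{eq:cor-pess-cvar-reform-2} via $\VaR_{1-\epsilon}(-X) = -\VaR_{\epsilon}(X)$ and the assumption $a_i(x) = 0 \Rightarrow b_i(x) \geq 0$ likewise matches the paper's argument.
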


\begin{remark}
We notice that constraint~\eqref{eq:deter-reform-int-2} is simply (\CCP) with respect to the reference distribution \(\Prob\) of the Wasserstein ball \(\mathcal{P}\). In addition, constraint~\eqref{eq:deter-reform-int-1} encodes a robust guarantee. Intuitively, the RHS of~\eqref{eq:deter-reform-int-1} evaluates the budget needed to shift the probability masses of \(\Prob\) so that the corresponding (\CCP) can be violated. Constraint~\eqref{eq:deter-reform-int-1} makes sure that this budget is beyond the radius of \(\mathcal{P}\), i.e., (\CCP) will not be violated as long as the shifted distribution lies within \(\mathcal{P}\).
\end{remark}

\subsection{Joint (\PDRCCL{}) with \RHS{} Uncertainty}
\label{sec:pess-joint-drcc}

For (\CCP) with~\RHS{} uncertainty, it is well
celebrated that the ensuing feasible region is
convex when \(\xi\) has an \(\alpha\)-concave distribution (particularly, \(\xi\) is log-concave when \(\alpha=0\))~\cite{prekopa-2013-stoch,shapiro-2009-lectur}.
\begin{proposition}[Theorem~\(4.39\) and Corollary~\(4.41\) in~\cite{shapiro-2009-lectur}]\label{prop:cc-rhs-constr-log-concave}
If \({\xi} \in \reals^m\) follows an \(\alpha\)-concave probability distribution, then \(H(x) := \Prob \left[ A\xi \leq b(x) \right]\) is \(\alpha\)-concave on the
set \(\mathcal{D} := \{x \in \reals^n \colon \exists \; \xi \text{ such}\) \(\text{that } A\xi \leq b(x)\}\) and the following set is convex and closed:
\begin{align*}
  \mathcal{X} := \Set{x \in \reals^n \colon \Prob \left[A\xi \leq b(x) \right] \geq 1 - \epsilon}.
\end{align*}
\end{proposition}

In this subsection, we seek to extend this result to (\PDRCCL{}).
\begin{theorem}\label{thm:drcc-rhs-cvx}
Suppose that the reference distribution \(\Prob\) of \(\mathcal{P}\) is
\(\alpha\)-concave with \(\alpha \geq -1\). Then, for
\(\delta > 0\) the set
\begin{align*}
  \mathcal{X}^p_{\text{R}} := \Set{x \in \reals^n \colon \inf_{\QProb \in \mathcal{P}}
  \QProb \left[ A\xi \leq b(x) \right]
  \geq 1 - \epsilon} %
\end{align*}
is convex and closed.
\end{theorem}
\modify{
Although Theorem~\ref{thm:drcc-rhs-cvx} pertains to (\PDRCCL) with linear inequalities, the convexity result extends to (\PDRCCL) with \emph{quasi-concave} inequalities. We present a detailed description and a proof for this generalization in Appendix~\ref{apx-thm:drcc-rhs-cvx}.} Before presenting a proof of Theorem~\ref{thm:drcc-rhs-cvx}, we present some useful lemmas. Without loss of generality, we assume that each row of matrix
\(A\), denoted by \(a_i^{\top}\) for all \(i \in [m]\), satisfies
\begin{enumerate}[label=(\roman*)]
\item \(a_i \neq 0\), because otherwise we can add a deterministic constraint \(b_i(x) \geq 0\) to \(\mathcal{X}^p_{\text{R}}\) and eliminate inequality \(i\) from (\PDRCCL{});
\item \(\norm{a_i}_* = 1\), because otherwise we can divide both sides of inequality \(i\) by \(\norm{a_i}_*\) and set \(a_i \leftarrow a_i/\norm{a_i}_*\), \(b_i(x) \leftarrow b_i(x)/\norm{a_i}_*\).
\end{enumerate}
Recall that for \(\zeta \in \reals^m\) the distance \(\Dist{\zeta, \Safe^c(x)}\) to the unsafe set satisfies \(\Dist{\zeta, \Safe^c(x)} = \big(f(x, \zeta)\big)^+\) with
\begin{align*}
f(x, \zeta) = \min_{i \in [m]} \Big\{b_i(x) - a_i^{\top}\zeta\Big\}
\end{align*}
and \(f(x, \zeta)\) is jointly concave in \((x, \zeta)\).
\begin{lemma}\label{lem:rhs-var-concave}
For all \(\epsilon \in (0, 1)\), if \({\zeta}\) has \modify{an} \(\alpha\)-concave distribution with \(\alpha \geq -1\), then
\(\VaR_{1 - \epsilon}\left(f(x, {\zeta})\right)\) is concave in \(x\) on \(\reals^n\).
\end{lemma}

\begin{lemma}\label{lem:cont-of-prob-fcns}
Suppose that \(f(\cdot, \cdot) \colon \reals^n \times \Xi \to \reals\) is a
continuous function, \({\zeta}\) follows an \(\alpha\)-concave distribution \(\Prob\), and
\(f(x, {\zeta})\) is atomless for any \(x \in \reals^n\). Then,
\begin{gather*}
  \psi(x, t) := \Prob \left[ f(x, {\zeta}) \geq t \right] - (1 - \epsilon)
  \qquad \text{ and } \qquad
  \phi(x, y) := \int_0^y \psi(x, t) \dLeb{t}
\end{gather*}
are both continuous on \(\reals^n \times \reals_+\).
\end{lemma}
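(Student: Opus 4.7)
The plan is to establish continuity of $\psi$ via the dominated convergence theorem (DCT), and then bootstrap to continuity of $\phi$ using the fact that $\psi$ is uniformly bounded.

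For $\psi$, I would fix $(x_0, t_0) \in \reals^n \times \reals_+$ and take an arbitrary sequence $(x_k, t_k) \to (x_0, t_0)$. Writing
$$\psi(x_k, t_k) + (1-\epsilon) \; = \; \int_{\Xi} \Ind{f(x_k, \zeta) \geq t_k} \dProb{\zeta},$$
I would apply DCT directly. For every $\zeta \in \Xi$ with $f(x_0, \zeta) \neq t_0$, joint continuity of $f$ yields $f(x_k, \zeta) \to f(x_0, \zeta)$, and combined with $t_k \to t_0$ this forces $\Ind{f(x_k, \zeta) \geq t_k} \to \Ind{f(x_0, \zeta) \geq t_0}$. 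The exceptional set $\{\zeta \colon f(x_0, \zeta) = t_0\}$ has $\Prob$-measure zero by the atomless hypothesis, so convergence holds $\Prob$-almost surely; the indicators are dominated by $1$, and DCT delivers $\psi(x_k, t_k) \to \psi(x_0, t_0)$.

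For $\phi$, I would fix $(x_0, y_0)$ and pick $(x_k, y_k) \to (x_0, y_0)$, then split by the triangle inequality:
$$|\phi(x_k, y_k) - \phi(x_0, y_0)| \; \leq \; \left| \int_{y_0}^{y_k} \psi(x_k, t) \dLeb{t} \right| \; + \; \int_0^{y_0} |\psi(x_k, t) - \psi(x_0, t)| \dLeb{t}.$$
Since $|\psi| \leq 1$, the first term is bounded by $|y_k - y_0| \to 0$, while the second tends to $0$ by a second DCT application, using pointwise convergence from the previous step together with the uniform bound $|\psi(x_k, t) - \psi(x_0, t)| \leq 2$ on the finite interval $[0, y_0]$.

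The main obstacle is securing the pointwise convergence of $\Ind{f(x_k, \zeta) \geq t_k}$. Without the atomless hypothesis this indicator could fail to converge on a set of positive $\Prob$-measure, for instance when $f(x_k, \zeta) \uparrow t_0 = f(x_0, \zeta)$ and $t_k = t_0$, which would destroy continuity of $\psi$. The atomless property, which in this paper arises from the $\alpha$-concavity of $\Prob$ via Lemma~\ref{lem:cont-of-dist-to-set}, is precisely what rules this pathology out; beyond supplying the atomless property, $\alpha$-concavity itself plays no further role in the argument.
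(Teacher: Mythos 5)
Your proposal is correct and follows essentially the same route as the paper: continuity of $\psi$ via dominated convergence, with the atomless hypothesis ensuring the indicators converge $\Prob$-almost surely off the null set $\{\zeta \colon f(x_0,\zeta) = t_0\}$, and then continuity of $\phi$ from boundedness of $\psi$. The only cosmetic difference is in the second step, where the paper writes $\phi(x_k,y_k)=\int_{\reals_+}\psi(x_k,t)\,\Ind{t\le y_k}\dLeb{t}$ and applies dominated convergence once more, whereas you split off the tail integral $\int_{y_0}^{y_k}$ by the triangle inequality; both are valid and rest on the same bound $|\psi|\le 1$.
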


Now we are ready to prove Theorem~\ref{thm:drcc-rhs-cvx}.

\begin{proof}[Proof of Theorem~\ref{thm:drcc-rhs-cvx}]
First, recall that by Proposition~\ref{prop:reform-w-cc} we recast \(\mathcal{X}^p_{\text{R}}\) as constraints~\eqref{eq:deter-reform-int-1}--\eqref{eq:deter-reform-int-2}. For ease of exposition, we denote by \(G(x)\) the RHS of~\eqref{eq:deter-reform-int-1}.

Second, to show that \(\mathcal{X}^p_{\text{R}}\) is closed, it suffices to prove the closedness of the feasible region of~\eqref{eq:deter-reform-int-1} because that of~\eqref{eq:deter-reform-int-2} follows from Proposition~\ref{prop:cc-rhs-constr-log-concave}. To this end, we notice that \(\VaR_{\epsilon}\big(f(x, \zeta)\big)\) is continuous in \(x\) due to its concavity. Then, by Lemma~\ref{lem:cont-of-prob-fcns} the mapping
\begin{align*}
  x \mapsto \int_0^{\VaR_{\epsilon}(f(x, {\zeta}))} \Prob \Big[ f(x, {\zeta}) \geq t \Big] \dLeb{t}
\end{align*}
is continuous. It follows that \(G(x)\) is continuous and the feasible region of~\eqref{eq:deter-reform-int-1} is closed.

Third, to show that \(\mathcal{X}^p_{\text{R}}\) is convex, it suffices to prove the convexity of the feasible region of~\eqref{eq:deter-reform-int-1} because that of~\eqref{eq:deter-reform-int-2} follows from Proposition~\ref{prop:cc-rhs-constr-log-concave}. To that end,
by Proposition~\ref{prop:cc-rhs-constr-log-concave} and
Lemma~\ref{lem:v-shift-of-alpha-cve}, \(\psi\) is \(\alpha\)-concave in \((x, t)\) on \(\dom \psi := \Set{(x, t) \colon \psi(x, t) \geq 0} = \Set{(x, t) \colon t \leq \VaR_{\epsilon}(f(x, \zeta))}\), which is convex by Lemma~\ref{lem:rhs-var-concave}. Then, for any \(x_0, x_1 \in \mathcal{X}^p_{\text{R}}\) and any \(t_0 \in S_0 := [0, \VaR_{\epsilon} ( f(x_0, {\zeta}))]\),
\(t_1 \in S_1 := [0, \VaR_{\epsilon} ( f(x_1, {\zeta}))]\) it holds that
\begin{align*}
  \psi(x_{1/2}, t_{1/2})
  \geq m_{\alpha} \left[ \psi(x_0, t_0), \psi(x_1, t_1); \frac{1}{2} \right],
\end{align*}
where \(x_{1/2} = (x_0 + x_1)/2\) and \(t_{1/2} = (t_0 + t_1)/2\). It follows that
\begin{align*}
m_{\alpha^{\ast}_1} \left[ \int_{S_0} \psi(x_0, t) \dLeb{t}, \int_{S_1} \psi(x_1, t) \dLeb{t}; \frac{1}{2}\right] \leq & \int_{\frac{1}{2}S_0 + \frac{1}{2}S_1} \psi(x_{1/2}, t) \dLeb{t} \\
\leq & \int_{S_{1/2}} \psi(x_{1/2}, t) \dLeb{t}
\end{align*}
where the first inequality is due to  Proposition~\ref{prop:brunn-minkow-ineq} and \(\alpha^{\ast}_1 \geq -\infty\) is a function of \(\alpha\) (see Proposition~\ref{prop:brunn-minkow-ineq}), and the second inequality is because \(\frac{1}{2}S_0 + \frac{1}{2}S_1 \subseteq S_{1/2} := [0, \VaR_{\epsilon}(f(x_{1/2}, \zeta))]\). In other words, we obtain that
\[
m_{\alpha^{\ast}_1}\left[G(x_0), G(x_1); \frac{1}{2}\right] \leq G\left(x_{1/2}\right)
\]
and \(G(x)\) is midpoint \(\alpha^{\ast}_1\)-concave, and particularly, midpoint quasi-concave. Then, its  continuity implies that \(G(x)\) is quasi-concave and constraint~\eqref{eq:deter-reform-int-1} yields a convex feasible region. This finishes the proof.
\end{proof}

\begin{FirstUpdate}
We close this section by commenting on the worst-case distribution with respect to (\PDRCCL). For expectation-oriented optimization, it has been observed
that if the Wasserstein ball is centered around a Gaussian reference distribution, then the worst-case probability distribution is also Gaussian~\cite{kuhn-2019-wasser-distr}. In contrast, the following example demonstrates that this is not the case for (\PDRCCL). We present a detailed proof for this example in Appendix~\ref{apx-exa:not-gaussian}.
\begin{example} \label{exa:not-gaussian}
Consider the (\PDRCCL)
\begin{align*}
\inf_{\Prob \in \mathcal{P}} \Prob \left[ \xi \leq x \right] \geq 1 - \epsilon,
\tag{Ex}
\end{align*}
where \(\epsilon \in (0, 1/2)\), and the Wasserstein ball \(\mathcal{P}\) is centered around the 1-dimensional standard Gaussian distribution and has a radius \(\delta > 0\). Then, there does not exist a Gaussian distribution $\nu \in \mathcal{P}$ such that $\nu\left[ \xi \leq x \right] = \inf_{\Prob \in \mathcal{P}} \Prob \left[ \xi \leq x \right]$.
\end{example}
\end{FirstUpdate}

\begin{FirstUpdate}
\subsection{Two-Sided~(\PDRCCL{}) with \LHS{} Uncertainty}%
\label{sec:pess-two-sided}
We move on to two-sided (\PDRCCL) with \LHS{} uncertainty, defined through
\begin{align*}
  \mathcal{X}^p_{\text{T}} := \Set{(x, \ell, u) \in \reals^{n+2} \colon
  \inf_{\QProb \in \mathcal{P}} \QProb \left[ \ell \leq x^{\top} \xi \leq u \right] \geq 1 - \epsilon
  }.
\end{align*}
To study the convexity of \(\mathcal{X}^p_{\text{T}}\), we make the following two assumptions about the Wasserstein ball.
\begin{assumption}\label{asm:ellip-ref-and-norm}
The Wasserstein ball \(\mathcal{P}\) is such that
\begin{enumerate*}[label=(\roman*)]
\item the reference distribution \(\Prob\) is elliptical, particularly \(\mathcal{E}_n(0, \Sigma, \phi)\)
with \(\Sigma \succ 0\). %
\item the norm \(\norm{\cdot}\) in \(d_W\) is an ellipsoidal norm with regard to
\(\Sigma^{1/2}\), \ie{}, \(\norm{\cdot} = \norm{\Sigma^{-1/2}(\cdot)}_2\) (or
equivalently, \(\norm{\cdot}_{\ast} = \norm{\Sigma^{1/2}(\cdot)}_2\)).
\end{enumerate*}
\end{assumption}
\begin{assumption}\label{asm:unimodal-ref-w-diff-density}
The reference distribution \(\Prob\) of \(\mathcal{P}\) is star-unimodal.
\end{assumption}
Examples of \(\Prob\) satisfying both Assumptions~\ref{asm:ellip-ref-and-norm}--\ref{asm:unimodal-ref-w-diff-density} include uniform distribution in a ball, Gaussian, \(t\)-distribution, logistic distribution, and Cauchy distribution. We remark that Assumption~\ref{asm:unimodal-ref-w-diff-density} implies that the mode of \(\Prob\) is the origin, which can be achieved without loss of generality by shifting \(\xi\) by its mode, if different from \(0\). In addition, since \(\Prob\) is elliptical, it is identically distributed as \(R\cdot \Sigma^{1/2}U_n\) for a nonnegative random variable \(R\) and an \(n\)-dimensional random vector \(U_n\) uniformly distributed on the unit sphere \(S^{n-1}\) of \(\reals^n\) (see Remark~\ref{rmk:normal-of-ellip-dist} in Appendix~\ref{apx-preliminary}). We denote by \(\Prob_0\) the probability measure induced by \(R\cdot e_1^{\top} U_n\) and establish the convexity of \(\mathcal{X}^p_{\text{T}}\) as follows.
\begin{theorem}\label{thm:ts-pcc-convexity}
Suppose that Assumptions~\ref{asm:ellip-ref-and-norm} and~\ref{asm:unimodal-ref-w-diff-density} hold, \(\epsilon \in (0, \frac{1}{2})\), and \(\delta > 0\). Define
\begin{gather*}
g_{\epsilon}(\ell, u) := \int_0^{+\infty} \Big[
    \Phi(u - t) - \Phi(\ell + t) - (1 - \epsilon) \Big]^+ \dLeb{t} \\
  \text{and} \qquad
\mathcal{C}_{\delta} := \Set{
  (\ell, u) \in \reals^2 \colon
  \delta \leq g_{\epsilon}(\ell, u)
  },
\end{gather*}
where \(\Phi \colon \reals \to [0, 1]\) denotes the cumulative distribution function of \(\Prob_0\). Then,
\begin{gather*}
  \mathcal{X}^p_{\text{T}} = \Set{(x, \ell, u) \in \reals^{n+2}\colon
  \exists \, s \geq 0 \text{ such that } \norm{x}_{\ast} \leq s, \ (\ell, u, s) \in \text{co}(\mathcal{C}_\delta)},
\end{gather*}
where \(\text{co}(\mathcal{C}_\delta) := \cl{} \left(\Set{(\ell, u, s) \in \reals^3 \colon s > 0,
(\ell/s, u/s) \in \mathcal{C}_\delta} \right)\) is the cone induced by \(\mathcal{C}_\delta\) and \(\cl{}(\cdot)\) denotes the closure operator. Furthermore, \(\mathcal{X}^p_{\text{T}}\) is convex and closed.
\end{theorem}
\begin{figure}[!htbp]
\centering
\includegraphics[width=0.4\linewidth]{./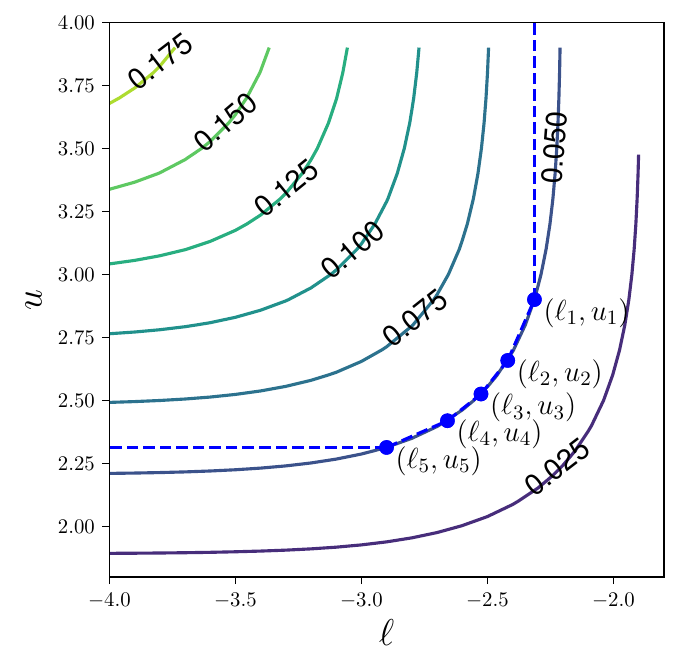}
\caption{Contours of \(g_{\epsilon}(\ell, u)\) with varying \(\delta\) and a polyhedral inner approximation \(\hat{\mathcal{C}}_N\) of \(\mathcal{C}_\delta\) with \(N = 5\) and \(\delta = 0.050\)}
\label{fig:Z0-approx}
\end{figure}
We visualize the function \(g_\epsilon(\ell, u)\) and set \(\mathcal{C}_\delta\). Figure~\ref{fig:Z0-approx} depicts the contour of \(g_\epsilon(\ell, u)\), which is symmetric with respect to the line \(\ell + u = 0\). This is because \(\Prob_0\) is elliptical and so
\(\Phi(u) - \Phi(\ell)\) is symmetric with respect to \(\ell + u = 0\) on \(\reals_- \times \reals_+\), \ie{},
\begin{align*}
  \Phi(u) - \Phi(\ell)
  = \Phi(-\ell) - \Phi(-u),
  \quad \forall (\ell, u) \in \reals_- \times \reals_+, %
\end{align*}
which implies that
\begin{align*}
g_{\epsilon}(\ell, u) = g_{\epsilon}(-u, -\ell), \quad \forall (\ell, u) \in \reals_- \times \reals_+.
\end{align*}
In addition, from the same figure, we observe that the superlevel set \(\mathcal{C}_\delta\) of \(g_{\epsilon}(\ell, u)\) takes a convex shape, which explains (intuitively) the convexity of \(\mathcal{X}^p_{\text{T}}\). %

Assumptions~\ref{asm:ellip-ref-and-norm}--\ref{asm:unimodal-ref-w-diff-density} are not only sufficient for the convexity of \(\mathcal{X}^p_{\text{T}}\), but also tight in the sense that Theorem~\ref{thm:ts-pcc-convexity} ceases to hold for the lack of either assumption. We demonstrate through the following two examples.
\begin{example}[Loss of Convexity Without Assumption~\ref{asm:unimodal-ref-w-diff-density}] \label{exa:counter-1}
Consider an example of \(\mathcal{X}^p_{\text{T}}\) with \(n=1\) and the reference distribution \(\Prob\) of the Wasserstein ball is identical to that of a random variable \(\zeta_1 := R_c \cdot U_1\), where \(R_c\) and
\(U_1\) are independent random variables following the arcsine distribution and the
uniform distribution on \(\set{-1, 1}\), respectively. Hence, \(\Prob\) satisfies Assumption~\ref{asm:ellip-ref-and-norm} but violates Assumption~\ref{asm:unimodal-ref-w-diff-density} (see Figure~\ref{fig:graph-of-zeta1-cdf}). By
Proposition~\ref{prop:repr-of-elli-dist}, we have
\begin{align*}
  \mathcal{X}^p_{\text{T}} \cap \set{(x, \ell, u) \in \reals^3 \colon x = 1}
  & = \Set{(1, \ell, u) \in \reals^3 \colon
    \inf_{\QProb \in \mathcal{P}} \QProb \left[ \ell \leq \xi \leq u \right] \geq (1 - \epsilon)} \\
  & = \Set{(1, \ell, u) \in \reals^3 \colon g_{\epsilon}(\ell, u) \geq \delta }.
\end{align*}
In addition, the cumulative distribution function of \(\zeta_1\) satisfies
\begin{align*}
  \Phi(t)
  & = \frac{1}{2} \Prob \left[ R_c \leq t \right] + \frac{1}{2} \Prob \left[ -R_c \leq t \right]
    = \frac{1}{2} (F_{R_c}(t) + 1) \cdot \Ind{t \geq 0} + \frac{1}{2} (1 - F_{R_c}(-t)) \cdot \Ind{t < 0},
\end{align*}
where \(F_{R_c}(\cdot)\) denotes the distribution function of \(R_c\). Next, we
show that, when restricted to the line segment
\(L_a := \set{(\ell, u) \in (-1, 0) \times (0, 1) \colon u - \ell = a}\) with
\(a \in (1, 2)\), \(\Phi(u) - \Phi(\ell)\) is a strictly convex function, which fails the midpoint concavity of the function \(g_{\epsilon}(\ell, u)\) and shows that \(\mathcal{X}^p_{\text{T}} \cap \set{(x, \ell, u) \in \reals^3 \colon x = 1}\) is non-convex.

To this end, we define a function \(\Psi_a: [a-1, 1] \mapsto \reals_+\) with \(\Psi_a(u) := \Phi(u) - \Phi(u-a)\).
A simple calculation shows that \(\Psi^{\prime\prime}_a(u) = \frac{1}{4\pi} \left(f_{\Psi^{\prime\prime}_a}(u) + f_{\Psi^{\prime\prime}_a}(a-u) \right)\), where \(f_{\Psi^{\prime\prime}_a}(u) := (2u-1)/(u(1-u))^{3/2}\).
Because \(f_{\Psi^{\prime\prime}_a}\) is strictly increasing on
\((0, 1)\) (see Figure~\ref{fig:graph-of-fPsi-hess}), we have
\begin{figure}[!htbp]
\centering
\begin{subfigure}{.48\textwidth}
\centering
\centerline{\includegraphics[width=0.8\linewidth]{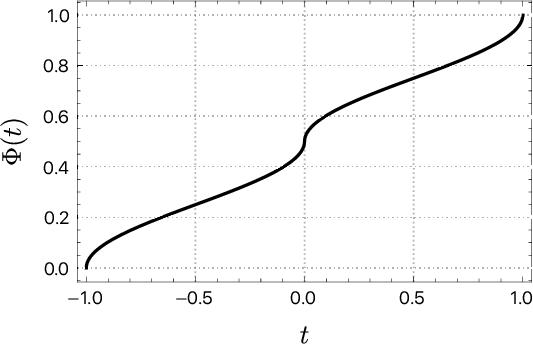}}
\caption{\label{fig:graph-of-zeta1-cdf} Distribution function of \(\zeta_1\)}
\end{subfigure}
\hfill
\begin{subfigure}{.48\textwidth}
\centering
\centerline{\includegraphics[width=0.8\linewidth]{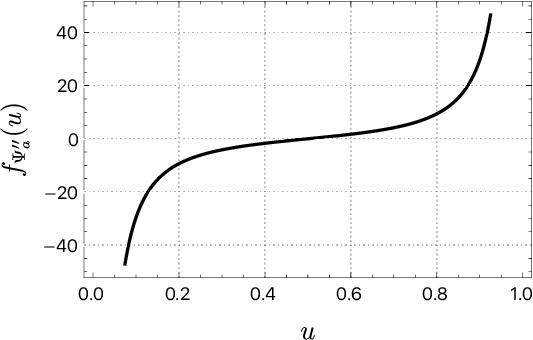}}
\caption{\label{fig:graph-of-fPsi-hess} Graph of \(f_{\Psi^{\prime\prime}_a}(u)\) on \((0,1)\)}
\end{subfigure}\\
\vspace{0.5em}
\begin{subfigure}{.48\textwidth}
\centering
\centerline{\includegraphics[width=0.7\linewidth]{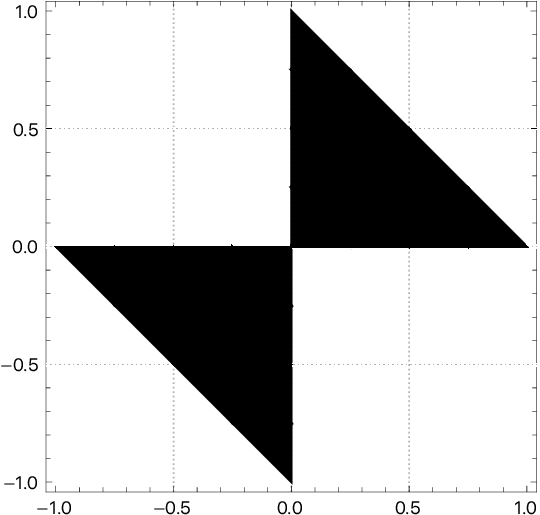}}
\caption{\label{fig:graph-of-star-shaped-unif} Density of \(\zeta_2\).}
\end{subfigure}
\begin{subfigure}{.48\textwidth}
\centering
\centerline{\includegraphics[width=0.8\linewidth]{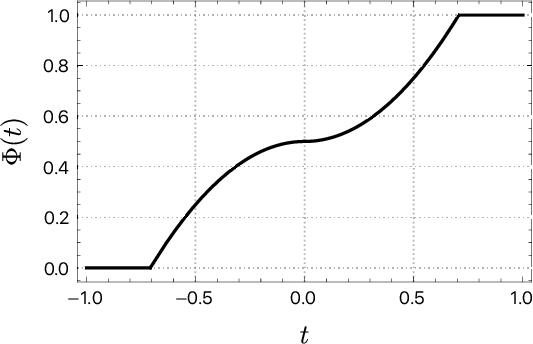}}
\caption{\label{fig:graph-of-zeta2-cdf} Density of \(\zeta^{\top}_2 [1; 1] / \sqrt{2}\).}
\end{subfigure}
\caption{Visualization of random variables \(\zeta_1\) and \(\zeta_2\) in Examples~\ref{exa:counter-1}--\ref{exa:counter-2}}
\end{figure}
\begin{align*}
  \Psi^{\prime\prime}_a(u)
  > \frac{1}{4\pi} \left(f_{\Psi^{\prime\prime}_a}(u) + f_{\Psi^{\prime\prime}_a}(1-u)\right) = 0, \quad \forall u \in (0, 1)
\end{align*}
implying that \(\Psi_a(u)\) is strictly convex. To see how this fails the midpoint concavity of \(g_{\epsilon}(\ell, u)\), we define \(u_1 := a - 1\) and \(u_2 := 1\). Then, for any \(a \in (1, 2)\), we have
\begin{align*}
  g_{\epsilon}\left(-\frac{a}{2}, \frac{a}{2}\right)
  & = \int\limits_0^{+\infty} \left[ \Phi\left(\frac{a}{2} - t\right) - \Phi\left(- \frac{a}{2} +t\right) - (1 - \epsilon) \right]^+ \dLeb{t} \\
  & = \int\limits_0^{+\infty} \left[ \Psi_{a - 2t} \left( \frac{a}{2} - t \right) - (1 - \epsilon) \right]^+ \dLeb{t}
    = \int\limits_0^{+\infty} \left[ \Psi_{a - 2t} \left( \frac{1}{2} (u_1 + u_2) - t \right) - (1 - \epsilon) \right]^+ \dLeb{t} \\
  & < \frac{1}{2} \int\limits_0^{+\infty} \left[ \Psi_{a - 2t} \left( u_1 - t \right) - (1 - \epsilon) \right]^+ \dLeb{t}
    + \frac{1}{2} \int\limits_0^{+\infty} \left[ \Psi_{a - 2t} \left( u_2 - t \right) - (1 - \epsilon) \right]^+ \dLeb{t} \\
  & = \frac{1}{2} \left[ g_{\epsilon}(u_1 - a, u_1) + g_{\epsilon}(u_2 - a, u_2) \right] = g_{\epsilon}(-1, a-1),
\end{align*}
where the inequality is due to the strict convexity of \(\Psi_a\),
and the last equality uses the symmetry of \(g_{\epsilon}\). Let
\(\delta_{\epsilon, a} := g_{\epsilon}(-1, a_1)\), then we see that
\((-1, a-1)\) and \((1-a, 1)\) are in the \(\delta_{\epsilon, a}\)-superlevel
set of \(g_{\epsilon}\), while their midpoint, \((-a/2, a/2)\), falls out of the
\(\delta_{\epsilon,a}\)-superlevel set of \(g_{\epsilon}\). This implies that \(\mathcal{X}^p_{\text{T}}\), and particularly the intersection \(\mathcal{X}^p_{\text{T}} \cap \set{(x, \ell, u) \in \reals^3 \colon x = 1}\), are non-convex.
\end{example}

\begin{example}[Loss of Convexity Without Assumption~\ref{asm:ellip-ref-and-norm}] \label{exa:counter-2}
Consider an example of \(\mathcal{X}^p_{\text{T}}\) with \(n=2\) and the reference distribution \(\Prob\) is a uniform distribution on the star-shaped set depicted in Figure~\ref{fig:graph-of-star-shaped-unif}, where we let \(\zeta_2 \in \reals^2\) denote the random vector following distribution \(\Prob\). Then, \(\Prob\) satisfies Assumption~\ref{asm:unimodal-ref-w-diff-density} but violates Assumption~\ref{asm:ellip-ref-and-norm}. Proposition~\ref{prop:repr-of-elli-dist} yields
\begin{align*}
  & \mathcal{X}^p_{\text{T}} \cap \set{(x, \ell, u) \in \reals^4 \colon x^{\top} = [1/\sqrt{2}, 1/\sqrt{2}]} \\
  = \
  & \Set{(1/\sqrt{2}, 1/\sqrt{2}, \ell, u) \in \reals^4 \colon
    \inf_{\QProb \in \mathcal{P}} \QProb \left[ \ell \leq \xi^{\top}[1; \ 1]/\sqrt{2} \leq u \right] \geq (1 - \epsilon)} \\
  = \
  & \Set{(1/\sqrt{2}, 1/\sqrt{2}, \ell, u) \in \reals^4 \colon g_{\epsilon}(\ell, u) \geq \delta }.
\end{align*}
In addition, the cumulative distribution function of \(\zeta_2^{\top}[1; \ 1]/\sqrt{2}\) is
\begin{align*}
  \Phi(t)
  & = \left( \frac{1}{2} - t^2 \right) \cdot \Ind{-\frac{1}{\sqrt{2}} \leq t \leq 0}
    + \left( \frac{1}{2} + t^2 \right) \cdot \Ind{0 \leq t \leq \frac{1}{\sqrt{2}} },
\end{align*}
whose graph is depicted in Figure~\ref{fig:graph-of-zeta2-cdf}. Since \(\Phi(t)\)
is strictly convex on \([0,1/\sqrt{2}]\), \(\Phi(u) - \Phi(\ell)\) is strictly
convex on the set \(\set{(\ell, u) \in [-1/\sqrt{2}, 0] \times [0, 1/\sqrt{2}]}\), and so is
its restriction to the line segment
\(\set{(\ell, u) \in (-1, 0) \times (0, 1) \colon u - \ell = a}\). Hence, following a similar argument as in  Example~\ref{exa:counter-1}, we can show that \(\mathcal{X}^p_{\text{T}}\), and particularly the intersection \(\mathcal{X}^p_{\text{T}} \cap \set{(x, \ell, u) \in \reals^4 \colon x^{\top} = [1/\sqrt{2}, 1/\sqrt{2}]}\), are non-convex.
\end{example}

We end this section by mentioning two special cases of the two-sided (\PDRCCL) with \LHS{} uncertainty that often arise in practice and admit second-order conic representations. The first case considers symmetric bounds, i.e., \(\ell = -u\) for \(u \geq 0\).
\begin{corollary}\label{cor:ts-pcc-symm-bds}
Suppose that Assumption~\ref{asm:ellip-ref-and-norm} holds,
\(\epsilon \in (0, \frac{1}{2})\), and \(\delta > 0\). Define
\begin{gather*}
  \mathcal{X}^p_{\text{TS}} := \Set{(x, u) \in \reals^n \times \reals_+ \colon
  \inf_{\QProb \in \mathcal{P}} \QProb \left[ -u \leq x^{\top} \xi \leq u \right] \geq 1 - \epsilon
} \\
\text{and} \qquad g^s_{\epsilon}(r) := \int\limits_0^{+\infty} \Big[ \Phi(r - t) - \Phi(-r + t) - (1 - \epsilon) \Big]^+ \dLeb{t}.
\end{gather*}
Then, it holds that
\begin{align*}
  \mathcal{X}^p_{\text{TS}} = \Set{
  (x, u) \in \reals^n \times \reals_+ \colon
  u \geq \norm{x}_{\ast} \cdot \inf_r \set{r \geq 0 \colon g^s_{\epsilon}(r) \geq \delta}
  }.
\end{align*}
\end{corollary}

The second case considers individual (\PDRCCL), which can be obtained by driving \(\ell\) to \(-\infty\) in the two-sided (\PDRCCL). The proof of this case is similar to that of Corollary~\ref{cor:ts-pcc-symm-bds} and so omitted.
\begin{corollary}\label{cor:from-ts-to-single}
Suppose that Assumption~\ref{asm:ellip-ref-and-norm} holds, \(\epsilon \in (0, \frac{1}{2})\), and \(\delta > 0\). Define
\begin{gather*}
  \mathcal{X}^p_{\text{I}} := \Set{(x, u) \in \reals^n \times \reals_+ \colon
  \inf_{\QProb \in \mathcal{P}} \QProb \left[ x^{\top} \xi \leq u \right] \geq 1 - \epsilon
}, \\
\overline{g}_{\epsilon}(r)
  := \int\limits_0^{+\infty} \Big[
  \Phi(r - t) - (1 - \epsilon) \Big]^+ \dLeb{t}, \quad \text{and} \quad (\overline{g})^{-1}_{\epsilon}(s) := \inf_{r \geq 0} \set{r \colon \overline{g}_{\epsilon}(r) \geq s}.
\end{gather*}
Then, it holds that
\begin{align*}
  \mathcal{X}^p_{\text{I}} = \Set{
  (x, u) \in \reals^n \times \reals_+ \colon
  u \geq \norm{x}_{\ast} \cdot (\overline{g})^{-1}_{\epsilon}(\delta)
  }.
\end{align*}
\end{corollary}
The same representation in Corollary~\ref{cor:from-ts-to-single} has been derived in~\cite{chen-2021-sharin-value}. In fact, using integration by part, one can verify that the coefficient \((\overline{g})^{-1}_{\epsilon}(\delta)\) in the above representation equals the \(\eta^{\ast}\) in Theorem~\(4.8\) of~\cite{chen-2021-sharin-value}.

\end{FirstUpdate}

\section{Solution Approaches for Pessimistic Chance Constraint}%
\label{sec:algo}
The convexity results in the previous section inspires us to study solution approaches for solving (\PDRCCL) based on convex/continuous optimization. We study a block coordinate ascent algorithm for (\PDRCCL) with \RHS{} uncertainty in Section~\ref{sec:coordinate-ascent} and a second-order conic inner approximation approach for two-sided (\PDRCCL) with \LHS{} uncertainty in Section~\ref{sec:soc-inner-approx}.

\subsection{Block Coordinate Ascent Algorithm} \label{sec:coordinate-ascent}
We focus on a model with joint (\PDRCCL) and RHS uncertainty: \(\min_{x \in X} \{c^{\top}x: (\text{\PDRCCL})\}\), where \(c \in \reals^n\) represents cost coefficients and \(X \subseteq \reals^n\) represents a set that is deterministic, compact, and convex. By Proposition~\ref{prop:reform-w-cc}, this model is equivalent to
\begin{subequations}
\begin{align}
\min_{x \in X} \ & \ c^{\top}x \label{eq:algo-ref-0} \\
\text{s.t.} \ & \ \delta \leq \int_0^{\VaR_{\epsilon}(f(x, \zeta))} \Big(\Prob\big[f(x, \zeta) \geq t\big] - (1 - \epsilon)\Big) \dLeb{t}, \label{eq:algo-ref-1} \\
& \ \VaR_{\epsilon}\big(f(x, \zeta)\big) \geq 0, \label{eq:algo-ref-2}
\end{align}
where \(f(x, \zeta) = \min_{i \in [m]}\{b_i(x) - a_i^{\top}\zeta\}\). Here, constraint~\eqref{eq:algo-ref-1} appears challenging because its RHS involves an integral with upper limit \(\VaR_{\epsilon}(f(x, \zeta))\). To make the model computable, we define a new variable \(y \geq 0\) to represent \(\VaR_{\epsilon}(f(x, \zeta))\).
\begin{proposition}\label{prop:algo-reform}
For \(y \geq 0\), define
\begin{align*}
  \phi(x, y) := \int^y_0 \left(\Prob \Big[ f(x, {\zeta}) \geq t \Big] - (1 - \epsilon) \right) \dLeb{t}.
\end{align*}
If \(\Prob{}\) is \(\alpha\)-concave with \(\alpha \geq -1\), then \(\phi(x, y)\) is
\(\alpha^{\ast}_1\)-concave on
\[\dom \phi := \Set{(x, y) \in X \times \reals_+ \colon \Prob
  \left[ f(x, {\zeta}) \geq y \right] \geq (1 -
  \epsilon)},\]
where \(\alpha^*_1\) is defined in Proposition~\ref{prop:brunn-minkow-ineq}. In addition, \(\dom \phi\) is closed and constraints~\eqref{eq:algo-ref-1}--\eqref{eq:algo-ref-2} is equivalent to
\begin{align}
\delta \leq \max_{y \geq 0} \phi(x, y). \label{eq:algo-ref-3}
\end{align}
\end{proposition}
\end{subequations}

By Proposition~\ref{prop:algo-reform}, formulation~\eqref{eq:algo-ref-0}--\eqref{eq:algo-ref-2} is equivalent to \(\min_{x \in X}\{c^{\top}x: \text{\eqref{eq:algo-ref-3}}\}\). To address the integral arising from the RHS of constraint~\eqref{eq:algo-ref-3}, we switch the objective function with the constraint to obtain
\begin{align}
  \rho(u) := \sup_{x \in X, y \geq 0} \Set{
  \phi(x, y) \colon c^{\top} x \leq u}, \label{eq:rho-u}
\end{align}
where \(u\) represents a budget limit on the (original) objective function.
\modify{We notice that \(\rho(u)\) is non-decreasing in \(u\), and hence \(u^* \in \mathbb{R}\) is the optimal value of~\eqref{eq:algo-ref-0}--\eqref{eq:algo-ref-2} if and only if \(u^*\) is the smallest number such that \(\rho(u^*)\) exceeds \(\delta\). It follows that we can solve~\eqref{eq:algo-ref-0}--\eqref{eq:algo-ref-2} by searching for the intersection of the function \(\rho(u)\) with the constant \(\delta\), which can be done by a bisection line search on \(u\) and iteratively solving~\eqref{eq:rho-u} to evaluate \(\rho(u)\).}
In addition, \(\rho(u)\) may be interesting in its own right because it represents the largest Wasserstein radius \(\delta\) that allows us to find a solution \(x\) that satisfies (\PDRCCL) and incurs a cost no more than \(u\). Hence, the graph of \(\rho(u)\) depicts a risk envelope that interprets the trade-off between the robustness and the cost effectiveness of (\PDRCCL). We demonstrate the risk envelope numerically in Section~\ref{sec:exps-pess-joint-drcc}.

Evaluating \(\rho(u)\) is equivalent to maximizing \(\phi(x, y)\) over the intersection of \(\Set{(x,y) \in X \times \reals_+: c^{\top}x \leq u}\) and \(\dom\phi\). Unfortunately, projecting onto \(\dom\phi\) may be inefficient since it is the feasible region of (\CCP). To avoid projection, we propose a block coordinate ascent algorithm (Algorithm~\ref{algo:bcm-for-rho-u}; see, e.g.,~\cite{auslender-1976-optim,luo-1993-error-bound, bertsekas-2016-nonli-progr,grippof-1999-global-conver,beck-2013-conver-block,beck-2015-conver-alter}). This algorithm iteratively maximizes over \(y\) with \(x\) fixed and then maximizes over \(x\) with \(y\) fixed. Here, for fixed \(x\) with \(\Prob[A\zeta \leq b(x)] \geq 1 - \epsilon\), i.e., when \(x\) satisfies (\CCP), the maximization over \(y\) admits a closed-form solution \(y =  \VaR_{\epsilon} \big( f(x, {\zeta}) \big)\), that is,
\begin{align*}
  \max_{y \geq 0} \phi(x, y) = \phi\left(x, \VaR_{\epsilon} \Big( f(x, {\zeta}) \Big) \right) %
\end{align*}
because \(\phi(x, y)\) is increasing in \(y\) on the interval \(\big[0, \VaR_{\epsilon} \big( f(x, {\zeta}) \big)\big]\) and it becomes decreasing in \(y\) when \(y > \VaR_{\epsilon} \big( f(x, {\zeta}) \big)\). On the other hand, for fixed \(y\), we seek to maximize \(\phi(x, y)\), which appears challenging as it is an integral. Fortunately, we can recast \(\phi(x, y)\) as
\begin{align}
  \phi(x, y)
  & = \int_0^y \Prob \left[f(x, {\zeta}) \geq t \right]  \dLeb{t} - y \cdot (1 - \epsilon) \nonumber{}\\
  & = y \int_0^1 \Prob \left[ f(x, {\zeta}) \geq sy  \right] \dLeb{s} - y \cdot (1 - \epsilon) \nonumber{}\\
  & = y \int\displaylimits_{\Xi} \int\displaylimits_{\reals} \Ind{
    (\zeta, s) \colon f(x, {\zeta}) \geq sy} \cdot \IndSet{[0, 1]}{s}
    \dLeb{s} \dProb{\zeta} - y \cdot (1 - \epsilon), \nonumber{} \\
  & = y \int\displaylimits_{\Xi \times \reals} \Ind{
    (\zeta, s) \colon f(x, {\zeta}) \geq sy} \dMeas{\hat{\mathbb{P}}}{\zeta, s} - y \cdot (1 - \epsilon), \nonumber{}\\
  & = y \cdot \hat{\Prob} \left[ f(x, {\zeta}) \geq sy \right] - y \cdot (1 - \epsilon), \label{eq:reform-phi}
\end{align}
where the third equality is by Tonelli's theorem and \(\hat{\Prob}\) represents the product measure of \(\Prob\) and the uniform distribution on \([0, 1]\). Since these two distributions are log-concave on \(\Xi\) and \([0, 1]\), respectively, \(\hat{\Prob}\) is log-concave on \(\Xi \times [0, 1]\). As a result, the problem simplifies to the P-model of (\CCP)
with respect to a log-concave distribution, which has been well studied in~\cite{prekopa-2013-stoch,norkin-1993-analy-optim}. As a result, Algorithm~\ref{algo:bcm-for-rho-u} uses the existing solution approach as a building block and assumes that there exists an oracle, denoted by \(\mathcal{O}_u(y, \varepsilon)\), which for given \(y\) and \(\varepsilon > 0\) returns an
\(\varepsilon\)-optimal solution \(\hat{x} \in  \Set{x \in X: c^{\top}x \leq u}\) such that
\begin{gather*}
\hat{\Prob} \left[ f(\hat{x}, {\zeta}) \geq sy \right] \geq \max_{x \in X: \ c^{\top}x \leq u} \Set{
\hat{\Prob} \left[  f(x, {\zeta}) \geq sy \right] } - \varepsilon.
\end{gather*}

We are now ready to present Algorithm~\ref{algo:bcm-for-rho-u}.
\begin{algorithm}[!htbp]
  \caption{Evaluation of \(\rho(u)\)}\label{algo:bcm-for-rho-u}
  \SetKwInOut{Inputs}{Inputs}
  \SetKw{Return}{return}
  \Inputs{budget \(u\), risk level \(\epsilon\), oracle \(\mathcal{O}_u\), a diminishing sequence \(\set{\varepsilon_k}_k\), and an \(x_1\) such that \(y_1 := \VaR_{\epsilon}\big(f(x_1, \zeta)\big) > 0\).}
  \For{\(k = 1, 2, \ldots\)}{
    \(x_{k+1} \gets \mathcal{O}_u(y_k, \varepsilon_k)\)\;
    \(y_{k+1} \gets \VaR_{\epsilon} \big( f(x_{k+1}, {\zeta}) \big)\)\;
    \If{stopping criterion is satisfied}{
      \Return{\(\phi(x_{k+1}, y_{k+1})\).}
    }
  }
\end{algorithm}

Algorithm~\ref{algo:bcm-for-rho-u} needs an starting point \((x_1, y_1)\) such that
\(\VaR_{\epsilon}(f(x_1, {\zeta})) > 0\). This can be obtained by solving a (\CCP) feasibility problem,
\begin{align}
  \min_{x \in X} \Set{0 \colon \Prob \big[ f(x, {\zeta}) \geq \varepsilon_0 \big] \geq 1 - \epsilon, \ c^{\top} x \leq u}, \label{eq:algo-find-init-sol}
\end{align}
where \(\varepsilon_0\) is a small positive constant. If formulation~\eqref{eq:algo-find-init-sol} is infeasible for all \(\varepsilon > 0\), then \(\rho(u) = 0\) because \(\VaR_{\epsilon} \big( f(x, {\zeta}) \big)\) always remains non-positive. Numerically, one can solve~\eqref{eq:algo-find-init-sol} for a sequence
of diminishing \(\varepsilon_0\)'s to find a valid starting point. We close this section by showing that Algorithm~\ref{algo:bcm-for-rho-u} achieves global optimum.
\begin{theorem}\label{thm:algo-cvx-converge}
Let \(\set{(x_k, y_k)}_k\) be an infinite sequence of iterates produced by Algorithm~\ref{algo:bcm-for-rho-u}. Suppose that \(\Prob\) is log-concave and, for all \(k \geq 2\), \(x_k\) and \(y_k\) are \(\varepsilon_k\)-optimal, i.e.,
\begin{gather*}
  \max_{x}\phi(x, y_{k-1}) - \varepsilon_k \leq \phi(x_k, y_{k - 1}) \leq \max_{x}\phi(x, y_{k-1})
  \quad \text{ and } \quad
  \abs[\Big]{y_k - \VaR_{\epsilon} \big( f(x_k, {\zeta}) \big)} \leq \varepsilon_k
\end{gather*}
with \(\lim_{k \to \infty}\varepsilon_k = 0\). Then, any limit point of \(\set{(x_k, y_k)}_k\) is a global optimal solution to~\eqref{eq:rho-u}.
\end{theorem}

\begin{remark} \label{rmk:convergence-cvx}
{\color{black} Block alternating minimization/maximization algorithms have been applied to improve conservative approximations of (distributionally robust) chance-constrained programs. For example,~\cite{zymler-2011-distr-robus} applied them to improve the CVaR approximation of a moment~(\PDRCCL{}),~\cite{chen-2010-from-cvar} applied them to improve an order statistics approximation of the same CVaR approximation, and~\cite{jiang2022also} applied them to improve a hinge-loss approximation of chance-constrained programs.} The convergence of block alternating minimization algorithms for convex programs~\cite{auslender-1976-optim,luo-1993-error-bound, bertsekas-2016-nonli-progr,
grippof-1999-global-conver,beck-2013-conver-block,
beck-2015-conver-alter} requires stronger
sense of convexity~\cite{auslender-1976-optim, luo-1993-error-bound}, {\color{black}continuous differentiability and}
a unique minimizer with respect to each
block~\cite{bertsekas-2016-nonli-progr}, or a Lipschitz
gradient~\cite{beck-2013-conver-block, beck-2015-conver-alter}. {\color{black}The convergence of Algorithm~\ref{algo:bcm-for-rho-u} does not follow from these existing results because, for fixed \(y\), \(\phi(x,y)\) may not even be differentiable. We make this concrete through the following Example~\ref{exa:counter-3}.}
\end{remark}

{\color{black}
\begin{example}[\(\phi(x,y)\) may not be differentiable in \(x\) for fixed \(y\)]\label{exa:counter-3}
Consider an example of \(\phi\) with \(\mathbb{P}\) being a uniform distribution on the interval \(\Xi := [-2, 2]\) and \(f(x, \zeta)\) is defined on \(\reals_+ \times \Xi\) as
\begin{align*}
f(x, \zeta) := \min \Set{\zeta + 2, \zeta + x, 2 - \zeta, x - \zeta}.
\end{align*}
Fix \(y = 1\), and we simplify \(\phi\) as follows:
\begin{align*}
\phi(x, 1) + (1 - \epsilon)
& = \int_0^1 \Prob \left[ f(x, \zeta) \geq t \right] \dLeb{t}
= \int_0^1 \Prob \Big[ t + (-x) \vee (-2) \leq \zeta \leq (x \wedge 2) - t \Big] \dLeb{t} \\
& = 2 \int_0^1 \Big[ (x - t) \wedge (2 - t) \Big]^+ \dLeb{t} \\
& = 2 \left( \Ind{0 \leq x \leq 2} \int_0^1 (x - t)^+ \dLeb{t} + \Ind{x > 2} \int_0^1 (2 - t) \dLeb{t} \right) \\
& = 2 \left( \Ind{0 \leq x \leq 1} \frac{1}{2} x^2 + \Ind{1 < x \leq 2} \left(x - \frac{1}{2}\right) + \Ind{x > 2} \left(2 - \frac{1}{2}\right) \right).
\end{align*}
Then, the left and right derivatives of \(\phi(x, 1) + (1 - \epsilon)\) at
\(x = 2\) are \(1/2\) and \(0\), respectively. Therefore, \(\phi(x, 1)\) is not differentiable.
\end{example}
Under additional (mild) assumptions, one can show that, for fixed \(x\), \(\phi(x, y)\) is continuously differentiable in \(y\) and the gradient is Lipschitz. Following this, we establish the linear convergence of Algorithm~\ref{algo:bcm-for-rho-u}. In other words, it takes \(O(1/\varepsilon)\) iterations for the algorithm to achieve an \(\varepsilon\)-optimal solution to (\PDRCCL). We refer the interested readers to Appendix~\ref{apx-sec:rate-of-converge}.
}

\begin{FirstUpdate}
\subsection{\SOC{} Inner Approximation}\label{sec:soc-inner-approx}
We focus on the two-sided (\PDRCCL) with \LHS{} uncertainty and its feasible region \(\mathcal{X}^p_{\text{T}}\). Although Theorem~\ref{thm:ts-pcc-convexity} provides a convex representation of \(\mathcal{X}^p_{\text{T}}\), it is not computable (say, in a commercial solver) because the function
\(g_{\epsilon}(\ell, u)\) is defined through an integral. This section derives \SOC{} inner approximations of \(\mathcal{X}^p_{\text{T}}\), which can be directly and efficiently computed by commercial solvers.

\subsubsection{Inner Approximations for \(\mathcal{C}_\delta\) and \(\mathcal{X}^p_\text{T}\)}
\label{sec:poly-approx-Z0}
To illustrate the basic idea, we plot the boundary of \(\mathcal{C}_\delta\), \ie{}, the contour of the function \(g_\epsilon(\ell, u)\), in Figure~\ref{fig:Z0-approx}. Since \(\mathcal{C}_\delta\) is convex, we can obtain a polyhedral inner approximation using two extreme rays of \(\mathcal{C}_\delta\) and a set of points on its boundary, denoted by \(\bd{}(\mathcal{C}_{\delta})\) (see the dotted line in Figure~\ref{fig:Z0-approx} for an illustration of this inner approximation). We now formalize this idea.
\begin{definition} \label{def:inner}
Given \(N\) points \( \set{(\ell_1, u_1), \ldots, (\ell_N, u_N)}\) on \(\bd{}(C_{\delta})\) with \(\ell_1 > \ell_2 > \cdots > \ell_N\), define a polyhedron
\begin{subequations}
\begin{empheq}[left={\hat{\mathcal{C}}_N = \empheqlbrace{} (\ell, u) \in \reals_- \times \reals_+ \colon}, right={\empheqrbrace{}}]{align}
  & \ell \leq \ell_1 \label{eq:approx-ZN0-p1}\\
  & (u - u_i) (\ell_i - \ell_{i + 1}) \geq (u_i - u_{i+1}) (\ell - \ell_i), \quad \forall i \in [N-1] \label{eq:approx-ZN0-pi} \\
  & u \geq u_N. \label{eq:approx-ZN0-pN}
\end{empheq}
\end{subequations}
\end{definition}
In this definition, inequality~\eqref{eq:approx-ZN0-p1}
(resp.~\eqref{eq:approx-ZN0-pN}) is the vertical (resp.~horizontal) ray emitting
from \((\ell_1, u_1)\) (resp.~\((\ell_N, u_N)\)) and inequalities~\eqref{eq:approx-ZN0-pi} are the line segments connecting two neighboring points \((\ell_i, u_i)\) and \((\ell_{i+1}, u_{i+1})\). Then, \(\hat{\mathcal{C}}_N\) constructs an inner approximation for \(\mathcal{X}^p_{\text{T}}\).
\begin{algorithm}[tbp]
  \caption{Construction of \(\hat{\mathcal{C}}_N\)}\label{algo:seeds-for-C-hat}
  \SetKwInOut{Inputs}{Inputs}
  \SetKw{Return}{return}
  \Inputs{\(\epsilon \in (0, \frac{1}{2}), \delta > 0\), %
    and a (small) error threshold \(\tau > 0\).}
  Initialize the set of points \(\texttt{PT} = \emptyset.\) \\
  Find a \(u \gets (\overline{g}_{\epsilon})^{-1}(\delta + \tau)\) and an \(\ell\) such that \(g_{\epsilon}(\ell, u) = \delta\). \\
  {\bf if} \(\ell + u > 0\) {\bf then} Replace \((\ell, u) \gets (-u, -\ell)\). \\
  \(\texttt{PT} \gets \texttt{PT} \cup \set{(\ell, u)}\). \\
  \While{\(\ell + u \leq 0\)}{%
    Find an \((\ell', u')\) on \(\bd{}(\mathcal{C}_{\delta})\) such that
    \begin{enumerate}
    \item \((\ell', u') \geq (\ell, u)\), and
    \item the line connecting \((\ell, u)\) and
    \((\ell', u')\) supports \(\mathcal{C}_{\delta + \tau}\).
    \end{enumerate}
    \(\texttt{PT} \gets \texttt{PT} \cup \set{(\ell', u')}.\) \\
    \((\ell, u) \gets (\ell', u')\).
  }
  \For{\((\ell, u)\) in \textup{\texttt{PT}}}{%
    \(\texttt{PT} \gets \texttt{PT} \cup \set{(-u, -\ell)}\).}
  Sort and label all points in \texttt{PT} from \(1\) to \(N\) such that \(\ell_1 > \ell_2 > \cdots > \ell_N\). \\
  \Return{\textup{\texttt{PT}}}
\end{algorithm}
\begin{proposition} \label{prop:soc-inner}
Given \(\hat{\mathcal{C}}_N\) in Definition~\ref{def:inner}, it holds that \(\hat{\mathcal{X}}^p_{\text{T}} \subseteq \mathcal{X}^p_{\text{T}}\), where
\begin{align*}
  \hat{\mathcal{X}}^p_{\text{T}} := \Set{ (x, \ell, u) \in \reals^{n+2} \colon
  \begin{aligned}
  & \exists s \in \reals: \norm{x}_{\ast} \leq s \\
  & \ell - x^{\top} \mu \leq \ell_1 s \\
  & u - x^{\top} \mu \geq u_N s \\
  & \left(\frac{\ell_i - \ell_{i+1}}{u_i - u_{i+1}}\right) \big(u - x^{\top} \mu - u_i s\big)
  \geq \ell - x^{\top} \mu - \ell_i s, \quad \forall i \in [N - 1]
  \end{aligned}
  }.
\end{align*}
\end{proposition}
We highlight that \(\hat{\mathcal{X}}^p_{\text{T}}\) is \SOC{} and so can be efficiently computed by commercial solvers. In view that \(\mathcal{C}_\delta\) and \(g_{\epsilon}(\ell, u)\) are symmetric with respect to \(\ell + u = 0\), we also construct \(\hat{\mathcal{C}}_N\) to be symmetric; that is, we pick the \(N\) points such that \(\ell_i + u_{N-i+1} = 0\) for all \(i \in [N]\). We propose Algorithm~\ref{algo:seeds-for-C-hat} to find the points and construct a symmetric \(\hat{\mathcal{C}}_N\). Specifically, Algorithm~\ref{algo:seeds-for-C-hat} receives a (small) error threshold \(\tau > 0\) and returns a set of \(N\) points on \(\bd{}(\mathcal{C}_{\delta})\) such
that the ensuing inner polyhedral approximation \(\hat{\mathcal{C}}_N\) satisfies \(g_{\epsilon}(\ell, u) \in [\delta, \delta + \tau]\) for all \((\ell, u) \in \bd{}(\hat{\mathcal{C}}_N)\). In other words,
\begin{align*}
\mathcal{C}_{\delta} \subseteq \hat{\mathcal{C}}_N \subseteq \mathcal{C}_{\delta + \tau}.
\end{align*}
In Step 2 of Algorithm~\ref{algo:seeds-for-C-hat}, we search for \(u = (\overline{g}_{\epsilon})^{-1}(\delta + \tau)\), which is defined in Corollary~\ref{cor:from-ts-to-single} and visualized in Figure~\ref{fig:g_inverse}. This can be done by running a root-finding algorithm on the function \(\overline{g}_{\epsilon}(\cdot)\), e.g., a bisection line search. The same approach can be applied in Step 2 to find an \(\ell\) such that \(g_{\epsilon}(\ell, u) = \delta\), and in Step 6 to find an \((\ell', u')\) such that the line connecting \((\ell, u)\) and \((\ell', u')\) supports \(\mathcal{C}_{\delta + \tau}\). Since the domains of functions \(\overline{g}_{\epsilon}(\cdot)\) and \(g_{\epsilon}(\cdot, \cdot)\) are \(1\)-dimensional and \(2\)-dimensional, respectively, running a bisection line search on them is efficient. As a result, the runtime of Algorithm~\ref{algo:seeds-for-C-hat} is usually negligible, even when we choose a small error threshold \(\tau\) (see Section~\ref{sec:exps-hydro-runtime} for a numerical demonstration).

\subsubsection{Approximation Error and Asymptotic Exactness of \
\(\hat{\mathcal{X}}^p_\text{T}\)} \label{sec:approx-bound}

We show that \(\hat{\mathcal{X}}^p_\text{T}\) is asymptotically exact, \ie{}, \(\hat{\mathcal{X}}^p_\text{T}\) asymptotically recovers \(\mathcal{X}^p_\text{T}\), as the error threshold \(\tau\) in Algorithm~\ref{algo:seeds-for-C-hat} decreases to zero and the points \(\{(\ell_i, u_i)\}_{i=1}^N\) become dense. More generally, we quantify the error of \(\hat{\mathcal{X}}^p_\text{T}\) in approximating \(\mathcal{X}^p_\text{T}\) with a positive \(\tau\).

To this end, we study the error of \(\hat{\mathcal{C}}_N\) in approximating \(\mathcal{C}_\delta\) and the same approximation guarantee carries over to \(\hat{\mathcal{X}}^p_\text{T}\) by construction. We shall show that \(\mathcal{C}_\delta\) becomes a subset of \(\hat{\mathcal{C}}_N\) if we slightly expand the latter. But since \(0 \notin \mathcal{C}_\delta\), we need to define such expansion with respect to a new origin within \(\mathcal{C}_\delta\), giving rise to the next definition.
\begin{definition}
Given a set \(C \subseteq \reals^2\), a point \((\ell_0, u_0) \in C\), and a positive scalar \(\gamma > 0\), define
\[
\gamma \cdot C := (\ell_0, u_0) + \Big\{\gamma(\ell - \ell_0, u - u_0): \ (\ell, u) \in C\Big\}.
\]
\end{definition}
The new origin \((\ell_0, u_0)\) partitions \(\reals^2\) into four (shifted) orthants \(\set{\mathcal{O}_i, i \in [4]}\), and hence \(\mathcal{C}_{\delta}\) and
\(\hat{\mathcal{C}}_N\) into four disjoint subsets:
\begin{align*}
  \mathcal{C}_{\delta}
  = \bigcup_{i=1}^4 \Big(\mathcal{C}_{\delta} \cap \mathcal{O}_i\Big), \quad
  \hat{\mathcal{C}}_N
  = \bigcup_{i=1}^4 \Big(\hat{\mathcal{C}}_N \cap \mathcal{O}_i\Big).
\end{align*}
Our approach is to find constants \(\gamma_i\) such that
\(\mathcal{C}_{\delta} \cap \mathcal{O}_i \subseteq \gamma_i \cdot (\hat{\mathcal{C}}_N \cap \mathcal{O}_i)\) for all \(i \in [4]\). Then, \(\max_{i \in [4]} \{\gamma_i\}\) gives the approximation error of \(\hat{\mathcal{C}}_N\). We present the main result of this section as follows.
\begin{theorem}\label{cor:approx-guarantee-for-hierarchy}
Under Assumptions~\ref{asm:ellip-ref-and-norm} and~\ref{asm:unimodal-ref-w-diff-density}, suppose that \(\epsilon \in (0, 1/2)\), \(\Prob_0\) has a CDF \(\Phi\) and a density function \(\Phi^{\prime}\), and
\(\hat{\mathcal{C}}_{N}\) is obtained from Algorithm~\ref{algo:seeds-for-C-hat} with an error threshold \(\tau > 0\). Then, for
\((\ell_0, u_0) := (\ell_N, -\ell_N)\) it holds that
\begin{gather*}
\hat{\mathcal{C}}_N \subseteq \mathcal{C}_{\delta}
\subseteq \gamma_{\tau} \cdot \hat{\mathcal{C}}_N, \\[1em]
\text{where} \qquad{}
\gamma_{\tau} :=
\max \Set{
  \frac{u_0 - (\overline{g}_{\epsilon})^{-1}(\delta)}{u_0 - (\overline{g}_{\epsilon})^{-1}(\delta + \tau)}, \quad
  1 + \frac{\sqrt{2}\ln{}\big((\delta + \tau) / \delta\big)}{\underline{D} \cdot \underline{\rho}}
  }.
\end{gather*}
Here, \(\underline{D} := \Phi^{\prime}(u_0)/\big(\Phi(u_0) - \Phi(\ell_0) - (1 - \epsilon)\big)\) and
\(\underline{\rho} := 1/\norm{(1, 0)}_{\mathcal{C}_{\delta+\tau}}\), where \(\norm{\cdot}_{\mathcal{C}_{\delta+\tau}}\) is defined through \(\norm{x}_{\mathcal{C}_{\delta+\tau}} := \inf \set{r > 0 \colon x \in r \cdot \mathcal{C}_{\delta+\tau}}\). Furthermore, \(\gamma_{\tau} \to 1\) as \(\tau \to 0\).
\end{theorem}
\begin{figure}[!htbp]
\centering
\begin{subfigure}[t]{0.4\textwidth}
\centering
\resizebox{1.0\linewidth}{!}{\begin{tikzpicture}

\begin{axis}[
tick align=outside,
tick pos=left,
xlabel={\(\displaystyle \tau / \delta\)},
xmin=2.75e-06, xmax=5.225e-05,
xtick style={color=black},
xtick={0,1e-05,2e-05,3e-05,4e-05,5e-05,6e-05},
xticklabels={0,1,2,3,4,5,6},
ylabel={\(\displaystyle \gamma_{\tau}\)},
ymin=1.0, ymax=1.4,
ytick style={color=black}
]
\addplot [semithick,mark=*]
table {%
5e-06 1.02685752404044
1e-05 1.05373799293458
1.5e-05 1.0806414400817
2e-05 1.10755077575162
2.5e-05 1.13449597967031
3e-05 1.16146425463388
3.5e-05 1.18845563428059
4e-05 1.21543579253735
4.5e-05 1.24246915266955
5e-05 1.26952571093449
};
\end{axis}

\end{tikzpicture}}
\caption{Approximation error bound \(\gamma_{\tau}\)}\label{fig:approx-bd-gamma}
\end{subfigure}
\hfill
\begin{subfigure}[b]{0.4\textwidth}
\centering
\resizebox{1.01\linewidth}{!}{\begin{tikzpicture}

\begin{axis}[
tick align=outside,
tick pos=left,
xlabel={\(\displaystyle s\)},
xmin=-0.005, xmax=0.10,
xtick style={color=black},
xticklabel style={
        /pgf/number format/fixed,
        /pgf/number format/precision=3
},
scaled x ticks=false,
ylabel={\(\displaystyle (\bar{g}_{\epsilon})^{-1}(s)\)},
ymin=0.80, ymax=4.2,
ytick style={color=black}
]
\addplot [semithick]
table {%
0 1.63636363636364
2.42438811644486e-05 1.66666666666667
0.000136114702768985 1.6969696969697
0.000334805446943915 1.72727272727273
0.000615928702857585 1.75757575757576
0.000975242647795754 1.78787878787879
0.00140866399123948 1.81818181818182
0.00191224825733521 1.84848484848485
0.00248220545756173 1.87878787878788
0.00311489778577876 1.90909090909091
0.00380681163493545 1.93939393939394
0.00455460441586295 1.96969696969697
0.00535506239815976 2
0.00620511058693757 2.03030303030303
0.00710180981718671 2.06060606060606
0.00804235971463597 2.09090909090909
0.00902408551379137 2.12121212121212
0.010044442878792 2.15151515151515
0.0111010095453974 2.18181818181818
0.0121914849281335 2.21212121212121
0.013313685706878 2.24242424242424
0.0144655416733553 2.27272727272727
0.0156450919511855 2.3030303030303
0.0168504819065824 2.33333333333333
0.0180799590268646 2.36363636363636
0.019331869137051 2.39393939393939
0.0206046512656373 2.42424242424242
0.0218968374402447 2.45454545454545
0.0232070444820913 2.48484848484848
0.0245339751041699 2.51515151515152
0.0258764046946386 2.54545454545455
0.0272331949172695 2.57575757575758
0.0286032677200124 2.60606060606061
0.0299818928071383 2.63636363636364
0.0313793308269921 2.66666666666667
0.032783503970362 2.6969696969697
0.0341973296238252 2.72727272727273
0.0356200463103369 2.75757575757576
0.0370509475989319 2.78787878787879
0.0384893666161458 2.81818181818182
0.0399347023637558 2.84848484848485
0.0413863723423519 2.87878787878788
0.0428438611375265 2.90909090909091
0.0443066762961187 2.93939393939394
0.0457743662104323 2.96969696969697
0.0472465139878842 3
0.0487227338456963 3.03030303030303
0.050202670458133 3.06060606060606
0.051685995916938 3.09090909090909
0.0531724089650234 3.12121212121212
0.0546616321514763 3.15151515151515
0.0561534105157532 3.18181818181818
0.05764751097763 3.21212121212121
0.0591437182445703 3.24242424242424
0.0606418369002541 3.27272727272727
0.0621416870550354 3.3030303030303
0.0636431041734369 3.33333333333333
0.065145938827221 3.36363636363636
0.0666500536625573 3.39393939393939
0.0681553247852691 3.42424242424242
0.0696616384469235 3.45454545454545
0.0711688913929499 3.48484848484848
0.0726769900314714 3.51515151515152
0.0741858490880628 3.54545454545455
0.0756953917477496 3.57575757575758
0.0772055478450338 3.60606060606061
0.0787162541078945 3.63636363636364
0.0802274538382717 3.66666666666667
0.0817390943863197 3.6969696969697
0.0832511304660664 3.72727272727273
0.0847635181268646 3.75757575757576
0.0862762221036531 3.78787878787879
0.0877892066367442 3.81818181818182
0.089302441601933 3.84848484848485
0.0908158994962612 3.87878787878788
0.0923295556957974 3.90909090909091
0.0938433882169554 3.93939393939394
0.0953573756479847 3.96969696969697
0.0968715048988364 4
};
\end{axis}

\end{tikzpicture}}
\caption{Plot of \((\bar{g}_{\epsilon})^{-1}(s)\)}\label{fig:g_inverse}
\end{subfigure}
\caption{Visualization of \(\gamma_{\tau}\) and \((\bar{g}_{\epsilon})^{-1}(s)\)}\label{fig:approx-visual}
\end{figure}
We depict the approximation error bound \(\gamma_\tau\) as a function of \(\tau\) in Figure~\ref{fig:approx-bd-gamma}, using a Gaussian \(\Prob_0\). From this figure, we observe that \(\gamma_\tau\) is close to \(1\) for small \(\tau\), suggesting that \(\hat{\mathcal{C}}_N\) is a tight inner approximation of \(\mathcal{C}_\delta\). Accordingly, the ensuing \(\hat{\mathcal{X}}^p_{\text{T}}\) is a tight inner approximation of \(\mathcal{X}^p_{\text{T}}\).

\end{FirstUpdate}

\section{Optimistic Chance Constraint}
\label{sec:optimistic-drcc}
This section extends the convexity results for (\PDRCCL) in Section~\ref{sec:pessimistic-case} to (\ODRCCL). We first present a \CVaR{} reformulation for \(\mathcal{X}^o\) by
adapting Theorem~\(1\) in~\cite{xie-2019-distr-robus}. Then, we study joint (\ODRCCL{}) with \RHS{} uncertainty and \modify{two-sided (\ODRCCL{}) with LHS uncertainty} in Sections~\ref{sec:opt-joint-drcc-rhs} and~\ref{sec:opti-two-sided}, respectively.

\begin{theorem}\label{thm:opt-det-reform}
For \(\delta > 0\) it holds that
\begin{align*}
  \mathcal{X}^o = \Set{
  x \in \reals^n \colon
  \CVaR_{\epsilon} \Big( - \Dist{{\zeta}, \Safe(x)} \Big)
  + \frac{\delta}{1 - \epsilon} \geq 0},
\end{align*}
where the \CVaR{} is with respect to the reference distribution \(\Prob\) and \(\Dist{{\zeta}, \Safe(x)}\) is the distance from \(\zeta \in \reals^m\) to the safe set \(\Safe(x)\),
\begin{align*}
\Dist{\zeta, \Safe(x)}
& := \inf_{\xi \in \Xi} \Set{\norm{ \zeta - \xi } \colon A(x)\xi \leq b(x)}.
\end{align*}
\end{theorem}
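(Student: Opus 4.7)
The plan is to mirror the derivation of Theorem~\ref{thm:pess-cvar-reform-1} by \cite{xie-2019-distr-robus}, with the roles of the safe and unsafe sets interchanged. First, I would apply Kantorovich-style strong duality for Wasserstein distributionally optimistic probabilities to obtain
\begin{align*}
\sup_{\QProb \in \mathcal{P}} \QProb\!\left[\Safe(x)\right]
\;=\; \inf_{\lambda \geq 0}\left\{\lambda \delta + \Expect{}_{\Prob}\!\left[\big(1 - \lambda \cdot \Dist{\zeta, \Safe(x)}\big)^+\right]\right\}.
\end{align*}
The relevant conjugate of the indicator $\Ind{\xi \in \Safe(x)}$ at scale $\lambda$ evaluates to $\big(1 - \lambda\,\Dist{\cdot,\Safe(x)}\big)^+$ since the cheapest way for the optimist to accumulate probability on $\Safe(x)$ is to transport each unit of $\Prob$-mass sitting at $\zeta$ a distance $\Dist{\zeta,\Safe(x)}$ into $\Safe(x)$ whenever the benefit exceeds the transportation cost (mirroring Xie's adversary, who instead pushed mass into $\Safe^c(x)$). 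Thus $x \in \mathcal{X}^o$ is equivalent to the family of inequalities
\[
\forall\,\lambda \geq 0 \colon \quad \lambda \delta + \Expect{}_{\Prob}\!\left[\big(1 - \lambda Z\big)^+\right] \geq 1 - \epsilon,
\]
where I abbreviate $Z := \Dist{\zeta,\Safe(x)} \geq 0$.

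Second, I would change variables to unmask the CVaR. Using $(1 - \lambda Z)^+ = \lambda(\lambda^{-1} - Z)^+$ for $\lambda > 0$ and setting $\beta := \lambda^{-1}$, the constraint above becomes
\[
\delta + \Expect{}_{\Prob}\!\left[(\beta - Z)^+\right] \;\geq\; (1-\epsilon)\beta \qquad \text{for every } \beta > 0.
\]
Since $Z \geq 0$ and $\delta > 0$, the same inequality holds trivially when $\beta \leq 0$, so the quantifier extends to all $\beta \in \reals$. Dividing by $1 - \epsilon$ and taking the infimum over $\beta$, this is equivalent to
\[
\frac{\delta}{1-\epsilon} + \inf_{\beta \in \reals}\left\{-\beta + \frac{1}{1-\epsilon}\Expect{}_{\Prob}\!\left[(\beta - Z)^+\right]\right\} \;\geq\; 0,
\]
and the inner infimum is exactly $\CVaR_{\epsilon}(-Z)$ after the substitution $\gamma = -\beta$ in the Rockafellar--Uryasev representation of CVaR. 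This yields the claimed reformulation.

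The main obstacle I anticipate is justifying the Wasserstein strong duality used in the very first step, because the integrand $\Ind{\xi \in \Safe(x)}$ is discontinuous. Fortunately, $\Safe(x) = \{\xi : A(x)\xi \leq b(x)\}$ is a closed polyhedron, so its indicator is upper semicontinuous---the correct side of semicontinuity to close the duality gap in a supremum problem. The general Kantorovich-type duality of \cite{mohajerin-2018-data-driven}, together with the same argument used by \cite{xie-2019-distr-robus} for the pessimistic case, therefore carries over verbatim with $\Safe^c(x)$ replaced by $\Safe(x)$ throughout. Once this duality is established, the remainder of the argument is the elementary change of variables $\beta = 1/\lambda$ described above.
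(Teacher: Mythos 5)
Your proposal is correct and follows essentially the same route as the paper's proof: both invoke the Wasserstein strong duality of Gao--Kleywegt to reduce the optimistic probability to the family of inequalities $\lambda\delta + \Expect_{\Prob}[(1-\lambda\,\Dist{\zeta,\Safe(x)})^+] \geq 1-\epsilon$ for all $\lambda \geq 0$, then substitute $\gamma = 1/\lambda$, extend the quantifier to all of $\reals$, and recognize the Rockafellar--Uryasev representation of $\CVaR_{\epsilon}$. Your remark on upper semicontinuity of $\Ind{\xi \in \Safe(x)}$ correctly identifies the hypothesis needed for the duality (the paper simply cites the result), and the only step you leave implicit --- that the $\lambda = 0$ inequality holds trivially since its left-hand side equals one --- is immediate.
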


\subsection{Joint (\ODRCCL{}) with \RHS{} Uncertainty}
\label{sec:opt-joint-drcc-rhs}

When \(\xi\) arises from the RHS, we recall the formulation of (\ODRCCL{}):
\begin{align*}
  \mathcal{X}^o_{\text{R}} := \Set{ x \in \reals^n \colon
  \sup_{\QProb \in \mathcal{P}} \QProb \Big[ A\xi \leq b(x) \Big] \geq 1 - \epsilon}.
\end{align*}
As a preparation, we show that the distance \(\Dist{\zeta, \Safe(x)}\) from \(\zeta \in \reals^m\) to the safe set \(\Safe(x)\) is convex.
\begin{lemma} \label{lem:dist-convex}
\(\Dist{\zeta, \Safe(x)} \equiv \min_{\xi \in \Xi}\Big\{\norm{\xi - \zeta}: A\xi \leq b(x)\Big\}\) is jointly convex in \((\zeta, x)\) on
\(\Xi \times \reals^n\).
\end{lemma}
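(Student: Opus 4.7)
The plan is to prove joint convexity via partial minimization of a jointly convex function. The key structural observation is that $A$ is a constant matrix (RHS uncertainty) and $b(x)$ is affine in $x$, so the constraint $A\xi \leq b(x)$ is jointly affine — hence jointly convex — in $(\xi, x)$. Combined with the convexity of $\Xi$ (which we may assume, as it is the support of an $\alpha$-concave distribution), the feasible set $\mathcal{F} := \{(\xi, x) \in \Xi \times \reals^n : A\xi \leq b(x)\}$ is a convex subset of $\reals^m \times \reals^n$.

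First I would define the extended-real-valued function
\[
g(\xi, \zeta, x) := \norm{\xi - \zeta} + \IndSet{\mathcal{F}}{(\xi, x)},
\]
where $\IndSet{\mathcal{F}}{\cdot}$ is the convex indicator function (equal to $0$ on $\mathcal{F}$ and $+\infty$ outside). Then $g$ is jointly convex in $(\xi, \zeta, x)$: the norm $\norm{\xi - \zeta}$ is convex as the composition of a norm with the linear map $(\xi, \zeta) \mapsto \xi - \zeta$, and the indicator of the convex set $\mathcal{F}$ is convex in $(\xi, x)$ and trivially extends to convexity in $(\xi, \zeta, x)$.

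Second, I would invoke the standard partial minimization result: if $g(\xi, \zeta, x)$ is jointly convex in $(\xi, \zeta, x)$, then the partial minimization
\[
h(\zeta, x) := \inf_{\xi} g(\xi, \zeta, x)
\]
is convex in $(\zeta, x)$. By construction $h(\zeta, x) = \Dist{\zeta, \Safe(x)}$ whenever $\Safe(x) \cap \Xi \neq \emptyset$, and $h(\zeta, x) = +\infty$ otherwise, so the claim follows on the effective domain. Since any $x$ under consideration in (\ODRCCL{}) with positive probability mass on $\Safe(x)$ must make $\Safe(x) \cap \Xi$ nonempty, the convexity statement is meaningful on the relevant domain.

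There is no substantive obstacle here; the result is essentially a textbook consequence of convex analysis, and the only point requiring mild care is the handling of the cases where $\Safe(x) \cap \Xi = \emptyset$ (so that $\Dist{\zeta, \Safe(x)} = +\infty$), which is absorbed cleanly into the extended-real-valued framework above.
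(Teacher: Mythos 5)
Your argument is correct, but it takes a genuinely different route from the paper's. The paper observes that \(\Dist{\zeta, \Safe(x)}\) is the optimal value of a convex program in which Slater's condition holds and passes to the Lagrangian dual,
\[
\Dist{\zeta, \Safe(x)} = \max_{\lambda \leq 0}\Set{\lambda^{\top}\big[b(x) - A\zeta\big] \colon \norm{A^{\top}\lambda}_{*} \leq 1},
\]
which exhibits the distance as a pointwise supremum of functions affine in \((\zeta, x)\) and hence jointly convex. You instead use infimal projection: the sum of the norm and the convex indicator of the feasible set is jointly convex in \((\xi, \zeta, x)\), and minimizing out \(\xi\) preserves convexity. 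Both proofs are sound. Your route is the more elementary and slightly more robust one: it needs no constraint qualification, it keeps the support restriction \(\xi \in \Xi\) explicit (the paper's stated dual effectively treats the minimization as ranging over all of \(\reals^m\), or would require an additional multiplier for the support constraint), it absorbs the degenerate case \(\Safe(x) \cap \Xi = \emptyset\) cleanly via the value \(+\infty\), and it extends verbatim to nonlinear convex safe sets, which is precisely the generality invoked in the remark following Theorem~\ref{thm:opt-rhs-cvx}. What the paper's duality argument buys in exchange is an explicit max-of-affine representation of the distance function that can be reused in downstream reformulations. The one hypothesis you should flag rather than parenthesize is the convexity of \(\Xi\); it does hold in the setting where the lemma is applied (the support of an \(\alpha\)-concave distribution is convex), but it is where that assumption genuinely enters your proof.
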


Now we are ready to present the main result of this subsection.
\begin{theorem}\label{thm:opt-rhs-cvx}
Suppose that the reference distribution \(\Prob\) of \(\mathcal{P}\) is \(\alpha\)-concave with
\(0 \leq \alpha \leq 1/m\). Then, \(\mathcal{X}^o_{\text{R}}\) is convex and
closed for \(\delta > 0\).
\end{theorem}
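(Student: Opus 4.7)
The plan is to convert the constraint defining $\mathcal{X}^o_{\text{R}}$ into the intersection of a family of quasi-concave superlevel sets indexed by a scalar parameter, in direct analogy with the pessimistic proof of Theorem~\ref{thm:drcc-rhs-cvx}. Writing $D(x, \zeta) := \Dist{\zeta, \Safe(x)}$ and applying Theorem~\ref{thm:opt-det-reform}, the defining inequality $\CVaR_{\epsilon}(-D(x, \zeta)) + \delta/(1-\epsilon) \geq 0$ unfolds through the Rockafellar--Uryasev minimization representation of \CVaR{} into the family
\[
\gamma + \frac{1}{1-\epsilon}\Expect\big[(-D(x, \zeta) - \gamma)^+\big] + \frac{\delta}{1-\epsilon} \geq 0 \qquad \forall \gamma \in \reals.
\]
Since $D \geq 0$ everywhere, the case $\gamma \geq 0$ is automatic. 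For $\gamma < 0$, substituting $t := -\gamma > 0$ and using the identity $\Expect[(t-D)^+] = \int_0^t \Prob[D(x, \zeta) \leq u] \dLeb{u}$ (valid for $t \geq 0$ and $D \geq 0$) rewrites the constraint as
\[
\phi_t(x) := \int_0^t \Prob\big[D(x, \zeta) \leq u\big] \dLeb{u} \geq (1 - \epsilon) t - \delta \qquad \forall t \geq 0.
\]
Thus $\mathcal{X}^o_{\text{R}} = \bigcap_{t \geq 0}\{x \in \reals^n \colon \phi_t(x) \geq (1-\epsilon) t - \delta\}$, and it suffices to prove that each $\phi_t$ is continuous and quasi-concave in $x$.

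The second step is to chase $\alpha$-concavity from $\Prob$ through to $\phi_t$. By Lemma~\ref{lem:dist-convex}, $D(x, \zeta)$ is jointly convex in $(x, \zeta)$, so $h((x, u), \zeta) := u - D(x, \zeta)$ is jointly concave---and hence quasi-concave---in the enlarged decision variable $(x, u)$ for each fixed $\zeta$. Applying Theorem~\ref{thm:cc-rhs-constr-log-concave} with $(x, u)$ as the decision and $\Prob$ being $\alpha$-concave with $\alpha \geq 0$ yields that
\[
H(x, u) := \Prob\big[D(x, \zeta) \leq u\big] = \Prob\big[h((x, u), \zeta) \geq 0\big]
\]
is $\alpha$-concave in $(x, u)$ on its effective domain. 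I then invoke Theorem~\ref{thm:brunn-minkow-ineq} with ambient dimension $1$, integrating in $u$ over the fixed support set $S_0 = S_1 = [0, t]$ (whose midpoint Minkowski sum is again $[0, t]$), to obtain, for any $x_0, x_1 \in \reals^n$,
\[
\phi_t\!\left(\tfrac{x_0 + x_1}{2}\right) \geq m_{\alpha^{\ast}_1}\!\left[\phi_t(x_0), \phi_t(x_1); \tfrac{1}{2}\right], \qquad \alpha^{\ast}_1 = \frac{\alpha}{1+\alpha} \geq 0.
\]
Continuity of $\phi_t$---inherited from continuity of $D(\cdot, \zeta)$ together with dominated convergence, and leveraging the density of $\Prob$ granted by Theorem~\ref{thm:alpha-prob-meas-and-pdf} under $0 \leq \alpha \leq 1/m$---promotes this midpoint $\alpha^{\ast}_1$-concavity to full $\alpha^{\ast}_1$-concavity, and in particular quasi-concavity.

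Closedness and convexity of $\mathcal{X}^o_{\text{R}}$ then follow at once: for every $t \geq 0$, the set $\{x \colon \phi_t(x) \geq (1-\epsilon) t - \delta\}$ is a closed superlevel set of a continuous quasi-concave function, and arbitrary intersections of closed convex sets are closed and convex.

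The main obstacle is the Brunn--Minkowski step from joint $\alpha$-concavity of $H(x, u)$ to $\alpha^{\ast}_1$-concavity of the partially integrated $\phi_t(x)$. Careful bookkeeping is required to (i) track the effective domain on which $H$ is $\alpha$-concave and handle the zero-value convention $m_\alpha(0, b; \theta) = 0$ where $\Prob[D(x,\zeta) \leq u]$ vanishes, (ii) promote midpoint to full concavity via continuity, which is where the assumption $\alpha \in [0, 1/m]$ is doubly used (the lower bound keeps $\alpha^{\ast}_1 \geq 0$ and the upper bound secures an absolutely continuous density for $\Prob$), and (iii) rigorously justify the ``for all $\gamma$'' unfolding of the \CVaR{}. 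A pleasant simplification over the pessimistic analog in Theorem~\ref{thm:drcc-rhs-cvx} is that the integration upper limit is an auxiliary scalar $t$ rather than the $x$-dependent $\VaR_{\epsilon}(f(x, \zeta))$, which obviates the need for any concavity lemma akin to Lemma~\ref{lem:rhs-var-concave}.
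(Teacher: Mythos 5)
Your proposal is correct, but it takes a genuinely different route from the paper. The paper fixes $\gamma$ and proves that $x \mapsto \Expect_{\Prob}\bigl[(-\Dist{\zeta,\Safe(x)}-\gamma)^+\bigr]$ is log-concave by exhibiting the integrand $\bigl[-\Dist{\zeta,\Safe(x)}-\gamma\bigr]^+ f_{\zeta}(\zeta)$ as jointly log-concave in $(x,\zeta)$ and marginalizing over $\zeta \in \reals^m$ (Pr\'ekopa's preservation of log-concavity under marginalization); this is exactly where the hypothesis $0 \le \alpha \le 1/m$ enters, since it forces the density to be $\alpha'$-concave with $\alpha' \ge 0$. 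You instead discard the density, rewrite the expectation via the layer-cake identity as $\int_0^t \Prob[\Dist{\zeta,\Safe(x)} \le u]\,\dLeb{u}$, push the measure's $\alpha$-concavity through the probability with Theorem~\ref{thm:cc-rhs-constr-log-concave}, and integrate in the single scalar $u$ via Theorem~\ref{thm:brunn-minkow-ineq} --- precisely the architecture of the paper's pessimistic proof of Theorem~\ref{thm:drcc-rhs-cvx}, and, as you note, simpler there because the integration limit $t$ is an exogenous scalar rather than an $x$-dependent \VaR{}. Two things this buys: a unified treatment of the pessimistic and optimistic cases, and a strictly weaker hypothesis --- your argument only needs $\alpha \ge -1$ for the one-dimensional Brunn--Minkowski step (plus absolute continuity of $\Prob$ for atomlessness), so it would extend Theorem~\ref{thm:opt-rhs-cvx} to $\alpha \ge -1$, matching the pessimistic result; the paper's marginalization argument cannot go below $\alpha = 0$. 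What the paper's route buys is brevity and the stronger conclusion that the entire \CVaR{} expression is log-concave in $x$, from which continuity is immediate. One small point of precision: when invoking Theorem~\ref{thm:cc-rhs-constr-log-concave} you should state that $h((x,u),\zeta) = u - \Dist{\zeta,\Safe(x)}$ is quasi-concave \emph{jointly} in $((x,u),\zeta)$ --- which it is, by Lemma~\ref{lem:dist-convex} --- not merely in $(x,u)$ for fixed $\zeta$, since joint quasi-concavity is what that theorem requires.
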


\modify{
Although Theorem~\ref{thm:opt-rhs-cvx} pertains to (\ODRCCL) with linear inequalities, the convexity result extends to (\ODRCCL) with \emph{quasi-concave} inequalities. We present this generalization in Appendix~\ref{thm:opt-rhs-cvx-general}.
}

\begin{FirstUpdate}
\subsection{Two-Sided~(\ODRCCL{}) with LHS Uncertainty}%
\label{sec:opti-two-sided}

We extend the convexity result for two-sided~(\PDRCCL) in Section~\ref{sec:pess-two-sided} to the optimistic setting. Specifically, define
\begin{align*}
  \mathcal{X}^o_{\text{T}} := \Set{(x, \ell, u) \in \reals^n \times \reals_- \times \reals_+ \colon
  \sup_{\QProb \in \mathcal{P}} \QProb \left[ \ell \leq x^{\top} \xi \leq u \right] \geq 1 - \epsilon
  }.
\end{align*}
When the Wasserstein ball \(\mathcal{P}\) has an elliptical reference distribution \(\Prob\), we can ``project'' \(\mathcal{P}\) onto a Wasserstein ball for \(1\)-dimensional distributions centered around \(\Prob_0\), which is induced by \(R\cdot e_1^{\top}U_n\) with \(\Prob \disteq R\cdot \Sigma^{1/2}U_n\) (see Remark~\ref{rmk:normal-of-ellip-dist} in Appendix~\ref{apx-preliminary}). This relates \(\mathcal{X}^o_{\text{T}}\) to a primitive set
\begin{align*}
  \mathcal{X}^o_{\text{T}_0} := \Set{(\ell, u) \in \reals_- \times \reals_+ \colon
  \sup_{\QProb \in \mathcal{P}_0} \QProb \left[ \ell \leq \zeta \leq u \right] \geq 1 - \epsilon
  },
\end{align*}
where \(\mathcal{P}_0\) is a Wasserstein ball centered around \(\Prob_0\) with the same radius \(\delta\) as in \(\mathcal{P}\).
\begin{lemma}\label{lem:optimistic-ts-rel-T-T0}
Suppose that Assumption~\ref{asm:ellip-ref-and-norm} holds and \(\epsilon \in (0, 1/2)\). Then, for any \(x \neq 0\), \((x, \ell, u) \in \mathcal{X}^o_\text{T}\) if and only if
\(\left( \frac{\ell}{\norm{x}_{\ast}}, \frac{u}{\norm{x}_{\ast}}\right) \in \mathcal{X}^o_{\text{T}_0}\).
\end{lemma}

We are now ready to present the convexity result for \(\mathcal{X}^o_{\text{T}}\).
\begin{theorem}\label{thm:ts-occ-convexity}
Suppose that Assumptions~\ref{asm:ellip-ref-and-norm} and~\ref{asm:unimodal-ref-w-diff-density} hold, \(\epsilon \in (0, \frac{1}{2})\), and \(\delta > 0\). Define
\begin{gather*}
  h_{\epsilon}(\ell, u) := \int\limits_0^{+\infty} \Big[
    (1 - \epsilon) - (\Phi(u + t) - \Phi(\ell - t)) \Big]^+ \dLeb{t}.
\end{gather*}
Then, it holds that \(\mathcal{X}^o_{\text{T}_0} = \Set{
  (\ell, u) \in \reals_- \times \reals_+ \colon h_{\epsilon}(\ell, u) \leq \delta}\) and
\begin{gather*}
  \mathcal{X}^o_{\text{T}} = \Set{(x, \ell, u) \in \reals^n \times \reals_- \times \reals_+ \colon
  \exists \, s \geq 0 \text{ such that } \norm{x}_{\ast} \leq s, \ (\ell, u, s) \in \text{co}\left(\mathcal{X}^o_{\text{T}_0}\right)},
\end{gather*}
where \(\text{co}\left(\mathcal{X}^o_{\text{T}_0}\right) := \cl{} \left(\Set{
  (\ell, u, s) \in \reals_- \times \reals_+ \times \reals \colon s > 0,
  (\ell/s, u/s) \in \mathcal{X}^o_{\text{T}_0}} \right)\) represents the cone generated by \(\mathcal{X}^o_{\text{T}_0}\). Furthermore, \(\mathcal{X}^o_{\text{T}}\) is convex and closed.
\end{theorem}

\end{FirstUpdate}

\section{Numerical Experiments}\label{sec:exps}

We demonstrate the theoretical results through two numerical experiments: a (\SDC) model using joint (\PDRCCL{}) in Section~\ref{sec:exps-pess-joint-drcc} and \redmodify{a (\WP) model using two-sided (\PDRCCL) in Section~\ref{sec:exps-hydro-power}}.

\subsection{Production Planning}
\label{sec:exps-pess-joint-drcc}

We consider a (\SDC) model that seeks to procure production capacity so that all
demands can be satisfied with high probability and a minimal procurement cost
(see Example~\ref{exa:pp}). Specifically, we consider the following formulation
with (\PDRCCL):
\begin{align*}
  \min~%
  & c^{\top} x, \\
  \st{} ~
  & \inf_{\QProb \in \mathcal{P}} \QProb \left[ T x \geq {\xi} \right] \geq 1 - \epsilon,\\
  & 0 \leq x_i \leq U, \forall i \in [n],
\end{align*}
where \(c\) represents the procurement costs, \(U\) represents a homogeneous upper bound of production capacity for all facilities, and the reference distribution \(\Prob\) of the Wasserstein ball \(\mathcal{P}\) is assumed to be pairwise independent and Gaussian. To apply Algorithm~\ref{algo:bcm-for-rho-u}, we switch the objective function with (\PDRCCL) to obtain
\begin{align*}
\rho(u) = \max_{x \in \reals_+^n, y \in
  \reals_+} ~ & \phi(x, y) \equiv \bigintsss_0^y \left( \Prob \left[
  \min_{t \in [m]} \left( T_i x - \zeta_i \right) \geq t  \right] - (1 - \epsilon) \right) \dLeb{t} \\
  \st{} ~
  & c^{\top} x \leq u, \\
  & 0 \leq x_i \leq U, \quad \forall i \in [n],
\end{align*}
where we adjust the procurement budget \(u\) and apply the algorithm with various \(u\) to obtain a risk envelope. In addition, when applying Algorithm~\ref{algo:bcm-for-rho-u}, we employ the stochastic approach described in~\cite{norkin-1993-analy-optim} to be the oracle \(\mathcal{O}_u(y_k, \varepsilon_k)\) in Step 5 and terminate the algorithm whenever the change in \(y_k\) becomes sufficiently small, specifically, when \(|y_k - y_{k+1}| \leq 10^{-6}\).

\begin{figure}[!tbp]
\centering
\begin{tikzpicture}
\begin{axis}[
  ymode=log,
  xlabel={Iterations},%
  ylabel={\(\phi(x_k, y_k) - {\phi}^{\ast}\)},%
]

\addplot[
line width=1pt,
error bars/.cd,
y dir = both,
y explicit,
error bar style={line width=1.0pt},
error mark options={line width=0.5pt,mark size=3pt,rotate=90},
] table[x=x, y=y, y error=ey]{
    x   y             ey
    1   2.1730226     0.002259489
    2   1.5890378     0.006690989
    3   1.059986      0.006570152
    4   0.617507      0.012340062
    5   0.367558      0.007582801
    6   0.1317346     0.010366181
    7   0.0000998     0.00011013
};
\end{axis}
\end{tikzpicture}
\caption{Convergence of Algorithm~\ref{algo:bcm-for-rho-u} on Production Planning Instances; solid line = average of the difference \(\phi(x_k, y_k) - {\phi}^{\ast}\) across five runs, error bar = standard deviation of the difference}
\label{fig:exps-conv-of-algo-1}
\end{figure}

We demonstrate the convergence of Algorithm~\ref{algo:bcm-for-rho-u} in Figure~\ref{fig:exps-conv-of-algo-1}, which is obtained by running the algorithm for five times on an instance with \(n=10\), \(m=5\), \(U = 200\), \(c\) randomly drawn from the set \(\{1, \ldots, 10\}\), and \(\Expect_{\Prob}[\zeta_i]\) randomly drawn from the interval \([10, 51]\). In this figure, the solid line represents the difference between each iterate \(\phi(x_k, y_k)\) and the final iterate \(\phi^*\), averaged across the five runs, and the error bar represents the standard deviation of the difference. From this figure, we observe that Algorithm~\ref{algo:bcm-for-rho-u} converges at a linear rate in only a few iterations.

\begin{figure}[!htbp]
\centering
\begin{subfigure}{0.48\textwidth}
\begin{tikzpicture}
\begin{axis}[%
  x label style={at={(axis description cs:0.5,0.00)},anchor=north},%
  y label style={at={(axis description cs:0.1,.5)},rotate=0,anchor=south},%
  xlabel={\(u\) budget},%
  ylabel={\(\delta\) radius},%
  legend pos=north west,
  legend cell align={left},%
]
\addplot [smooth, mark=*, color=blue, line width=1pt] table [x=budget, y=radius, col sep=comma] {./exps/SDC-N10T5-epsi0.1-seed7283.csv};
\addlegendentry{\(\epsilon = 0.1\)}
\addplot [smooth, mark=*, color=green, line width=1pt] table [x=budget, y=radius, col sep=comma] {./exps/SDC-N10T5-epsi0.15-seed7283.csv};
\addlegendentry{\(\epsilon = 0.15\)}
\addplot [smooth, mark=o, color=magenta, line width=1pt] table [x=budget, y=radius, col sep=comma] {./exps/SDC-N10T5-epsi0.2-seed7283.csv};
\addlegendentry{\(\epsilon = 0.2\)}
\end{axis}
\end{tikzpicture}
\caption{\(n = 10\) and \(m = 5\)}
\end{subfigure}
\hfill
\begin{subfigure}{0.48\textwidth}
\begin{tikzpicture}
\begin{axis}[%
  x label style={at={(axis description cs:0.5,0.00)},anchor=north},%
  y label style={at={(axis description cs:0.1,.5)},rotate=0,anchor=south},%
  xlabel={\(u\) budget},%
  ylabel={\(\delta\) radius},%
  legend pos=north west,
  legend cell align={left},%
]
\addplot [smooth, mark=*, color=blue, line width=1pt] table [x=budget, y=radius, col sep=comma] {./exps/SDC-N30T20-epsi0.1-seed1116.csv};
\addlegendentry{\(\epsilon = 0.1\)}
\addplot [smooth, mark=*, color=green, line width=1pt] table [x=budget, y=radius, col sep=comma] {./exps/SDC-N30T20-epsi0.15-seed1116.csv};
\addlegendentry{\(\epsilon = 0.15\)}
\addplot [smooth, mark=o, color=magenta, line width=1pt] table [x=budget, y=radius, col sep=comma] {./exps/SDC-N30T20-epsi0.2-seed1116.csv};
\addlegendentry{\(\epsilon = 0.2\)}
\end{axis}
\end{tikzpicture}
\caption{\(n = 30\) and \(m = 20\)}
\end{subfigure}
\caption{Risk envelopes under different risk thresholds}\label{fig:exps-stodmd-risk-envs}
\end{figure}
We demonstrate the trade-off between the robustness and the budget in Figure~\ref{fig:exps-stodmd-risk-envs}, which is obtained by solving instances with \(\epsilon \in \{0.1, 0.15, 0.2\}\), \(n \in \{10, 30\}\), and \(m \in \{5, 20\}\). The vertical axis of this figure represents \(\rho(u)\), i.e., the largest Wasserstein radius \(\delta\) that allows (\PDRCCL) to be satisfied. From this figure, we observe that, for fixed \(\epsilon\), the largest allowable \(\delta\) is an S-shaped function of the the budget \(u\). That is, \(\delta\) remains at zero for small \(u\), and then \(\delta\) increases with a diminishing momentum as \(u\) becomes larger. In addition, for fixed \(\delta\), it needs a larger budget \(u\) to keep (\PDRCCL) satisfied as \(\epsilon\) decreases.

\begin{FirstUpdate}
\subsubsection{Out-of-Sample Performance}\label{sec:exps-pp-out-of-sample}

We compare the out-of-sample performance of (\PDRCCL) and
(\CCP{}) on production planning problems with
\(\delta \in \set{0.005, 0.01, 0.02, 0.05}, N \in \set{5, 10, 15, 30, 50, 70}, \epsilon = 0.1, m = 5\), and \(n = 10\). Specifically, we assume that \(\Probt\) is a multivariate Gaussian distribution with known mean and covariance and draw a set of \(N\) training
data samples from \(\Probt\). Then, we center
\(\mathcal{P}\) around the Gaussian distribution with empirical mean and covariance matrix, which we estimate from the training data. After
obtaining the optimal solutions to (\PDRCCL{}) and (\CCP{}) models, we compare their
out-of-sample performance by the probability of fully satisfying the demands with respect to \(\Probt\) and report the results in Figure~\ref{fig:exps-pp-oos}. In this figure, the solid line and dots are the
average out-of-sample performance across three randomly generated instances and
the shaded region around them are the \(95\%\) confidence interval.
\begin{figure}[!htbp]
\centering
\begin{subfigure}[b]{0.49\textwidth}
\centering
\includegraphics[width=\textwidth]{./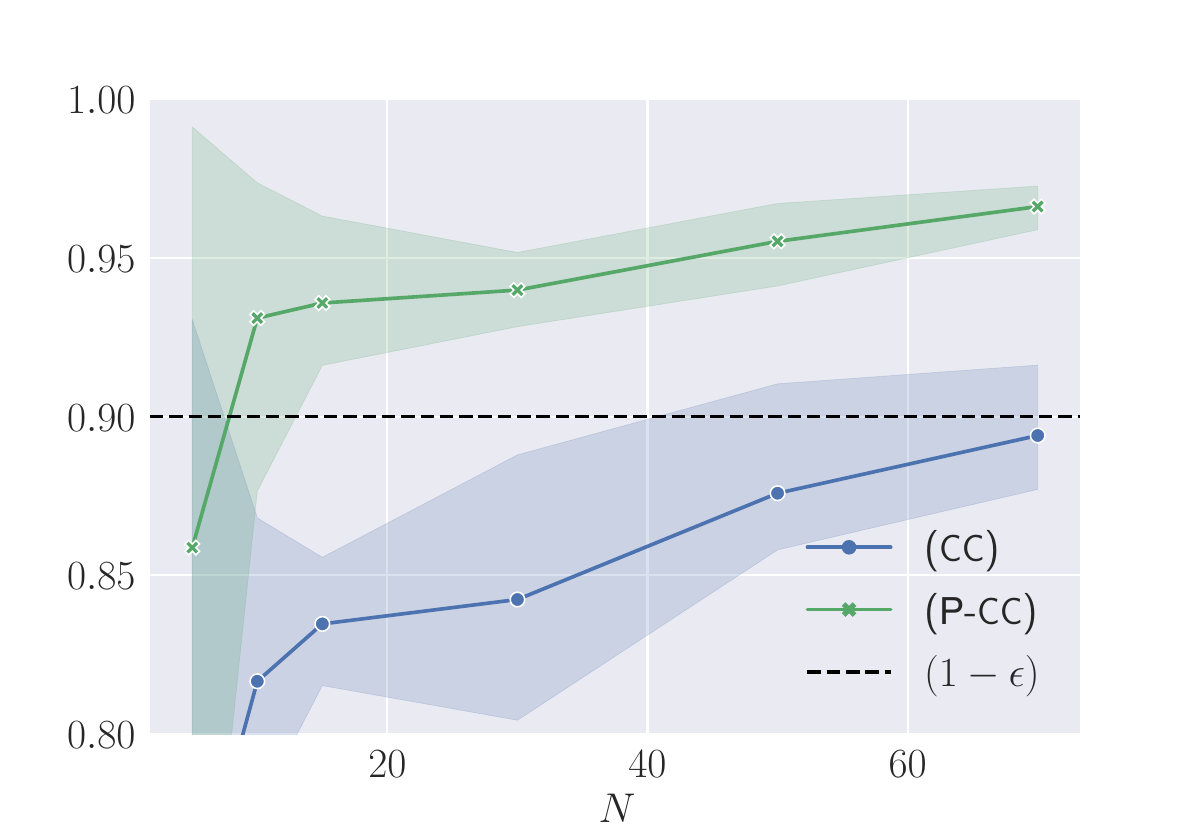}
\caption{Radius \(\delta = 0.02\)}\label{fig:exps-pp-oos-delta}
\end{subfigure}
\hfill
\begin{subfigure}[b]{0.49\textwidth}
\centering
\includegraphics[width=\textwidth]{./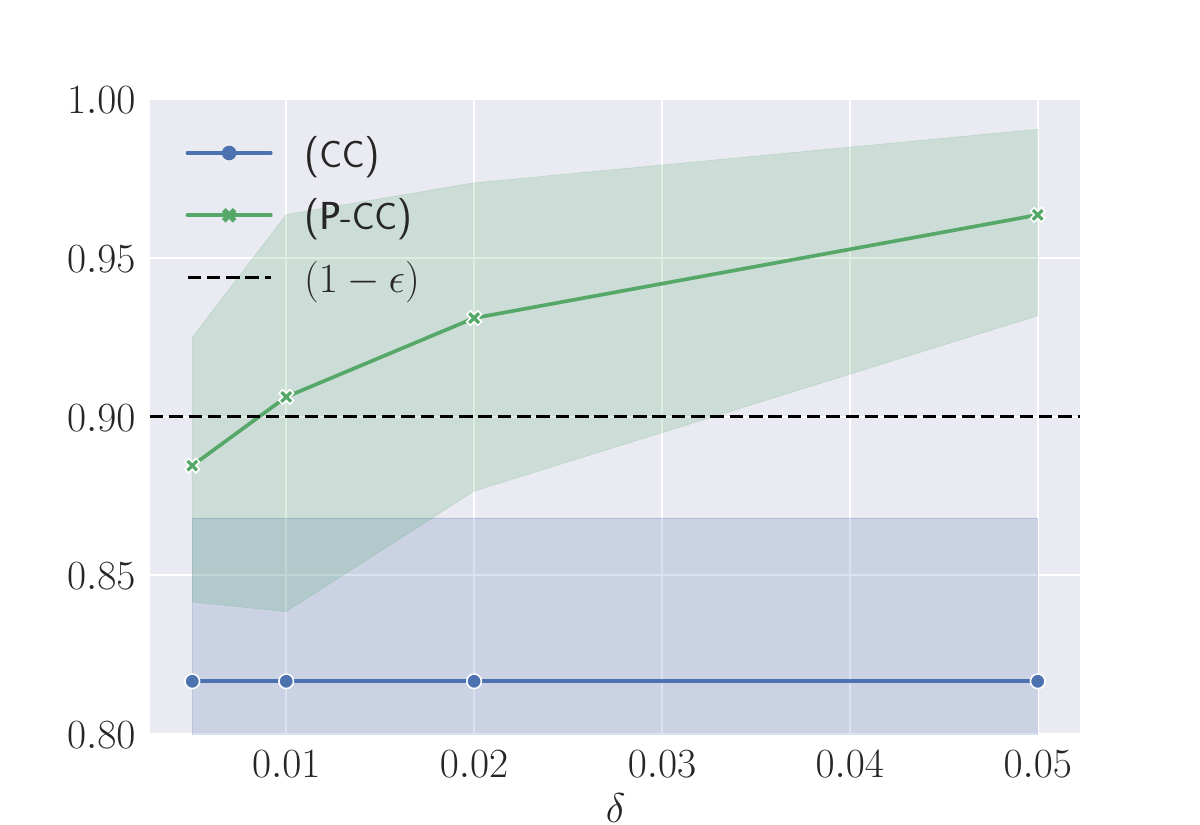}
\caption{Data size \(N = 10\)}\label{fig:exps-pp-oos-N}
\end{subfigure}
\caption{Out-of-sample performance of (\PDRCCL{}) and (\CCP{}) on the production planning problem}\label{fig:exps-pp-oos}
\end{figure}
From Figure~\ref{fig:exps-pp-oos-delta}, we
observe that the out-of-sample performance of both models improve as we obtain more training data. However, there is a significant difference in their sensitivity to having more data: (\PDRCCL{}) achieves the target reliability level of \(90\%\) with only \(N = 10\) data, whereas (\CCP{}) fails to achieve the same level even when \(70\) samples are provided. In addition, we observe from Figure~\ref{fig:exps-pp-oos-N} that, with as few as \(N=10\) data, the reliability of (\PDRCCL) quickly increases and achieves the target reliability level as soon as \(\delta\) exceeds \(0.01\). Intuitively, a small degree of pessimism suffices to improve reliability drastically.
\end{FirstUpdate}

\begin{SecondUpdateRemoved}

\end{SecondUpdateRemoved}

\begin{SecondUpdate}
\subsection{Hydro Planning}
\label{sec:exps-hydro-power}

We test our inner approximation approach for solving the two-sided~(\PDRCCL{}) using the hydro planning model in Example~\ref{exa:water-plan}, wherein we seek to maximize the revenue of electricity generation while maintaining the water inventory within the safety interval \([\ell_{\text{low}}, \ell_{\text{high}}]\):
\begin{align*}
\max \
& \ \sum_{t = 1}^T c_t x_t, \\
\st{} \
& \ \inf_{\Prob \in \mathcal{P}} \Prob \left[
\ell_{\text{low}} \leq \ell_0 + \sum_{i=1}^t \left( \xi_i - x_i \right) \leq \ell_{\text{high}}
\right] \geq (1 - \epsilon), \quad \forall t \in [T], \\
& \ x \in \reals^T_+,
\end{align*}
where \(c_t\) denotes the unit price of electricity in time unit \(t\). We follow~\cite{zymler-2011-distr-robus} by
setting \(T = 5, \ell_0 = 1, \ell_{\text{low}} = 1, \ell_{\text{high}} = 5\), and \(c_t = 10 + 5 \sin \left[ \frac{\pi (1 - t)}{3} \right]\) for all \(t \in [T]\). To calibrate the Wasserstein ball \(\mathcal{P}\), we follow~\cite{zymler-2011-distr-robus} to generate \(N\) historical data samples of \(\xi_i\) from a truncated Gaussian distribution with the support \(\Xi = [0, 2]\),
mean \(\mu = 1\), and standard deviation \(0.1\). In addition, we set
the correlation between \(\xi_i\) and \(\xi_{i+1}\) to be \(0.10\) for all
\(i \in [T]\) and the risk level \(\epsilon\) to be \(10\%\). We evaluate the
out-of-sample reliability of a given solution, that is, the (joint) probability of the water level stays within \([\ell_{\text{low}}, \ell_{\text{high}}]\) across all \(T\) time periods, using Monte Carlo sampling and the same approach for generating the calibration (training) data.

\begin{figure}[!htbp]
\centering
\begin{subfigure}{.49\textwidth}
\centering
\includegraphics[width=\textwidth]{./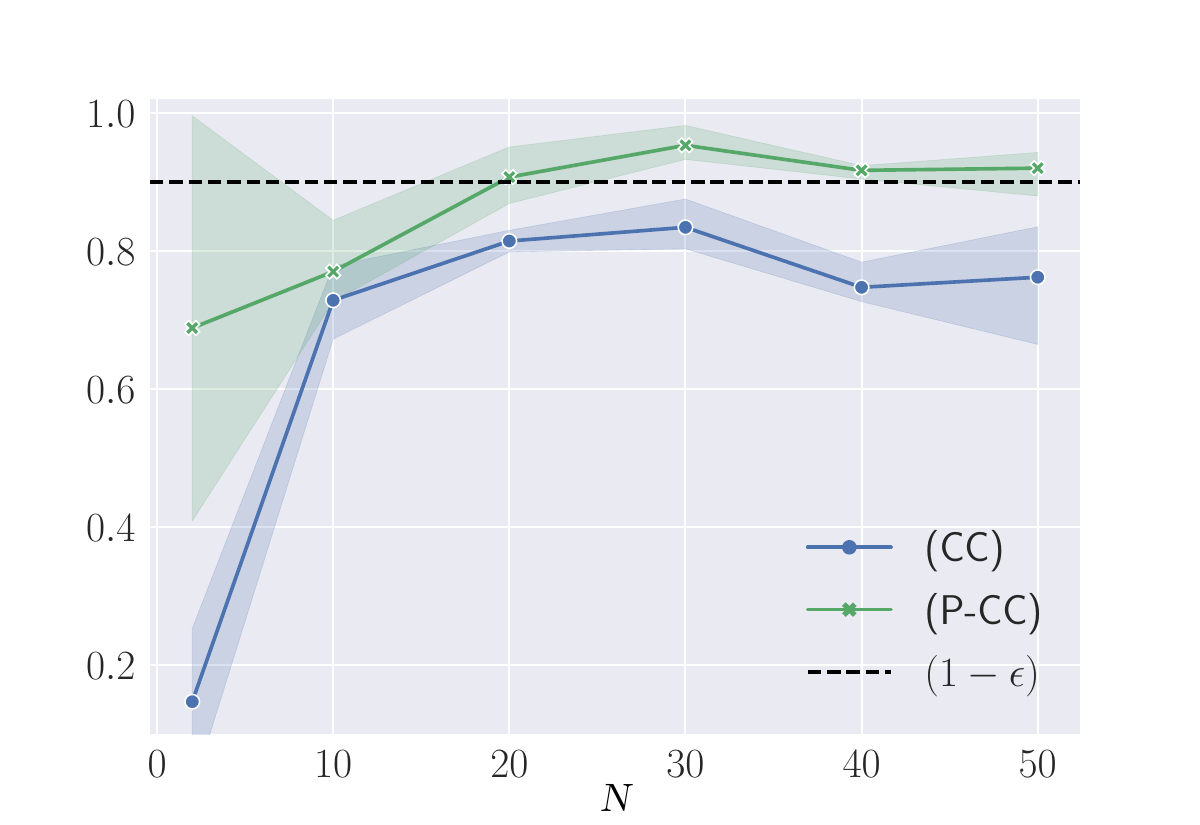}
\caption{\small Radius \(\delta = 0.019\)}\label{fig:exps-hydro-oos-1}
\end{subfigure}%
\hfill
\begin{subfigure}{.49\textwidth}
\centering
\includegraphics[width=\textwidth]{./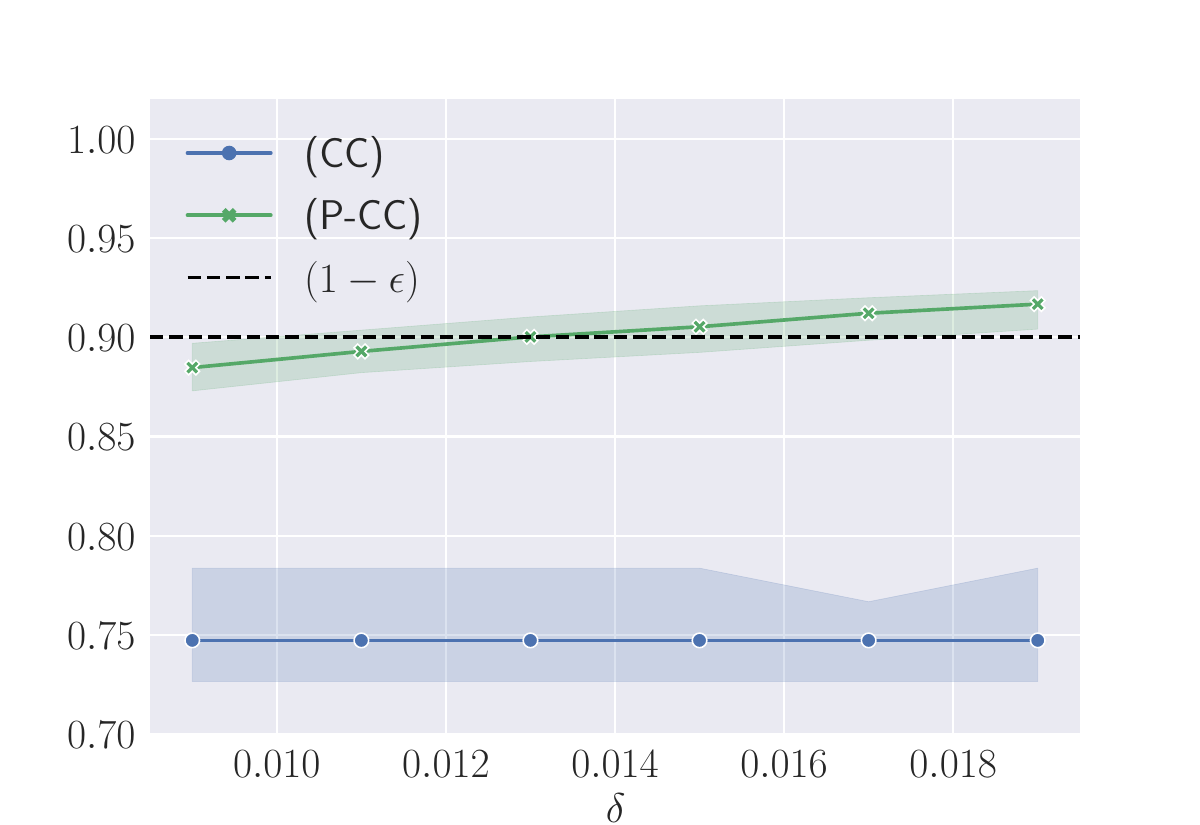}
\caption{\small Training data size \(N = 50\)}\label{fig:exps-hydro-oos-2}
\end{subfigure}
\caption{Out-of-sample reliability of (\textbf{P-CC}) and (\textbf{CC}) for hydro planning\label{fig:exps-hydro-oos}}
\end{figure}

\subsubsection{Out-of-Sample Reliability}%
\label{sec:exps-hydro-oos}

To compare~(\PDRCCL{}) and~(\CCP{}), we solve
randomly generated instances of the hydro planning problem with
\(\delta \in \set{0.009, 0.011, 0.013, 0.015, 0.017, 0.019}\), \(N \in \set{2, 10, 20, 30, 40, 50}\),
and report the results in Figure~\ref{fig:exps-hydro-oos}, where the solid lines
are the average value over five runs and the shaded regions represent the
associated \(95\%\) confidence intervals. In particular,
Figure~\ref{fig:exps-hydro-oos-1} depicts the out-of-sample reliability as the
training data size \(N\) increases. From this figure, we observe that the
out-of-sample reliability of both models improve as \(N\) increases,
but there is a significant difference in the effectiveness of using the data.
For example, (\PDRCCL) is able to achieve the target reliability of \(90\%\)
using only \(20\) data on hand, while (\CCP) fails to achieve the same
target even with \(50\) data samples. This demonstrates that the (\PDRCCL) model
can utilize the data more effectively in a data-driven context. In addition,
Figure~\ref{fig:exps-hydro-oos-2} depicts the improvement of the (\PDRCCL)
out-of-sample reliability as we increase the radius \(\delta\) of the
Wasserstein ball. From this figure, we observe that the reliability of (\PDRCCL)
achieves the target reliability of \(90\%\) as soon as \(\delta\) exceeds~0.012.

\subsubsection{Comparison with Moment Ambiguous Chance Constraints}%
\label{sec:exps-hydro-mom}

We compare the solution quality of our
Wasserstein~(\PDRCCL{}) with that of a \emph{moment} ambiguous~(\PDRCCL{})
studied in~\citet{xie-2017-distr-robus}, who adopted a moment ambiguity set using the first two moments of \(\xi\). To this end, we first generate random instances of the hydro planning problem with different (training) data sizes \(N \in \set{500, 700, 900, 1000}\). Next, we use the sample mean and covariance matrix to characterize the moment ambiguity set and use the \(k\)-fold
cross validation to select the smallest radius \(\delta > 0\) to achieve
the target reliability of \((1 - \epsilon) = 90\%\) for the Wasserstein ambiguity set. Then, we solve the problem instances with respect to the two ambiguity sets and compare the optimal revenues and the out-of-sample reliability of the respective
solutions.
\begin{table}[!htbp]
\centering
\caption{Comparisons between Wasserstein~(\PDRCCL{}) and moment~(\PDRCCL{})}\label{tab:exps-hydro-mom}
\begin{tabularx}{0.7\textwidth}{@{}l|XX|XX@{}}
\toprule
{}    & \multicolumn{2}{c}{\small Out-of-Sample Reliability (\%)} & \multicolumn{2}{c}{\small Optimal Revenue (\$)} \\
\(N\) &        {\small Wasserstein}                   &      {\small moment} &        {\small Wasserstein} &     {\small moment}\\
\midrule
500   &  94.6                                         &  99.6                &  68.7                     &  63.9 \\
700   &  93.4                                         &  99.6                &  69.0                     &  63.9 \\
900   &  94.0                                         &  99.7                &  68.8                     &  63.8 \\
1000  &  94.3                                         &  99.7                &  68.7                     &  63.8 \\
\midrule
Average   & 94.1                                         & 99.7                 & 68.8                       & 63.8 \\
\bottomrule
\end{tabularx}
\end{table}

Table~\ref{tab:exps-hydro-mom} reports their out-of-sample
reliability and optimal revenue across varying data sizes \(N\). We observe that the out-of-sample reliability of both models exceed the target reliability of~90\%, but that of the Wasserstein~(\PDRCCL{}) is significantly closer to the target level than that of the moment~(\PDRCCL{}). This implies
that the Wasserstein model is less conservative than the moment model. The comparison of optimal revenues confirms this implication because the Wasserstein model consistently outperforms the moment model.

\subsubsection{Comparison with a Mixed-Integer Reformulation of~(\PDRCCL{})}
\label{sec:exps-hydro-runtime}

To demonstrate the scalability of our convex (\PDRCCL{}) model, we report the runtime of our \SOC{} formulation proposed in Proposition~\ref{prop:soc-inner} and Algorithm~\ref{algo:seeds-for-C-hat} with the error threshold \(\tau = 10^{-5}\) under various parameter settings, and compare with the runtime of the mixed-integer second-order conic (MISOC)
reformulation proposed in~\cite[Theorem~2]{xie-2019-distr-robus}. Specifically,
we generate random hydro planning instances with (training) data size \(N \in \{500, 700, 900, 1000\}\), risk level
\(\epsilon \in \set{0.05, 0.10}\), and radius
\(\delta \in \set{0.01, 0.05, 0.09}\). The average runtime over three different
instances of our~\SOC{} formulation and that of~\cite{xie-2019-distr-robus}'s
MISOC reformulation are reported in
Table~\ref{tab:exps-hydro-mom-time}. We observe that our approach is faster and more scalable in \((N, \epsilon, \delta)\) than the mixed-integer reformulation.
\begin{table}[!htbp]
\centering\small
\caption{Comparison between \SOC{} and MISOC Reformulations}\label{tab:exps-hydro-mom-time}
\begin{tabularx}{0.7\textwidth}{@{}XXX|XX@{}}
  \toprule
        &              & {}         & \multicolumn{2}{c}{Runtime (sec)} \\
  \(N\) & \(\epsilon\) & \(\delta\) &  \SOC{}      &    MISOC               \\
  \midrule
  500   & 0.05         & 0.01       &      0.00050 &          2.21 \\
        &              & 0.05       &      0.00036 &          2.14 \\
        &              & 0.09       &      0.00028 &          2.23 \\
        & 0.10         & 0.01       &      0.00085 &          6.29 \\
        &              & 0.05       &      0.00066 &          6.23 \\
        &              & 0.09       &      0.00033 &          6.29 \\
  700   & 0.05         & 0.01       &      0.00074 &          4.73 \\
        &              & 0.05       &      0.00033 &          4.69 \\
        &              & 0.09       &      0.00026 &          4.73 \\
        & 0.10         & 0.01       &      0.00064 &         11.85 \\
        &              & 0.05       &      0.00081 &         12.06 \\
        &              & 0.09       &      0.00033 &         11.62 \\
  900   & 0.05         & 0.01       &      0.00048 &          7.49 \\
        &              & 0.05       &      0.00034 &          7.48 \\
        &              & 0.09       &      0.00043 &          7.39 \\
        & 0.10         & 0.01       &      0.00064 &         21.20 \\
        &              & 0.05       &      0.00042 &         21.07 \\
        &              & 0.09       &      0.00033 &         28.93 \\
  1000  & 0.05         & 0.01       &      0.00046 &         16.12 \\
        &              & 0.05       &      0.00034 &         16.11 \\
        &              & 0.09       &      0.00027 &         16.19 \\
        & 0.10         & 0.01       &      0.00177 &         17073.21 \\
        &              & 0.05       &      0.00068 &         25.13 \\
        &              & 0.09       &      0.00033 &         31.70 \\
  \midrule
 Average    &              &            &     0.00053  & 722.79 \\
  \bottomrule
\end{tabularx}
\end{table}

\end{SecondUpdate}

\clearpage{}

\printbibliography{}

\clearpage{}

\begin{appendix}

\section{Preliminary Results} \label{apx-preliminary}
We review properties of \(\alpha\)-concave functions, \(\alpha\)-concave probability measures, as well as elliptical and star-unimodal distributions.

\begin{lemma}[Lemma~\(4.8\) in~\cite{shapiro-2009-lectur}]\label{lem:monotone-m-alpha}
The mapping \(\alpha \mapsto m_{\alpha}(a, b; \theta)\) is
nondecreasing and continuous.
\end{lemma}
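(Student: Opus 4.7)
My plan is first to dispose of the degenerate case: if $ab = 0$, then by definition $m_\alpha(a,b;\theta) \equiv 0$ for every $\alpha \in \overline{\mathbb{R}}$, so monotonicity and continuity hold trivially. In the remainder I would fix $a, b > 0$ and $\theta \in [0,1]$ and show that $\alpha \mapsto m_\alpha(a,b;\theta)$ is the classical weighted power mean, whose monotonicity and continuity I would reconstruct directly.

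For monotonicity on finite nonzero exponents I would invoke Jensen's inequality. Given $\alpha < \beta$ with $\alpha\beta > 0$, the map $t \mapsto t^{\beta/\alpha}$ is convex on $\mathbb{R}_{+}$ because $\beta/\alpha > 1$, so applied to the two-point weighted average on $(a^\alpha, b^\alpha)$ with weights $(\theta, 1-\theta)$ it gives
\[
\bigl(\theta a^\alpha + (1-\theta) b^\alpha \bigr)^{\beta/\alpha}
\;\leq\; \theta a^\beta + (1-\theta) b^\beta.
\]
Raising to the $1/\beta$ power (and tracking the inequality direction, which flips once when $\beta < 0$) yields $m_\alpha \leq m_\beta$. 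The mixed-sign case $\alpha < 0 < \beta$ I would handle by chaining through $\alpha = 0$ via the weighted AM--GM inequality $a^\theta b^{1-\theta} \leq \theta a + (1-\theta) b$, applied to the pairs $(a^\alpha, b^\alpha)$ and $(a^\beta, b^\beta)$ so as to sandwich $m_0$ between $m_\alpha$ and $m_\beta$. The main bookkeeping obstacle lies precisely here: the sign pattern of $(\alpha, \beta)$ determines whether each exponentiation preserves or reverses the inequality, so the case split has to be done carefully.

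For continuity, smoothness of $(\theta a^\alpha + (1-\theta)b^\alpha)^{1/\alpha}$ in $\alpha$ on each of $(-\infty, 0)$ and $(0, +\infty)$ is immediate, so only the three boundary points $\alpha \in \{-\infty, 0, +\infty\}$ require care. At $\alpha = 0$ I would expand $a^\alpha = 1 + \alpha \ln a + o(\alpha)$, so that $\theta a^\alpha + (1-\theta) b^\alpha = 1 + \alpha \bigl[\theta \ln a + (1-\theta) \ln b\bigr] + o(\alpha)$, and take the $1/\alpha$-th power to recover the limit $a^\theta b^{1-\theta}$. At $\alpha \to +\infty$ I would factor $M := \max\{a, b\}$ out of the inner sum, reducing the expression to $M \cdot \bigl(\theta (a/M)^\alpha + (1-\theta)(b/M)^\alpha\bigr)^{1/\alpha}$, where the bracketed factor is pinched between a positive constant and $1$ and hence its $1/\alpha$-th power tends to $1$, giving $m_\alpha \to M$; the case $\alpha \to -\infty$ is symmetric with $M$ replaced by $\min\{a,b\}$. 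Since these limits match the prescribed values of $m_\alpha$ at $0, +\infty, -\infty$, monotonicity extends to all of $\overline{\mathbb{R}}$ and continuity holds on the whole extended line.
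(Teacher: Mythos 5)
The paper itself does not prove this lemma: it is imported verbatim as Lemma~4.8 of the cited lecture notes of Shapiro, Dentcheva, and Ruszczy\'nski, so there is no in-paper argument to compare yours against. Your self-contained reconstruction follows the classical power-mean route, and its architecture --- dispose of $ab=0$, Jensen for same-sign exponents, chaining through $m_0$ via weighted AM--GM for the mixed-sign case, and separate limit computations at $0$ and $\pm\infty$ --- is the standard and correct one.

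One step is wrong as written. For $\alpha<\beta<0$ you have $\beta/\alpha=\abs{\beta}/\abs{\alpha}\in(0,1)$, not $\beta/\alpha>1$, so $t\mapsto t^{\beta/\alpha}$ is \emph{concave} there and your displayed inequality
\[
\bigl(\theta a^{\alpha}+(1-\theta)b^{\alpha}\bigr)^{\beta/\alpha}\leq \theta a^{\beta}+(1-\theta)b^{\beta}
\]
holds with the opposite sign. The conclusion still comes out right, because raising to the power $1/\beta<0$ reverses the inequality once more; but the argument you actually recorded (convexity, then a single flip) would deliver $m_{\alpha}\geq m_{\beta}$ in that sub-case. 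You flagged the sign bookkeeping as the delicate point, yet the specific justification you committed to fails exactly there --- you need concavity plus one flip, or alternatively the identity $m_{\alpha}(a,b;\theta)=1/m_{-\alpha}(1/a,1/b;\theta)$ to transport the positive-exponent case to the negative one. A second, smaller caveat: your pinching argument at $\alpha\to+\infty$ assumes $\theta\in(0,1)$. If $\theta=1$ and $b>a$, the bracketed factor is $(a/M)^{\alpha}\to 0$ rather than bounded below by a positive constant, and indeed $m_{\alpha}\equiv a$ for all finite $\alpha$ while $m_{+\infty}=b$, so continuity at the endpoints genuinely requires nondegenerate weights (monotonicity is unaffected). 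Both issues are repaired by a short case split, but as stated the negative-exponent case does not go through.
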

The monotonicity of \(m_{\alpha}\) implies that if \(f\) is
\(\alpha\)-concave, then it is \(\beta\)-concave for all
\(\beta \leq \alpha\). Under certain conditions, summation preserves \(\alpha\)-concavity.
\begin{proposition}[Theorem~\(4.19\) in~\cite{shapiro-2009-lectur}]\label{prop:sum-of-alpha-concave-funcs}
If the function \(f \colon \reals^n \to \reals_+\) is
\(\alpha\)-concave and the function \(g \colon \reals^n \to \reals_+\)
is \(\beta\)-concave, where \(\alpha, \beta \geq 1\), then
\(f(x) + g(x)\) is \(\min\{\alpha, \beta\}\)-concave.
\end{proposition}

The next two propositions review the relationship between \(\alpha\)-concave
probability measures and their densities.
\begin{proposition}[Theorem~\(4.15\) in~\cite{shapiro-2009-lectur}]\label{prop:alpha-prob-meas-and-pdf}
Let \(\Omega\) be a convex subset of \(\reals^n\) and \(s\) be the
dimension of the smallest affine subspace \(\mathcal{H}(\Omega)\)
containing \(\Omega\). The probability measure \(\Prob\) is
\(\alpha\)-concave with \(\alpha \leq 1 /s\) if and only if its
probability density function (PDF) with respect to the Lebesgue measure on
\(\mathcal{H}\) is \(\alpha^{\prime}\)-concave with
\begin{align*}
  \alpha^{\prime} :=
  \begin{cases}
  \alpha/ ( 1 - s \alpha) & \text{ if \(\alpha \in (-\infty, 1/s)\),} \\
  -1/s & \text{ if \(\alpha = -\infty\),} \\
  +\infty & \text{ if \(\alpha = 1/s\).}
  \end{cases}
\end{align*}
\end{proposition}

\begin{proposition}[Theorem~\(2\) in~\cite{gupta-1980-brunn-minkow}]\label{prop:brunn-minkow-ineq}
Let \(f_0, f_1\) be two non-negative Borel-measurable functions on \(\reals^n\)
with non-empty supports \(S_0\) and \(S_1\), respectively. Assume that \(f_0\)
and \(f_1\) are integrable with respect to the Lebesgue measure on \(\reals^n\).
Let \(\theta \in (0, 1)\) be a fixed number and \(f\) be a non-negative,
measurable function on \(\reals^n\) such that
\begin{align*}
f(x) \geq m_{\alpha} [f_0(x_0), f_1(x_1); \theta],
\end{align*}
whenever \(x = (1 - \theta)x_0 + \theta x_1\) with \(x_0 \in S_0, x_1 \in S_1\);
\(-1/n \leq \alpha \leq +\infty\). Then
\begin{align*}
  \int_{(1 - \theta)S_0 + \theta S_1} f(x) \dLeb{x}
  \geq m_{\alpha^{\ast}_n} \left[ \int_{S_0} f_0(x) \dLeb{x}, \int_{S_1} f_1(x) \dLeb{x}; \theta \right],
\end{align*}
where
\begin{align*}
  \alpha^{\ast}_n :=
  \begin{cases}
  \alpha / (1 + n \alpha) & \text{ if \(\alpha > -1/n\),} \\
  1/n & \text{ if \(\alpha = +\infty\),} \\
  -\infty & \text{ if \(\alpha = -1/n\).}
  \end{cases}
\end{align*}
\end{proposition}

\begin{FirstUpdate}
We move on to review preliminary results for elliptical distributions. Following the convention in the literature, we write
\(X \sim \mathcal{E}_n(\mu, \Sigma, \phi)\) if \(X\) is elliptically distributed
with parameters \(\mu, \Sigma, \phi\). Specifically, we say a random vector
\(Y\) is spherically distributed if \(Y \sim \mathcal{E}_n(0, I_n, \phi)\), which implies
\(\Lambda Y \sim \mathcal{E}_n(0, \Lambda \Lambda^{\top}, \phi)\)~(cf.~Proposition~\(1\) in~\cite{frahm-2004-gener-ellip}).

\begin{proposition}[Theorems~\(1\)--\(3\) in~\cite{cambanis-1981-theor-ellip}]\label{prop:repr-of-elli-dist}
\(X \sim \mathcal{E}_n(\mu, \Sigma, \phi)\) with
\(\Rank(\Sigma) = k\) if and only if
\begin{align*}
  X - \mu \disteq{} R \Lambda U_k,
\end{align*}
where \(\disteq{}\) represents being identical in distribution, \(U_k\) is a \(k\)-dimensional random vector uniformly distributed on the
sphere \(S^{k-1}\), \(R\) is a nonnegative random variable independent from
\(U_k\), \(\mu \in \reals^n\), and \(\Lambda \in \reals^{n \times k}\) with
\(\Rank(\Lambda) = k\) and \(\Sigma = \Lambda \Lambda^{\top}\). Furthermore, if
\(R\) has a density function \(f_R(\cdot)\), then the probability density function \(f_X(\cdot)\) of \(X\) can be written as
\begin{gather*}
  f_X(x) =
  C \cdot t^{-\frac{n-1}{2}} f_R\left(\sqrt{(x - \mu)^{\top} \Sigma^{-1} (x - \mu)}\right), \quad
  \forall x \in \reals^n \setminus \set{\mu},
\end{gather*}
where \(C\) is a nonnegative constant depending on \(\Lambda\) and \(k\) only.%
\end{proposition}
\begin{remark}\label{rmk:normal-of-ellip-dist}
Suppose that \(a \in \reals^n\) is a column vector and
\(X \sim \mathcal{E}_n(\mu, \Lambda \Lambda^{\top}, \phi)\), then by
Proposition~\ref{prop:repr-of-elli-dist}, we have
\begin{align*}
  a^{\top} (X - \mu)
  \disteq{} R (a^{\top} \Lambda) U_k
  = R \norm{\Lambda^{\top} a}_2 \cdot e^{\top}_a U_k
  \disteq{} R \norm{\Lambda^{\top} a}_2 \cdot e^{\top}_1 U_k,
\end{align*}
where
\(e_a := \left( \frac{\Lambda^{\top} a}{\norm{{\Lambda^{\top} a}}_2} \right)\)
is the normalized vector of \(\Lambda^{\top}a\), \(e_1\) denotes the first orthonormal basis of \(\reals^n\), and the last equality is because \(U_k\) is
invariant to orthogonal transformations. If we further assume that \(\Sigma\) is
positive definite (i.e., \(k=n\)), then
\(a^{\top}(X - \mu) / \norm{\Lambda^{\top} a}_2\) is identically distributed as
\(R \cdot e^{\top}_1 U_n\), which is a one-dimensional elliptical distribution
not dependent on \(a\).
\end{remark}
Next, we mention results for star-unimodal distributions.
\begin{proposition}[Theorem~\(2.1\) in~\cite{dharmadhikari-1988-unimod-convex}]\label{prop:charac-of-star-unimodal}
An \(n\)-dimensional random vector \(X\) is star-unimodal if and only if \(X\) is distributed as \(V^{1/n} Z\), where \(V\) and \(Z\) are independent and \(V\) is uniformly distributed on \((0,1)\).
\end{proposition}

\begin{proposition}\label{prop:proj-of-sphr-dist-is-unimodal}
Suppose that \(X \sim \mathcal{E}_n(0, I_n, \phi)\) is spherically distributed
and star-unimodal, then \(e^{\top}_1 X\) is unimodal.
\end{proposition}
We are not aware of this result in the literature, and so we provide a proof in the following.

\begin{proof}
By Propositions~\ref{prop:repr-of-elli-dist}
and~\ref{prop:charac-of-star-unimodal}, random vector \(X\) admits two representations:
\begin{align*}
R \cdot I_n \cdot U_n \disteq X \disteq V^{1/n} \cdot Z,
\end{align*}
where \(R\) is a nonnegative random variable independent from \(U_n\), which is
uniformly distributed on the sphere \(S^{n-1}\), \(V\) is a random variable
uniformly distributed on \((0,1)\) and is independent from \(Z\), an
\(n\)-dimensional random vector. Because \(X\) is spherically distributed, for
any two distinct unit vectors \(e_i, e_j \in S^{n-1}, e_i \neq e_j\), we have
\[ V^{1/n} e^{\top}_i Z \disteq R \cdot e^{\top}_i U_n \disteq R \cdot e^{\top}_j U_n \disteq V^{1/n} e^{\top}_j Z.\]
First, we show that \(Z\) is spherically distributed. Observe that
\begin{align*}
  \Prob \left[ e^{\top}_j V^{1/n} Z \leq t \right]
  & = \Expect_V \left[\Expect_{Z|V} \left[ \Ind{ V^{1/n} e^{\top}_j Z \leq t} | V\right]\right]
    = \int_0^1 \Prob \left[ v^{1/n} \cdot e^{\top}_j Z \leq t \right] \dLeb{v} \\
  & = \int_{+\infty}^t \Prob \left[ e^{\top}_j Z \leq v' \right] \dLeb{\left( \frac{t}{v'} \right)^n}
    = n t^n \int_t^{+\infty} \frac{1}{(v')^{n+1}} \Prob \left[ e^{\top}_j Z \leq v' \right] \dLeb{v'},
\end{align*}
where the first equality is by the smoothing property, the second equality is
because \(Z\) and \(V\) are independent, and the third equality is by the variable
substitution \(v' = t / v^{1/n}\). Because \(e^{\top}_j V^{1/n} Z\) is independent from \(e_j\), we can denote its distribution
function as \(F_Z(t)\) and have
\begin{gather*}
  (n t^n)^{-1} F_Z(t)
  = \int_t^{+\infty} \frac{1}{(v')^{n+1}} \Prob \left[ e^{\top}_j Z \leq v' \right] \\
  \implies
  \frac{\dif}{\dif{t}} \left[ (n t^n)^{-1} F_Z(t) \right]
  = \frac{\dif}{\dif{t}} \int_t^{+\infty} \frac{1}{(v')^{n+1}} \Prob \left[ e^{\top}_j Z \leq v' \right]
  = - \frac{1}{t^{n+1}} \Prob \left[ e^{\top}_j Z \leq t \right], \quad \forall t \neq 0, \\
  \implies
  \Prob \left[ e^{\top}_j Z \leq t \right] = - t^{n+1} \frac{\dif{}}{\dif{t}} \left[ \frac{1}{n t^n} F_Z(T) \right], \quad \forall t \neq 0,
\end{gather*}
from which we observe that the distribution function of \(e^{\top}_j Z\) does
not depend on \(e_j\). In other words,
\(e_j^{\top} Z \disteq e_i^{\top} Z\) for all \(e_i, e_j \in S^{n-1}, e_i \neq e_j\).
Let \(Z_0\) represent a random variable identically distributed as
\(e^{\top}_1 Z\) and define
\(\phi_0(t) := \Expect \left[ e^{\iu \sqrt{t} Z_0} \right]\). We notice
that the characteristic function of \(Z\) can be represented as
\begin{align*}
  \Expect \left[ e^{\iu t^{\top} Z} \right]
  = \Expect \left[ e^{\iu \norm{t}_2 e_t^{\top} Z} \right]
  = \Expect \left[ e^{\iu \norm{t}_2 e_1^{\top} Z} \right]
  = \Expect \left[ e^{\iu \norm{t}_2 Z_0} \right] = \phi_0(t^{\top} t),
\end{align*}
implying that \(Z\) is spherically distributed (clearly, \(\Expect [Z] = 0\)). By Proposition~\ref{prop:repr-of-elli-dist}, there exists a nonnegative random
variable \(R_Z\), independent from \(U_n\), such that \(Z \disteq R \cdot U_n\).

Second, we show that \(e^{\top}_1 X\) is unimodal. Let \(B_n\) be a random
variable uniformly distributed on the \(n\)-dimensional unit ball. Then,
\begin{align*}
  e^{\top}_1 X
  \disteq e^{\top}_1 \left( V^{1/n} \cdot R_Z \cdot U_n \right)
  \disteq e^{\top}_1 \left( R_Z \cdot (V^{1/n} U_n)\right)
  \disteq e^{\top}_1 \left( R_Z \cdot B_n \right)
  \disteq  R_Z \cdot e^{\top}_1 B_n,
\end{align*}
where the third equality is because \(B_n \disteq V^{1/n} U_n\) (see
Section~\(3.1.5\) of~\cite{fang-2018-symmet}). Furthermore, because the density
function of \(e^{\top}_1 B_n\) is monotone increasing on \((-\infty, 0)\) and
monotone decreasing on \((0, +\infty)\)~(see Section~\(3.1.5\)
of~\cite{fang-2018-symmet}), \(e^{\top}_1 B_n\) is unimodal. Therefore,
by Proposition~\ref{prop:charac-of-star-unimodal}, there exists a
random variable \(Z_B\), independent from \(V\), such that
\(e^{\top}_j B_n \disteq V Z_B\). It follows that
\[e^{\top}_1 X \disteq R_Z \cdot V Z_B \disteq V \cdot (R_Z Z_B),\] \ie{},
\(e^{\top}_1 X\) can be represented as the product between \(V\), a uniform distribution on \((0, 1)\), and \(R_Z Z_B\), which is independent of
\(V\). %
Therefore, \(e^{\top}_1 X\) is unimodal by Proposition~\ref{prop:charac-of-star-unimodal}.
\end{proof}
\end{FirstUpdate}

\begin{SecondUpdate}
\begin{proposition}[Steiner Formula; see (4.8) in Section~4.2 of~\cite{schneider2014convex}]\label{prop:steiner-formula}
Let \(\mathcal{K}\) be a convex body on \(\reals^n\) and \(\alpha > 0\), then
\begin{align*}
\Leb{} (\mathcal{K} + \alpha \mathcal{B}_1) = \sum_{j = 0}^n \alpha^j {n \choose j} V(\underbrace{\mathcal{K}, \cdots \mathcal{K}}_{n-j}, \underbrace{\mathcal{B}_1, \cdots \mathcal{B}_1}_j),
\end{align*}
where \(V(\cdot)\) is the mixed volume of \(n\) convex bodies. Furthermore, it
is monotone:
\begin{align*}
V(P_1, \cdots, P_n) \leq V(Q_1, \cdots, Q_n) \quad \text{ if convex bodies \(P_i \subseteq Q_i, \forall i \in [n]\).}
\end{align*}
\end{proposition}
\end{SecondUpdate}

\begin{SecondUpdate}
\begin{definition}[Section~2.1 in~\cite{beck-2015-conver-alter}]
For a given closed and proper convex function \(h\), the proximal operator is defined as
\begin{align*}
\prox_h(x) := \argmin_u \Set{ h(u) + \frac{1}{2} \norm{u - x}^2 }.
\end{align*}
\end{definition}

\begin{remark}\label{rmk:prox-opt-cond}
Let \(M > 0\) and \(h\) be a closed, proper and convex function. If
\(w = \prox_{\frac{1}{M}h}(x)\), then equivalently
\(0 \in \partial h(w) + M (w - x)\).
\end{remark}

\begin{corollary}\label{cor:approx-prox-opt-cond}
Let \(g\) and \(h\) be proper, closed, and convex functions. Let \(M > 0\) and
\(x\) be an approximate stationary point of \(g + h\), that is, there exists
\(\bm{e}\) with \(\norm{\bm{e}}_2\) small such that
\(0 \in \bm{e} + \partial g(x) + \partial h(x)\). Then, for some \(g^{\prime}\)
in \(\partial g(x)\), we have
\begin{align*}
x = \prox_{\frac{1}{M} h} \left( x - \frac{1}{M} \left(g^{\prime} + \bm{e} \right) \right).
\end{align*}
\end{corollary}

\begin{proof}
By assumption, there exists a \(g^{\prime} \in \partial g(x)\) such that
\begin{align*}
0 \in e + g^{\prime} + \partial h(x)
= \partial h(x) + M \left( \frac{1}{M} g^{\prime} + \frac{1}{M} \bm{e} \right)
= \partial h(x) + M \left( x - \left( x - \frac{1}{M} g^{\prime} - \frac{1}{M} \bm{e} \right) \right).
\end{align*}
Then, the conclusion follows by Remark~\ref{rmk:prox-opt-cond}.
\end{proof}

\begin{definition}[Section~2.2 in~\cite{beck-2015-conver-alter}]\label{def:prox-grad-mapping}
For a given continuously differentiable convex function \(f\), a closed and
proper convex function \(h\), and a positive constant \(M > 0\), the proximal
gradient mapping is defined as
\begin{align*}
T_M(x) := \prox_{\frac{1}{M}g} \left( x - \frac{1}{M} \nabla f(x) \right).
\end{align*}
The associated graident mapping is given by
\begin{align*}
G_M(x) = M(x - T_M(x)) = M \left( x - \prox_{\frac{1}{M}g} \left[ x - \frac{1}{M} \nabla f(x) \right] \right).
\end{align*}
\end{definition}

\begin{proposition}[Lemma~2.1 in~\cite{beck-2015-conver-alter}]\label{prop:prox-equiv-def}
Let \(h\) be a proper, closed, and convex function, and let \(M > 0\). Then
\begin{align*}
w = \prox_{\frac{1}{M}h}(x) \iff h(u) \geq h(w) + M (x - w)^{\top}(u - w), \quad \forall u \in \dom h.
\end{align*}
\end{proposition}

\begin{proposition}[Lemma~2.2 in~\cite{beck-2015-conver-alter}]\label{prop:local-quad-ubd}
Let \(f(x_1, x_2)\) be a closed and proper convex function. Also suppose
that the gradient of \(f\) is (uniformly) Lipschitz continuous with respect to
\(x_1\) with constant \(L_1\), that is, for any \(x_1, x_2, d\),
\begin{align*}
\norm{\nabla_{x_1} f(x_1 + d, x_2) - \nabla_{x_1} f(x_1, x_2)}_2
\leq L_1 \norm{d}_2.
\end{align*}
Then, it holds that
\begin{align*}
f(x_1 + d, x_2) \leq f(x_1, x_2) + \nabla_{x_1} f(x_1, x_2)^{\top} d + \frac{L_1}{2} \norm{d}_2^2.
\end{align*}
\end{proposition}

\begin{proposition}[Section~2.4 in~\cite{beck-2015-conver-alter}]\label{prop:prox-suff-decrease}
Suppose that \(f\) is continuously differentiable with Lipschitz gradient with
constant \(L > 0\), and that \(h\) is a closed, proper, and convex function.
Then, \(F(x) := f(x) + h(x)\) satisfies
\begin{align*}
F(x) - F \left( \prox_h \left( x - \frac{1}{L} \nabla f(x) \right) \right)
\geq \frac{1}{2L} \norm[\Big]{L \left(x - \prox_h \left( x - \frac{1}{L} \nabla f(x) \right)\right)}_2^2.
\end{align*}
\end{proposition}

\end{SecondUpdate}

Finally, we review the (reverse) Minkowski's inequality.
\begin{proposition}[Minkowski's Inequality; see Theorem~\(9\) in Chapter~\(3\) of~\cite{bullen-2013-handb}]
For \(p > 1\) and \(a_i, b_i \in \reals_+\) for all \(i \in [n]\), the following holds:
\begin{gather}
\left( \sum_{i = 1}^n (a_i + b_i)^p \right)^{1/p}
\leq \left( \sum_{i = 1}^n a^p_i \right)^{1/p} + \left( \sum_{i = 1}^n b^p_i \right)^{1/p}. \nonumber %
\end{gather}
If \(p < 1\) and \(p \neq 0\), then the inequality holds with the inequality sign reversed.
\end{proposition}

An implication of the Minkowski's inequality is as follows.
\begin{lemma}\label{lem:v-shift-of-alpha-cve}
If the function \(f \colon \reals^n \to \reals_+\) is an
\(\alpha\)-concave function with \(\alpha \in \overline{\reals}\)
and \(c \in \reals_+\) is a constant, then \(g(x) := f(x) - c\) is
\(\alpha\)-concave on \(D := \set{x \in \reals^n \colon f(x) > c}\).
\end{lemma}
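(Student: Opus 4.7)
The plan is to reduce the lemma to a scalar inequality on $m_\alpha$ and then settle that inequality via a Minkowski-type argument. First, the domain $D$ is convex: since $f$ is $\alpha$-concave and hence quasi-concave, for $x, y \in D$ and $\theta \in [0,1]$ the monotonicity of $m_\alpha(\cdot,\cdot;\theta)$ together with $f(x), f(y) > c$ gives
\[
f(\theta x + (1-\theta) y) \;\geq\; m_\alpha(f(x), f(y); \theta) \;>\; m_\alpha(c, c; \theta) \;=\; c,
\]
placing the midpoint in $D$ and ensuring $g > 0$ there. The case $c = 0$ is trivial since then $g \equiv f$, so I assume $c > 0$.

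Second, the $\alpha$-concavity of $f$ immediately yields
\[
g(\theta x + (1-\theta) y) \;=\; f(\theta x + (1-\theta) y) - c \;\geq\; m_\alpha(f(x), f(y); \theta) - c
\]
on $D$, so it suffices to establish the scalar inequality
\[
m_\alpha(a, b; \theta) - c \;\geq\; m_\alpha(a-c, b-c; \theta) \qquad \text{for all } a, b > c > 0,\; \theta \in [0,1]. \quad (\ast)
\]

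To prove $(\ast)$, I would split on the value of $\alpha$. The cases $\alpha = \pm\infty$ and $\alpha = 1$ produce equality by direct inspection ($\min$, $\max$, and the arithmetic mean, respectively). For $\alpha \in (-\infty, 0) \cup (0, 1)$, I apply the reverse Minkowski inequality of the excerpt to the decomposition $a = (a-c) + c$, $b = (b-c) + c$, folding the weights $\theta, 1 - \theta$ into the entries through the rescaling that converts $\theta u^\alpha + (1-\theta) v^\alpha$ into an ordinary two-term $\ell^\alpha$ sum; this delivers
\[
m_\alpha(a, b; \theta) \;\geq\; m_\alpha(a-c, b-c; \theta) + m_\alpha(c, c; \theta) \;=\; m_\alpha(a-c, b-c; \theta) + c,
\]
which rearranges to $(\ast)$. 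The limiting case $\alpha = 0$ then follows from the continuity of $\alpha \mapsto m_\alpha(\cdot,\cdot;\theta)$ in Lemma~\ref{lem:monotone-m-alpha} by passing to a one-sided limit.

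The main technical obstacle is the reverse Minkowski step for negative $\alpha$: one must interpret $a^\alpha$ for $a > 0$, $\alpha < 0$ carefully and track how raising to the power $1/\alpha < 0$ affects the direction of the inequality, so that the reversed Minkowski bound in the excerpt indeed produces the $\geq$ in $(\ast)$ rather than a $\leq$. Once that sign check is pinned down, $\alpha$-concavity of $g$ on $D$ follows essentially automatically from the scalar reduction.
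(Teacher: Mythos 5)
Your argument for the convexity of \(D\), the reduction to the scalar inequality \((\ast)\), and the reverse-Minkowski step for \(\alpha \in (-\infty, 0) \cup (0,1)\) reproduce the paper's proof almost exactly: the paper applies the reversed Minkowski inequality with \(p = \alpha\) to the vectors \(\big(\theta^{1/\alpha}(f(x_1)-c),\,(1-\theta)^{1/\alpha}(f(x_2)-c)\big)\) and \(\big(\theta^{1/\alpha}c,\,(1-\theta)^{1/\alpha}c\big)\), which is precisely your decomposition \(a = (a-c)+c\) with the weights folded into the entries. Your handling of \(\alpha = 0\) by continuity of \(\alpha \mapsto m_{\alpha}\) is a small, self-contained improvement over the paper's citation of an exercise in \cite{boyd-2004-convex}, and the sign check you flag for \(\alpha < 0\) does go through: with \(c > 0\) and \(a, b > c\) all entries are strictly positive, and the reversed inequality holds for every \(p < 1\), \(p \neq 0\).

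The one genuine gap is the range \(\alpha \in (1, +\infty)\), which your case split never touches, and which cannot be reached by your reduction: for \(\alpha > 1\) the ordinary (non-reversed) Minkowski inequality applies and gives \(m_{\alpha}(a,b;\theta) \leq m_{\alpha}(a-c,b-c;\theta) + c\), i.e., \((\ast)\) with the inequality reversed, and \((\ast)\) is in fact false there (take \(\alpha = 2\), \(a = 2\), \(b = 4\), \(c = 1\), \(\theta = 1/2\): then \(m_2(2,4;1/2) - 1 = \sqrt{10} - 1 \approx 2.16\) while \(m_2(1,3;1/2) = \sqrt{5} \approx 2.24\)). This is not merely a defect of the reduction: lifting the same numbers to \(f(x) = \sqrt{x}\), which is \(2\)-concave with equality, shows that \(g(x) = \sqrt{x} - 1\) is not \(2\)-concave on \((1,\infty)\), so the statement itself fails for \(\alpha > 1\). (The paper disposes of \(\alpha \geq 1\) by invoking Theorem~\ref{thm:sum-of-alpha-concave-funcs}, which concerns sums rather than differences and does not actually close this case either.) The omission is harmless downstream, since the lemma is only ever applied to functions of the form \(\Prob[f(x,\zeta) \geq t]\) with \(\alpha \leq 1\); but you should either restrict your claim to \(\alpha \leq 1\) together with \(\alpha \in \{+\infty\}\) handled as you do, or note explicitly that \((1,\infty)\) is excluded.
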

\begin{proof}
When \(\alpha \geq 1\), the result follows from
Proposition~\ref{prop:sum-of-alpha-concave-funcs}. When \(\alpha = 0\),
the result was proved in~\cite{boyd-2004-convex} (see Exercise~\(3.48\)). When \(\alpha = -\infty\), shifting the function along the vertical direction does not affect the convexity of its super level sets. Hence, it suffices to prove the result when
\(\alpha < 1\) and \(\alpha \neq 0\).

We notice that \(D\) is convex
as it is the super-level set of the quasi-concave function \(f\). Now,
for any \(x_1, x_2 \in D\) and \(\theta \in (0, 1)\), the following
holds for \(x_{\theta} := \theta x_1 + (1 - \theta) x_2\):
\begin{align}
  f(x_{\theta}) \geq \Big( \theta \cdot (f(x_1))^{\alpha} + (1 - \theta) \cdot (f(x_2))^{\alpha} \Big)^{1 / \alpha}. \label{eq:minus-const-preserve-alpha-1}
\end{align}
By Minkowski's Inequality with \(p\) set to be \(\alpha\), we have
\begin{align*}
  \left( \left[ \theta^{1/\alpha} \cdot f(x_1) \right]^{\alpha} + \left[ (1 - \theta)^{1/\alpha} \cdot f(x_2) \right]^{\alpha} \right)^{1/\alpha}
  & \geq \left( \left[ \theta^{1/\alpha} \cdot (f(x_1) - c) \right]^{\alpha}
    + \left[ (1 - \theta)^{1/\alpha} \cdot (f(x_2) - c) \right]^{\alpha} \right)^{1/\alpha} \\
  & \phantom{\geq} + \left( \left[ \theta^{1/\alpha} \cdot c \right]^{\alpha}
    + \left[ (1 - \theta)^{1/\alpha} \cdot c \right]^{\alpha} \right)^{1/\alpha},
\end{align*}
from which we obtain
\begin{align}
  \Big( \theta \cdot (f(x_1) - c)^{\alpha} + (1 - \theta) \cdot (f(x_2) - c)^{\alpha} \Big)^{1/\alpha}
  + c \leq \Big( \theta \cdot (f(x_1))^{\alpha} + (1 - \theta) \cdot (f(x_2))^{\alpha} \Big)^{1/\alpha}. \label{eq:minus-const-preserve-alpha-2}
\end{align}
Combining~\eqref{eq:minus-const-preserve-alpha-1}
and~\eqref{eq:minus-const-preserve-alpha-2} concludes the proof:
\begin{align*}
  f(x_{\theta}) - c
  & \geq \Big( \theta \cdot (f(x_1))^{\alpha} + (1 - \theta) \cdot (f(x_2))^{\alpha} \Big)^{1 / \alpha} - c \\
  & \geq \Big( \theta \cdot (f(x_1) - c)^{\alpha} + (1 - \theta) \cdot (f(x_2) - c)^{\alpha} \Big)^{1/\alpha}.
\end{align*}
\end{proof}

\section{Proofs} \label{apx-proofs}

\subsection{Proof of Lemma~\ref{lem:reform-neg-cvar}}\label{apx-lem:reform-neg-cvar}
\begin{proof}
By definition of \CVaR{}, we have
\begin{align*}
  \CVaR_{1 - \epsilon} \left( X^- \right)
  = \VaR_{1 - \epsilon} \left( X^- \right)
  + \frac{1}{\epsilon} \cdot \Expect \left[ X^- - \VaR_{1 - \epsilon} (X^-) \right]^+.
\end{align*}
We discuss two cases:
\begin{enumerate}[label=(\roman*)]
\item If \(0 < \VaR_{1 - \epsilon}(X)\), then
      \(\VaR_{1 - \epsilon}(X^-) = 0\), from which
      \[\CVaR_{1 - \epsilon}(X^-) = 0 + \frac{1}{\epsilon} \cdot \Expect \left[ X^- - 0 \right]^+ = 0.\]
\item If \(0 \geq \VaR_{1 - \epsilon}(X)\), then
      \(\VaR_{1 - \epsilon}(X^-) = \VaR_{1 - \epsilon}(X)\). It follows that
      \begin{align*}
        \Expect \left[ X^- - \VaR_{1 - \epsilon}(X) \right]^+
        & = \Expect \left[ \left( X^- - \VaR_{1 - \epsilon}(X) \right)
          \cdot \Ind{X^- \geq \VaR_{1 - \epsilon}(X)} \right] \\
        & = \Expect \left[ \left( X - \VaR_{1 - \epsilon}(X) - X^+ \right)
          \cdot \Ind{X \geq \VaR_{1 - \epsilon}(X)} \right] \\
        & = \Expect \left[ \left( X - \VaR_{1 - \epsilon}(X) \right)
          \cdot \Ind{X \geq \VaR_{1 - \epsilon}(X)} \right] \\
        & \phantom{ = }
          - \Expect \left[ X^+ \cdot \Ind{X \geq \VaR_{1 - \epsilon}(X)} \right] \\
        & =  \Expect \left[ \left( X - \VaR_{1 - \epsilon}(X) \right)
          \cdot \Ind{X \geq \VaR_{1 - \epsilon}(X)} \right]
          - \Expect \left[ X^+ \right] \\
        & =  \Expect \left[ X - \VaR_{1 - \epsilon}(X) \right]^+
          - \Expect \left[ X^+ \right],
      \end{align*}
      where the first equality is by definitions of positive part \([\cdot]^+\) and \(\Ind{\cdot}\), the second is due to
      \(\VaR_{1 - \epsilon}(X) \leq 0\) and the definitions of positive and negative parts, and the fourth is because \(X < \VaR_{1 - \epsilon}(X)\) implies \(X^+ = 0\). We conclude the proof by noticing that
\begin{align*}
\VaR_{1 - \epsilon} \left( X^- \right)
  + \frac{1}{\epsilon} \cdot \Expect \left[ X^- - \VaR_{1 - \epsilon} (X^-) \right]^+ & = \VaR_{1 - \epsilon} \left( X \right)
  + \frac{1}{\epsilon} \cdot \Expect \left[ X - \VaR_{1 - \epsilon}(X) \right]^+
  - \frac{1}{\epsilon} \cdot \Expect \left[ X^+ \right] \\
  & = \CVaR_{1-\epsilon}(X) - \frac{1}{\epsilon} \cdot \Expect \left[ X^+ \right].
\end{align*}
\end{enumerate}
\end{proof}

\subsection{Proof of Lemma~\ref{lem:cont-of-dist-to-set}}\label{apx-lem:cont-of-dist-to-set}
\begin{proof}
We denote the set of points whose distance to \(\Safe^c(x)\) is exactly \(y\) by
\begin{align*}
  E := \Set{ \zeta \in \Xi \colon \Dist{\zeta, \Safe^c(x)} = y}.
\end{align*}
We notice that
\(\Dist{\zeta, \Safe^c(x)} = \Dist{\zeta, \cl \Safe^c(x)}\), where \(\cl \Safe^c(x)\) denotes the closure of \(\Safe^c(x)\). Then, by the item~(1)
of~\cite{erdoes-1945-some-remar}, we have \(\mathbf{Leb}(E) = 0\),
which further implies that \(\Prob(E) = 0\) because \(\Prob\) is
absolutely continuous with respect to \(\mathbf{Leb}(\cdot)\) (see
Theorem~\(2.2\) in~\cite{norkin-1993-analy-optim}).

In addition, the Lebesgue measure of the event \(\{\zeta \in \Xi: f(x, \zeta) = y\}\) equals zero because \(a_i(x) \neq 0\) for all \(i \in [m]\setminus I(x)\). It follows that \(f(x, \zeta)\) is atomless because \(\Prob\) is absolutely continuous with respect to \(\mathbf{Leb}(\cdot)\).
\end{proof}

\subsection{Proof of Proposition~\ref{prop:reform-w-cc}} \label{apx-prop:reform-w-cc}
\begin{proof}
First, moving the \CVaR{} term to the~\RHS{}
of~\eqref{eq:cor-pess-cvar-reform-1} yields
\begin{align}
  \delta
  & \leq \Expect \Big[ f(x, \zeta) \cdot \Ind{-f(x, \zeta) \geq \VaR_{1 - \epsilon} \left( -f(x, \zeta) \right)}\Big]
    - \Expect \Big[ f(x, \zeta) \cdot \Ind{-f(x, \zeta) \geq 0} \Big] \nonumber{} \\
  & = \Expect \Big[ f(x, \zeta) \cdot \Ind{\VaR_{1 - \epsilon}\left( -f(x, \zeta) \right) \leq - f(x, \zeta) \leq 0} \Big] \nonumber{}\\
  & = \Expect \Big[ f(x, \zeta) \cdot \Ind{0 \leq f(x, \zeta) \leq \VaR_{\epsilon}\left( f(x, \zeta) \right)} \Big], \label{eq:reform-deter-expect}
\end{align}
where the first equality is because \(f(x, \zeta)\) is atomless and the second equality is because
\(\VaR_{1 - \epsilon} (- X) = -\VaR_{\epsilon}(X)\).
Now, we use the \redmodify{layer cake} representation of nonnegative integrable
functions to further recast the \RHS{} of \eqref{eq:reform-deter-expect} as
\begin{align*}
  & \Expect \Big[ f(x, \zeta) \cdot \Ind{0 \leq f(x, \zeta) \leq \VaR_{\epsilon}\left( f(x, \zeta) \right)} \Big] \\
  = ~ & \int\displaylimits_{\Xi} f(x, \zeta) \cdot \Ind{0 \leq f(x, \zeta) \leq \VaR_{\epsilon}\left( f(x, \zeta) \right)} \dProb{\zeta} \\
  = ~ & \int\displaylimits_{\Xi} \int\displaylimits_{\reals_+}
        \Ind{ t \leq f(x, \zeta) \cdot \Ind{0 \leq f(x, \zeta) \leq \VaR_{\epsilon}\left( f(x, \zeta) \right)}} \dLeb{t} \, \dProb{\zeta} \\
  = ~ & \int\displaylimits_{\Xi} \int\displaylimits_{\reals_+}
        \Ind{ t \leq f(x, \zeta) \leq \VaR_{\epsilon}\left( f(x, \zeta) \right)} \dLeb{t} \, \dProb{\zeta} \\
  = ~ & \int\displaylimits_{\reals_+}
        \Prob \Big[  t \leq f(x, \zeta) \leq \VaR_{\epsilon}\left( f(x, \zeta) \right)\Big] \dLeb{t}, \tag{by the Tonelli's Theorem} \\
  = ~ & \int_0^{\VaR_{\epsilon}\left( f(x, \zeta) \right)}
        \Big(\Prob \left[ f(x, \zeta) \geq t \right] - ( 1 - \epsilon)\Big) \dLeb{t},
\end{align*}
where the first two equalities are by definitions of expectation and
\redmodify{layer cake} representation, respectively. We justify the third equality by arguing
that, for any \(x \in \mathcal{X}^p\) and \(\zeta \in \Xi\),
\begin{gather}
\Ind{ t \leq f(x, \zeta) \cdot \Ind{0 \leq f(x, \zeta) \leq \VaR_{\epsilon}\left( f(x, \zeta) \right)}}
= \Ind{ t \leq f(x, \zeta) \leq \VaR_{\epsilon}\left( f(x, \zeta) \right)} \label{eq:reform-mid-1}
\end{gather}
holds Lebesgue-almost everywhere for \(t \in \reals_+\). We discuss the following three cases:
\begin{enumerate}[label=(\roman*)]
\item If \(\zeta\) makes \(f(x, \zeta) < 0\), then the LHS
      of~\eqref{eq:reform-mid-1} simplifies to \(\Ind{t \leq 0}\), which coincides with the RHS.
\item If \(\zeta\) makes
      \(f(x, \zeta) \in [0, \VaR_{\epsilon}(f(x, \zeta))]\), then the LHS
      of~\eqref{eq:reform-mid-1} simplifies to \(\Ind{t \leq f(x, \zeta)}\), coinciding with the RHS.
\item If \(\zeta\) makes \(f(x, \zeta) > \VaR_{\epsilon}(f(x, \zeta))\), then
      the LHS and RHS of~\eqref{eq:reform-mid-1} simplify to \(\Ind{t \leq 0}\) and
      \(0\), respectively, which differ only at \(t = 0\) for \(t \in \reals_+\).
\end{enumerate}
The last equality is because
\begin{align*}
  \Prob \Big[  t \leq f(x, \zeta) \leq \VaR_{\epsilon}\left( f(x, \zeta) \right)\Big]
  = \Prob \Big[  t \leq f(x, \zeta) \Big] - \Prob \Big[ t \geq \VaR_{\epsilon}\left( f(x, \zeta) \right) \Big]
\end{align*}
when \(t \in [0, \VaR_{\epsilon} f(x, \zeta)]\). This recasts~\eqref{eq:cor-pess-cvar-reform-1} into~\eqref{eq:deter-reform-int-1}.

Second, constraint~\eqref{eq:cor-pess-cvar-reform-2} is equivalent to \(\Prob\big[ f(x, \zeta) \geq 0 \big] \geq 1 - \epsilon\) by definition of \VaR, which can be further recast as
\[
\Prob\Big[ a_i(x)^{\top}\zeta \leq b_i(x), \ \forall i \in [m]\setminus I(x) \Big] \geq 1 - \epsilon
\]
by definition of \(f(x, \zeta)\). For all \(x \in \mathcal{X}^p\) and \(i \in [m]\), we assume without loss of generality that \(b_i(x) \geq 0\) whenever \(a_i(x) = 0\) (because otherwise \(\Prob[A(x)\zeta \leq b(x)] = 0\)), and it holds that \(a_i(x)^{\top}\zeta \leq b_i(x)\) for all \(i \in I(x)\). It follows that~\eqref{eq:cor-pess-cvar-reform-2} is equivalent to~\eqref{eq:deter-reform-int-2}, which completes the proof.
\end{proof}

\subsection{A Generalized Theorem~\ref{thm:drcc-rhs-cvx} For Quasi-Concave Inequalities} \label{apx-thm:drcc-rhs-cvx}
\begin{FirstUpdate}
We generalize Theorem~\ref{thm:drcc-rhs-cvx} to quasi-concave inequalities as follows.
\begin{theorem} \label{thm:drcc-rhs-cvx-general}
Suppose that the reference distribution \(\Prob\) of \(\mathcal{P}\) is
\(\alpha\)-concave with \(\alpha \geq -1\) and
\(h \colon \reals^n \times \reals^m \to \reals\) is quasi-concave. Then, for
\(\delta > 0\) the set
\begin{align*}
  \mathcal{X}^p_{\text{R}} = \Set{x \in \reals^n \colon \inf_{\QProb \in \mathcal{P}}
  \QProb \left[ h(x, \xi) \geq 0 \right]
  \geq 1 - \epsilon} %
\end{align*}
is convex and closed.
\end{theorem}
\begin{proof}
Recall that for \(\zeta \in \reals^m\), the distance
\(\Dist{\zeta, \Safe^c(x)}\) to the unsafe set is
\begin{align*}
  \Dist{\zeta, \Safe^c(x)}
  & = \inf_{\xi \in \Xi} \Set{ \norm{\zeta - \xi} \colon (x, \xi) \not\in \mathcal{H}_{\geq 0}} \\
  & = \inf_{\xi \in \Xi} \Set{ \norm{\zeta - \xi} \colon (x, \xi) \not\in \cl{}\left(\mathcal{H}_{\geq 0}\right)},
\end{align*}
where \(\mathcal{H}_{\geq 0} := \set{(x, \xi) \colon h(x, \xi) \geq 0}\)
represents the superlevel set of \(h\) at level \(0\). To see he second equality, we first observe that \(\mathcal{H}_{\geq 0} \subseteq \cl{}\left( \mathcal{H}_{\geq 0} \right)\) and so \(\inf_{\xi \in \Xi} \Set{ \norm{\zeta - \xi} \colon (x, \xi) \not\in \mathcal{H}_{\geq 0}} \leq \inf_{\xi \in \Xi} \Set{ \norm{\zeta - \xi} \colon (x, \xi) \not\in \cl{}\left(\mathcal{H}_{\geq 0}\right)}\). Second, for any \(\varepsilon > 0\), there exists a \(\xi_0 \in \Xi\) such that
\((x, \xi_0) \not\in \mathcal{H}_{\geq 0}\) and
\begin{align}
  \inf_{\xi \in \Xi} \Set{ \norm{\zeta - \xi} \colon (x, \xi) \not\in \mathcal{H}_{\geq 0}} + 2 \varepsilon
  > \norm{\zeta - \xi_0} + \varepsilon. \label{eq:general-to-joint-1}
\end{align}
We discuss the following two cases.
\begin{enumerate}[label=(\roman*)]
\item If \((x, \xi_0)\) happens to fall out of
\(\cl{}\left( \mathcal{H}_{\geq 0} \right)\) as well, then
\begin{align}
  \norm{\zeta - \xi_0} + \varepsilon
  >
  \inf_{\xi \in \Xi} \Set{ \norm{\zeta - \xi} \colon (x, \xi) \not\in \cl{}\left(\mathcal{H}_{\geq 0}\right)}, \label{eq:general-to-joint-2}
\end{align}
which implies
\begin{align*}
  \inf_{\xi \in \Xi} \Set{ \norm{\zeta - \xi} \colon (x, \xi) \not\in \mathcal{H}_{\geq 0}} + 2 \varepsilon
  > \inf_{\xi \in \Xi} \Set{ \norm{\zeta - \xi} \colon (x, \xi) \not\in \cl{}\left(\mathcal{H}_{\geq 0}\right)}
\end{align*}
for all \(\varepsilon > 0\). It follows that \(\inf_{\xi \in \Xi} \Set{ \norm{\zeta - \xi} \colon (x, \xi) \not\in \mathcal{H}_{\geq 0}} \geq \inf_{\xi \in \Xi} \Set{ \norm{\zeta - \xi} \colon (x, \xi) \not\in \cl{}\left(\mathcal{H}_{\geq 0}\right)}\).
\item If \((x, \xi_0) \in \cl{}\left( \mathcal{H}_{\geq 0} \right)\),
then
\((x, \xi_0) \in \bd{} \left( \mathcal{H}_{\geq 0}\right)\). Hence, any open ball centered around \((x, \xi_0)\) has to intersect
with
\((\reals^n \times \Xi) \setminus \cl{}\left( \mathcal{H}_{\geq 0} \right)\).
As a result, there exists an
\((x, \xi_0^{\prime}) \not\in \cl{}\left( \mathcal{H}_{\geq 0} \right)\)
such that  \(\norm{\xi_0 - \xi_0^{\prime}} \leq \varepsilon / 2\) and
\begin{align}
  \norm{\zeta - \xi_0} + \varepsilon
  & = \norm{\zeta - \xi_0} + \varepsilon / 2 + \varepsilon / 2
  \geq \norm{\zeta - \xi_0} + \norm{\xi_0 - \xi_0'} + \varepsilon / 2
  \geq \norm{\zeta - \xi_0^{\prime}} + \varepsilon / 2 \nonumber{}\\
  & > \inf_{\xi \in \Xi} \Set{ \norm{\zeta - \xi} \colon (x, \xi) \not\in \cl{}\left(\mathcal{H}_{\geq 0}\right)}.
    \label{eq:general-to-joint-3}
\end{align}
Combining~\eqref{eq:general-to-joint-1} and~\eqref{eq:general-to-joint-3} yields
\begin{align*}
  \inf_{\xi \in \Xi} \Set{ \norm{\zeta - \xi} \colon (x, \xi) \not\in \mathcal{H}_{\geq 0}} + 2 \varepsilon
  > \inf_{\xi \in \Xi} \Set{ \norm{\zeta - \xi} \colon (x, \xi) \not\in \cl{}\left(\mathcal{H}_{\geq 0}\right)}
\end{align*}
for all \(\varepsilon > 0\). It follows that \(\inf_{\xi \in \Xi} \Set{ \norm{\zeta - \xi} \colon (x, \xi) \not\in \mathcal{H}_{\geq 0}} \geq \inf_{\xi \in \Xi} \Set{ \norm{\zeta - \xi} \colon (x, \xi) \not\in \cl{}\left(\mathcal{H}_{\geq 0}\right)}\).
\end{enumerate}
Because \(\cl{} \left( \mathcal{H}_{\geq 0} \right)\) is a closed convex set, it
can be represented as the intersection of hyperplanes:
\begin{align*}
\cl{} \left( \mathcal{H}_{\geq 0} \right) = \Set{(x, \xi) \in \reals^n \times \Xi \colon a_i^{\top} \xi \leq b_i(x), \quad \forall i \in \mathcal{I}},
\end{align*}
where \(\mathcal{I}\) is a (possibly infinite) index set, and for any \(i \in \mathcal{I}\), \(a_i \in \reals^m\) is independent of \(x\) while \(b_i(x)\) is an affine function of \(x\). In other words, we have recast \(\mathcal{X}^p_{\text{R}}\) as
\begin{align*}
\mathcal{X}^p_{\text{R}} = \Set{x \in \reals^n \colon \inf_{\QProb \in \mathcal{P}}
  \QProb \left[ a_i^{\top} \xi \leq b_i(x), \ \forall i \in \mathcal{I} \right]
  \geq 1 - \epsilon}, %
\end{align*}
which coincides with the chance constraint with linear inequalities in Theorem~\ref{thm:drcc-rhs-cvx}. Therefore, the conclusion follows from the proof of Theorem~\ref{thm:drcc-rhs-cvx}.
\end{proof}
\end{FirstUpdate}

\subsection{Proof of Lemma~\ref{lem:rhs-var-concave}} \label{apx-lem:rhs-var-concave}
\begin{proof}
We show that the hypograph of \(\VaR_{1 - \epsilon}\left(f(x, {\zeta})\right)\), i.e.,
\begin{align*}
  \mathcal{H} := \Set{(x, \theta) \colon \VaR_{1 - \epsilon} \left( f(x, {\zeta}) \right) \geq \theta}
\end{align*}
is convex. To this end, we note that
\begin{align*}
  \VaR_{1 - \epsilon} \left( f(x, {\zeta}) \right) \geq \theta
  & \iff \Prob \Set{f(x, {\zeta}) \leq \theta} \leq 1 - \epsilon
    \iff \Prob \Set{f(x, {\zeta}) - \theta \geq 0} \geq \epsilon
\end{align*}
where both equivalences are because \(f(x, {\zeta})\) is atomless. Since \(f(x, \zeta) - \theta\) is jointly concave in
\((x, \zeta, \theta)\) and \(\Prob\) is \(\alpha\)-concave,
\(\Prob \Set{f(x, {\zeta}) - \theta \geq 0}\) is
\(\alpha\)-concave in \((x, \theta)\) on the set
\begin{align*}
  \mathcal{H}' := \Set{(x, \theta) \colon \exists \; \zeta \text{ such that } f(x, \zeta) - \theta \geq 0}
\end{align*}
by Proposition~\ref{prop:cc-rhs-constr-log-concave}. Now, since \(\mathcal{H} \subseteq \mathcal{H}'\), \(\Prob \Set{f(x, {\zeta}) - \theta \geq 0}\) is also
\(\alpha\)-concave on \(\mathcal{H}\) and \(\mathcal{H}\) is convex because it is a super level set of \(\Prob \Set{f(x, {\zeta}) - \theta \geq 0}\).
\end{proof}

\subsection{Proof of Lemma~\ref{lem:cont-of-prob-fcns}} \label{apx-lem:cont-of-prob-fcns}
\begin{proof}
For any \((\hat{x}, \hat{t}) \in \reals^n \times \reals_+\), consider a sequence \(\set{(x_k, t_k)}_k\) that converges to \((\hat{x}, \hat{t})\) as \(k\) goes to infinity.
Then, for any \(\zeta \in \Xi\) such that \(f(\hat{x}, \zeta) - \hat{t} \neq 0\), we
have
\begin{align*}
  \lim_{k \to \infty} \Ind{f(x_k, \zeta) \geq t_k} = \Ind{ f(\hat{x}, \zeta) \geq \hat{t}}
\end{align*}
because the function \(f(x, \zeta) - t\) is continuous in \((x,t)\). Hence, as a function of
\(\zeta\), \(\Ind{f(x_k, \zeta) \geq t_k} \) converges pointwise to
\(\Ind{f(\hat{x}, \zeta) \geq \hat{t}} \) on the complement of
\[
\Bad{}_0 := \set{\zeta \in \Xi \colon f(\hat{x}, \zeta) = \hat{t}}.
\]
It follows that
\begin{align*}
  \lim_{k \to \infty} \psi(x_k, t_k) + (1 - \epsilon)
  & = \lim_{k \to \infty} \Prob \left[ f(x_k, {\zeta}) \geq t_k \right] \\
  & = \lim_{k \to \infty} \int\limits_{\Xi \setminus \Bad_0} \Ind{\zeta \colon f(x_k, \zeta) \geq t_k} \dProb{\zeta} \\
  & = \int\limits_{\Xi \setminus \Bad_0} \lim_{k \to \infty} \Ind{\zeta \colon f(x_k, \zeta) \geq t_k} \dProb{\zeta} \\
  & = \int \Ind{\zeta \colon f(\hat{x}, \zeta) \geq \hat{t}} \dProb{\zeta}
  = \psi(\hat{x}, \hat{t}) + (1 - \epsilon),
\end{align*}
where the second and fourth equality are because
\(\Leb{}(\Bad_0(x, t)) = 0\), and the third equality is by the dominated
convergence theorem. The continuity of \(\phi\) can be established in
a similar way: let \(\set{(x_k, y_k)}_k\) be a
sequence such that converges to \((\hat{x}, \hat{y})\). Then,
\begin{align*}
  \lim_{k \to \infty} \phi(x_k, y_k)
  & = \int_{\reals_+} \lim_{k \to \infty} \psi(x_k, t) \cdot \Ind{t \leq y_k} \dLeb{t}
    = \int_{\reals_+ \setminus \set{\hat{y}}} \psi(\hat{x}, t) \cdot \lim_{k \to \infty} \Ind{t \leq y_k} \dLeb{t} \\
  & = \int_{\reals_+ \setminus \set{\hat{y}}} \psi(\hat{x}, t) \cdot \Ind{t \leq \hat{y}} \dLeb{t} = \phi(\hat{x}, \hat{y}),
\end{align*}
where the first equality is by the dominated convergence theorem, and the
second equality is because \(\psi\) is continuous and
\(\Ind{t \leq y_k}\) has a limit as \(k \to \infty\) when \(t \neq \hat{y}\). This completes the proof.
\end{proof}

\subsection{Proof of Example~\ref{exa:not-gaussian}} \label{apx-exa:not-gaussian}

\begin{FirstUpdate}
\begin{proof}
Suppose that there exists a worst-case Gaussian distribution $\nu \sim \mathcal{N}(\mu, \sigma)$ for some $\mu$ and $\sigma$. Then, $\nu$ satisfies the following two conditions simultaneously:
\begin{enumerate}[label=(\Alph*)]
\item \(\nu \in \mathcal{P}\), that is,
\begin{equation}
d_W \left( \nu, \Prob_0 \right) = \int\limits_{\reals} \abs{F_{\Prob_0}(y) - F_{\nu}(y)} \dLeb{y} \leq \delta,  \label{ctrd-1}
\end{equation}
where \(\mathbb{P}_0\) denotes the 1-dimensional standard Gaussian distribution, and \(F_{\Prob_0}\) and \(F_{\nu}\) represent the cumulative
distribution functions of \(\mathbb{P}_0\) and \(\nu\),
respectively. The above expression of \(d_W \left( \nu, \Prob_0 \right)\) is because both \(\mathbb{P}_0\) and \(\nu\) are 1-dimensional distributions.
\item \(\nu\) attains the probability bound, i.e., \(\nu\left[ \xi \leq x \right] = \inf_{\Prob \in \mathcal{P}} \Prob \left[ \xi \leq x \right]\). As a result, on the one hand, \(\nu\left[ \xi \leq x \right] \geq 1 - \epsilon\) implies \(x \geq \VaR_{1 - \epsilon}(Y_\nu) = \VaR_{1-\epsilon}(\sigma Y + \mu) = \sigma \VaR_{1 - \epsilon}(Y) + \mu\), where \(Y\) and \(Y_{\nu}\) represent random variables following distributions \(\mathbb{P}_0\) and \(\nu\), respectively. On the other hand, by Corollary~\ref{cor:from-ts-to-single} (Ex) is equivalent to \(x \geq c_p := (\overline{g})^{-1}_{\epsilon}(\delta)\). It follows that
\begin{equation}
c_p = \sigma \VaR_{1 - \epsilon}(Y) + \mu. \label{ctrd-2}
\end{equation}
\end{enumerate}
In what follows, we show that if \(\nu\) satisfies condition (B), then it necessarily violates condition (A), establishing the claim. To this end, we simplify \(d_W(\nu, \Prob_0)\):
\begin{align*}
  d_W(\nu, \Prob_0)
  & = \int\limits_{\reals} \abs{F_{\Prob_0}(y) - F_{\nu}(y)} \dLeb{y} \\
  & = \int\limits_0^1 \Big|F_{\Prob_0}(\VaR_t(Y_{\nu})) - t\Big| \dLeb{(\VaR_t(Y_{\nu}))}
    = \int\limits_0^1 \abs[\bigg]{\int\limits_{-\infty}^{\sigma \VaR_t(Y) + \mu} \dif\Prob_0
    - \int\limits_{-\infty}^{\VaR_t(Y)} \dif\Prob_0} \dLeb{(\VaR_t(Y_{\nu}))} \\
  & = \int\limits_0^1 \abs[\bigg]{ \int\limits_{\VaR_t(Y)}^{\sigma \VaR_t(Y) + \mu} \dif{} \Prob_0} \dLeb{(\VaR_t(Y_{\nu}))}
    = \sigma \int\limits_{\reals} \abs[\bigg]{\int\limits_y^{\sigma y + \mu} \dif{}\Prob_0} \dLeb{y} \\
  & = \sigma \int\limits_{\reals}
    \int\limits_{\reals} \Ind{y \wedge (\sigma y + \mu) \leq t \leq y \vee (\sigma y + \mu)} \dif\Prob_0(t) \dLeb{y} \\
  & = \sigma \int\limits_{\reals}
    \int\limits_{\reals} \Ind{\sigma y \leq \sigma t \text{ or } (\sigma y + \mu) \leq t} \cdot
    \Ind{ \sigma t \leq \sigma y \text{ or } t \leq (\sigma y + \mu) } \dif\Prob_0(t) \dLeb{y} \\
  & = \int\limits_{\reals} \int\limits_{\reals}
    \Ind{ \sigma y \leq \sigma t \vee (t - \mu) } \cdot \Ind{ \sigma t \wedge (t - \mu) \leq \sigma y } \dLeb{(\sigma y)} \dif\Prob_0(t)
  = \int\limits_{\reals} \abs{\mu - (1 - \sigma) t} \dif\Prob_0(t),
\end{align*}
where the second and the fifth equalities are due to the change of variable \(y = \VaR_t(Y_{\nu})\). Now, for any \(t \in (\VaR_{1 - \epsilon}(Y), c_p)\), we have
\begin{gather*}
\mu - (1 - \sigma) t
= (c_p - t) + \sigma (t - \VaR_{(1 - \epsilon)}(Y)) > c_p - t > 0,
\end{gather*}
where the equality uses condition (B), particularly equality~\eqref{ctrd-2}. Then,
\begin{align*}
  d_W(\nu, \Prob_0)
  & = \int\limits_{\reals} \abs{\mu - (1 - \sigma) t} \dif\Prob_0(t)
  \geq \int\limits_{\VaR_{1 - \epsilon}(Y)}^{c_p} (\mu - (1 - \sigma) t) \dif\Prob_0(t)
    > \int\limits_{\VaR_{1 - \epsilon}(Y)}^{c_p} (c_p - t) \dif\Prob_0(t) = \delta,
\end{align*}
where the last equality uses the definition of $c_p$.
It follows that \(\nu\) necessarily violates inequality~\eqref{ctrd-1} and so condition (A).
\end{proof}
\end{FirstUpdate}

\subsection{Proof of Theorem~\ref{thm:ts-pcc-convexity}} \label{apx-thm:ts-pcc-convexity}
\begin{proof}
First, we present a technical fact that connects \(\mathcal{X}^p_{\text{T}}\) with \(\mathcal{C}_\delta\) and provide a proof in Appendix~\ref{apx-fact}.

\paragraph{Fact.} For any \(x \neq 0\), \((x, \ell, u) \in \mathcal{X}^p_\text{T}\) if and only if
\(\left( \frac{\ell}{\norm{x}_{\ast}}, \frac{u}{\norm{x}_{\ast}}\right) \in \mathcal{C}_\delta\).\\

Second, we show that \(\mathcal{C}_\delta\) is convex. Since
\(\Prob_0 \disteq R \cdot e^{\top}_1 U_n\) is
unimodal, its distribution function \(\Phi\) is convex on \((-\infty, 0)\) and concave on
\((0, +\infty)\). %
In addition, \(\mathcal{C}_\delta \subseteq \reals_- \times \reals_+\) because \(\epsilon < \frac{1}{2}\). Then, \(\Phi(u - t) - \Phi(\ell - t)\) is jointly concave in \((u, \ell, t)\), implying that \(g_{\epsilon}(\ell, u)\) is log-concave. It follows that \(\mathcal{C}_\delta\) is convex.

Therefore, to prove that \(\mathcal{X}^p_{\text{T}}\) is convex, it remains to
show that \((x, \ell, u) \in \mathcal{X}^p_{\text{T}}\) if and only if there
exists an \(s \geq \norm{x}_{\ast}\) such that
\((\ell, u, s) \in \text{co}(\mathcal{C}_\delta)\). To this end, we discuss
the following two cases:
\begin{enumerate}
\item Suppose that \(x = 0\). For any \((0, \ell, u) \in \mathcal{X}^p_{\text{T}}\), we have
      \(\ell \leq 0 \leq u\) because otherwise
      \(\Prob[\ell \leq 0 \leq u] < 1/2 < 1 - \epsilon\), violating the
      assumption that \((0, \ell, u) \in \mathcal{X}^p_{\text{T}}\). Then, \(s := 1/n\) for a
      sufficiently large integer \(n\) ensures that
      \((\ell/s, u/s) \in \mathcal{C}_\delta\) and so
      \((\ell, u, s) \in \text{co}(\mathcal{C}_\delta)\). On the contrary, for any
      \((0, \ell, u) \in \reals^{n+2}\) such that there exists an \(s \geq 0\)
      with \((\ell, u, s) \in \text{co}(\mathcal{C}_\delta)\), by definition of
      \(\text{co}(\mathcal{C}_\delta)\) there exists a sequence
      \(\{(\ell_n, u_n, s_n)\}_{n=1}^{\infty}\) converging to \((\ell, u, s)\)
      such that \(s_n > 0\) and
      \(g_{\epsilon}(\ell_n/s_n, u_n/s_n) \geq \delta\) for all \(n\). Then,
      \(\ell_n < 0\) and \(u_n > 0\) for all \(n\) because otherwise
      \(g_{\epsilon}(\ell_n/s_n, u_n/s_n) = 0 < \delta\). Driving \(n\) to
      infinity yields that \(\ell \leq 0\) and \(u \geq 0\). Hence,
      \((0, \ell, u) \in \mathcal{X}^p_{\text{T}}\).
\item Suppose that \(x \neq 0\). Pick any \((x, \ell, u) \in \mathcal{X}^p_{\text{T}}\), then the above fact implies that
      \(\left(\frac{\ell}{\|x\|_*}, \frac{u}{\|x\|_*} \right) \in \mathcal{C}_\delta\).
      Hence, \(s := \|x\|_* > 0\) ensures that
      \((\ell, u, s) \in \text{co}(\mathcal{C}_\delta)\). On the
      contrary, pick any \((x, \ell, u) \in \reals^{n+2}\) such that
      \(x \neq 0\) and there exists an \(s \geq \|x\|_* > 0\) with
      \((\ell, u, s) \in \text{co}(\mathcal{C}_\delta)\). By
      definition of \(\text{co}(\mathcal{C}_\delta)\), there exists a sequence
      \(\{(\ell_n, u_n, s_n)\}_{n=1}^{\infty}\) converging to
      \((\ell, u, s)\) such that \(s_n > 0\) and
      \(g_{\epsilon}(\ell_n/s_n, u_n/s_n) \geq \delta\) for all \(n\). Then,
\begin{align*}
  g_{\epsilon}\left(\frac{\ell }{\|x\|_*}, \frac{u }{\|x\|_*}\right) \geq & \  g_{\epsilon}\left(\frac{\ell }{s}, \frac{u}{s}\right)
  = \ \lim_{n \rightarrow \infty} g_{\epsilon}\left(\frac{\ell_n}{s_n}, \frac{u_n}{s_n}\right) \geq \delta,
\end{align*}
      where the first inequality is because the function
      \(g_{\epsilon}(\ell, u)\) is nonincreasing in \(\ell\) and nondecreasing
      in \(u\), and the equality is due to the dominated convergence theorem
      (or equivalently, the continuity of \(g_{\epsilon}\)). It follows that
      \(\left(\frac{\ell}{\|x\|_*}, \frac{u}{\|x\|_*} \right) \in \mathcal{C}_\delta\)
      and so \((x, \ell, u) \in \mathcal{X}^p_{\text{T}}\) by the above fact. This
      completes the proof.
\end{enumerate}
\end{proof}

\subsection{Proof of A Fact Connecting \(\mathcal{X}^p_\text{T}\) and \(\mathcal{C}_\delta\)} \label{apx-fact}
\begin{FirstUpdate}
\paragraph{Fact.} For any \(x \neq 0\), \((x, \ell, u) \in \mathcal{X}^p_\text{T}\) if and only if
\(\left( \frac{\ell}{\norm{x}_{\ast}}, \frac{u}{\norm{x}_{\ast}}\right) \in \mathcal{C}_\delta\).

\begin{proof}
We define a set
\begin{align*}
  \mathcal{X}^p_{\text{T}_0} := \Set{(\ell, u) \in \reals^2 \colon
  \inf_{\QProb \in \mathcal{P}_0} \QProb \left[ \ell \leq \xi \leq u \right] \geq 1 - \epsilon
  },
\end{align*}
where \(\mathcal{P}_0\) is centered around \(\Prob_0 \disteq R\cdot e_1^{\top} U_n\) and has radius \(\delta\). By Proposition~\ref{prop:reform-w-cc}, \(\mathcal{X}^p_{\text{T}_0}\) can be
recast as
\begin{gather}
\Prob_0 \left[ \ell \leq Y \leq u \right] \geq 1 - \epsilon, \label{eq:ts0-deter-reform-int-1}\\
\int\limits_0^{\VaR_{\epsilon}(f_0(\ell,u,Y))} \left(
\Prob_0 \left[ f_0(\ell,u,Y) \geq t \right] - (1 - \epsilon) \right) \dLeb{t} \geq \delta, \label{eq:ts0-deter-reform-int-2}
\end{gather}
where \(Y\) has distribution \(\Prob_0\), and
\(f_0(\ell, u, Y) := \min \set{Y - \ell, u - Y}\). We simplify constraint~\eqref{eq:ts0-deter-reform-int-2}:
\begin{align*}
  \int\limits_0^{\VaR_{\epsilon}(f_0(\ell,u,Y))} \left(
  \Prob_0 \left[ f_0(\ell,u,Y) \geq t \right] - (1 - \epsilon) \right) \dLeb{t}
  & = \int\limits_0^{+\infty} \left[
    \Prob_0 \left[ f_0(\ell,u,Y) \geq t \right] - (1 - \epsilon) \right]^+ \dLeb{t} \\
  & = \int\limits_0^{+\infty} \left[
    \Phi(u - t) + \Phi(\ell + t) - (1 - \epsilon) \right]^+ \dLeb{t},
\end{align*}
where the first equality is because the integrand is decreasing in \(t\) and the second equality is by definition of \(\Phi\). Because
constraint~\eqref{eq:ts0-deter-reform-int-2} implies that there exists a
\(t \geq 0\) such that \(\Phi(u - t)- \Phi(\ell + t) > (1 - \epsilon)\), or
equivalently,
\(\Prob_0 \left[ \ell + t \leq Y \leq u - t \right] \geq 1 - \epsilon\), we
conclude that
\(\mathcal{X}^p_{\text{T}_0} = \set{(\ell, u) \in \reals^2 \colon \delta \leq g_{\epsilon}(\ell, u)} = \mathcal{C}_\delta\). Hence, it remains to show that, for any \(x \neq 0\), \((x, \ell, u) \in \mathcal{X}^p_\text{T}\) if and only if
\(\left( \frac{\ell}{\norm{x}_{\ast}}, \frac{u}{\norm{x}_{\ast}}\right) \in \mathcal{X}^p_{\text{T}_0}\).

To this end, by Proposition~\ref{prop:reform-w-cc}, \((x, \ell, u) \in \mathcal{X}^p_{\text{T}}\) if and only if it satisfies
\begin{gather}
\Prob \left[ \ell \leq x^{\top} \zeta \leq u \right]
\geq 1 - \epsilon, \label{eq:ts-deter-reform-int-1}\\
\int\limits_0^{\VaR_{\epsilon}(f(\ell,u,\zeta))} \left(
\Prob \left[ f(\ell,u, x^{\top} \zeta)
\geq t \right] - (1 - \epsilon) \right) \dLeb{t} \geq \delta, \label{eq:ts-deter-reform-int-2}
\end{gather}
where \(f(\ell, u, x^{\top} \zeta)\) represents the minimum distance to the
unsafe set and can be rewritten using \(f_0\) and \(Y\):
\begin{align}
f(\ell, u, \zeta)
& := \frac{\min \set{x^{\top} \zeta - \ell, u - x^{\top}\zeta}}{\norm{x}_{\ast}}
= \min \Set{
  \frac{x^{\top}\zeta}{\norm{x}_{\ast}} - \frac{\ell}{\norm{x}_{\ast}},
  \frac{u}{\norm{x}_{\ast}} - \frac{x^{\top}\zeta}{\norm{x}_{\ast}}} \nonumber{}\\
& = f_0 \left( \frac{\ell}{\norm{x}_{\ast}}, \frac{u}{\norm{x}_{\ast}}, \frac{x^{\top} \zeta}{\norm{x}_{\ast}} \right)
  \disteq{} f_0 \left( \frac{\ell}{\norm{x}_{\ast}}, \frac{u}{\norm{x}_{\ast}}, Y \right). \label{eq:ts-rel-two-dist-fcns}
\end{align}
Likewise, we have
\begin{align}
  \Prob \left[ \ell \leq x^{\top} \zeta \leq u \right]
  & = \Prob \left[
    \frac{\ell}{\norm{x}_{\ast}}
    \leq \frac{x^{\top} \zeta}{\norm{x}_{\ast}}
    \leq  \frac{u}{\norm{x}_{\ast}} \right]
    = \Prob_0 \left[
    \frac{\ell}{\norm{x}_{\ast}}
    \leq Y
    \leq \frac{u}{\norm{x}_{\ast}} \right] \geq 1 - \epsilon. \label{eq:ts-rel-two-cc}
\end{align}
Now, take \((x, \ell, u) \in \mathcal{X}^p_{\text{T}}\) with \(x \neq 0\), then
by definition it
satisfies~\eqref{eq:ts-deter-reform-int-1},~\eqref{eq:ts-deter-reform-int-2},
and together with equations~\eqref{eq:ts-rel-two-dist-fcns}
and~\eqref{eq:ts-rel-two-cc} we have that
\((\frac{\ell}{\norm{x}_{\ast}}, \frac{u}{\norm{x}_{\ast}}) \in \mathcal{X}^p_{\text{T}_0}\).
Similarly, if \((x, \ell, u)\) satisfies
\((\frac{\ell}{\norm{x}_{\ast}}, \frac{u}{\norm{x}_{\ast}}) \in \mathcal{X}^p_{\text{T}_0}\),
then~\eqref{eq:ts-rel-two-dist-fcns},~\eqref{eq:ts-rel-two-cc},~\eqref{eq:ts0-deter-reform-int-1},
and~\eqref{eq:ts0-deter-reform-int-2} imply that
\((x, \ell, u) \in \mathcal{X}^p_{\text{T}}\).
\end{proof}
\end{FirstUpdate}

\subsection{Proof of Corollary~\ref{cor:ts-pcc-symm-bds}} \label{apx-cor:ts-pcc-symm-bds}
\begin{FirstUpdate}
\begin{proof}
By Proposition~\ref{prop:reform-w-cc}, \((x, u) \in \mathcal{X}^p_{\text{TS}}\)
if and only if
\begin{gather}
\Prob \left[ \abs[\bigg]{\frac{x^{\top} \zeta}{\norm{x}_{\ast}}} \leq \frac{u}{\norm{x}_{\ast}} \right]
\geq 1 - \epsilon, \label{eq:tss-deter-reform-int-1}\\
\int\limits_0^{+\infty} \left(
\Prob \left[ \abs[\bigg]{\frac{x^{\top} \zeta}{\norm{x}_{\ast}}} \leq \frac{u}{\norm{x}_{\ast}} - t \right] - (1 - \epsilon) \right)^+ \dLeb{t} \geq \delta. \label{eq:tss-deter-reform-int-2}
\end{gather}
Observe that if \((x, u)\) satisfies~\eqref{eq:tss-deter-reform-int-2}, then there exists a \(t > 0\) such that
\begin{align*}
  \Prob \left[ \abs[\bigg]{\frac{x^{\top} \zeta}{\norm{x}_{\ast}}} \leq \frac{u}{\norm{x}_{\ast}} \right]
  \geq \Prob \left[ \abs[\bigg]{\frac{x^{\top} \zeta}{\norm{x}_{\ast}}}
  \leq \frac{u}{\norm{x}_{\ast}} - t \right]
  \geq 1 - \epsilon.
\end{align*}
Therefore,~\eqref{eq:tss-deter-reform-int-1} is redundant
and we recast~\eqref{eq:tss-deter-reform-int-2} as
\begin{align*}
  \mathcal{X}^p_{\text{TS}}
  & = \Set{
  (x, u) \in \reals^n \times \reals_+ \colon
  g^s_{\epsilon}\left(\frac{u}{\norm{x}_{\ast}}\right) \geq \delta
  } \\
  & = \Set{
  (x, u) \in \reals^n \times \reals_+ \colon
  u \geq \norm{x}_{\ast} \cdot \inf_r \set{r \geq 0 \colon g^s_{\epsilon}(r) \geq \delta}
  },
\end{align*}
where the second equality is because \(g^s_{\epsilon}(\cdot)\) is increasing.
\end{proof}
\end{FirstUpdate}

\subsection{Proof of Proposition~\ref{prop:algo-reform}} \label{apx-prop:algo-reform}
\begin{proof}
We first show the \(\alpha^*_1\)-concavity of \(\phi(x, y)\) using a similar argument as in the proof of Theorem~\ref{thm:drcc-rhs-cvx}. Recall that \(\psi(x, t) = \Prob[f(x, \zeta) \geq t] - (1- \epsilon)\) and \(\phi(x, y) = \int_0^y\psi(x, t)\dLeb{t}\). Pick any \((x_0, y_0), (x_1, y_1) \in \dom\phi\), then their midpoint \((x_{1/2}, y_{1/2}) := \frac{1}{2}(x_0, y_0) + \frac{1}{2}(x_1, y_1)\) lies in \(\dom\phi\) because \(\dom\phi\) is convex by Lemma~\ref{lem:rhs-var-concave}. Define \(S_i = [0, y_i]\) and pick any \(t_i \in S_i\) for \(i = 0,1\). Since \(\psi(x, t)\) is \(\alpha\)-concave by Lemma~\ref{lem:v-shift-of-alpha-cve}, it holds that
\begin{align*}
  \psi(x_{1/2}, t_{1/2})
  \geq m_{\alpha} \left[ \psi(x_0, t_0), \psi(x_0, t_0); \frac{1}{2} \right].
\end{align*}
It follows from Proposition~\ref{prop:brunn-minkow-ineq} that
\begin{align*}
  \int_{\frac{1}{2}S_0 + \frac{1}{2} S_1} \psi(x_{1/2}, t) \dLeb{t}
  \geq m_{\alpha^{\ast}_1} \left[
  \int_{S_0} \psi(x_0, t) \dLeb{t}, \int_{S_1} \psi(x_1, t) \dLeb{t}; \frac{1}{2}
  \right],
\end{align*}
or equivalently, \(\phi(x_{1/2}, y_{1/2}) \geq m_{\alpha^*_1}[\phi(x_0,y_0),\phi(x_1,y_1); 1/2]\). This shows the midpoint \(\alpha^*_1\)-concavity of \(\phi(x,y)\), which together with its continuity (see Lemma~\ref{lem:cont-of-prob-fcns}) shows the \(\alpha^*_1\)-concavity.

Second, the closedness of \(\dom\phi\) follows from the continuity of \(\psi\) by Lemma~\ref{lem:cont-of-prob-fcns}.

Third, we show that constraints~\eqref{eq:algo-ref-1}--\eqref{eq:algo-ref-2} are equivalent to~\eqref{eq:algo-ref-3}. To this end, we pick any \(x\) that satisfies~\eqref{eq:algo-ref-1}--\eqref{eq:algo-ref-2}. Then, by letting \(y := \VaR_{\epsilon}\big(f(x, \zeta)\big) \geq 0\), we obtain \(\delta \leq \phi(x, y)\), which implies constraint~\eqref{eq:algo-ref-3}. On the contrary, pick any \(x\) that satisfies~\eqref{eq:algo-ref-3}. Then, by definition there exists a \(y \geq 0\) such that \(\delta \leq \phi(x, y)\). Since \(\delta > 0\) and \(\phi(x, y) = \int_0^y \big(\Prob\big[f(x, \zeta) \geq t\big] - (1 - \epsilon)\big) \dLeb{t}\), there exists a \(t \in [0, y]\) such that \(\Prob\big[f(x, \zeta) \geq t\big] \geq (1 - \epsilon)\), which implies that \(\Prob\big[f(x, \zeta) \geq 0\big] \geq (1 - \epsilon)\), i.e., constraint~\eqref{eq:algo-ref-2}. Finally, we notice that \(\phi(x, y) \leq \phi\big(x, \VaR_{\epsilon}\big(f(x, \zeta)\big)\big)\) and hence \(\delta \leq \phi\big(x, \VaR_{\epsilon}\big(f(x, \zeta)\big)\big)\), i.e., constraint~\eqref{eq:algo-ref-1}. This completes the proof.
\end{proof}

\subsection{Proof of Theorem~\ref{thm:algo-cvx-converge}} \label{apx-thm:algo-cvx-converge}
\begin{proof}
The proof relies on preparatory Lemmas~\ref{lem:feasible},~\ref{lem:first-order-opt}, and~\ref{lem:cont-dir-derivative-of-cvx}, whose proofs are provided in Appendix~\ref{apx-preparatory-lemmas}.

First, we define set \(S := \dom\phi \cap \{(x, y) \in X \times \reals_+: c^{\top}x \leq u\}\). Then, by compactness of \(X\) and closedness of \(\dom\phi\) (see Proposition~\ref{prop:algo-reform}), \(S\) is compact. Since all iterates \((x_k, y_k)\) lives in \(S\) (see Lemma~\ref{lem:feasible}), \(\{(x_k, y_k)\}_k\) has a limit point \((x^*, y^*) \in S\).

Second, we show that \((x^*, y^*)\) is a first-order local optimal solution to~\eqref{eq:rho-u}, which implies its global optimality due to the log-concavity of \(\phi(x, y)\). To this end, let \(\Delta := (d_x, d_y)\) be an arbitrary tangent direction of \(S\) at \((x^{\ast}, y^{\ast})\). Then, by definition there exists a sequence \(\{(x_{\ell}, y_{\ell})\}_{\ell}\) in \(S\) converging to
\((x^{\ast}, y^{\ast})\) and \(t_{\ell} \searrow 0\) such that
\[
\Delta = \lim_{\ell \to \infty} \frac{(x_{\ell}, y_{\ell}) - (x^{\ast}, y^{\ast})}{t_{\ell}}.
\]
Then, we examine the directional derivative of \(\phi(x, y)\) along direction \(\Delta\) to obtain
\begin{align*}
  \phi^{\prime}(x^{\ast}, y^{\ast}; \Delta)
  & = \phi^{\prime}\left(x^{\ast}, y^{\ast};
    \lim_{\ell \to \infty} \frac{1}{t_{\ell}} \Big[ (x_{\ell}, y_{\ell}) - (x^{\ast}, y^{\ast}) \Big] \right) \\
  & = \lim_{\ell \to \infty} \phi^{\prime} \left(x^{\ast}, y^{\ast}; \frac{1}{t_{\ell}} \Big[ (x_{\ell}, y_{\ell}) - (x^{\ast}, y^{\ast}) \Big]\right) \\
  & = \lim_{\ell \to \infty} \frac{1}{t_{\ell}} \phi^{\prime} \Big(x^{\ast}, y^{\ast}; (x_{\ell}, y_{\ell}) - (x^{\ast}, y^{\ast}) \Big)
    \leq 0,
\end{align*}
where the second and third equalities follow from the continuity and positive homogeneity of \(\phi^{\prime}(x^{\ast}, y^{\ast}; \Delta)\) in \(\Delta\), respectively (see Lemma~\ref{lem:cont-dir-derivative-of-cvx}), and the inequality follows from Lemma~\ref{lem:first-order-opt} because \((x^{\ast}, y^{\ast}) + (x_{\ell}, y_{\ell}) - (x^{\ast}, y^{\ast}) = (x_{\ell}, y_{\ell}) \in S\). This completes the proof.
\end{proof}

\subsection{Proofs of Preparatory  Lemmas~\ref{lem:feasible},~\ref{lem:first-order-opt}, and~\ref{lem:cont-dir-derivative-of-cvx}} \label{apx-preparatory-lemmas}
\begin{lemma} \label{lem:feasible}
Let \(\set{(x_k, y_k)}_k\) represent a sequence of iterates produced by Algorithm~\ref{algo:bcm-for-rho-u}. Then, all  iterates are feasible, i.e., \((x_k, y_k) \in S\) for all \(k\). In addition, it holds that
\begin{align}
  \lim_{k \to \infty} \phi(x_k, y_k) = \lim_{k \to \infty} \phi(x_{k+1}, y_k). \nonumber %
\end{align}
\end{lemma}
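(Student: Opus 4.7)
The plan is to prove the two claims in turn. Feasibility I would establish by induction on $k$; the limit equality then follows from sandwiching the essentially monotone block-coordinate updates.

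For the base case $(x_1, y_1) \in S$, Algorithm~\ref{algo:bcm-for-rho-u} initializes $x_1 \in X$ with $c^{\top} x_1 \leq u$ and $y_1 = \VaR_\epsilon(f(x_1, \zeta)) > 0$. Since $f(x_1, \zeta)$ is atomless by Lemma~\ref{lem:cont-of-dist-to-set}, $\Prob[f(x_1, \zeta) \geq y_1] = 1 - \epsilon$, placing $(x_1, y_1)$ in $\dom \phi$ and hence in $S$. For the inductive step, assume $(x_k, y_k) \in S$; the oracle returns $x_{k+1} \in X$ with $c^{\top} x_{k+1} \leq u$, so what remains is to verify $y_{k+1} \geq 0$ and $(x_{k+1}, y_{k+1}) \in \dom \phi$. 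The key step would use the identity $\phi(x, y_k) = y_k \bigl( \hat{\Prob}[f(x, \zeta) \geq s y_k] - (1 - \epsilon) \bigr)$ derived in Section~\ref{sec:algo}: since $(x_k, y_k) \in \dom \phi$ is equivalent to $\hat{\Prob}[f(x_k, \zeta) \geq s y_k] \geq 1 - \epsilon$, the oracle's (near-)optimal $x_{k+1}$ inherits $\hat{\Prob}[f(x_{k+1}, \zeta) \geq s y_k] \geq 1 - \epsilon$. Because $y_k \geq 0$ makes the map $s \mapsto \Prob[f(x_{k+1}, \zeta) \geq s y_k]$ nonincreasing on $[0, 1]$, averaging over $s$ gives $\Prob[f(x_{k+1}, \zeta) \geq 0] \geq \hat{\Prob}[f(x_{k+1}, \zeta) \geq s y_k] \geq 1 - \epsilon$. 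Atomlessness then yields $y_{k+1} = \VaR_\epsilon(f(x_{k+1}, \zeta)) \geq 0$ with $\Prob[f(x_{k+1}, \zeta) \geq y_{k+1}] = 1 - \epsilon$, so $(x_{k+1}, y_{k+1}) \in S$.

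For the limit equality, the block coordinate ascent yields essentially monotone inequalities
\[
\phi(x_k, y_k) - \varepsilon_k \ \leq \ \phi(x_{k+1}, y_k) \ \leq \ \phi(x_{k+1}, y_{k+1}),
\]
where the first uses the oracle's $\varepsilon_k$-optimality together with feasibility of $x_k$ in the $x$-subproblem, and the second uses the closed-form maximization over $y$. Continuity of $\phi$ on the compact set $S$ (via Lemma~\ref{lem:cont-of-prob-fcns} and Proposition~\ref{prop:algo-reform}) bounds $\{\phi(x_k, y_k)\}_k$; combined with the essentially monotone increase and $\varepsilon_k \to 0$, this forces $\{\phi(x_k, y_k)\}_k$ to converge, and the sandwich then forces $\{\phi(x_{k+1}, y_k)\}_k$ to share the same limit. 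The main obstacle is the inductive feasibility step, specifically showing $y_{k+1} \geq 0$ in the presence of the oracle's $\varepsilon_k$-slack: in the idealized exact-oracle case the averaging argument above is clean, but with $\varepsilon_k > 0$ one may need either a tolerance small enough relative to the slack already enjoyed by $x_k$, or an explicit clipping $y_{k+1} \gets \max\{\VaR_\epsilon(f(x_{k+1}, \zeta)), 0\}$ to keep each iterate strictly inside $\dom \phi$.
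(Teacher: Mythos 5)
Your proof is correct and follows essentially the same route as the paper: feasibility of \((x_{k+1}, y_{k+1})\) is deduced from the nonnegativity of \(\phi(x_{k+1}, y_k)\) (your averaging over \(s\) in the \(\hat{\Prob}\) identity is just the paper's observation that a nonnegative integral of \(\psi(x_{k+1},\cdot)\) over \([0,y_k]\) forces \(\Prob[f(x_{k+1},\zeta)\geq 0]\geq 1-\epsilon\), hence \(y_{k+1}\geq 0\)), and the limit equality comes from the same bounded, essentially monotone sandwich of the \(\phi\)-values. The \(\varepsilon_k\)-slack concern you flag at the end is legitimate but is equally present in the paper's own proof, which simply asserts monotonicity of the iterates' \(\phi\)-values ``by construction,'' so it does not constitute a gap relative to the paper's argument.
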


\begin{proof}
First, recall that \(S \equiv \dom\phi \cap \Set{(x, y) \in X \times \reals_+: c^{\top}x \leq u}\) is compact. Since \(\phi(x, y)\) is continuous by Lemma~\ref{lem:cont-of-prob-fcns}, it is bounded on \(S\). In addition, we notice that by construction the
\(\phi\)-values of the iterates produced by Algorithm~\ref{algo:bcm-for-rho-u} are non-decreasing,
\ie{},
\begin{align}
  0 < \phi(x_1, y_1) \leq \phi(x_2, y_1) \leq \phi(x_2, y_2) \leq \cdots \leq \phi(x_k, y_k) \leq \phi(x_{k+1}, y_k) \leq \cdots \label{eq:algo-monotone-seq}
\end{align}
Hence, this non-decreasing, bounded sequence converges to a finite value. It follows that the two subsequences
\(\set{\phi(x_k, y_k)}_k\) and \(\set{\phi(x_{k+1}, y_k)}_k\) converge to the same limit.

Second, we recall that \((x_1, y_1) \in S\) by construction. For all \(k \geq 2\), \(\phi(x_{k+1}, y_k) > 0\) by~\eqref{eq:algo-monotone-seq}, which implies that there exists a \(t \in [0, y_k]\) such that \(\Prob\big[f(x_{k+1}, \zeta) \geq t\big] > 1 - \epsilon\). Then, \(\Prob\big[f(x_{k+1}, \zeta) \geq 0\big] > 1 - \epsilon\), or equivalently, \(\VaR_{\epsilon} \big(f(x_{k+1}, {\zeta}\big)) > 0\).
It follows that \(y_{k+1} \equiv \VaR_{\epsilon} \big(f(x_{k+1}, {\zeta})\big) \geq 0\) and so \((x_{k+1}, y_{k+1}) \in S\). This completes the proof.
\end{proof}

\begin{lemma} \label{lem:first-order-opt}
Let \((x^{\ast}, y^{\ast})\) represent a limit point of the sequence \(\set{(x_k, y_k)}_k\). Then, it holds that
\begin{align*}
  \phi(x^{\ast} + d_x, y^{\ast}) \leq \phi(x^{\ast}, y^{\ast})
  \quad \text{and} \quad
  \phi(x^{\ast}, y^{\ast} + d_y) \leq \phi(x^{\ast}, y^{\ast})
\end{align*}
for all \(d_x \in \reals^n, d_y \in \reals\) such that
\((x^{\ast} + d_x, y^{\ast}) \in S\) and
\((x^{\ast}, y^{\ast} + d_y) \in S\). In addition, if \((x^{\ast} + d_x, y^{\ast} + d_y) \in S\), then the directional derivative of \(\phi(x, y)\) along \((d_x, d_y)\) satisfies
\begin{align*}
  \phi^{\prime}(x^{\ast}, y^{\ast}; (d_x, d_y)) := \lim_{s \to 0^+} \frac{1}{s} \Big[
  \phi(x^{\ast} + s d_x, y^{\ast} + s d_y) - \phi(x^{\ast}, y^{\ast}) \Big] \leq 0.
\end{align*}
\end{lemma}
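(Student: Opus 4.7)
All three claims will be established by extracting a subsequence $\{k_\ell\}$ along which $(x_{k_\ell}, y_{k_\ell}) \to (x^*, y^*)$ (such a subsequence exists because all iterates lie in the compact set $S$ by Lemma~\ref{lem:feasible}) and then passing to the limit in the $\varepsilon_{k_\ell}$-optimality conditions of the algorithm. The main tools are the continuity of $\phi$ and $\psi$ from Lemma~\ref{lem:cont-of-prob-fcns}, the continuity of $x\mapsto\VaR_\epsilon(f(x,\zeta))$ that follows from its concavity in Lemma~\ref{lem:rhs-var-concave}, and the identity $\lim_k\phi(x_{k+1},y_k)=\lim_k\phi(x_k,y_k)$ proven in Lemma~\ref{lem:feasible}.

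For the first inequality, the oracle guarantee $\phi(x_{k_\ell+1},y_{k_\ell}) \geq \phi(x,y_{k_\ell}) - \varepsilon_{k_\ell+1}$ is valid for every $x\in X$ with $c^\top x\leq u$. Plugging in $x=x^*+d_x$—feasible because $(x^*+d_x,y^*)\in S$—and letting $\ell\to\infty$ gives $\phi(x^*,y^*) \geq \phi(x^*+d_x,y^*)$ via continuity of $\phi$ and Lemma~\ref{lem:feasible}. This argument in fact proves the \emph{stronger} statement that $\phi(x^*+d_x,y^*)\leq \phi(x^*,y^*)$ whenever $x^*+d_x\in X$ and $c^\top(x^*+d_x)\leq u$—even if $(x^*+d_x,y^*)\notin\dom\phi$—and this stronger form will be needed in the third part. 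For the second inequality, $|y_{k_\ell}-\VaR_\epsilon(f(x_{k_\ell},\zeta))|\leq\varepsilon_{k_\ell}\to 0$ together with continuity of $\VaR_\epsilon\circ f$ yields $y^*=\VaR_\epsilon(f(x^*,\zeta))$; since $\phi(x^*,\cdot)$ is increasing on $[0,\VaR_\epsilon(f(x^*,\zeta))]$ and decreasing afterwards, $y^*$ maximizes $\phi(x^*,\cdot)$ on $\reals_+$, giving the claim.

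For the directional derivative, the crucial structural fact is that $\phi$ is differentiable in $y$ with $\partial_y\phi(x,y) = \Prob[f(x,\zeta)\geq y] - (1-\epsilon)$, which is jointly continuous by Lemma~\ref{lem:cont-of-prob-fcns}; combined with the second inequality and the atomlessness of $f(x^*,\zeta)$ supplied by Lemma~\ref{lem:cont-of-dist-to-set}, this yields $\partial_y\phi(x^*,y^*) = 0$. Writing
\[
\phi(x^*+sd_x,\,y^*+sd_y) - \phi(x^*,y^*) = \int_{y^*}^{y^*+sd_y}\partial_y\phi(x^*+sd_x,t)\,\dLeb{t} + \bigl[\phi(x^*+sd_x,y^*)-\phi(x^*,y^*)\bigr],
\]
the first term is $o(s)$ because the integrand is continuous and vanishes at $(x^*,y^*)$, while the second term is nonpositive for all small $s\in[0,1]$ by the strengthened form of the first inequality applied at $x^*+sd_x$, which is feasible by convexity of $X$ and linearity of $c^\top x\leq u$. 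Dividing by $s$ and letting $s\searrow 0$ produces $\phi'((x^*,y^*);(d_x,d_y))\leq 0$.

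The main obstacle I anticipate is precisely this feasibility subtlety in the last step: the hypothesis $(x^*+d_x,y^*+d_y)\in S$ does not force $(x^*+sd_x,y^*)$ to lie in $\dom\phi$, so bounding the second bracket cannot invoke the lemma's stated form of the first inequality as-is. The fix is to extract the $\dom\phi$-free version of that inequality from the oracle guarantee in the course of its proof, exactly as flagged above; with this stronger form in hand the decomposition closes and all three parts fit together cleanly.
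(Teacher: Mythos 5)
Your proof is correct, and its first two parts essentially match the paper's argument — though your device of plugging the \emph{fixed} point \(x^{\ast}+d_x\) directly into the oracle's \(\varepsilon_k\)-optimality guarantee and passing to the limit is cleaner than the paper's interior/boundary case split, and it yields, as you note, the stronger conclusion that \(\phi(x^{\ast}+d_x, y^{\ast}) \leq \phi(x^{\ast}, y^{\ast})\) whenever \(x^{\ast}+d_x\) is merely oracle-feasible (in \(X\) with \(c^{\top}(x^{\ast}+d_x)\leq u\)), without requiring \((x^{\ast}+d_x, y^{\ast}) \in \dom\phi\). The paper's own treatment of the second bracket in the directional-derivative decomposition quietly relies on exactly this stronger form, so your explicit flagging of the feasibility subtlety is a genuine improvement in rigor. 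Where you truly diverge is the first bracket: the paper establishes a local Lipschitz bound on \(\ln\varphi\) near \((x^{\ast}, y^{\ast})\) and runs a two-case analysis on the sign of \(d_y\) to bound \(\frac{1}{s}\int_{y^{\ast}}^{y^{\ast}+sd_y}\big(\varphi(x^{\ast}+sd_x, t) - (1-\epsilon)\big)\dLeb{t}\) by \(\phi^{\prime}(x^{\ast}, y^{\ast}; (0,d_y))\), whereas you observe that the integrand \(\psi\) is jointly continuous (Lemma~\ref{lem:cont-of-prob-fcns}) and vanishes at \((x^{\ast}, y^{\ast})\) because \(y^{\ast} = \VaR_{\epsilon}(f(x^{\ast},\zeta))\) and \(f(x^{\ast},\zeta)\) is atomless, so the whole integral is \(o(s)\). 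Your route is shorter, avoids the log-concavity machinery for this step, and reaches the same conclusion (the paper's bound \(\phi^{\prime}(x^{\ast}, y^{\ast}; (0,d_y))\) also equals zero by the same token). The one thing worth adding is a sentence on why the limit defining \(\phi^{\prime}(x^{\ast}, y^{\ast}; (d_x,d_y))\) exists at all: your argument as written controls only the limsup of the difference quotient; the paper secures existence from the log-concavity of \(\phi\) and \(\phi(x^{\ast}, y^{\ast}) > 0\) via Lemma \(2.4\) in~\cite{norkin-1993-analy-optim}, and since that fact is needed anyway in Lemma~\ref{lem:cont-dir-derivative-of-cvx}, it is a one-line addition.
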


\begin{proof}
We split the proof into three parts: the perturbation along \((0, d_y)\), the perturbation along \((d_x, 0)\), and the directional derivative \(\phi^{\prime}(x^{\ast}, y^{\ast}; (d_x, d_y))\). For notation brevity, we assume, by passing to a subsequence if needed, that \(\{(x_k, y_k)\}_k\) converges to \((x^*, y^*)\).

\noindent \textbf{(Perturbation along \((0, d_y)\))} By definition of \((x^*, y^*)\), it holds that
\begin{align*}
  \abs[\Big]{y^{\ast} - \VaR_{\epsilon} \big(f(x^{\ast}, {\zeta})\big)}
  & = \abs[\Big]{ \lim_{k \to \infty} y_{k} - \VaR_{\epsilon}\big(f(\lim_{k \to \infty} x_{k}, {\zeta})\big)} \\
  & = \abs[\Big]{ \lim_{k \to \infty} \left( y_{k} - \VaR_{\epsilon}\big(f(x_{k}, {\zeta})\big) \right)} \\
  & = \lim_{k \to \infty} \abs{\varepsilon_{k}} = 0,
\end{align*}
where the second and third equalities are due to the continuity of \(\VaR_{\epsilon} (f(x, {\zeta}))\) (see Lemma~\ref{lem:rhs-var-concave}) and
\(\abs{\cdot}\), respectively. Therefore,
\(\phi(x^{\ast}, y^{\ast} + d_y) \leq \phi(x^{\ast}, y^{\ast})\)
because \(y^{\ast} = \VaR_{\epsilon} (f(x^{\ast}, {\zeta}))\) is a maximizer of \(\phi(x^*, y)\) for fixed \(x^*\).

\noindent \textbf{(Perturbation along \((d_x, 0)\))} First, suppose that \((x^*+d_x, y^*)\) lies in the interior of \(S\), denoted by \(\text{int}(S)\). Then, since \(\{(x_k, y_k)\}_k\) converges to \((x^*, y^*)\), there exist neighborhoods \(N \subseteq S\) and \(N^d \subseteq S\) of \((x^*, y^*)\) and \((x^*+d_x, y^*)\), respectively, such that \((x_k, y_k) \in N\) and \((x_k + d_x, y_k) \in N^d\) for sufficiently large \(k\). Then, by construction it holds that
\[
\phi(x_k+d_x, y_k) \leq \max_x \phi(x, y_k) \leq \phi(x_{k+1}, y_k) + \varepsilon_k.
\]
Driving \(k\) to infinity yields
\[
\phi(x^*+d_x, y^*) \leq \phi(x^*, y^*)
\]
by continuity of \(\phi\) and Lemma~\ref{lem:feasible}.

Second, suppose that \((x^*+d_x, y^*)\) lies on the boundary of \(S\). Then, for all positive integers \(M\), \((x^*+(1 - 1/M)d_x, y^*) \in \text{int}(S)\) by convexity of \(S\). It follows that \(\phi(x^*+(1 - 1/M)d_x, y^*) \leq \phi(x^*, y^*)\). Driving \(M\) to infinity yields \(\phi(x^*+d_x, y^*) \leq \phi(x^*, y^*)\) by continuity of \(\phi\).

\noindent \textbf{(Directional derivative)} Since \(\phi(x, y)\) is log-concave and
\(\phi(x^{\ast}, y^{\ast}) > 0\), \(\phi\) is
directionally differentiable at \((x^{\ast}, y^{\ast})\) by Lemma~\(2.4\) in~\cite{norkin-1993-analy-optim}. Hence, \(\phi^{\prime}(x^{\ast}, y^{\ast}; (d_x, d_y))\) is well-defined. To compute \(\phi'(x^{\ast}, y^{\ast}; (d_x, d_y))\), we define \(\varphi(x, t) := \Prob \left[ f(x, {\zeta})\geq t \right]\) and recast the finite difference
\begin{align}
  & \phi(x^{\ast} + s d_x, y^{\ast} + s d_y) - \phi(x^{\ast}, y^{\ast}) \nonumber{} \\
  = \;
  & \phi(x^{\ast} + s d_x, y^{\ast} + s d_y) - \phi(x^{\ast} + s d_x, y^{\ast}) + \phi(x^{\ast} + s d_x, y^{\ast}) - \phi(x^{\ast}, y^{\ast}) \nonumber{}\\
  = \;
  & \int^{y^{\ast} + s d_y}_{y^{\ast}} \Big(
    \varphi(x^* + sd_x, t) - (1 - \epsilon) \Big) \dLeb{t}
    + \Big(\phi(x^{\ast} + s d_x, y^{\ast}) - \phi(x^{\ast}, y^{\ast})\Big). \label{eq:algo-direc-diff-1}
\end{align}
For the second term in~\eqref{eq:algo-direc-diff-1}, we have
\[
\lim_{s \to 0^+} \frac{1}{s} \Big[ \phi(x^{\ast} + s d_x, y^{\ast}) - \phi(x^{\ast}, y^{\ast}) \Big] = \phi'(x^*, y^*; (d_x, 0)) \leq 0
\]
because \(\phi(x^*+sd_x, y^*) \leq \phi(x^*, y^*)\) for all sufficiently small \(s > 0\). In what follows, we address the first term in~\eqref{eq:algo-direc-diff-1}. To that end, we notice that \(\varphi(x, t)\) is
log-concave on
\begin{align*}
  \dom \varphi := \Set{ (x, t) \in \reals \times \reals_+ \colon
  \exists \ \zeta \text{ such that } f(x, \zeta) - t \geq 0},
\end{align*}
and \((x^{\ast}, y^{\ast}) \in \text{int}(\dom\varphi)\) because
\begin{align*}
  \Prob \left[ f(x^{\ast}, {\zeta}) - y^{\ast} > 0  \right]
  = \Prob \left[ f(x^{\ast}, {\zeta}) - y^{\ast} \geq 0  \right] \geq 1 - \epsilon,
\end{align*}
which implies that there exists a \(\hat{\zeta} \in \Xi\) such that
\(f(x^{\ast}, \hat{\zeta}) - y^{\ast} > 0\). By continuity of \(f\), we also have
\(f(x^{\prime}, \hat{\zeta}) - y^{\prime} \geq 0\) for all \((x^{\prime}, y^{\prime})\) sufficiently close to \((x^{\ast}, y^{\ast})\). Since
\(\varphi(x^{\ast}, y^{\ast})\) is strictly positive and
\(\ln\varphi(x, t)\) is concave on \(\dom \varphi\), \(\ln\varphi(x, t)\) is locally Lipschitz at \((x^{\ast}, y^{\ast})\), \ie{}, there exist \(M > 0\) and
\(r > 0\) such that
\begin{align*}
  \abs[\Big]{\ln\varphi(x, t) - \ln\varphi(x^{\ast}, y^{\ast})}
  \leq M \norm{ (x - x^{\ast}, t - y^{\ast})}_2 \qquad \forall (x, t) \in \mathcal{B}\big((x^{\ast}, y^{\ast}), r\big),
\end{align*}
where \(\mathcal{B}((x^{\ast}, y^{\ast}), r)\) denotes a Euclidean ball centered around \((x^{\ast}, y^{\ast})\) with radius \(r\). For all \(s > 0\) sufficiently small such that
\(s \cdot \norm{(d_x, d_y)}_2 \leq r / 2\) and all scalar \(t\) such that \(\abs{t - y^{\ast}} < s \abs{d_y} \),
we have
\begin{align*}
  \abs[\big]{\ln\varphi(x^{\ast} + s d_x, t) - \ln\varphi(x^{\ast}, t)}
  & \leq \abs[\big]{\ln\varphi(x^{\ast} + s d_x, t) - \ln\varphi(x^{\ast}, y^{\ast})} + \abs[\big]{\ln\varphi(x^{\ast}, y^{\ast}) - \ln\varphi(x^{\ast}, t)} \\
  & \leq M \norm{(s d_x, t - y^{\ast})}_2 + M \norm{(0, t - y^{\ast})}_2 \\
  & \leq M \norm{(s d_x, s d_y)}_2 + M \norm{(0, s d_y)}_2 \\
  & \leq 2 s M \norm{(d_x, d_y)}_2,
\end{align*}
where the first inequality is because of the triangle inequality, the
second inequality is because \(\ln\varphi_0\) is locally Lipschitz around \((x^{\ast}, y^{\ast})\), and the third inequality is because
\begin{align*}
  \norm{(s d_x, t - y^{\ast})}_2^2
  = \norm{s d_x}^2_2 + \abs{t - y^{\ast}}^2
  < \norm{s d_x}^2_2 + \abs{s d_y}^2 = \norm{(s d_x, s d_y)}_2^2.
\end{align*}
We bound the first term
in~\eqref{eq:algo-direc-diff-1} by discussing the following two cases. First, if \(d_y > 0\), then it holds that
\begin{align*}
  & \int^{y^{\ast} + s d_y }_{y^{\ast}} \Big( \varphi(x^{\ast} + s d_x, t) - (1 - \epsilon) \Big) \dLeb{t}
  = \int^{y^{\ast} + s d_y }_{y^{\ast}} \Big( \exp \big[ \ln\varphi(x^{\ast} + s d_x, t) \big] - (1 - \epsilon) \Big) \dLeb{t} \\
  \leq \
  & \int^{y^{\ast} + s d_y }_{y^{\ast}} \Big( \exp \Big[\ln\varphi(x^{\ast}, t) + 2sM \norm{(d_x, d_y)}_2 \Big] - (1 - \epsilon) \Big) \dLeb{t} \\
  = \
  & \exp \big[2sM \norm{(d_x, d_y)}_2 \big] \int^{y^{\ast} + s d_y }_{y^{\ast}}
    \Big( \varphi(x^{\ast}, t) - (1 - \epsilon) \exp \left[-2sM \norm{(d_x, d_y)}_2 \right] \Big) \dLeb{t} \\
  = \
  & \exp \big[2sM \norm{(d_x, d_y)}_2 \big] \left(\int^{y^{\ast} + s d_y }_{y^{\ast}}
    \big[ \varphi(x^{\ast}, t) - (1 - \epsilon) \big]\dLeb{t} +
    (1 - \epsilon)\big(1 - \exp \left[-2sM \norm{(d_x, d_y)}_2 \right]\big) s d_y \right).
\end{align*}
It follows that
\begin{align*}
  & \lim_{s \to 0^+} \frac{1}{s} \int^{y^{\ast} + s d_y }_{y^{\ast}} \big( \varphi(x^{\ast} + s d_x, t) - (1 - \epsilon) \big) \dLeb{t} \\
  \leq \
  & \left(\lim_{s \to 0^+} \exp \big[ 2sM \norm{(d_x, d_y)}_2 \big]\right) \cdot \left( \phi^{\prime}(x^{\ast}, y^{\ast} ; (0, d_y))
    + \lim_{s \to 0^+} \frac{1}{s} (1 - \epsilon)\big(1 - \exp \left[-2sM \norm{(d_x, d_y)}_2 \right] \big) s d_y  \right)\\
  = \
  & \phi^{\prime}(x^{\ast}, y^{\ast} ; (0, d_y)),
\end{align*}
where the inequality is because
\begin{equation*}
\lim_{s \to 0^+} \frac{1}{s} \int^{y^{\ast} + s d_y }_{y^{\ast}} \big[ \varphi(x^{\ast}, t) - (1 - \epsilon) \big]\dLeb{t} = \lim_{s \to 0^+} \frac{1}{s} \big[\phi(x^{\ast}, y^{\ast} + s d_y) - \phi(x^{\ast}, y^{\ast})\big] = \phi^{\prime}(x^{\ast}, y^{\ast} ; (0, d_y)).
\end{equation*}
Second, if \(d_y < 0\), then it holds that
\begin{align*}
  & \int^{y^{\ast} + s d_y }_{y^{\ast}} \big( \varphi(x^{\ast} + s d_x, t) - (1 - \epsilon) \big) \dLeb{t}
  = \int^{y^{\ast}}_{y^{\ast} + s d_y } \big( - \exp \left[ \ln\varphi(x^{\ast} + s d_x, t) \right] + (1 - \epsilon) \big) \dLeb{t} \\
  \leq \
  & \int^{y^{\ast}}_{y^{\ast} + s d_y } \Big( -\exp \Big[\ln\varphi(x^{\ast}, t) - 2sM \norm{(d_x, d_y)}_2 \Big] + (1 - \epsilon) \Big) \dLeb{t} \\
  = \
  & \exp \big[-2sM \norm{(d_x, d_y)}_2 \big] \int^{y^{\ast}}_{y^{\ast} + s d_y }
    \Big( - \varphi(x^{\ast}, t) + (1 - \epsilon) \exp \left[2sM \norm{(d_x, d_y)}_2 \right] \Big) \dLeb{t} \\
  = \
  & \exp \big[-2sM \norm{(d_x, d_y)}_2 \big] \int^{y^{\ast} + s d_y }_{y^{\ast}}
    \Big( \varphi(x^{\ast}, t) - (1 - \epsilon) \exp \left[2sM \norm{(d_x, d_y)}_2 \right] \Big) \dLeb{t} \\
  = \
  & \exp \big[-2sM \norm{(d_x, d_y)}_2 \big] \left(\int^{y^{\ast} + s d_y }_{y^{\ast}}
    \left[ \varphi(x^{\ast}, t) - (1 - \epsilon) \right]\dLeb{t} +
    (1 - \epsilon)\left(1 - \exp \left[2sM \norm{(d_x, d_y)}_2 \right] \right) s d_y \right),
\end{align*}
where the inequality is because
\(\ln\varphi(x^{\ast} + s d_x, t) \geq \ln\varphi(x^{\ast}, t) -
2sM \norm{(d_x, d_y)}_2\) and that the function \(-\exp(\cdot)\) is monotonically decreasing.
It follows that
\begin{align*}
  & \lim_{s \to 0^+} \frac{1}{s} \int^{y^{\ast} + s d_y }_{y^{\ast}} \big( \varphi(x^{\ast} + s d_x, t) - (1 - \epsilon) \big) \dLeb{t} \\
  \leq \
  & \left(\lim_{s \to 0^+} \exp \big[-2sM \norm{(d_x, d_y)}_2 \big]\right) \cdot \left( \phi^{\prime}(x^{\ast}, y^{\ast} ; (0, d_y))
    + \lim_{s \to 0^+} \frac{1}{s} (1 - \epsilon)\big(1 - \exp \left[2sM \norm{(d_x, d_y)}_2 \right] \big) s d_y  \right)\\
  = \
  & \phi^{\prime}(x^{\ast}, y^{\ast} ; (0, d_y)).
\end{align*}
Finally, applying the above analysis on both terms in~\eqref{eq:algo-direc-diff-1} yields
\begin{align*}
  \phi^{\prime}(x^{\ast}, y^{\ast}; (d_x, d_y))
  & = \lim_{s \to 0^+} \frac{1}{s} \Big[ \phi(x^{\ast} + s d_x, y^{\ast} + s d_y) - \phi(x^{\ast}, y^{\ast}) \Big] \\
  & \leq \phi^{\prime}\big(x^{\ast}, y^{\ast}; (0, d_y)\big) + \phi^{\prime}\big(x^{\ast}, y^{\ast}; (d_x, 0)\big) \leq 0,
\end{align*}
which completes the proof.
\end{proof}

\begin{lemma} \label{lem:cont-dir-derivative-of-cvx}
For all \((x, y) \in \dom\phi\) with \(\phi(x, y) > 0\), the directional derivative \(\phi'(x, y; \Delta)\) at \((x, y)\) along direction \(\Delta\) is continuous and positively homogeneous in \(\Delta\).
\end{lemma}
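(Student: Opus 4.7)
The plan is to reduce the claim to the well-known behavior of directional derivatives of concave functions, by exploiting the log-concavity of $\phi$ supplied by Proposition~\ref{prop:algo-reform}. Since $\Prob$ is log-concave (i.e.\ $\alpha = 0$, hence $\alpha^{\ast}_1 = 0$), $\phi$ is log-concave on $\dom \phi$, so the function $h := \ln \phi$ is concave on the set $U := \set{(x', y') \colon \phi(x', y') > 0}$ and finite at the given $(x, y) \in U$. Directional differentiability of $\phi$ in every direction at $(x,y)$ is already granted, via Lemma~2.4 of~\cite{norkin-1993-analy-optim}, in the proof of Lemma~\ref{lem:first-order-opt}; so is that of $h$, with the chain rule giving the key identity
\begin{align*}
\phi'(x, y; \Delta) \;=\; \phi(x, y) \cdot h'(x, y; \Delta) \qquad \forall \Delta \in \reals^{n+1}.
\end{align*}
Both claims thus reduce to showing that $h'(x, y; \cdot)$ is positively homogeneous and continuous in $\Delta$; the positive scalar $\phi(x,y)$ carries the conclusion back to $\phi'$.

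Positive homogeneity I will establish directly from the defining limit via the change of variable $s' := st$: for any $t > 0$ and $\Delta \in \reals^{n+1}$,
\begin{align*}
h'(x, y; t\Delta) \;=\; \lim_{s \to 0^+} \frac{h((x, y) + s t\Delta) - h(x, y)}{s} \;=\; t \lim_{s' \to 0^+} \frac{h((x, y) + s' \Delta) - h(x, y)}{s'} \;=\; t \cdot h'(x, y; \Delta),
\end{align*}
with the case $t = 0$ trivial. For continuity I will invoke the standard fact that the directional derivative of a finite concave function at an interior point of its effective domain is itself concave and positively homogeneous in the direction, and hence---being finite on all of $\reals^{n+1}$---continuous there. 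Applied to $h$ at $(x, y) \in U$, this yields continuity of $h'(x, y; \cdot)$, and then of $\phi'(x, y; \cdot)$ after multiplying by $\phi(x, y) > 0$.

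The main obstacle is the interior-point hypothesis needed for the concavity-based continuity step: $(x, y)$ must lie in the interior of $\dom h = U$, for otherwise $h'(x,y;\Delta)$ can be $-\infty$ in some directions and the finiteness-implies-continuity argument collapses. Since $\phi$ is continuous (Lemma~\ref{lem:cont-of-prob-fcns}), $U$ is relatively open in $\dom \phi$, so the only troublesome situation is $y = \VaR_{\epsilon}(f(x, \zeta))$ with $\phi(x,y) > 0$. I expect to dispatch this by invoking the strict monotonicity of $t \mapsto \Prob[f(x, \zeta) \geq t]$ at $t = y$---which follows from the log-concave (hence absolutely continuous) density of $\Prob$ combined with the atomlessness of $f(x,\zeta)$ (Lemma~\ref{lem:cont-of-dist-to-set})---to upgrade the defining inequality of $\dom\phi$ to a strict inequality at $(x,y)$, thereby placing $(x,y)$ in $\mathrm{int}(\dom\phi)$. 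As a fallback, the downstream use of this lemma in Lemma~\ref{lem:first-order-opt} only requires continuity along direction sequences inside the tangent cone of $S$ at $(x^{\ast}, y^{\ast})$, for which the finiteness of $h'$ is automatic.
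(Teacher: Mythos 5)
Your proof takes essentially the same route as the paper's: both hinge on the identity \(\phi'(x, y; \Delta) = \phi(x, y) \cdot (\ln\phi)'(x, y; \Delta)\) (the paper derives it via L'H\^{o}pital's rule, you via the chain rule) and then read off positive homogeneity and continuity from the standard properties of directional derivatives of the concave function \(\ln\phi\).

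The one place you go beyond the paper is the interior-point caveat. The worry is legitimate---the paper silently assumes it away---but your proposed repair does not work: if \(y = \VaR_{\epsilon}\big(f(x, \zeta)\big)\), then \(\Prob\big[f(x, \zeta) \geq y\big] = 1 - \epsilon\) exactly (by atomlessness of \(f(x,\zeta)\)), so the defining inequality of \(\dom\phi\) is tight and \((x, y)\) genuinely lies on the boundary of \(\dom\phi\); strict monotonicity of \(t \mapsto \Prob\big[f(x, \zeta) \geq t\big]\) only confirms that increasing \(y\) exits \(\dom\phi\), and cannot place the point in the interior. Your fallback---that the downstream application in Lemma~\ref{lem:first-order-opt} only needs continuity of \((\ln\phi)'(x^{\ast}, y^{\ast}; \cdot)\) on the cone generated by \(S - (x^{\ast}, y^{\ast})\), where concavity keeps the directional derivative finite---is the right way to salvage the argument, and is what the paper's proof implicitly relies on as well.
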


\begin{proof}
For notation brevity, we denote \(z = (x, y)\). Then, it holds that
\begin{align*}
  \phi^{\prime}(z; \Delta)
  & = \lim_{s \to 0^+} \frac{1}{s} \big[ \phi(z + s \Delta) - \phi(z) \big] \\
  & = \lim_{s \to 0^+} \left\{ \frac{
    \exp(\ln\phi(z + s \Delta)) - \exp(\ln\phi(z))}{\ln \phi(z + s \Delta) - \ln \phi(z)} \cdot
    \frac{\ln \phi(z + s \Delta) - \ln \phi(z)}{s} \right\} \\
  & = \phi(z) \lim_{s \to 0^+} \frac{\ln \phi(z + s \Delta) - \ln \phi(z)}{s}
    = \phi(z) \cdot (\ln \phi)^{\prime}(z; \Delta),
\end{align*}
where the third equality follows from the L'H{\^{o}}pital's rule. Since \((\ln \phi)^{\prime}(z; \Delta)\) is convex and positively homogeneous in \(\Delta\) by Proposition~\(17.2\) in~\cite{bauschke-2011-convex-hilber}, so is \(\phi'(z; \Delta)\). The continuity of \(\phi'(z; \Delta)\) follows from its convexity, which completes the proof.
\end{proof}

\begin{SecondUpdate}
\subsection{Rate of Convergence of Algorithm~\ref{algo:bcm-for-rho-u}}\label{apx-sec:rate-of-converge}

We study the rate of convergence of Algorithm~\ref{algo:bcm-for-rho-u}, which belongs to the class of block alternating minimization algorithms for convex programs. As Remark~\ref{rmk:convergence-cvx} and Example~\ref{exa:counter-3} indicate, even the convergence (let alone the rate of convergence) of such algorithms may rely on non-trivial assumptions of \(\phi(x,y)\), e.g., continuous differentiability or Lipschitz gradient with respect to both \(x\) and \(y\), which are not satisfied in our case.
Nevertheless, in what follows we show that, for fixed \(x\), \(\phi\) is continuously differentiable and has a Lipschitz
gradient with respect to \(y\), which paves a way towards establishing the linear convergence of Algorithm~\ref{algo:bcm-for-rho-u}. We need the following (mild) technical assumptions.
\begin{assumption}[see~(3.8) in~\cite{beck-2015-conver-alter}] \label{asm:level-set}
The level set
\(S_{\geq \phi_1} := \Set{(x, y)\in \dom \phi \colon \phi(x, y) \geq \phi(x_1, y_1)}\)
is compact.
\end{assumption}
Similar to~\cite{beck-2015-conver-alter}, we define by \(\overline{R}\) the ``diameter'' of \(S_{\geq \phi_1}\):
\begin{align*}
\overline{R} := \max \Set{\norm{(x_1, y_1) - (x_2, y_2)}_2 \colon (x_i, y_i) \in S_{\geq \phi_1}, i \in [2]}.
\end{align*}

\begin{assumption}\label{asm:pivot-zeta-always-exists}
There exists a \(\zeta_0 \in \Xi\) such that
\begin{align}
f(x, \zeta_0) \geq y \quad \mbox{or equivalently} \quad A \, \zeta_0 \leq b(x) - y \cdot \ones{}, \quad \forall (x, y) \in S_{\geq \phi_1}. \label{eq:rate-assump-pivot}
\end{align}
\end{assumption}
For \((x, y) \in S_{\geq \phi_1}\),
\(\Set{\zeta \colon A \zeta \leq b(x) - y \cdot \ones}\) denotes a polyhedron of
\(\zeta\) whose distance to the unsafe set is \(y\). This polyhedron is the intersection of
halfspaces with (fixed) normal vectors \(a_i^{\top}, i \in [m]\) and intercepts
\(b_i(x) - y, i \in [m]\) parameterized by \((x,y)\). Assumption~\ref{asm:pivot-zeta-always-exists} demands that the intersection of all polyhedra, which are parameterzied by \((x,y) \in S_{\geq \phi_1}\), is non-empty. In other words, there exists a ``core'' \(\zeta_0\) that is distant from the unsafe
set for all choices of \((x,y) \in S_{\geq \phi_1}\). This assumption is always satisfied when \(A\) is an identity matrix, or
more generally, when the column space of \(A\) contains \(\ones\). This is because
the~\RHS{} of~\eqref{eq:rate-assump-pivot} is bounded and for any \(a > 0\) there always exists a \(\zeta_0\) such
that \(A \zeta_0 \leq - a \cdot \ones{}\). In addition, this assumption is
satisfied whenever~\eqref{eq:rate-assump-pivot} represents an intersection of
polytopes (\eg{}, Example~\ref{exa:counter-3}) because \((x,y) \in S_{\geq \phi_1}\) implies \(\mathbb{P}[f(x, \zeta) \geq y] \geq 1 - \epsilon\).
Without loss of generality, we further assume \(\zeta_0 = 0\), with the possibility of
applying a proper translation to \(\Xi\). In this case, it follows that
\(b(x) - y \cdot \ones \geq 0\) for all \((x, y) \in S_{\geq \phi_1}\).

\subsubsection{Differentiability of \(\phi(x, y)\) with respect to \(y\)}%
\label{sec:rate-diff-phi}

\begin{proposition}\label{prop:lip-grad-phi-y}
For fixed \(x\), \(\phi(x, \cdot)\) has a continuous and Lipschitz
gradient, \ie{}, \(\nabla_y \phi(x, y)\) exists and there is a (universal)
\(L > 0\) such that
\begin{align*}
\abs{\nabla_y \phi(x, y_1) - \nabla_y \phi(x, y_2)} \leq L \abs{y_1 - y_2},
\quad \forall x \in X, y_1, y_2 \in \reals_+ \colon (x, y_1), (x, y_2) \in S_{\geq \phi_1}.
\end{align*}
\end{proposition}
The proof of Proposition~\ref{prop:lip-grad-phi-y} relies on the following lemmas.

\begin{lemma}\label{lem:trunc-set-perturb}
For \(x \in \reals^n, R > 0, y \geq 0\) and \(0 \leq \Delta \leq y\), define
\(\mathcal{Z}_y := \Set{\zeta \in \Xi \colon f(x, \zeta) \geq y}\) and
\(\mathcal{B}_r := \set{\zeta \in \Xi \colon \norm{\zeta}_2 \leq r}\) for \(r > 0\).
Then, there exists an \(M_0 > 0\), whose value only depends on the matrix
\(A\), such that
\begin{align}
\left(R\mathcal{B}_1 \cap \mathcal{Z}_{y - \Delta}\right) \subseteq \left(R \mathcal{B}_1 \cap \mathcal{Z}_y \right) + M_0 \Delta \mathcal{B}_1, \label{eq:rate-set-inclusion}
\end{align}
where the \(+\) on the~\RHS{} denotes the Minkowski sum.
\end{lemma}

\begin{proof}%
By the continuity and convexity of \(f\) in \(\zeta\), \(\mathcal{Z}_y\) is
closed and convex; and its intersection with \(R \mathcal{B}_1\) is also closed
and convex. Furthermore, the Minkowski sum of two convex sets,
\(R \mathcal{B}_1 \cap \mathcal{Z}_y\) and \(M_0 \Delta \mathcal{B}_1\), is
also closed and convex, therefore we can exploit the relationship between the
support function and Minkowski sum to prove~\eqref{eq:rate-set-inclusion}. In
particular, we aim to show
\begin{align*}
h(R \mathcal{B}_1 \cap \mathcal{Z}_{y - \Delta}; u)
\leq h(R \mathcal{B}_1 \cap \mathcal{Z}_y; u) + M_0 \Delta h(\mathcal{B}_1; u),
\quad \forall u \in \Xi, \norm{u}_2 = 1.
\end{align*}
We first derive an equivalent reformulation for
\(h(R \mathcal{B}_1 \cap \mathcal{Z}_y; u)\) for \(u \in \Xi\) with unit length.
\begin{align*}
h(R \mathcal{B}_1 \cap \mathcal{Z}_y; u)~
& = - \min_{\zeta} \Set{ -u^{\top}\zeta \colon b(x) - A\zeta \geq y \cdot \ones{}, \norm{\zeta}_2 \leq R } \\
& = - \max_{\pi \geq 0, \norm{q} \leq q_0} \Set{- (b(x) - y \cdot \ones)^{\top} \pi - R q_0 \colon - \pi^{\top}A + q^{\top} = -u^{\top}},
\end{align*}
where the second equality is due to strong duality under the relaxed Slater's
condition~\cite[Theorem~3.2.2]{ben2013optimization}, as \(\zeta = 0\) is always
feasible by Assumption~\ref{asm:pivot-zeta-always-exists}. Because a perturbation on \(y\) only affects the dual objective
function, we can estimate the change in the optimal value by bounding the
optimal dual solution \(\pi\), which indeed exists because \(R \mathcal{B}_1\)
is bounded. To be more specific, observe that
\begin{align*}
h(R \mathcal{B}_1 \cap \mathcal{Z}_{y - \Delta}; u)
& = \min_{\pi \geq 0} \Set{ (b(x) - (y - \Delta) \ones)^{\top} \pi + R \norm{A^{\top} \pi - u}_2} \\
& \leq  (b(x) - (y - \Delta) \ones)^{\top} \pi^{\ast} + R \norm{A^{\top} \pi^{\ast} - u}_2\\
& = h(R \mathcal{B}_1 \cap \mathcal{Z}_y; u) + \Delta \cdot \ones^{\top} \pi^{\ast},
\end{align*}
where \(\pi^{\ast}\) is the optimizer of
\(h(R \mathcal{B}_1 \cap \mathcal{Z}_y; u)\). If we can show that there exists
a universal upper bound \(M_0\) for \(\ones^{\top} \pi^{\ast}\) that is
independent from \(x, y, u\) and \(R\), then~\eqref{eq:rate-set-inclusion}
holds.

To this end, we investigate the Karush-Kuhn-Tucker optimality conditions. Let
the optimal primal dual pair be \(\zeta^{\ast}\) and
\(\pi^{\ast}, q^{\ast}, q^{\ast}_0\), respectively, and they satisfy
\begin{align*}
\text{primal feasible:} \quad
& b(x) - A\zeta^{\ast} \geq y \cdot \ones{}, \norm{\zeta^{\ast}}_2 \leq R, \\
\text{dual feasible:} \quad
& \norm{q^{\ast}}_2 \leq q^{\ast}_0, q^{\ast} = A^{\top} \pi^{\ast} - u, \pi^{\ast} \geq 0, \\
\text{complementary slackness:} \quad
& (b(x) - y \cdot \ones{} - A\zeta^{\ast})^{\top} \pi^{\ast} = 0, R q^{\ast}_0 + (\zeta^{\ast})^{\top} q^{\ast} = 0.
\end{align*}
Simplifying the above conditions by removing \(q^{\ast}\), we obtain
\begin{align*}
& b(x) - A \zeta^{\ast} \geq y \cdot \ones{}, \norm{\zeta^{\ast}}_2 \leq R, \\
& (b(x) - y \cdot \ones{} - A\zeta^{\ast})_i \cdot \pi^{\ast}_i = 0, \quad \forall i \in [m], \\
& - (\zeta^{\ast})^{\top} (A^{\top} \pi^{\ast} - u) \geq R \norm{A^{\top} \pi^{\ast} - u}_2, \pi^{\ast} \geq 0.
\end{align*}
To bound the \(1\)-norm of \(\pi^{\ast}\), we denote by
\(I^{\ast} := \set{i \in [m] \colon \pi^{\ast}_i \neq 0}\) the support of
\(\pi^{\ast}\) and discuss two cases:
\begin{enumerate}
\item If \(A^{\top} \pi^{\ast} = u\), then the following linear program finds a
\(\pi^{\ast}\) with the smallest size, which can give a tight bound for \(\ones{}^{\top} \pi^{\ast}\):
\begin{align}
\min_{\pi \geq 0} \Set{\pi^{\top} \ones \colon A^{\top} \pi = u, \pi_i = 0, \forall i \not\in I^{\ast}}
= \min_{\pi_{I^{\ast}} \geq 0} \Set{\pi_{I^{\ast}}^{\top} \ones \colon A_{I^{\ast}}^{\top} \pi_{I^{\ast}} = u}, \label{eq:rate-set-inclu-case-i}
\end{align}
where \(\pi_{I^{\ast}} := [\pi_i]_{i \in I^{\ast}} \in \reals^{\abs{I^{\ast}}}\)
and \(A_{I^{\ast}} \in \reals^{\abs{I^{\ast}} \times n}\) are the restrictions
of \(\pi\) and rows of \(A\) to the active index set \(I^{\ast}\).
\item If \(A^{\top} \pi^{\ast} \neq u\), then \(\norm{\zeta^{\ast}} = R\), and
there exists an \(\alpha > 0\) such that
\(A^{\top} \pi^{\ast} - u = - \alpha \zeta\) because
\begin{align}
R \leq -(\zeta^{\ast})^{\top} \left(\frac{A^{\top} \pi^{\ast} - u}{\norm{A^{\top} \pi^{\ast} - u}_2}  \right) \leq \norm{\zeta^{\ast}}_2 \leq R. \label{eq:rate-set-inclu-case-ii}
\end{align}
Similarly, we can construct a linear program to find a \(\pi^{\ast}\) of a minimal size:
\begin{align*}
\inf_{\pi \geq 0, \alpha > 0} \Set{\pi^{\top} \ones \colon A^{\top} \pi + \zeta^{\ast} \alpha = u, \pi_i = 0, \forall i \not\in I^{\ast}}
= \inf_{\pi_{I^{\ast}} \geq 0, \alpha > 0} \Set{\pi_{I^{\ast}}^{\top} \ones \colon A_{I^{\ast}}^{\top} \pi + \zeta^{\ast} \alpha = u}.
\end{align*}
\end{enumerate}
Combining the two cases, we conclude that the optimal dual variable \(\pi^{\ast}\) is
supported on \(I^{\ast}\), and \(u\) lives in the positive cone of
\(\set{\zeta^{\ast}, a_i, i \in I^{\ast}}\).
Furthermore, by Caratheodory's theorem, we can assume that
\(\set{a_i, i \in I^{\ast}, \zeta^{\ast}}\) are linearly
independent, because otherwise we can extract a linearly independent subset. Let
\(\overline{\zeta} := \zeta^{\ast} / \norm{\zeta^{\ast}}_2\), and we study
\begin{align}
  \min_{\pi_{I^{\ast}} \geq 0, \alpha \geq 0} \Set{\pi_{I^{\ast}}^{\top} \ones \colon A_{I^{\ast}}^{\top} \pi_{I^{\ast}}
  + \overline{\zeta} \alpha = u}, \label{eq:rate-set-inclu-goal}
\end{align}
which generalizes the two cases above. Specifically, we aim to show that its optimal
value is universally bounded for all \(\norm{u}_2 = 1\) and unit vector
\(\overline{\zeta}\), such that constraints in \(I^{\ast}\) are active. To
start with, we denote by \(\mathcal{C}(\overline{\zeta})\) the positive cone
spanned by unit vectors \(\Set{a_i, i \in I^{\ast}, \overline{\zeta}}\), and
consider the following problem
\begin{align}
  \eqref{eq:rate-set-inclu-goal}
  \leq \max_{\pi_{I^{\ast}} \geq 0, \alpha \geq 0, \norm{u}_2 \leq 1} \Set{\pi_{I^{\ast}}^{\top} \ones \colon A_{I^{\ast}}^{\top} \pi_{I^{\ast}}
  + \overline{\zeta} \alpha = u}
  =
  \max_{\pi, \alpha} ~
  & \ones^{\top} \pi, \label{eq:rate-set-inclu-goal-bd1} \\
  \st{} ~
  & \norm{ A^{\top}_{I^{\ast}} \pi + \overline{\zeta} \alpha }_2 \leq 1, \nonumber{} \\
  & \pi \in \reals^{\abs{I^{\ast}}}_+, \alpha \geq 0,  \nonumber{}
\end{align}
~\eqref{eq:rate-set-inclu-goal-bd1} upper bounds~\eqref{eq:rate-set-inclu-goal} because it looks for a unit
vector in \(\mathcal{C}(\overline{\zeta})\) with the largest \(1\)-norm
representation. Further
relaxing~\eqref{eq:rate-set-inclu-goal-bd1} by outer-approximating the unit ball
with its supporting hyperplanes \(a_i^{\top} (\cdot) \leq 1, i \in I^{\ast}\)
and \(\overline{\zeta}^{\top}(\cdot) \leq 1\), we obtain
\begin{align}
  \max_{\pi, \alpha} ~
  & \ones^{\top} \pi , \label{eq:rate-set-inclu-goal-bd2} \\
  \st{} ~
  & A_{I^{\ast}} \left( A^{\top}_{I^{\ast}} \pi + \overline{\zeta} \alpha  \right) \leq 1, \nonumber{} \\
  & \overline{\zeta}^{\top} \left( A^{\top}_{I^{\ast}} \pi + \overline{\zeta} \alpha  \right) \leq 1, \nonumber{} \\
  & \pi \in \reals^{\abs{I^{\ast}}}_+, \alpha \geq 0.  \nonumber{}
\end{align}
Because
\(A_{I^{\ast}} \overline{\zeta} = (b_{I^{\ast}}(x) - y \cdot \ones_{I^{\ast}}) / \norm{\zeta^{\ast}}_2 \geq 0\)
(by Assumption~\ref{asm:pivot-zeta-always-exists}),
we have \(A_{I^{\ast}} \overline{\zeta} \alpha \geq 0\). Then,
\begin{align}
~\eqref{eq:rate-set-inclu-goal-bd2}
& \leq \
\max_{\pi \geq 0, \alpha \geq 0} ~ \Set{
\ones^{\top} \pi \colon
A_{I^{\ast}} A^{\top}_{I^{\ast}} \pi + A_{I^{\ast}}\overline{\zeta} \alpha \leq 1
}, \nonumber{} \\
& \leq \
\max_{\pi \geq 0} ~ \Set{
\ones^{\top} \pi \colon
A_{I^{\ast}} A^{\top}_{I^{\ast}} \pi \leq 1 \label{eq:rate-set-inclu-goal-bd3}
}.
\end{align}
As the rows of \(A_{I^{\ast}}\) are linearly independent,
\(A_{I^{\ast}} A^{\top}_{I^{\ast}}\) is positive definite and the
recession cone in~\eqref{eq:rate-set-inclu-goal-bd3} only contains zero:
\begin{align*}
  \pi \geq 0, A_{I^{\ast}} A^{\top}_{I^{\ast}} \pi \leq 0
  \implies 0 \leq \norm{A^{\top}_{I^{\ast}} \pi}_2^2 = \pi^{\top} A_{I^{\ast}} A^{\top}_{I^{\ast}} \pi \leq 0.
\end{align*}
Therefore, the optimal value of~\eqref{eq:rate-set-inclu-goal-bd3} is finite, providing an universal upper bound
on~\eqref{eq:rate-set-inclu-goal}. Because~\eqref{eq:rate-set-inclu-goal-bd3}
only depends on the active set \(I^{\ast}\) and the total number of active sets
is finite, we conclude that the desired \(M_0\) exists and finish the proof.
\end{proof}

\begin{lemma}
Suppose that \(g \colon \Xi \to \reals\) is a log-concave density function. Then, there
exist \(a > 0\) and \(b \in \reals\) such that
\begin{align*}
g(\zeta) \leq \sum_{i=1}^{M_1} \Ind{ \zeta \in R_i \mathcal{B}_1 },
\end{align*}
where the constants \(R_i := (b - \ln{}(i-1)) / a\) are decreasing in \(i\) and
\(M_1 := \ceil{1 + e^b}\).
\end{lemma}

\begin{proof}
By~\cite[Lemma~1]{cule2010theoretical}, there exist \(a > 0\) and
\(b \in \reals\) such that \(g \leq \exp{} (-a \norm{\zeta}_2 + b)\) on
\(\Xi\). For \(i \in \integers{}\) and \(i \geq 1\), \(M_1\) the largest
integer such that \(R_{M_1} \geq 0\). We bound \(g\) as follows:
\begin{align*}
  g(\zeta)
  & \leq \exp{} (-a \norm{\zeta}_2 + b)
  \leq \int_0^{+\infty} \Ind{ t \leq \ceil{\exp{} (- a \norm{\zeta}_2 + b)} } \dLeb{t} \\
  & = \sum_{i = 1}^{+\infty} \Ind{ t \leq \ceil{\exp{} (- a \norm{\zeta}_2 + b)} }
  \leq \sum_{i = 1}^{+\infty} \Ind{ t \leq \exp{} (- a \norm{\zeta}_2 + b) + 1 } \\
  & = \sum_{i = 1}^{+\infty} \Ind{ \norm{\zeta}_2 \leq R_{i}}
  = \sum_{i = 1}^{M_1} \Ind{ \norm{\zeta}_2 \leq R_{i}},
\end{align*}
where the second inequality is by the \redmodify{layer} cake representation of integrals
and the ceiling operation, and the second equality is by the definition of the \(R_i\)'s.
\end{proof}

\begin{proof}[Proof of Proposition~\ref{prop:lip-grad-phi-y}]
By the definition of \(\phi(x, y)\) and fundamental theorem of calculus
\(\nabla_y \phi(x, y) = \Prob \left[ f(x, \zeta) \geq y \right] - (1 - \epsilon)\),
which is continuous. It remains to show that \(\nabla_y \phi(x, y)\) is
Lipschitz on \(S_{\geq \phi_1}\). For \(0 \leq \Delta \leq y\), we have
\begin{align}
  ~ & \Prob \left[ f(x, \zeta) \geq y - \Delta \right] - \Prob \left[ f(x, \zeta) \geq y \right] \label{eq:rate-lip-bound}\\
 =~ & \int\limits_{\Xi} \Ind{ f(x, \zeta) \in [y, - \epsilon, y) } g(\zeta) \dLeb{\zeta}
 \leq \int\limits_{\Xi} \Ind{ f(x, \zeta) \in [y, - \epsilon, y) } \cdot \sum_{i=1}^{M_1} \Ind{\zeta \in R_i \mathcal{B}_1} \dLeb{\zeta} \nonumber{}\\
 = ~& \sum_{i=1}^{M_1} \Leb{} \left( \left\{ \zeta \in \Xi \colon f(x, \zeta) \in [y- \epsilon, y) \right\} \cap R_i \mathcal{B}_1 \right) \nonumber{}\\
 = ~&\sum_{i=1}^{M_1} \Leb{} \left( \left\{ \zeta \in \Xi \colon f(x, \zeta) \in [y- \epsilon, +\infty) \right\} \cap R_i \mathcal{B}_1 \right)
    - \Leb{} \left( \left\{ \zeta \in \Xi \colon f(x, \zeta) \in [y, +\infty) \right\} \cap R_i \mathcal{B}_1 \right). \nonumber
\end{align}
Define \(\mathcal{Z}_y := \Set{\zeta \in \Xi \colon f(x, \zeta) \geq y}\), and
by Lemma~\ref{lem:trunc-set-perturb}, we can bound the summands
of~\eqref{eq:rate-lip-bound} as follows:
\begin{align*}
  & \Leb{} \left( \mathcal{Z}_{y - \Delta} \cap R_i \mathcal{B}_1 \right)
  - \Leb{} \left( \mathcal{Z}_y \cap R_i \mathcal{B}_1 \right) \\
  \leq ~
  & \Leb{} \left( (\mathcal{Z}_{y} \cap R_i \mathcal{B}_1) + M_0 \Delta \mathcal{B}_1 \right)
  - \Leb{} \left( \mathcal{Z}_y \cap R_i \mathcal{B}_1 \right) \\
  = ~
  & \sum_{j = 0}^{q} (M_0 \Delta)^j {q \choose j} V(\underbrace{(\mathcal{Z}_{y} \cap R_i \mathcal{B}_1), \ldots, (\mathcal{Z}_{y} \cap R_i \mathcal{B}_1)}_{n-j}, \underbrace{\mathcal{B}_1, \ldots, \mathcal{B}_1}_j) - \Leb{} \left( \mathcal{Z}_y \cap R_i \mathcal{B}_1 \right) \\
  = ~
  & \sum_{j = 1}^{q} (M_0 \Delta)^j {q \choose j} V(\underbrace{(\mathcal{Z}_{y} \cap R_i \mathcal{B}_1), \ldots, (\mathcal{Z}_{y} \cap R_i \mathcal{B}_1)}_{n-j}, \underbrace{\mathcal{B}_1, \ldots, \mathcal{B}_1}_j) \\
  \leq ~
  & \sum_{j = 1}^{q} (M_0 \Delta)^j {q \choose j} V(\underbrace{\max{}\{R_i, 1\} \cdot \mathcal{B}_1, \ldots, \max{}\{R_i, 1\} \cdot \mathcal{B}_1}_n) \\
  = ~
  & \sum_{j = 1}^{q} (M_0 \Delta)^j {q \choose j} \cdot \max \set{R_i^n, 1} \cdot \Leb{} (\mathcal{B}_1),
\end{align*}
where the first inequality is by Lemma~\ref{lem:trunc-set-perturb}, the
first equality is by Proposition~\ref{prop:steiner-formula} (see Appendix~\ref{apx-preliminary}), and the second
inequality is because the mixed volume operator \(V(\cdot)\) is monotone. It follows that
\begin{align*}
\eqref{eq:rate-lip-bound}
& \leq \sum_{j = 1}^{q} (M_0 \Delta)^j {q \choose j} \cdot \Leb{} (\mathcal{B}_1) \sum_{i = 1}^{M_1} \max \set{R_i^n, 1},
\end{align*}
where the~\RHS{} is convex as it is a polynomial of \(\Delta\) with nonnegative
coefficients. Furthermore, \(\Delta\) has a compact domain (a restriction of
\(S_{\geq \phi_1}\)), on which the convex~\RHS{} is bounded by a linear term
\(M_3 \Delta\) for some \(M_3 > 0\). Thus, we conclude that
\begin{align*}
\Prob \left[ f(x, \zeta) \geq y - \Delta \right] - \Prob \left[ f(x, \zeta) \geq y \right] \leq M_3 \Delta,
\end{align*}
establishing the Lipschitz continuity of \(\nabla_y \phi(x, y)\).
\end{proof}

\begin{corollary}\label{cor:lip-grad-ln-phi-y}
For fixed \(x\), \(-\ln{} \phi(x, y)\) is continuously differentiable in \(y\)
on \(S_{\geq \phi_1}\) and has a Lipschitz gradient, that is, there exists a (universal)
\(L_1 > 0\) such that
\begin{align*}
\abs{\nabla_y (-\ln{} \phi(x, y_1)) - \nabla_y (-\ln{} \phi(x, y_2)}) \leq L_1 \abs{y_1 - y_2},
\quad \forall x \in X, y_1, y_2 \in \reals_+ \colon (x, y_1), (x, y_2) \in S_{\geq \phi_1}.
\end{align*}
\end{corollary}

\begin{proof}
By the chain rule and \(\phi_1 > 0\), we have
\[
\abs{\nabla_y (- \ln{} \phi(x, y))} = \frac{1}{\abs{\phi(x, y)}} \abs{\nabla_y \phi(x, y)} \leq \frac{1}{\phi_1} L,
\]
where the inequality is because \((x, y) \in S_{\geq \phi_1}\) and the Lipschitz
continuity of \(\nabla_y \phi(x, y)\) shown in
Proposition~\ref{prop:lip-grad-phi-y}. By setting \(L_1\) to be \(L / \phi_1\), we conclude the proof.
\end{proof}

\subsubsection{Linear Convergence Rate}%
\label{sec:rate-linear}

To establish the linear convergence of Algorithm~\ref{algo:bcm-for-rho-u}, we make the following technical assumption on the iterates \(x_{k+1}\) and \(y_{k+1}\).

\begin{assumption} \label{asm:approx-station}
In Step 2 of Algorithm~\ref{algo:bcm-for-rho-u}, the oracle \(\mathcal{O}_u(y_k, \varepsilon_k)\) returns an \(x_{k+1}\) such that it is an approximate stationary point of the function
\[
\Phi(x, y_{k}) := -\ln{} \phi(x, y_{k}) + \chi\set{x \in X}.
\]
That is, there exists an \(\bm{e}\) such that \(\norm{\bm{e}}_2 \leq \gamma_{k} := \varepsilon_{k} / \overline{R}\) and \(\bm{e} \in \partial_x \left( - \ln{}\phi(x_{k+1}, y_{k}) \right)
+ \partial_x \left(\chi \set{x_{k+1} \in X}  \right)\). In addition, in Step 3 of Algorithm~\ref{algo:bcm-for-rho-u}, we find a \(y_{k+1}\) such that \(|y_{k+1} - \VaR_{\epsilon} \big( f(x_{k+1}, {\zeta}) \big)| \leq \varepsilon_k/L_1\), where \(L_1\) is the Lipschitz constant of \(\nabla_y (-\ln{} \phi(x_{k+1}, y))\).
\end{assumption}
Assumption~\ref{asm:approx-station} is standard in the analysis of continuous optimization algorithms. For Step 2, it strengthens the original demand of \(\varepsilon_k\)-optimality on \(\mathcal{O}(y_k, \varepsilon_k)\). Indeed, Assumption~\ref{asm:approx-station} implies that \(x_{k+1}\) is \(\varepsilon_k\)-optimal. %
To see this, we exploit the convexity of \(\Phi\) and bound the optimality gap
\begin{align*}
\Phi(x_{k+1}, y_{k}) - \Phi(x^{\ast}_{k+1}, y_{k})
\leq -\bm{e}^{\top} \left( x^{\ast}_{k+1} - x_{k+1} \right)
\leq \norm{\bm{e}}_2 \cdot \norm{x^{\ast}_{k+1} - x_{k+1}}_2
= \overline{R} \cdot \norm{\bm{e}}_2 \leq \overline{R} \gamma_{k+1},
\end{align*}
where the first inequality is by the first-order characterization of the convex \(\Phi\).
Imposing the exponential function on both sides, we have
\begin{align*}
\phi(x^{\ast}_{k+1}, y_{k}) \leq \phi(x_{k+1}, y_{k})
+ \left( \exp{}(\overline{R} \gamma_{k+1}) - 1 \right) \phi(x_{k+1}, y_{k})
\leq \phi(x_{k+1}, y_{k}) + \varepsilon_{k+1},
\end{align*}
where the last inequality holds if \(\gamma_{k+1}\) is small enough as \(\phi\) is
bounded on \(S_{\geq \phi_1}\). For Step 3, Assumption~\ref{asm:approx-station} is equivalent to that of Theorem~\ref{thm:algo-cvx-converge} up to the Lipschitz constant \(L_1\).
The linear convergence of Algorithm~\ref{algo:bcm-for-rho-u} follows.
\begin{theorem} \label{thm:cvg-rate}
Under Assumptions~\ref{asm:level-set}--\ref{asm:approx-station}, let \(\set{(x_k, y_k)}_{k=1}^{\infty}\) be the sequence generated by
Algorithm~\ref{algo:bcm-for-rho-u} with
\[
\varepsilon_{k+1} \leq \min \Set{ \frac{(\Phi(x_1, y_1) - \Phi(x_2, y_2))}{(3 \cdot 2^{k+1})}, \frac{1}{2} \varepsilon_k }, \quad \forall k \geq 2.
\]
Then, there exists an \(M_4 > 0\) such that, for any iterate \(k \geq 2\), we have
\begin{align*}
\Phi(x_k, y_k) - \Phi^{\ast} \leq
\max \Set{ \frac{4}{ 2 L_1 (k-1) \overline{R}^2}, \frac{(k+1) M_4}{2^{\floor{k / 2}}}},
\end{align*}
where \(\Phi^{\ast}\) denotes the optimal value.
\end{theorem}
Before proving Theorem~\ref{thm:cvg-rate} and to facilitate the analysis, we rewrite
Algorithm~\ref{algo:bcm-for-rho-u} under Assumption~\ref{asm:approx-station} as Algorithm~\ref{algo:bcm-for-rho-u-prime}.

\renewcommand{\thealgocf}{1\everymodeprime}
\begin{algorithm}[!htbp]
\caption{Evaluation of \(\rho(u)\)\label{algo:bcm-for-rho-u-prime}}
\SetKwInOut{Inputs}{Inputs}
\SetKw{Return}{return}
\Inputs{budget \(u\), risk level \(\epsilon\), %
a diminishing sequence \(\set{\varepsilon_k}_k\), %
and an \(x_1\) such that \(y_1 := \VaR_{\epsilon}\big(f(x_1, \zeta)\big) > 0\).%
}
\For{\(k = 1, 2, \ldots\)}{
Find \(y_{k+1}\) such that \(\abs{y_{k+1} - \VaR_{\epsilon} \big( f(x_k, {\zeta}) \big)} \leq \varepsilon_{k+1} / L_1\)\;
Find \(x_{k+1}\) such that it is an approximate stationary point of \(\Phi(x, y_{k+1})\)\;
\If{stopping criterion is satisfied}{
    \Return{\(\phi(x_{k+1}, y_{k+1})\).}
  }
}
\end{algorithm}
\renewcommand{\thealgocf}{1}
\setcounter{algocf}{1}
We first borrow ideas from Lemmas~3.4,~3.5, and~3.6
in~\cite{beck-2015-conver-alter} to prove the following preparatory lemmas.

\begin{lemma}\label{lem:rate-linear-grad-bound}
Let \(\set{(x_k, y_k)}_{k \geq 1}\) be the sequence generated by
Algorithm~\ref{algo:bcm-for-rho-u-prime} and \((x^{\ast}, y^{\ast})\) be an optimal
solution. Then, in any iteration \(k\),
\begin{align*}
\Phi(x_k, y_{k+1}) - \Phi(x^{\ast}, y^{\ast}) \leq \abs{G_{L_1}^1(y_k)} \abs{y^{\ast} - y_k} + 2 \varepsilon_k,
\end{align*}
where %
\(G_{L_1}^1(y_k)\) is the gradient mapping (see
Definition~\ref{def:prox-grad-mapping}) associated with \(- \ln{}\phi(x_k, \cdot)\) and
\(\chi\set{y \geq 0}\).
\end{lemma}

\begin{proof}
We first bound the difference between \(-\ln{} \phi(x_k, y_{k+1})\) and
\(-\ln{} \phi(x^{\ast}, y^{\ast})\).
\begin{align}
& (-\ln{} \phi(x_k, y_{k+1})) - (-\ln{} \phi(x^{\ast}, y^{\ast})) \label{eq:rate-linear-bound-1}\\
= ~
&
(-\ln{} \phi(x_k, y_{k+1})) - (-\ln{} \phi(x_k, y^{\ast}_{k+1})) +
(-\ln{} \phi(x_k, y^{\ast}_{k+1})) - (-\ln{} \phi(x^{\ast}, y^{\ast})) \nonumber{}\\
\leq ~
& \varepsilon_{k+1} + (-\ln{} \phi(x_k, T^1_{L_1}(y_k))) - (-\ln{} \phi(x^{\ast}, y^{\ast})) \nonumber{}\\
\leq ~
& \varepsilon_{k+1} + (- \ln{} \phi(x_k, y_k)) + \nabla_y(-\ln{} \phi(x_k, y_k))^{\top} \left( T^1_{L_1}(y_k) - y_k \right)
+ \frac{L_1}{2} \abs{T^1_{L_1}(y_k) - y_k}^2 - (-\ln{} \phi(x^{\ast}, y^{\ast})), \nonumber{}
\end{align}
where \(T_M^1(y_k)\) is the proximal gradient mapping associated with
\(-\ln{} \phi(x_k, \cdot)\) and \(\chi \set{y \geq 0}\), the first inequality is
because of Corollary~\ref{cor:lip-grad-ln-phi-y} %
and
\((-\ln{} \phi(x_k, y^{\ast}_{k+1})) \leq (-\ln{} \phi(x_k, T^1_{L_1}(y_k)))\), and
the second inequality is by Proposition~\ref{prop:local-quad-ubd}.

Let \(\bm{p}\) be an element in \(\partial_x(-\ln{}\phi(x_k, y_k))\), then by
convexity of \(-\ln{} \phi(x, y)\) we have
\begin{align*}
- \ln{} \phi(x^{\ast}, y^{\ast}) \geq
-\ln{}\phi(x_k, y_k) + \nabla_y (-\ln{} \phi(x_k, y_k))^{\top} (y^{\ast} - y_k)
+ \bm{p}^{\top} (x^{\ast} - x_k),
\end{align*}
from which
\begin{align}
\eqref{eq:rate-linear-bound-1} ~
& \leq \varepsilon_{k+1} + \nabla_y(-\ln{} \phi(x_k, y_k))^{\top} \left( T^1_{L_1}(y_k) - y_k \right)
+ \frac{L_1}{2} \abs{T^1_M(y_k) - y_k}^2 \nonumber{} \\
& \phantom{\leq} ~
+ \nabla_y (-\ln{} \phi(x_k, y_k))^{\top} (y_k - y^{\ast})
+ \bm{p}^{\top} (x_k - x^{\ast}). \label{eq:rate-linear-bound-2}
\end{align}

Because \(x_k\) is approximately stationary, we can choose \(\bm{p}\) such that
\begin{align*}
0 \in \partial_x \left( \chi\set{x \in X} \right) + \bm{p} - \bm{e}.
\end{align*}
By Corollary~\ref{cor:approx-prox-opt-cond} and Definition~\ref{def:prox-grad-mapping},
\begin{gather*}
x_k = \prox_{\frac{1}{L_1} \chi\set{x \in X}} \left( x_k - \frac{1}{L_1} \left( \bm{p} + \bm{e} \right) \right), \\
T^1_{L_1}(y_k) = \prox_{\frac{1}{L_1} \chi\set{y \geq 0}} \left( y_k - \frac{1}{L_1} \nabla_y (-\ln{} \phi(x_k, y_k)) \right).
\end{gather*}
Invoking Proposition~\ref{prop:prox-equiv-def}, we obtain
\begin{align}
\chi\set{ y^{\ast} \geq 0 }
& \geq \chi\set{ T^1_{L_1}(y_k) \geq 0 }
+ L_1 \left( y_k - \frac{1}{L_1} \nabla_y (-\ln{} \phi(x_k, y_k)) - T^1_{L_1}(y_k) \right)^{\top} \left( y^{\ast} - T^1_{L_1}(y_k) \right), \nonumber{}\\
\implies \quad 0 & \geq
L_1 \left( y_k - \frac{1}{L_1} \nabla_y (-\ln{} \phi(x_k, y_k)) - T^1_{L_1}(y_k) \right)^{\top} \left( y^{\ast} - T^1_{L_1}(y_k) \right),
\label{eq:rate-linear-bound-3}
\end{align}
where \(y^{\ast}\) and \(T^1_{L_1}(y_k)\) play the roles of \(u\) and \(w\) in Proposition~\ref{prop:prox-equiv-def}, respectively. Likewise, for \(x_k\), we have
\begin{align}
\chi\set{x^{\ast} \in X}
& \geq \chi\set{x_k \in X} + L_1 \left( x_k - \frac{1}{L_1} \left( \bm{p} + \bm{e} \right) - x_k \right)^{\top} \left( x^{\ast} - x_k  \right), \nonumber{}\\
\implies \quad 0 & \geq L_1 \left( x_k - \frac{1}{L_1} \left( \bm{p} + \bm{e} \right) - x_k \right)^{\top} \left( x^{\ast} - x_k  \right),
\label{eq:rate-linear-bound-4}
\end{align}
where \(x^{\ast}\) plays the role of \(u\) and \(x_k\) plays the role of \(w\).
Combining~\eqref{eq:rate-linear-bound-2},~\eqref{eq:rate-linear-bound-3},
and~\eqref{eq:rate-linear-bound-4} yields
\begin{align*}
\Phi(x_k, y_{k+1}) - \Phi(x^{\ast}, y^{\ast})
& \leq \eqref{eq:rate-linear-bound-2} - L_1 \left( y_k - \frac{1}{L_1} \nabla_y (-\ln{} \phi(x_k, y_k)) - T^1_{L_1}(y_k) \right)^{\top} \left( y^{\ast} - T^1_{L_1}(y_k) \right) \\
& \phantom{\leq (   )} - L_1 \left( x_k - \frac{1}{L_1} \left( \bm{p} + \bm{e} \right) - x_k \right)^{\top} \left( x^{\ast} - x_k  \right) \\
& = \varepsilon_{k+1} + \frac{L_1}{2} \abs{T^1_M(y_k) - y_k}^2 + (x_k - x^{\ast})^{\top} \left( \bm{p} - (\bm{p} + \bm{e}) \right) \\
& \phantom{\leq (   )} + \nabla_y (-\ln{} \phi(x_k, y_k))^{\top} (T^1_{L_1}(y_k) - y^{\ast}) \\
& \phantom{\leq (   )} + L_1 \left( y_k
- \frac{1}{L_1} \nabla_y (-\ln{} \phi(x_k, y_k))
- T^1_{L_1}(y_k) \right)^{\top}(T^1_{L_1}(y_k) - y^{\ast}) \\
& = \varepsilon_{k+1} + \frac{L_1}{2} \abs{T^1_M(y_k) - y_k}^2 - (x_k - x^{\ast})^{\top} \bm{e}
+ L_1 (y_k - T^1_{L_1}(y_k))^{\top} (T^1_{L_1}(y_k) - y^{\ast}).
\end{align*}
Because \(G^1_{L_1}(y_k) := L_1 (y_k - T^1_{L_1}(y_k))\), we arrive at
\begin{align*}
\Phi(x_k, y_{k+1}) - \Phi(x^{\ast}, y^{\ast})
& \leq \varepsilon_{k+1} + \frac{1}{2L_1} G^1_{L_1}(y_k)^2
- (x_k - x^{\ast})^{\top} \bm{e}
+ G^1_{L_1}(y_k)^{\top} (T^1_{L_1}(y_k) - y_k + y_k - y^{\ast})\\
& = \varepsilon_{k+1} + \frac{1}{2L_1} G^1_{L_1}(y_k)^2
- (x_k - x^{\ast})^{\top} \bm{e}
- \frac{1}{L_1} G^1_{L_1}(y_k)^2 + G^1_{L_1}(y_k) (y_k - y^{\ast}) \\
& \leq \varepsilon_{k+1}
+ \norm{(x_k - x^{\ast})}_2 \norm{\bm{e}}_2
+ \abs{G^1_{L_1}(y_k)} \abs{y_k - y^{\ast}} \\
& \leq \varepsilon_{k+1}
+ \overline{R} \gamma_{k+1}
+ \abs{G^1_{L_1}(y_k)} \abs{y_k - y^{\ast}}
= 2\varepsilon_{k+1} + \abs{G^1_{L_1}(y_k)} \abs{y_k - y^{\ast}},
\end{align*}
where the second inequality is by the Cauchy inequality.
\end{proof}

\begin{lemma}\label{lem:rate-lbd-on-decrease}
Let \(\set{(x_k, y_k)}_{k=1}^{\infty}\) be the sequence generated by %
Algorithm~\ref{algo:bcm-for-rho-u-prime} and \((x^{\ast}, y^{\ast})\) be an optimal
solution. Then, in any iteration \(k\),
\begin{align*}
\Phi(x_k, y_k) - \Phi(x_{k+1}, y_{k+1})
\geq (\Phi(x_{k+1}, y_{k+1}) - \Phi(x^{\ast}, y^{\ast}) - 2 \varepsilon_{k+1})^2 / (2 L_1 \overline{R}^2) - \varepsilon_{k+1}.
\end{align*}
\end{lemma}

\begin{proof}
By Lemma~\ref{lem:rate-linear-grad-bound}, we have
\begin{align}
\Phi(x_{k+1}, y_{k+1}) - \Phi(x^{\ast}, y^{\ast})
\leq \Phi(x_k, y_{k+1}) - \Phi(x^{\ast}, y^{\ast})
\leq 2 \varepsilon_{k+1} + \abs{G^1_{L_1}(y_k)} \abs{y_k - y^{\ast}}. \label{eq:rate-linear-grad-bound2}
\end{align}
Then, we can bound the change in objective value in adjacent iterations as follows:
\begin{align*}
\Phi(x_k, y_k) - \Phi(x_{k+1}, y_{k+1})
& \geq
\Phi(x_k, y_k) - \Phi(x_k, y_{k+1}) \\
& =
\Phi(x_k, y_k) - \Phi(x_k, y^{\ast}_{k+1}) - (\Phi(x_k, y_{k+1}) - \Phi(x_k, y^{\ast}_{k+1})) \\
& \geq
\Phi(x_k, y_k) - \Phi(x_k, T^1_{L_1}(y_k)) - \varepsilon_{k+1}
\geq \frac{1}{2L_1} G^1_{L_1}(y_k)^2 - \varepsilon_{k+1} \\
& \geq \frac{\left(\Phi(x_{k+1}, y_{k+1}) - \Phi(x^{\ast}, y^{\ast}) - 2\varepsilon_{k+1}\right)^2}{2L_1 \overline{R}^2} - \varepsilon_{k+1},
\end{align*}
where the third inequality is by Proposition~\ref{prop:prox-suff-decrease}, and
the last inequality is by~\eqref{eq:rate-linear-grad-bound2} and
\(\abs{y_k - y^{\ast}} \leq \overline{R}\).
\end{proof}

\begin{lemma}\label{lem:rate-technical-lemma}
Let \(\set{A_{k}}_{k = 1}^{\infty} \subseteq \reals_+\) be a nonnegative and
monotonically decreasing sequence, \(\eta > 0\), and
\(\set{\beta_k}_{k=1}^{\infty} \subseteq \reals_+\) be a diminishing sequence such that
\(\beta_k \leq 2^{-k} A_1\) for all \(k\). Suppose that
\begin{align*}
A_k - A_{k+1} \geq \eta (A_{k+1} - \beta_{k+1})^2, \quad \forall k \geq 1,
\end{align*}
then
\begin{align*}
A_k \leq \max \Set{ \frac{4}{\eta (k-1)}, \frac{(k+1) A_1}{2^{\floor{k / 2}}}}, \quad \forall k \geq 2.
\end{align*}
\end{lemma}
\begin{proof}
\begin{align*}
\frac{1}{A_{k+1}} - \frac{1}{A_k}
& = \frac{A_k - A_{k+1}}{A_{k+1}A_k}
\geq \frac{\eta (A_{k+1} - \beta_{k+1})^2}{A_{k+1}A_k}
= \frac{\eta (A^2_{k+1} - 2 \beta_{k+1} A_{k+1}+ \beta^2_{k+1})}{A_{k+1}A_k}\\
& \geq \frac{\eta (A^2_{k+1} - 2 \beta_{k+1} A_{k+1})}{A_{k+1}A_k}
= \frac{\eta}{A_k} \left( A_{k+1} - 2 \beta_{k+1} \right).
\end{align*}

Note that if all iterates satisfy \(A_{k+1} - 2 \beta_{k+1} \geq A_k/2\), that
is, \(1/A_{k+1} \geq 1/A_k + \eta / 2\), then \(\set{1/A_k}_{k=1}^{\infty}\)
increases linearly. On the flip side, if all iterates satisfy \(A_{k+1} - 2 \beta_{k+1} < A_k/2\)
and \(\set{\beta_k}_{k=1}^{\infty}\) decrease to zero fast enough, then
\(\set{A_k}_{k=1}^{\infty}\) decreases approximately at an exponential rate,
faster than a linear rate. To effectively combine the two cases, we define
\begin{align*}
\mathcal{K}^n_1 := \Set{k \in \integers{}_+ \colon 1 \leq k \leq n, A_{k+1} - 2 \beta_{k+1} \geq A_k / 2},
\quad \mathcal{K}^n_2 := \Set{k \in \integers{}_+ \colon 1 \leq k \leq n, k \not\in \mathcal{K}_1},
\end{align*}
where \(n\) is an iteration index. If \(n\) is even, we discuss the following two cases:
\begin{enumerate}
\item If \(\abs{\mathcal{K}^n_1} \geq n / 2\), then for any \(k \leq n - 1\),
\begin{align*}
1/A_{k+1} \geq
\begin{cases}
1 / A_k & \text{ if \(k \in \mathcal{K}^n_2\),} \\
1 / A_k + \eta/2 & \text{o.w.}
\end{cases}
\end{align*}
Hence, \(1 / A_n \geq (n / 2) \cdot (\eta / 2) = \eta \cdot n / 4\), implying
that \(A_n \leq 4 / (\eta \cdot n)\).
\item If \(\abs{\mathcal{K}^n_1} < n / 2\), then
\(K := \abs{\mathcal{K}_2^n} \geq n/2\). Without loss of generality, we denote \(\mathcal{K}^n_2 = \set{ j_1, j_2, \ldots, j_K}\). Then,
\begin{align}
A_{k+1} \leq
\begin{cases}
A_k / 2 + 2 \beta_{k+1} & \text{\(k \in \mathcal{K}^n_2\),} \\
A_k & \text{\(k \in \mathcal{K}^n_2\).}
\end{cases} \label{eq:rate-ineq-lemma-1}
\end{align}
So
\(A_n \leq \cdots \leq A_{j_K}/2 + 2 \beta_{j_K + 1} \leq \cdots \leq A_{j_2}/2 + 2 \beta_{j_2 + 1} \leq \cdots A_{j_1}/2 + 2 \beta_{j_1 + 1} \leq \cdots \leq A_1\),
from which we can obtain an upper bound for \(A_n\):
\begin{align*}
A_n
& \leq \cdots \leq \frac{1}{2} A_{j_K} + 2 \beta_{j_K + 1}
\leq \frac{1}{2} \left( \frac{1}{2} A_{j_{K-1}} + 2 \beta_{j_{K-1} + 1}  \right) + 2 \beta_{j_K + 1} \\
& = \frac{1}{2^2} A_{j_{(K-1)}} + \beta_{j_{(K-1)} + 1} + 2 \beta_{j_K + 1}
\leq \frac{1}{2^3} A_{j_{(K-2)}} + \frac{1}{2} \beta_{j_{(K-2)} + 1} + \beta_{j_{(K-1)} + 1} + 2 \beta_{j_K + 1} \leq \cdots \\
& \leq \frac{1}{2^K} A_{j_1} + \sum_{i=1}^K 2^{(1-K+i)} \beta_{j_i + 1}
\leq \frac{1}{2^{n/2}} A_1 + \sum_{i=1}^K 2^{(1-K+i)} \beta_{i+1}, \\
& \leq \frac{1}{2^{n/2}} A_1 + \sum_{i=1}^n 2^{(1-K+i)} \beta_{i+1},
\end{align*}
where the second, third, and fourth inequality are
by~\eqref{eq:rate-ineq-lemma-1}, the second to last inequality is due to
\(K \geq n/2\), \(i \leq j_i\), and the monotonicity of \(\beta_k\)'s. Furthermore,
by the choices of \(\set{\beta_k}_{k=1}^{\infty}\), we have
\begin{align*}
A_n
& \leq \frac{1}{2^{n/2}} A_1 + \sum_{i=1}^n 2^{(1-K+i)} 2^{-(i+1)} \beta_0
\leq \frac{1}{2^{n/2}} A_1 + n2^{-K} A_1
\leq \left( 2^{-n/2} + n2^{-n/2} \right) A_1
= (n+1) 2^{-n/2} A_1.
\end{align*}
\end{enumerate}
Combining the two cases, we have
\(A_n \leq \max\set{ \frac{4}{(\eta n)}, \frac{(n+1)A_1}{2^{n/2}} }\) when \(n\)
is even. When \(n\) is odd, then
\(A_n \leq A_{n-1} \leq \max\set{ \frac{4}{\eta(n-1)}, \frac{n A_1}{2^{(n-1)/2}} }\).
To sum up, for \(n \geq 2\), we have
\begin{align*}
A_n \leq \max \Set{ \frac{4}{\eta (n-1)}, \frac{(n+1) A_1}{2^{\floor{n / 2}}}}.
\end{align*}
\end{proof}
We are now ready to present a proof for Theorem~\ref{thm:cvg-rate}.

\begin{proof}[Proof of Theorem~\ref{thm:cvg-rate}]
We denote by \(A_k := \Phi(x_k, y_k) - \Phi^{\ast} + \varepsilon_k\), then by \(\Phi(x_2, y_2) \geq \Phi^{\ast}\) we obtain
\begin{align*}
\varepsilon_{k+1} \leq \frac{(\Phi(x_1, y_1) - \Phi(x_2, y_2))}{(3 \cdot 2^{k+1})} \leq \frac{(\Phi(x_1, y_1) - \Phi^{\ast})}{(3 \cdot 2^{k+1})}, \quad \forall k \geq 2.
\end{align*}
In addition, by Lemma~\ref{lem:rate-lbd-on-decrease} and
\(2 \varepsilon_{k+1} \leq \varepsilon_k\) we have
\begin{align*}
A_k - A_{k+1}
& = \Phi(x_k, y_k) - \Phi(x_{k+1}, y_{k+1}) + \varepsilon_k - \varepsilon_{k+1} \\
& \geq (A_{k+1} - 3 \varepsilon_{k+1})^2 / (2 L_1 \overline{R}^2) - \varepsilon_{k+1} + \varepsilon_k - \varepsilon_{k+1} \\
& \geq (A_{k+1} - 3 \varepsilon_{k+1})^2 / (2 L_1 \overline{R}^2).
\end{align*}
Finally, note that \(3 \cdot \varepsilon_{k} \leq \left( \Phi(x_1, y_1) - \Phi^{\ast} \right) / 2^k \leq A_1 / 2^k\). For any \(M_4 > A_1\), we have
\begin{align*}
\Phi(x_k, y_k) - \Phi^{\ast}
\leq A_k
\leq \max \Set{ \frac{4}{ 2 L_1 (k-1) \overline{R}^2}, \frac{(k+1) M_4}{2^{\floor{k / 2}}}}, \quad k \geq 2.
\end{align*}
where the second inequality is by Lemma~\ref{lem:rate-technical-lemma}.
\end{proof}
\end{SecondUpdate}

\subsection{Proof of Theorem~\ref{cor:approx-guarantee-for-hierarchy}} \label{apx-cor:approx-guarantee-for-hierarchy}

We first present preparatory propositions~\ref{prop:approx-cvx-g-limit-1},~\ref{prop:approx-bd-for-K1-K2}, and~\ref{prop:approx-guarantee-two-levelsets}. Then, we put them together to prove Theorem~\ref{cor:approx-guarantee-for-hierarchy}.

\begin{definition}
For \(\hat{\mathcal{C}}_N \subseteq \mathcal{C}_{\delta}\), define
\[
  \underline{\delta} :=
  \sup_{(\ell, u) \in \bd{}(\hat{\mathcal{C}}_N)} g_{\epsilon}(\ell, u),
\]
and we say \(\hat{\mathcal{C}}_N\) is \emph{supported} by
\(\mathcal{C}_{\underline{\delta}}\). In addition, with respect to the new origin \((\ell_0, u_0)\), define \(\underline{\delta}^+ := g_{\epsilon}(\ell_0, u_0) > \underline{\delta}\) and
\begin{gather}
g^s_{\epsilon} \colon \reals^2_+ \ni (\Delta \ell, \Delta u)
\mapsto g_{\epsilon}(\ell_0 + \Delta \ell, u_0 - \Delta u) \in \reals_+, \label{eq:approx-def-gsepsi}\\
\overline{g}^s_{\epsilon} \colon \reals^2 \ni (\Delta \ell, \Delta u)
\mapsto g^s_{\epsilon}(\abs{\Delta \ell}, \abs{\Delta u}) \in \reals_+, \label{eq:approx-def-gsepsibar}
\end{gather}
where \(g^s_{\epsilon}\) is the \(g_{\epsilon}\) function restricted to
\([\ell_0, +\infty) \times (-\infty, u_0]\), and \(\overline{g}^s_{\epsilon}\)
extends \(g^s_{\epsilon}\) by reflecting it over the two axes.
\end{definition}
An immediate implication of the above definition is that \(\mathcal{C}_{\underline{\delta}} \subseteq \hat{\mathcal{C}}_N\).

\begin{remark}
Under Assumptions~\ref{asm:ellip-ref-and-norm}
and~\ref{asm:unimodal-ref-w-diff-density}, we see that \(g^s_{\epsilon}\)
inherits the log-concavity of \(g_{\epsilon}\), and so is
\(\overline{g}^s_{\epsilon}\). This is because \(\overline{g}^s_{\epsilon}\) is
continuous; and for any
\((\Delta \ell_1, \Delta u_1), (\Delta \ell_2, \Delta u_2) \in \reals^2\) and
their midpoint \((\Delta \ell_{1/2}, \Delta u_{1/2})\), we have
\begin{align*}
  \overline{g}^s_{\epsilon}(\Delta \ell_{1/2}, \Delta u_{1/2})
  & = g^s_{\epsilon}(\abs{\Delta \ell_{1/2}}, \abs{\Delta u_{1/2}})
    \geq g^s_{\epsilon}(m_1(\abs{\Delta \ell_1}, \abs{\Delta \ell_2}; 1/2), \abs{\Delta u_{1/2}}) \\
  & \geq g^s_{\epsilon}(m_1(\abs{\Delta \ell_1}, \abs{\Delta \ell_2}; 1/2), m_1(\abs{\Delta u_1}, \abs{\Delta u_2}; 1/2)) \\
  & = g^s_{\epsilon}(m_1((\abs{\Delta \ell_1}, \abs{\Delta u_1}), (\abs{\Delta \ell_2}, \abs{\Delta u_2}); 1/2)) \\
  & \geq m_0(g^s_{\epsilon}(\abs{\Delta \ell_1}, \abs{\Delta u_1}), g^s_{\epsilon}(\abs{\Delta \ell_2}, \abs{\Delta u_2}); 1/2)
    = m_0(\overline{g}^s_{\epsilon}(\Delta \ell_1, \Delta u_1), \overline{g}^s_{\epsilon}(\Delta \ell_2, \Delta u_2); 1/2),
\end{align*}
where \(m_0,m_1\) are defined in Definition~\ref{def:alpha-ccv}, and the first two inequalities are due to the definition of
\(g_{\epsilon}\) and the convexity of \(\abs{\cdot}\).
\end{remark}

\begin{remark}\label{rmk:gs-epsi-strictly-decreas}
\(\overline{g}^s_{\epsilon}\) is radially and strictly
decreasing on \(\overline{g}^s_{\epsilon} > 0\). To see this, it suffices to examine \(g^s_{\epsilon}\). Pick any \(t_0 > 1\) and any nonzero
\((\Delta \ell_0, \Delta u_0) \in \reals^2_+\) at which
\(g^s_{\epsilon}(\Delta \ell_0, \Delta u_0) > 0\), we compare
\(g^s_{\epsilon}(\Delta \ell_0, \Delta u_0)\) and
\(g^s_{\epsilon}(t_0 \Delta \ell_0, t_0 \Delta u_0)\) by discussing the
following two cases:
\begin{enumerate}
\item \(\Delta \ell_0 \neq 0\) and \(\Delta u_0 \neq 0\): Define
      \[
      \underline{t} := \min \set{(t_0 - 1) \min \set{\Delta \ell_0, \Delta u_0} , \sup_{t > 0} \set{\Phi(u_0 - \Delta u_0 - t) - \Phi(\ell_0 + \Delta \ell_0 + t) > (1 - \epsilon)} },
      \]
      which is strictly positive. Then,
      \begin{align*}
        & g_{\epsilon}(\ell_0 + \Delta \ell_0, u_0 - \Delta u_0) \\
        = \
        & \int\limits_0^{+\infty} \left[ \Phi(u_0 - \Delta u_0 - t) - \Phi(\ell_0 + \Delta \ell_0 + t) - (1 - \epsilon)\right]^+ \dLeb{t} \\
        = \
        & \int\limits_0^{\underline{t}} \left[ \Phi(u_0 - \Delta u_0 - t) - \Phi(\ell_0 + \Delta \ell_0 + t) - (1 - \epsilon)\right]^+ \dLeb{t} \\
        & + \int\limits_0^{+\infty} \left[ \Phi(u_0 - (\Delta u_0 + \underline{t}) - t) - \Phi(\ell_0 + (\Delta \ell_0 + \underline{t}) + t) - (1 - \epsilon)\right]^+ \dLeb{t} \\
        \geq \
        & \int\limits_0^{\underline{t}} \left[ \Phi(u_0 - \Delta u_0 - t) - \Phi(\ell_0 + \Delta \ell_0 + t) - (1 - \epsilon)\right]^+ \dLeb{t} \\
        & + \int\limits_0^{+\infty} \left[ \Phi(u_0 - t_0 \Delta u_0 - t) - \Phi(\ell_0 + t_0 \Delta \ell_0 + t) - (1 - \epsilon)\right]^+ \dLeb{t} > \ g_{\epsilon}(\ell_0 + t_0 \Delta \ell_0, u_0 - t_0 \Delta u_0),
      \end{align*}
      where the second equality is due to variable substitution, the first
      inequality is by the definition of \(\underline{t}\), and the last
      inequality is because \(\underline{t} > 0\) and the integrand of the first
      term in the summation is strictly positive on \((0, t)\).
\item One of \(\Delta \ell_0\) and \(\Delta u_0\) is \(0\): Since \(g_{\epsilon}\) is symmetric, we can assume \(\Delta \ell_0 = 0\) and
      \(\Delta u_0 \neq 0\) without loss of generality. Then, by the continuity of
      \(g_{\epsilon}\) and the argument in the previous case, we have
      \begin{align*}
        g_{\epsilon}(\ell_0, u_0 - \Delta u_0)
        = \lim_{n \to \infty} g_{\epsilon}(\ell_0 + \frac{1}{n}, u_0 - \Delta u_0)
        > \lim_{n \to \infty} g_{\epsilon}(\ell_0 + \frac{t_0}{n}, u_0 - t_0 \Delta u_0)
        = g_{\epsilon}(\ell_0, u_0 - t_0 \Delta u_0).
      \end{align*}
\end{enumerate}
\end{remark}

The next proposition relates \(\overline{g}_{\epsilon}\) with
\(\bd{}(\mathcal{C}_{\delta})\).

\begin{proposition}\label{prop:approx-cvx-g-limit-1}
Suppose that \(\epsilon \in (0, 1/2)\) and \(\delta > 0\). Then, for a sequence
of points
\(\set{(\ell_n, u_n), n \in \naturals} \subseteq \bd{}(\mathcal{C}_{\delta})\),
if \(\ell_n \searrow -\infty\) as \(n \to \infty\), then \(u_n \to u^{\ast}\) as
\(n \to \infty\), where \(u^{\ast}\) is the solution of the equation
\(\overline{g}_{\epsilon}(u) = \delta\).
\end{proposition}

\begin{proof}
Since \(\ell_n \searrow -\infty\) and \((\ell_n, u_n) \in \bd{}(\mathcal{C}_{\delta})\),
\(u_n\) is decreasing in \(n\). Consider the sequence of functions
\(\set{g_{\epsilon}^n, n \in \naturals}\), where
\begin{align*}
  g_{\epsilon}^n(u) := \int\limits_0^{+\infty} \big( \Phi(u - t) - \Phi(\ell_n + t) - (1 - \epsilon) \big)^+ \dLeb{t}.
\end{align*}
Evidently, the sequence \(\set{g_{\epsilon}^n}_{n = 1}^{\infty}\) is increasing, bounded
from above by \(\overline{g}_{\epsilon}\), and continuous for all \(n\) by the dominated
convergence theorem. Take a \(\underline{u} > 0\) such that
\(\overline{g}_{\epsilon}(\underline{u}) < \delta\) and define a restricted domain
\(\dom_g := [\underline{u}, u_1]\) for all \((g_{\epsilon}^n)\)'s and \(\overline{g}_{\epsilon}\). Since
\(g_{\epsilon}^n(\underline{u}) \leq \overline{g}_{\epsilon}(\underline{u}) < \delta\) for all \(n\)
and \(g_{\epsilon}^n(u_1) \geq g_{\epsilon}^1(u_1) = \delta\), the solution to the equations
\(\set{u \colon g_{\epsilon}^n(u) = \delta} \subseteq \dom_g\) by the intermediate value
theorem. First, we show that \(g_{\epsilon}^n \to \overline{g}_{\epsilon}\) uniformly as
\(n \to \infty\) on \(\dom_g\). Notice that
\begin{align*}
  \abs{g_{\epsilon}^n(u) - \overline{g}_{\epsilon}(u)}
  & \leq \int\limits_0^{+\infty} \Big| \big( \Phi(u - t) - \Phi(\ell_n + t) - (1 - \epsilon) \big)^+
    - \big( \Phi(u - t) - (1 - \epsilon) \big)^+ \Big| \dLeb{t} \\
  & \leq \int\limits_0^{+\infty} \Phi(\ell_n + t) \cdot \Ind{ \Phi(u - t) \geq (1 - \epsilon)} \dLeb{t}
  = \int\limits_0^{+\infty} \Phi(\ell_n + t) \cdot \Ind{ t \leq u_1 - \Phi^{-1} (1 - \epsilon)} \dLeb{t}.
\end{align*}
For any \(u \in \dom_g\), the dominated convergence theorem implies that
\begin{align*}
  \lim_{n \to \infty} \abs[\Big]{g_{\epsilon}^n(u) - \overline{g}_{\epsilon}(u)}
  \leq
  \int\limits_0^{+\infty} \lim_{n \to \infty} \Phi(\ell_n + t) \cdot \Ind{ t \leq u_1 - \Phi^{-1} (1 - \epsilon)} \dLeb{t} = 0.
\end{align*}
Due to the strict monotonicity of \(\overline{g}_{\epsilon}\) in \(u\), its inverse
function \((\overline{g}_{\epsilon})^{-1}\) is well-defined. Furthermore, it is continuous
because \(\dom_g\) is compact. Second, we bound the distance between
\(u_n\) and \(u_{\ast}\). For any \(\varepsilon > 0\), there exists an
\(N_{\varepsilon} \in \naturals\) such that
\begin{align*}
  n > N_{\varepsilon} \implies \sup_{u \in \dom_g} \abs{g^n_{\epsilon}(u) - \overline{g}_{\epsilon}(u)} < \varepsilon.
\end{align*}
Let \(u^{\ast}_n\) be the solution of \(g^n_{\epsilon}(u) = \delta\), then for all
\(n > N_{\varepsilon}\),
\begin{align*}
  u_{\ast} \leq u^{\ast}_n \leq (\overline{g}_\epsilon)^{-1}(\delta + \varepsilon),
\end{align*}
where the first inequality is because \(g^n_{\epsilon}\) is monotone and
\(g^n_{\epsilon}(u^{\ast}_n) = \delta = \overline{g}_{\epsilon}(u^{\ast}) \geq g^n_{\epsilon}(u^{\ast})\), and the
second inequality is because \((\overline{g}_\epsilon)^{-1}\) is monotone and
\(\overline{g}_\epsilon(u^{\ast}_n) \leq g^n_\epsilon(u^{\ast}_n) + \varepsilon\). We complete the proof by noting that
\begin{align*}
  \inf_{\varepsilon > 0} \sup_{n \geq N_{\varepsilon}} \abs{ u^{\ast}_n - u^{\ast}}
  \leq \inf_{\varepsilon > 0} \left( (\overline{g}_\epsilon)^{-1}(\delta + \varepsilon) - u^{\ast} \right) = 0,
\end{align*}
where the last equality is because \((\overline{g}_\epsilon)^{-1}\) is continuous.
\end{proof}

The next proposition characterizes the level sets of \(\overline{g}^s_{\epsilon}\)
through polar coordinates.

\begin{proposition}\label{prop:approx-bd-for-K1-K2}
Suppose that \(\epsilon \in (0, 1/2)\), \(\delta > 0\), \(\Prob_0\) is unimodal
with CDF \(\Phi\) and density function \(\Phi^{\prime}\), and
\(\overline{g}^s_{\epsilon}\) is defined in~\eqref{eq:approx-def-gsepsibar}. Let
\(\mathcal{K}_{\delta}\) be its \(\delta\)-superlevel set, then it holds that
\begin{align}
  \bd{}(\mathcal{K}_{\delta})
  = \Set{(\rho \cos \theta, \rho \sin \theta) \in \reals^2 \colon
  \rho \norm{(\cos \theta, \sin \theta)}_{\mathcal{C}_{\delta}} = 1}, \label{eq:approx-Kd-polar}
\end{align}
where
\(\norm{x}_{\mathcal{K}_{\delta}} = \inf \set{r > 0 \colon x \in r \cdot \mathcal{K}_{\delta}}\)
is the gauge induced by \(\mathcal{K}_{\delta}\). Furthermore, for
\(0 < \delta_1 < \delta_2 < \underline{\delta}^+\), it holds that
\begin{align}
  \mathcal{K}_{\delta_2}
  \subseteq \mathcal{K}_{\delta_1}
  \subseteq \left(1 + \frac{\sqrt{2} \ln{}(\delta_2 / \delta_1)}{\underline{D} \cdot \underline{\rho}}\right)\mathcal{K}_{\delta_2},
  \label{eq:approx-Kd1-Kd2-CKd1}
\end{align}
where \(%
\displaystyle \underline{D} = \frac{\Phi^{\prime}(u_0)}{(\Phi(u_0) - \Phi(\ell_0) - (1 - \epsilon))}\)
and
\(\displaystyle \underline{\rho} = \norm{(1, 0)}_{\mathcal{C}_{\delta_2}}^{-1}\).
\end{proposition}

\begin{proof}
Remark~\ref{rmk:gs-epsi-strictly-decreas} shows that
\(\overline{g}^s_{\epsilon}\) is strictly decreasing along each radial
direction. Then, %
\begin{align*}
  \bd{}(\mathcal{K}_{\delta})
  & = \Set{(\rho \cos \theta, \rho \sin \theta) \in \reals^2 \colon
    \rho = \sup_{u > 0} \set{u \colon (u \cos \theta, u \sin \theta) \in \mathcal{K}_{\delta}}
    }\\
  & = \Set{(\rho \cos \theta, \rho \sin \theta) \in \reals^2 \colon
    \rho = \sup_{u' > 0} \set{1/u' \colon (1/u' \cos \theta, 1/u' \sin \theta) \in \mathcal{K}_{\delta}}
    } \\
  & = \Set{(\rho \cos \theta, \rho \sin \theta) \in \reals^2 \colon
    \rho = \left( \norm{(\cos \theta, \sin \theta)}_{\mathcal{K}_{\delta}}\right)^{-1}
    },
\end{align*}
where the second equality is obtained by the change of variable
\(u^{\prime} \gets 1/u\).

The first inclusion in~\eqref{eq:approx-Kd1-Kd2-CKd1} follows from
\(\delta_1 < \delta_2\). To prove the second inclusion, we pick an arbitrary
\(\theta \in [0, \pi / 4)\) and focus on the restriction of
\(\ln \overline{g}^s_{\epsilon}(\cdot, \cdot)\)'s hypograph to direction \(\theta\):
\begin{align*}
  \mathcal{H}_{\theta} := \Set{(\rho, \delta_{\text{ln}}) \in \reals_+ \times \reals \colon
  \ln{} \overline{g}^s_{\epsilon}(\rho \cos \theta, \rho \sin \theta) \geq \delta_{\text{ln}}}.
\end{align*}
Because \(\overline{g}^s_{\epsilon}(\cdot, \cdot)\) is log-concave,
\(\ln \overline{g}^s_{\epsilon}(\cdot, \cdot)\) is a concave function, and
\(\mathcal{H}_{\theta}\) is a convex set which can be approximated from above by
hyperplanes. In particular,
\begin{align*}
  (\rho_i, \ln{}(\delta_i)) \in \bd{}(\mathcal{H}_{\theta}),
  \text{ where } \rho_i := \left( \norm{(\cos \theta, \sin \theta)}_{\mathcal{K}_{\delta_i}} \right)^{-1}, \forall i \in [2],
\end{align*}
because clearly \((\rho_i, \ln{}(\delta_i))\) and for any \(\varepsilon > 0\),
\((\rho_i + \varepsilon/2, \ln{}(\delta_i)) \not\in \mathcal{H}_{\theta}\) for \(i \in [2]\).
Therefore, the supporting hyperplane at \((\rho_2, \ln{}(\delta_2))\) is an
upper bound of
\(\ln{} \overline{g}^s_{\epsilon}(\rho \cos \theta, \rho \sin \theta)\):
\begin{align*}
  \ln{} \overline{g}^s_{\epsilon}(\rho \cos \theta, \rho \sin \theta)
  \leq
  \left.\frac{\dif{}}{\dif{} \rho}(\ln{} \overline{g}^s_{\epsilon}(\rho \cos \theta, \rho \sin \theta)) \right\vert_{\rho = \rho_2}
  (\rho - \rho_2) + \ln{}(\delta_2) =: \hat{g}^s_{\epsilon, \theta, \delta_2}(\rho),
\end{align*}
and the superlevel sets of \(\hat{g}^s_{\epsilon, \theta, \delta_2}\) are
supersets of those of
\(\ln \overline{g}^s_{\epsilon}(\rho \cos \theta, \rho \sin \theta)\). In particular,
\begin{align*}
  [0, \rho_1] \subseteq [0, \overline{\rho}_1],
  \text{ where } \overline{\rho}_1 \in \reals_+ \text{ is such that }
  \hat{g}^s_{\epsilon, \theta, \delta_2}(\overline{\rho}_1) = \ln{}(\delta_1).
\end{align*}
Solving for \(\overline{\rho}_1\), we obtain that for any \(\theta \in [0, 2\pi)\),
\begin{align*}
  \overline{\rho}_1
      = \left( 1 + \frac{\ln{} (\delta_2 / \delta_1)}{
      -\left. \frac{\dif{}}{\dif{} \rho} (\ln{} \overline{g}^s_{\epsilon}(
      \rho \cos \theta, \rho \sin \theta)) \right\vert_{\rho = \rho_2} \cdot \rho_2} \right) \rho_2.
\end{align*}
Next, we seek an upper bound of \(\overline{\rho}_1\) that is independent
from \(\theta\). To this end, we analyze the derivative in the denominator:
\begin{gather*}
  \frac{\dif{}}{\dif{} \rho} \ln{} \overline{g}^s_{\epsilon}(\rho \cos \theta, \rho \sin \theta)
  = \frac{1}{\overline{g}^s_{\epsilon}(\rho \cos \theta, \rho \sin \theta)} \cdot \frac{\dif{}}{\dif{} \rho} \overline{g}^s_{\epsilon}(\rho \cos \theta, \rho \sin \theta)
\end{gather*}
where
\begin{align*}
  \frac{\dif{}}{\dif{} \rho} \overline{g}^s_{\epsilon}(\rho \cos \theta, \rho \sin \theta)
  & = \frac{\dif{}}{\dif{} \rho} \int\limits_0^{+\infty} \left[
    \Phi(u_0 - \rho \cos \theta - t) - \Phi(\ell_0 + \rho \sin \theta + t) - (1 - \epsilon)\right]^+ \dLeb{t} \\
  & = \int\limits_0^{\overline{t}_{\rho,\theta}} \frac{\dif{}}{\dif{} \rho} \left( \Phi(u_0 - \rho \cos \theta - t) - \Phi(\ell_0 + \rho \sin \theta + t) \right) \dLeb{t} \\
  & = \int\limits_0^{\overline{t}_{\rho,\theta}} \left(
    \Phi^{\prime}(u_0 - \rho \cos \theta - t) (- \cos \theta) - \Phi^{\prime}(\ell_0 + \rho \sin \theta + t) \sin \theta \right)\dLeb{t} \\
  & \leq \int\limits_0^{\overline{t}_{\rho,\theta}} \left( -\Phi^{\prime}(u_0) (\cos \theta + \sin \theta) \right) \dLeb{t}
    = \int\limits_0^{\overline{t}_{\rho,\theta}} - \Phi^{\prime}(u_0) \sqrt{2} \sin (\theta + \frac{\pi}{4}) \dLeb{t} \\
  & \leq
    - \Phi^{\prime}(u_0) \overline{t}_{\rho, \theta},
\end{align*}
where the second equality is by Leibniz integration rule, and
\(\overline{t}_{\rho,\theta}\) is defined through
\[
  \overline{t}_{\rho, \theta} := \max \Set{t \geq 0 \colon \Phi(u_0 - \rho \cos \theta - t) - \Phi(\ell_0 + \rho \sin \theta + t) \geq 1 - \epsilon}.
\]
We notice that \(\overline{t}_{\rho, \theta}\) satisfies
\[
  \ell_0 + \rho \cos \theta + \overline{t}_{\rho, \theta} < 0 < u_0 - \rho \cos \theta - \overline{t}_{\rho, \theta}.
\]
The first inequality is because
\(\theta \in [0, \pi/4)\) and \(\Phi^{\prime}\) is symmetric,
increasing on \((-\infty, 0]\), and decreasing on \([0, \infty)\). The
second inequality is because \(\sin(\theta + \frac{\pi}{4})\) achieves its
minimum at \(0\) on \([0, \frac{\pi}{4})\). In order to remove the dependence of
\(\overline{t}_{\rho, \theta}\) on \(\theta\), we seek a lower bound for \(\overline{t}_{\rho, \theta}\). To this
end, by the mean value theorem there exists a
\(t_0 \in [0, \overline{t}_{\rho, \theta}]\) such that
\begin{align*}
  \delta = \;
  & \int\limits_0^{\overline{t}_{\rho, \theta}} \left[ \Phi(u_0 - \rho \cos \theta - t) - \Phi(\ell_0 + \rho \sin \theta + t) - (1 - \epsilon) \right] \dLeb{t} \\
  = \;
  & \overline{t}_{\rho, \theta} \left[ \Phi(u_0 - \rho \cos \theta - t_0) -
    \Phi(\ell_0 + \rho \sin \theta + t_0) - (1 - \epsilon) \right],
\end{align*}
and by
\begin{align*}
  \Phi(u_0) - \Phi(\ell_0)
  \geq \Phi(u_0 - \rho \cos \theta - t_0) - \Phi(\ell_0 + \rho \sin \theta + t_0),
\end{align*}
we obtain a lower bound for \(\overline{t}_{\rho, \theta}\),
\begin{align*}
\underline{t}_{\delta} := \frac{\delta}{(\Phi(u_0) - \Phi(\ell_0) - (1 - \epsilon))} \leq \overline{t}_{\rho, \theta},
\end{align*}
from which
\begin{align*}
  - \left. \frac{\dif{}}{\dif{} \rho} \left( \ln{} \overline{g}^s_{\epsilon}(\rho \cos \theta, \rho \sin \theta) \right)\right\vert_{\rho = \rho_2}
  & = \frac{-1}{\overline{g}^s_{\epsilon}(\rho_2 \cos \theta, \rho_2 \sin \theta)} \cdot \frac{\dif{}}{\dif{} \rho} \overline{g}^s_{\epsilon}(\rho \cos \theta, \rho \sin \theta) \\
  & \geq \frac{\Phi^{\prime}(u_0) \overline{t}_{\rho, \theta}}{\overline{g}^s_{\epsilon}(\rho_2 \cos \theta, \rho_2 \sin \theta)}
  \geq \frac{\Phi^{\prime}(u_0) \underline{t}_{\delta_2}}{\overline{g}^s_{\epsilon}(\rho_2 \cos \theta, \rho_2 \sin \theta)} \\
  & = \frac{\Phi^{\prime}(u_0)}{(\Phi(u_0) - \Phi(\ell_0) - (1 - \epsilon))} = \underline{D}.
\end{align*}

Thus, for any \((\rho \cos \theta, \rho \sin \theta) \in \mathcal{K}_{\delta_1}\), we have
\begin{align*}
  0 \leq \rho \leq \left( \norm{(\cos \theta, \sin \theta)}_{\mathcal{K}_{\delta_1}} \right)^{-1}
  = \rho_1 \leq \overline{\rho}_1
  & =
  \left( 1 +
  \frac{\ln{} (\delta_2 / \delta_1)}{
    -\left. \frac{\dif{}}{\dif{} \rho} (\ln{} \overline{g}^s_{\rho}(
    \rho \cos \theta, \rho \sin \theta)) \right\vert_{\rho = \rho_2} \cdot \rho_2} \right) \rho_2 \\
  & \leq \left( 1 + \frac{\ln{} (\delta_2 / \delta_1)}{
    \underline{D} \cdot \rho_2} \right) \rho_2
    \leq \left( 1 + \frac{\sqrt{2} \ln{} (\delta_2 / \delta_1)}{
    \underline{D} \cdot \underline{\rho}} \right) \rho_2,
\end{align*}
where the second inequality is by definition of \(\underline{D}\). We
justify the last inequality as follows. Define
\(\displaystyle k := \frac{\underline{\rho}}{\rho_2 (\cos \theta + \sin \theta)} \geq 0\). Since \(\underline{\rho}(1, 0) \in \bd{} (\mathcal{K}_{\delta_2})\), we have
\begin{gather*}
  \delta_2 = \overline{g}^s_{\epsilon}((\underline{\rho}, 0)) = g_{\epsilon}(\ell_0 + \underline{\rho}, u_0 - 0)
  = g_{\epsilon}(-u_0, -\ell_0 - \underline{\rho}) = g_{\epsilon}(\ell_0, u_0 - \underline{\rho})
  = \overline{g}^s_{\epsilon}(0, \underline{\rho}) \implies \underline{\rho}(0, 1) \in \bd{}(\mathcal{K}_{\delta_2})\\
k \cdot \rho_2 (\cos \theta, \sin \theta)
= \frac{\underline{\rho} (\cos \theta, \sin \theta)}{(\cos \theta + \sin \theta)}
= \frac{\cos \theta}{\cos \theta + \sin \theta} \cdot \underline{\rho} (1, 0)
+ \frac{\sin \theta}{\cos \theta + \sin \theta} \cdot \underline{\rho} (0, 1) \implies k \leq 1,
\end{gather*}
where the second implication is because
\(k \cdot \rho_2(\cos \theta, \sin \theta)\) is the convex combination of
\(\underline{\rho}(1, 0)\) and \(\underline{\rho}(0, 1)\) and is parallel to
\(\rho_2 (\cos \theta, \sin \theta)\). Therefore, we have
\[
\rho_2 \geq k \cdot \rho_2 = \norm[\Bigg]{%
  \frac{\underline{\rho}(\cos \theta, \sin \theta)}{\sqrt{2} \sin (\theta + \frac{\pi}{4})}}_2
  \geq \frac{\underline{\rho}}{\sqrt{2}}.
\]
Since the upper bound above is independent from \(\theta\), it holds that
\begin{align*}
  \frac{\rho}{\left( 1 + \frac{\ln{} \delta_2 / \delta_1}{ \underline{D} \cdot \underline{\rho}} \right)} \leq \rho_2
  = \sup_{u > 0} \set{u \colon (u \cos \theta, u \sin \theta) \in \mathcal{K}_{\delta_2}}
  \iff \frac{1}{\left(  1 + \frac{\ln{} (\delta_2 / \delta_1)}{\underline{D} \cdot \underline{\rho}} \right)}(
  \rho \cos \theta, \rho \sin \theta) \in \mathcal{K}_{\delta_2},
\end{align*}
and~\eqref{eq:approx-Kd1-Kd2-CKd1} follows.
\end{proof}

\begin{proposition}\label{prop:approx-guarantee-two-levelsets}
Suppose that \(\epsilon \in (0, 1/2)\) and \(\Prob_0\) is unimodal
with CDF \(\Phi\) and density function \(\Phi^{\prime}\).
For any \(\delta_1, \delta_2\) such that
\(0 < \delta_1 < \delta_2 < \underline{\delta}^+\), denote by
\(g_{\epsilon}|_{\ell_0}(u) \) the restriction of the function \(g_{\epsilon}\) to the
vertical line \(\ell = \ell_0\), and by \(g^{-1}_{\epsilon}|_{\ell_0}(\cdot)\)
its inverse (whose existence is guaranteed by
Remark~\ref{rmk:gs-epsi-strictly-decreas}):
\begin{gather*}
g_{\epsilon}|_{\ell_0}(u) := g_{\epsilon}(\ell_0, u),
\quad
g^{-1}_{\epsilon}|_{\ell_0}(\delta) := \inf \set{u > 0 \colon g_{\epsilon}(\ell_0, u) \geq \delta}.
\end{gather*}
In addition, define
\(\gamma_{13}, \gamma_4\) as
\begin{gather*}
\gamma_{13} := \frac{u_0 - (\overline{g}_{\epsilon})^{-1}(\delta_1)}{%
  u_0 - g^{-1}_{\epsilon}|_{\ell_0}(\delta_2)},
\quad
\gamma_4 := 1 + \frac{\sqrt{2} \ln{}(\delta_2 / \delta_1)}{\underline{D} \cdot \underline{\rho}},
\end{gather*}
where \(\underline{D}\) and \(\underline{\rho}\) are defined in Proposition~\ref{prop:approx-bd-for-K1-K2}. Then, it holds that
\begin{enumerate}[label=(\arabic*)]
\item\label{item:approx-orthant-2} \(\mathcal{C}_{\delta_1} \cap \mathcal{O}_2 = \mathcal{C}_{\delta_2} \cap \mathcal{O}_2\).
\item\label{item:approx-orthant-13} \(%
\mathcal{C}_{\delta_1} \cap \mathcal{O}_i %
\subseteq \gamma_{13} \cdot (\mathcal{C}_{\delta_2} \cap \mathcal{O}_i) \text{ for any } i \in \set{1, 3}\).
\item\label{item:approx-orthant-4} \(\mathcal{C}_{\delta_1} \cap \mathcal{O}_4 %
\subseteq \gamma_4 \cdot \mathcal{C}_{\delta_2} \cap \mathcal{O}_4\).
\end{enumerate}
\end{proposition}

\begin{proof}
Because
\(g_{\epsilon}(\ell_0, u_0) = \underline{\delta}^+ > \delta_2 > \delta_1 > 0\),
we have \((\ell_0, u_0) \in \mathcal{C}_{\delta_2}\), from which
\(\mathcal{O}_2 \cap \mathcal{C}_{\delta_2} = \mathcal{O}_2 = \mathcal{O}_2 \cap \mathcal{C}_{\delta_1}\),
\ie{}, item~\ref{item:approx-orthant-2} holds.

Because \(\mathcal{C}_{\delta}\) is symmetric with respect to \(u + \ell = 0\),
it is sufficient to prove the case \(i = 3\) in
item~\ref{item:approx-orthant-13}. To this end, we notice that
\((\ell_0, g^{-1}_{\epsilon}|_{\ell_0}(\delta_2)) \in \mathcal{C}_{\delta_2}\),
and so
\((-\infty, \ell_0] \times [g^{-1}_{\epsilon}|_{\ell_0}(\delta_2), u_0] \subseteq \mathcal{C}_{\delta_2} \cap \mathcal{O}_3\)
because \((-1, 0)\) and \((0, 1)\) are extreme rays of \(\mathcal{C}_{\delta}\)
for all \(\delta > 0\). In
addition, %
\begin{align*}
  \inf_{\ell, u} \Set{u \colon (\ell, u) \in \mathcal{C}_{\delta_1} \cap \mathcal{O}_3}
  & = \inf_{\ell, u} \Set{u \colon g_{\epsilon}(\ell, u) \geq \delta_1, (\ell, u) \in (-\infty, \ell_0] \times (0, u_0]} \\
  & = \inf_{\ell} \Set{ \inf_{0 < u \leq u_0} \Set{u \colon g_{\epsilon}(\ell, u) \geq \delta_1} \colon \ell \in (-\infty, \ell_0] } \\
  & = \lim_{n \to \infty} \inf_{0 < u \leq u_0} \Set{u \colon g_{\epsilon}(\ell_0 - n, u) \geq \delta_1} \\
  & = \inf_u \Set{u \colon \overline{g}_{\epsilon}(u) \geq \delta_1} = (\overline{g}_{\epsilon})^{-1}(\delta_1).
\end{align*}
where the third equality is because the sequence of functions
\(\Set{g_{\epsilon}(\ell_0 - n, \cdot )}_{n = 1}^{+\infty}\) is increasing in \(n\), \ie{} %
\begin{align*}
  \cdots
  \leq g_{\epsilon}(\ell_0 - n, \cdot)
  \leq g_{\epsilon}(\ell_0 - (n - 1), \cdot)
  \leq \cdots
  \leq g_{\epsilon}(\ell_0 - 1, \cdot)
  \leq g_{\epsilon}(\ell_0, \cdot)
\end{align*}
and it follows that
\(\Set{\inf_{0 < u \leq u_0} \Set{u \colon g_{\epsilon}(\ell_0 - n, u) \geq \delta}}_{n=1}^{+\infty}\)
is a decreasing sequence. The fourth equality is because of
Proposition~\ref{prop:approx-cvx-g-limit-1}. Now, item~\ref{item:approx-orthant-13} follows from
\begin{align*}
  \gamma_{13} \cdot (\mathcal{C}_{\delta_2} \cap \mathcal{O}_3)
  & \supseteq \gamma_{13} \cdot (-\infty, \ell_0] \times [g^{-1}_{\epsilon}|_{\ell_0}(\delta_2), u_0] \\
  & = (-\infty, \ell_0] \times [u_0 - \gamma_{13} \cdot (u_0 - g^{-1}_{\epsilon}|_{\ell_0}(\delta_2)), u_0] \\
  & = (-\infty, \ell_0] \times [(\overline{g}_{\epsilon})^{-1}(\delta_1), u_0] \supseteq \mathcal{C}_{\delta_1} \cap \mathcal{O}_3.
\end{align*}

Finally, since
\(\mathcal{C}_{\delta_i} \cap \mathcal{O}_4 = \mathcal{K}_{\delta_i} \cap \mathcal{O}_4\)
for \(i \in \set{1,2}\), item~\ref{item:approx-orthant-4} follows from
Proposition~\ref{prop:approx-bd-for-K1-K2}.
\end{proof}

Now we are ready to prove Theorem~\ref{cor:approx-guarantee-for-hierarchy}.

\begin{proof}[Proof of Theorem~\ref{cor:approx-guarantee-for-hierarchy}]
Similar to the proof of Proposition~\ref{prop:approx-guarantee-two-levelsets},
we partition \(\hat{\mathcal{C}}_N\) and \(\mathcal{C}_{\delta}\) into four subsets to obtain:
\begin{enumerate}[label=(\arabic*)]
\item\label{item:approx-orthant-2} \(%
\mathcal{C}_{\delta} \cap \mathcal{O}_2 = \hat{\mathcal{C}}_N \cap \mathcal{O}_2\).
\item\label{item:approx-orthant-13} \(%
\mathcal{C}_{\delta} \cap \mathcal{O}_i %
\subseteq \frac{u_0 - (\overline{g}_{\epsilon})^{-1}(\delta)}{u_0 - (\overline{g}_{\epsilon})^{-1}(\delta + \tau)} %
\cdot (\hat{\mathcal{C}}_N \cap \mathcal{O}_i), \forall i \in \set{1, 3}\).
\item\label{item:approx-orthant-4} \(\mathcal{C}_{\delta} \cap \mathcal{O}_4 %
\subseteq \left( 1 + \frac{\sqrt{2}\ln{}((\delta + \tau) / \delta)}{\underline{D} \cdot \underline{\rho}} \right) \cdot (\hat{\mathcal{C}}_N \cap \mathcal{O}_4)\).
\end{enumerate}
Then, the claim follows by taking the maximum between the two approximation error coefficients for \(i = 3, 4\). The asymptotic exactness holds by observing that both coefficients reduce to \(1\) as \(\tau\) tends to zero.
\end{proof}

\subsection{Proof of Theorem~\ref{thm:opt-det-reform}} \label{apx-thm:opt-det-reform}
\begin{proof}
By Theorem~\(1\) in~\cite{gao-2016-distr-robus}, (\ODRCCL{}) is equivalent to inequality
\begin{align*}
  \sup_{\QProb \in \mathcal{P}} \QProb [A(x)\xi \leq b(x)] \equiv \min_{\lambda \geq 0} \Set{
  \lambda \delta - \Expect_{\Prob} \left[
  \inf_{\xi \in \Xi} \Set{ \lambda \norm{\zeta - {\xi}} - \Ind{ \xi \in \Safe(x)}} \right]}
  \geq 1 - \epsilon.
\end{align*}
Noting that for any fixed \(x \in \reals^n\) and \(\zeta \in \Xi\)
\begin{align*}
\inf_{\xi \in \Xi} \Set{ \lambda \norm{\zeta - \xi} - \Ind{ \xi \in \Safe(x)}}
& = \left\{\begin{array}{ll}
-1     & \text{if \(\zeta \in \Safe(x)\)} \\[0.5em]
\min \Set{ \lambda \cdot \Dist{\zeta, \Safe(x)} - 1, 0}     & \text{if \(\zeta \notin \Safe(x)\)}
\end{array}\right. \\[0.5em]
& = \min \Set{ \lambda \cdot \Dist{\zeta, \Safe(x)} - 1, 0},
\end{align*}
we recast \(\mathcal{X}^o\) as
\begin{align*}
  \lambda \delta + \Expect_{\Prob} \Big[
  \max \Set{ 1 - \lambda \cdot \Dist{{\zeta}, \Safe(x)} , 0} \Big]
  \geq 1 - \epsilon \qquad \forall \lambda \geq 0.
\end{align*}
We notice that the above inequality automatically holds when \(\lambda = 0\) because, in this case, the LHS equals one. Hence, we can drop this case and assume that \(\lambda > 0\). Then, we divide both sides by \(\lambda\) and denote \(\gamma = 1/\lambda\) to obtain
\begin{align*}
  \delta + \Expect_{\Prob} \left[ \left( \gamma - \Dist{{\zeta}, \Safe(x)}, 0 \right)^+  \right]
  \geq (1 - \epsilon) \gamma \qquad \forall \gamma \geq 0.
\end{align*}
We notice that the above inequality holds for all  \(\gamma < 0\) because, in that case, the LHS is positive and the RHS is negative. Hence, we expand the domain of \(\gamma\) to be the whole real line and finish the proof as follows:
\begin{align*}
  & (- \gamma) + \frac{1}{1 - \epsilon} \Expect_{\Prob} \left[
  \left( - \Dist{{\zeta}, \Safe(x)} - (- \gamma), 0 \right)^+  \right]
  \geq - \frac{\delta}{1 - \epsilon} \qquad \forall \gamma \in \reals \\
  \iff ~
  & \inf_{-\gamma \in \reals} \Set{
    (-\gamma) + \frac{1}{1 - \epsilon} \Expect_{\Prob} \left[
    \left( - \Dist{{\zeta}, \Safe(x)} - (- \gamma), 0 \right)^+  \right]}
    \geq \frac{-\delta}{1 - \epsilon} \\
  \iff ~
  & \CVaR_{\epsilon} \Big( - \Dist{{\zeta}, \Safe(x)} \Big) + \frac{\delta}{1 - \epsilon} \geq 0.
\end{align*}
\end{proof}

\subsection{Proof of Lemma~\ref{lem:dist-convex}} \label{apx-lem:dist-convex}
\begin{proof}
Since \(\Dist{\zeta, \Safe(x)}\) is defined through a convex program, in which the Slater's condition holds, we take the dual to obtain
\begin{align*}
  \Dist{\zeta, \Safe(x)}
  = \max_{\lambda \leq 0}
  & \Set{\lambda^{\top} [b(x) - A \zeta] \colon \norm{A^{\top} \lambda}_{\ast} \leq 1}.
\end{align*}
This completes the proof.

\end{proof}

\subsection{Proof of Theorem~\ref{thm:opt-rhs-cvx}} \label{apx-thm:opt-rhs-cvx}
\begin{proof}
By Theorem~\ref{thm:opt-det-reform}, (\ODRCCL{}) admits the following reformulations:
\begin{align*}
  & \CVaR_{\epsilon} \Set{ -\Dist{{\zeta}, \Safe(x)}} \geq - \frac{\delta}{1 - \epsilon} \\
  \iff
  & \inf_{\gamma \in \reals} \Set{
    \gamma + \frac{1}{1 - \epsilon} \Expect_{\Prob} \Set{ \Big[ - \Dist{{\zeta}, \Safe(x)} - \gamma \Big]^+}}
    \geq - \frac{\delta}{1 - \epsilon} \\
  \iff
  & \gamma + \frac{1}{1 - \epsilon} \Expect_{\Prob} \Set{ \Big[ - \Dist{{\zeta}, \Safe(x)} - \gamma \Big]^+}
    \geq - \frac{\delta}{1 - \epsilon} \qquad \forall \gamma \in \reals.
\end{align*}
In what follows, we prove that the LHS of the last reformulation is log-concave in \(x\) for any fixed \(\gamma\). Since log-concave functions are quasi-concave and continuous (see Lemma 2.4 in~\cite{norkin-1993-analy-optim}), the convexity and closedness of \(\mathcal{X}^o_{\text{R}}\) follows from their preservation under intersection. To this end, we notice that
\[
\Expect_{\Prob} \Big[ \phi({\zeta}, x) \Big]
= \int_{\Xi} \phi(x, \zeta) \cdot f_{\zeta} (\zeta) \dLeb{\zeta},
\]
where \(\phi(x, \zeta) := \big[ - \Dist{\zeta, \Safe(x)} - \gamma \big]^+\) and \(f_{\zeta}\) represents the probability density function of \(\zeta\). It suffices to show that \(\phi(x, \zeta) \cdot f_{\zeta} (\zeta)\) is jointly log-concave in \((x, \zeta)\) because log-concavity preserves under marginalization (see Theorem 3.3 in~\cite{saumard-2014-log-concav}). In view that log-concavity also preserves under multiplication, we complete the proof by showing that \(f_{\zeta} (\zeta)\) is log-concave in \(\zeta\) and \(\phi(x, \zeta)\) is jointly log-concave in \((x, \zeta)\).
\begin{enumerate}
\item Since \(\Prob\) is \(\alpha\)-concave, its density function \(f_{\zeta}\) is \(\alpha'\)-concave by Proposition~\ref{prop:alpha-prob-meas-and-pdf}, where
\begin{align*}
  \alpha^{\prime} =
  \begin{cases}
  \frac{\alpha}{1 - m \alpha} & \text{if } \alpha \in [0, 1/m) \\
  +\infty & \text{if } \alpha = 1 / m
  \end{cases}
\end{align*}
and \(\alpha' \geq 0\). Hence, \(f_{\zeta}\) is log-concave by Lemma~\ref{lem:monotone-m-alpha}.
\item For any pair of \((x_1, \zeta_1), (x_2, \zeta_2) \in \reals^n \times \Xi\) and any \(\theta \in [0, 1]\), define \((x_{\theta}, \zeta_{\theta}) := \theta (x_1, \zeta_1) + (1 -
\theta)(x_2, \zeta_2)\). Then, it holds that
\begin{align*}
  \phi(x_{\theta}, \zeta_{\theta})
  = \Big( - \Dist{\zeta_{\theta}, \Safe(x_{\theta})} - \gamma \Big)^+
  & \geq \Big( m_1 \big(- \Dist{\zeta_1, \Safe(x_1)} - \gamma, - \Dist{\zeta_2, \Safe(x_2)} - \gamma; \theta \big) \Big)^+ \\
  & \geq m_0 \Big(\phi(x_1, \zeta_1), \phi(x_2, \zeta_2); \theta \Big),
\end{align*}
where the first inequality is because \(\Dist{\zeta, \Safe(x)}\) is jointly convex in \((x, \zeta)\). To see the second inequality, we discuss the following two cases.
\begin{enumerate}[label=(\roman*)]
\item If either \(\phi(x_1, \zeta_1)\) or \(\phi(x_2, \zeta_2)\) equals zero, then \(m_0 \big(\phi(x_1, \zeta_1), \phi(x_2, \zeta_2); \theta \big)\) equals zero by definition.
\item If both \(\phi(x_1, \zeta_1)\) and \(\phi(x_2, \zeta_2)\) are strictly positive, then
\begin{align*}
  \Big( m_1 \big(- \Dist{\zeta_1, \Safe(x_1)} - \gamma, - \Dist{\zeta_2, \Safe(x_2)} - \gamma; \theta \big) \Big)^+
= \ & m_1 \Big(\phi(x_1, \zeta_1), \phi(x_2, \zeta_2); \theta \Big) \\
\geq \ & m_0 \Big(\phi(x_1, \zeta_1), \phi(x_2, \zeta_2); \theta \Big),
\end{align*}
where the inequality follows from Lemma~\ref{lem:monotone-m-alpha}.
\end{enumerate}
\end{enumerate}
\end{proof}

\subsection{A Generalized Theorem~\ref{thm:opt-rhs-cvx} For Quasi-Concave Inequalities} \label{thm:opt-rhs-cvx-general}

\begin{FirstUpdate}
\begin{theorem}
Suppose that the reference distribution \(\Prob\) of \(\mathcal{P}\) is \(\alpha\)-concave with \(0 \leq \alpha \leq 1/m\). Then, the set
\begin{align*}
  \mathcal{X}^o_{\text{R}} := \Set{ x \in \reals^n \colon
  \sup_{\QProb \in \mathcal{P}} \QProb \Big[ h(x, \xi) \geq 0 \Big] \geq 1 - \epsilon}
\end{align*}
is convex and closed for \(\delta > 0\), where \(h \colon \reals^n \times \reals^m \to \reals\) is quasi-concave.
\end{theorem}
\end{FirstUpdate}

\begin{FirstUpdate}
\begin{proof}
In this proof, we show that the distance \(\Dist{\zeta, \Safe(x)}\) from \(\zeta \in \reals^m\) to the safe set \(\Safe(x)\) is jointly convex in \((\zeta, x)\) on \(\Xi \times \reals^n\). Then, the conclusion follows from the proof of Theorem~\ref{thm:opt-rhs-cvx}.

To show the convexity of \(\Dist{\zeta, \Safe(x)}\), we recall that \(h \colon \reals^n \times \Xi \to \reals\) is quasi-concave. Then, the superlevel set
\(\mathcal{H}_{\geq 0} := \Set{(x, \xi) \colon h(x, \xi) \geq 0}\) is convex. In addition,
\begin{align*}
  \Dist{\zeta, \Safe(x)}
  & = \inf_{\xi \in \Xi} \Set{\norm{\xi - \zeta} \colon (x, \xi) \in \mathcal{H}_{\geq 0}} \\
  & = \min_{\xi \in \Xi} \Set{\norm{\xi - \zeta} \colon (x, \xi) \in \cl{}\left(\mathcal{H}_{\geq 0}\right)},
\end{align*}
where the second equality is because \(\norm{\cdot}\) is continuous. Take
\((x_1, \zeta_1), (x_2, \zeta_2) \in \reals^n \times \Xi\), then there exist
two minimizers \(\xi_1, \xi_2 \in \Xi\) such that they are the closest points in
\(\cl{}\left( \mathcal{H}_{\geq 0} \right)\) to \((x_1, \zeta_1)\) and
\((x_2, \zeta_2)\), respectively. It follows that, for \(\lambda \in (0, 1)\) and
\((x_{\lambda}, \zeta_{\lambda}) := \lambda (x_1, \zeta_1) + (1 - \lambda) (x_2, \zeta_2)\),
\begin{align*}
  \Dist{\zeta_{\lambda}, \Safe(x_{\lambda})}
  & = \min_{\xi \in \Xi} \Set{\norm{\xi - \zeta_{\lambda}} \colon (x_{\lambda}, \xi) \in \cl{}\left(\mathcal{H}_{\geq 0}\right)} \\
  & \leq \norm{\lambda \xi_1 + (1 - \lambda) \xi_2 - \zeta_{\lambda}}
    = \norm{\lambda (\xi_1 - \zeta_1) + (1 - \lambda) (\xi_2 - \zeta_2)} \\
  & \leq \lambda \Dist{\zeta_1, \Safe(x_1)} + (1 - \lambda) \Dist{\zeta_2, \Safe(x_2)},
\end{align*}
where the first inequality is because
\((x_{\lambda}, \lambda \xi_1 + (1 - \lambda) \xi_2) \in \cl{} \left( \mathcal{H}_{\geq 0} \right)\).
\end{proof}
\end{FirstUpdate}

\subsection{Proof of Lemma~\ref{lem:optimistic-ts-rel-T-T0}} \label{apx-lem:optimistic-ts-rel-T-T0}
\begin{proof}
Theorem~\ref{thm:opt-det-reform} yields
\begin{align*}
  \mathcal{X}^o_{\text{T}}
  = \Set{x \in \reals^n \colon
  \CVaR_{\epsilon}\left( - \Dist{\xi, \mathcal{S}(x)} \right) + \frac{\delta}{1 - \epsilon} \geq 0},
\end{align*}
where the distance to the safe set \(\mathcal{S}(x)\) is
\begin{gather*}
  \Dist{\xi, \mathcal{S}(x)}
  = \inf_{\eta} \Set{\norm{\eta - \xi} \colon \ell \leq x^{\top} \eta \leq u}
  = \frac{1}{\norm{x}_{\ast}} \left[ f(\ell, u, \xi) \right]^+
\end{gather*}
and \(f(\ell, u, \xi) := (x^{\top} \xi - u) \vee (\ell - x^{\top}\xi)\). Similarly, we recast \(\mathcal{X}^o_{\text{T}_0}\) as
\begin{align*}
  \mathcal{X}^o_{\text{T}_0}
  = \Set{(\ell ,u) \in \reals_- \times \reals_+ \colon
  \CVaR_{\epsilon}(-(f_0(\ell, u, \zeta))^+) + \frac{\delta}{1 - \epsilon} \geq 0
  },
\end{align*}
where \(f_0(\ell, u, \zeta) := (\zeta - u) \vee (\ell - \zeta)\) and the \(\CVaR{}\) is with respect to \(\Prob_0\). But for \(\zeta \sim \Prob_0\) and \(\xi \sim \Prob\), we have \(\zeta \disteq x^{\top}\xi/\|x\|_*\) and so
\[
\frac{1}{\norm{x}_{\ast}} \left[ f(\ell, u, \xi) \right]^+ = \max\left\{\frac{x^{\top}\xi - u}{\|x\|_*}, \frac{\ell- x^{\top}\xi}{\|x\|_*}, 0\right\} \disteq \max\left\{\zeta - \frac{u}{\|x\|_*}, \frac{\ell}{\|x\|_*} - \zeta, 0\right\} = (f_0(\ell, u, \zeta))^+.
\]
The conclusion follows.
\end{proof}

\subsection{Proof of Theorem~\ref{thm:ts-occ-convexity}} \label{apx-thm:ts-occ-convexity}
\begin{proof}
We first show the reformulation of \(\mathcal{X}^o_{\text{T}_0}\). By
Theorem~\ref{thm:opt-det-reform} and Lemma~\ref{lem:reform-neg-cvar}, we recast
\(\mathcal{X}^o_{\text{T}_0}\) as
\begin{align*}
  \frac{\delta}{1 - \epsilon} + \Ind{
  0 \geq \VaR_{\epsilon}(-f_0(\ell, u, \zeta))} \cdot
  \left( \CVaR_{\epsilon}(-f_0(\ell, u, \zeta)) - \frac{1}{1 - \epsilon} \Expect \left[ (-f_0(\ell, u, \zeta) )^+ \right]\right) \geq 0,
\end{align*}
where \(f_0(\ell, u, \zeta) \equiv (\zeta - u) \vee (\ell - \zeta)\) and \(\zeta \sim \Prob_0\). Then, we break down the indicator function to obtain
\begin{gather*}
\mathcal{X}^o_{\text{T}_0} = \mathcal{X}^o_{\text{T}_1} \cup \big(
(\reals_- \times \reals_+ \setminus \mathcal{X}^o_{\text{T}_1}) \cap \mathcal{X}^o_{\text{T}_2}\big), \\
  \text{ where } \quad
  \mathcal{X}^o_{\text{T}_1} = \Set{(\ell, u) \in \reals_- \times \reals_+ \colon
    0 < \VaR_{\epsilon}(- f_0(\ell, u, \zeta))}, \\
  \text{ and } \quad
  \mathcal{X}^o_{\text{T}_2} = \Set{(\ell, u) \in \reals_- \times \reals_+ \colon
      \frac{\delta}{1 - \epsilon} + \CVaR_{\epsilon}(-f_0(\ell, u, \zeta))
      \geq \frac{1}{1 - \epsilon} \Expect \left[ (-f_0(\ell, u, \zeta))^+ \right]}.
\end{gather*}
For \(\mathcal{X}^o_{\text{T}_1}\), we have
\begin{align*}
  0 < \VaR_{\epsilon}(-f_0(\ell, u, \zeta))
  & \iff  \Prob_0 \left[ - f_0(\ell, u, \zeta) \leq 0 \right] < \epsilon \\
  & \iff  \Prob_0 \left[ \zeta \geq u \text{ or } \ell \geq \zeta \right] < \epsilon \\
  & \iff  \Prob_0 \left[ \ell \leq \zeta \leq u \right] > 1 - \epsilon, \\
  \text{or equivalently:} \quad 0 \geq \VaR_{\epsilon}(-f_0(\ell, u, \zeta))
  & \iff \Prob_0 \left[ \ell \leq \zeta \leq u \right] \leq (1 - \epsilon).
\end{align*}
For \(((\reals_- \times \reals_+) \setminus \mathcal{X}^o_{T_1}) \cap \mathcal{X}^o_{\text{T}_2} \), we have
\begin{align*}
  & \delta + (1 - \epsilon) \CVaR{}_{\epsilon}(-f_0(\ell,u,\zeta)) \geq \Expect \left[ (- f_0(\ell,u,\zeta))^+ \right] \\
  \iff
  & \delta + \Expect \left[ -f_0(\ell,u,\zeta) \cdot \Ind{-f_0(\ell,u,\zeta) \geq \VaR_{\epsilon}(-f_0(\ell,u,\zeta))} \right]
    \geq \Expect \left[ - f_0(\ell,u,\zeta)  \cdot \Ind{-f_0(\ell,u,\zeta) \geq 0}\right] \\
  \iff
  & \delta + \Expect \left[ -f_0(\ell,u,\zeta) \cdot \Ind{\VaR_{\epsilon}(-f_0(\ell,u,\zeta)) \leq -f_0(\ell,u,\zeta) \leq 0} \right] \geq 0 \\
  \iff
  & \delta \geq \Expect \left[ f_0(\ell,u,\zeta) \cdot \Ind{0 \leq f_0(\ell,u,\zeta) \leq \VaR_{1 - \epsilon}(f_0(\ell,u,\zeta))} \right].
\end{align*}
Plugging the definition of \(f_0(\ell,u,\zeta)\) into the~\RHS{} yields
\begin{align*}
  & \Expect{} \left[ f_0(\ell,u,\zeta) \cdot \Ind{0 \leq f_0(\ell,u,\zeta) \leq \VaR_{1 - \epsilon}(f_0(\ell,u,\zeta))}\right] \\
  = \;
  & \int\limits_{\Xi} (\zeta - u) \vee (\ell - \zeta) \cdot \Ind{%
    0 \leq (\zeta - u) \vee (\ell - \zeta) \leq \VaR_{1 - \epsilon}(f_0(\ell,u,\zeta))} \dProb{\zeta} \\
  = \;
  & \int\limits_{\Xi} \int\limits_0^{+\infty} \Ind{%
    t \leq (\zeta - u) \vee (\ell - \zeta) \leq \VaR_{1 - \epsilon}(f_0(\ell,u,\zeta))} \dLeb{t} \dProb{\zeta} \\
  = \;
  & \int\limits_0^{+\infty} \Prob \left[ t \leq (\zeta - u) \vee (\ell - \zeta) \leq \VaR_{1 - \epsilon}((\zeta - u) \vee (\ell - \zeta)) \right] \dLeb{t} \\
  = \;
  & \int\limits_0^{+\infty} \left( (1 - \epsilon) - \Prob \left[ (\zeta - u) \vee (\ell - \zeta) \leq t \right] \right)^+ \dLeb{t}
  = \int\limits_0^{+\infty} \left( (1 - \epsilon) - \Prob \left[ \ell - t \leq \zeta \leq u + t \right] \right)^+ \dLeb{t}.
\end{align*}
Therefore,
\begin{align*}
  \big((\reals_- \times \reals_+) \setminus \mathcal{X}^o_{T_1}\big) \cap \mathcal{X}^o_{\text{T}_2}
  & = \Set{(\ell, u) \in \reals_- \times \reals_+ \colon
    \begin{aligned}
      & 0 \geq \VaR_{\epsilon} \left( -f_0(\ell, u, \zeta) \right) \\
      & \delta \geq \int\limits_0^{+\infty} \left(
        (1 - \epsilon) - \Prob \left[ \ell - t \leq \zeta \leq u + t \right] \right)^+ \dLeb{t}.
    \end{aligned}}
\end{align*}
It follows that
\begin{align*}
  \mathcal{X}^o_{\text{T}_0}
  & = \mathcal{X}^o_{\text{T}_1} \cup \big(
  (\reals_- \times \reals_+ \setminus \mathcal{X}^o_{\text{T}_1}) \cap \mathcal{X}^o_{\text{T}_2}\big) \\
  & = \Set{(\ell, u) \in \reals_- \times \reals_+ \colon 0 < \VaR_{\epsilon}(-f_0(\ell, u, \zeta))} \\
  & \phantom{=} \cup \Set{(\ell, u) \in \reals_- \times \reals_+ \colon
    \begin{aligned}
      & 0 \geq \VaR_{\epsilon} \left( -f_0(\ell, u, \zeta) \right) \\
      & \delta \geq \int\limits_0^{+\infty} \left(
        (1 - \epsilon) - \Prob_0 \left[ \ell - t \leq \zeta \leq u + t \right] \right)^+ \dLeb{t}
    \end{aligned}} \\
  & = (\reals_- \times \reals_+) \cap \Set{
    (\ell, u) \in \reals_- \times \reals_+ \colon
    \begin{aligned}
    & \Prob_0 \left[ \ell \leq \zeta \leq u \right] > (1 - \epsilon), \text{ \textbf{or} } \\
    & \delta \geq \int\limits_0^{+\infty} \left(
      (1 - \epsilon) - \Prob_0 \left[ \ell - t \leq \zeta \leq u + t \right] \right)^+ \dLeb{t}
    \end{aligned}} \\
  & = \Set{
    (\ell, u) \in \reals_- \times \reals_+ \colon
    \delta \geq \int\limits_0^{+\infty} \left(
    (1 - \epsilon) - \Prob_0 \left[ \ell - t \leq \zeta \leq u + t \right] \right)^+ \dLeb{t}} \\
  & = \Set{
    (\ell, u) \in \reals_- \times \reals_+ \colon
    \delta \geq h_{\epsilon}(\ell, u)
    }
\end{align*}
where the second to the last equality is because for all
\((\ell_1, u_1) \in \reals_- \times \reals_+\) such that
\(\Prob_0 \left[ \ell_1 \leq \zeta \leq u_1 \right] \geq (1 - \epsilon)\), we
have
\begin{align*}
  (1 - \epsilon) - \Prob_0 \left[ \ell_1 - t \leq \zeta \leq u + t \right]
  \leq  (1 - \epsilon) - \Prob_0 \left[ \ell_1 \leq \zeta \leq u \right] \leq 0, \quad \forall t \geq 0,
\end{align*}
implying
\begin{align*}
  \int\limits_0^{+\infty} \left( (1 - \epsilon) - \Prob_0 \left[ %
  \ell_1 - t \leq \zeta \leq u_1 + t \right] \right)^+ \dLeb{t} = 0 \leq \delta.
\end{align*}

Second, we show that \(\mathcal{X}^o_{\text{T}_0}\) is convex. By assumption,
\(\Prob_0 \disteq{} R \cdot e^{\top}_1 U_n\) is unimodal on \(\reals\) and its
distribution function \(\Phi\) is concave on \((0, +\infty)\) and convex on
\((-\infty, 0)\). Hence, \(\Phi(u + t) - \Phi(\ell - t)\) is jointly concave in \((\ell, u, t)\) on \(\reals_- \times \reals^2_+\). It follows that
the integrand of \(h_{\epsilon}\) is jointly convex in \((\ell, u, t)\), and so \(h_{\epsilon}\) is convex in \((\ell, u) \in \reals_- \times \reals_+\)
because partial integration of a convex function preserves its convexity.

Finally, to prove that \(\mathcal{X}^o_{\text{T}}\) is convex, it remains to
show that \((x, \ell, u) \in \mathcal{X}^o_{\text{T}}\) if and only if there
exists an \(s \geq \norm{x}_{\ast}\) such that
\((\ell, u, s) \in \text{co}\left(\mathcal{X}^o_{\text{T}_0}\right)\). To this end, we discuss
the following two cases:
\begin{enumerate}
\item \(x = 0\): Suppose that \((0, \ell, u) \in \mathcal{X}^o_{\text{T}_0}\),
      then \(\ell \leq 0 \leq u\) and for \(s_n := 1/n\) we have
      \begin{align*}
        h_{\epsilon}(\ell/s_n, u/s_n) =
        \int\limits_0^{+\infty} \Big[ (1 - \epsilon) - (%
        \Phi(n \cdot u + t) - \Phi(n \cdot \ell - t)) \Big]^+ \dLeb{t} \to 0 \text{ as \(n \to \infty\)}.
      \end{align*}
      Therefore, there exists an \(n\) such that
      \((\ell, u, 1/n) \in \mathcal{X}^o_{\text{T}_0}\). On the contrary, if
      there exists an \(s > 0\) such that
      \((\ell, u, s) \in \text{co}(\mathcal{X}^o_{\text{T}_0})\), then it is the limit
      point of a sequence
      \(\set{(\ell_n, u_n, s_n)}_{n=1}^{+\infty} \subseteq \mathcal{X}^o_{\text{T}_0}\)
      satisfying \(\ell_n \leq u_n\) for all \(n\). Then, \(\ell \leq u\) as well,
      implying that \((0, \ell, u) \in \mathcal{X}^o_{\text{T}}\).
\item \(x \neq 0\): Suppose that
      \((x, \ell, u) \in \mathcal{X}^o_{\text{T}}\). Then,
      Lemma~\ref{lem:optimistic-ts-rel-T-T0} implies that
      \(\left(\frac{\ell}{\norm{x}_{\ast}}, \frac{u}{\norm{x}_{\ast}}\right) \in \mathcal{X}^o_{\text{T}_0}\),
      \ie{}, \((\ell, u, \norm{x}_{\ast}) \in \text{co}\left(\mathcal{X}^o_{\text{T}_0}\right)\).
      On the contrary, suppose that there exists an
      \(s \geq \norm{x}_{\ast} > 0\) such that
      \((\ell, u, s) \in \text{co}\left(\mathcal{X}^o_{\text{T}_0}\right)\), then it is the
      limit point of a sequence
      \(\set{(\ell_n, u_n, s_n)}_{n=1}^{+\infty} \subseteq \mathcal{X}^o_{\text{T}_0}\).
      Observe that
      \begin{align*}
        h_{\epsilon}\left( \frac{\ell}{\norm{x}_{\ast}}, \frac{u}{\norm{x}_{\ast}} \right)
        \leq h_{\epsilon}\left( \frac{\ell}{s}, \frac{u}{s} \right)
        = \lim_{n \to \infty} h_{\epsilon} \left( \frac{\ell_n}{s_n}, \frac{u_n}{s_n} \right) \leq \delta,
      \end{align*}
      where the first inequality is because \(h_{\epsilon}(\ell, u)\) is
      increasing in \(\ell\) and decreasing in \(u\), and the
      equality is due to the continuity of \(h_{\epsilon}\). Therefore, \(\left( \frac{\ell}{\norm{x}_{\ast}}, \frac{u}{\norm{x}_{\ast}} \right) \in \mathcal{X}^o_{\text{T}_0}\)
      and so \((x, \ell, u) \in \mathcal{X}^o_{\text{T}}\) by
      Lemma~\ref{lem:optimistic-ts-rel-T-T0}.
\end{enumerate}
\end{proof}

\end{appendix}

\end{document}